\documentclass[11pt]{amsart}
\usepackage{amscd, amssymb}
\usepackage{tikz}
\usetikzlibrary{arrows,positioning} 

\usepackage{comment}

\usepackage{amsthm}
\usepackage{mathrsfs}
    \usepackage{bbm}
\usepackage{ stmaryrd }

\usepackage{multirow}
\usepackage{makecell}

\usepackage{hyperref}
\hypersetup{
	colorlinks=true,
	linkcolor=blue,
	filecolor=magenta,  
	urlcolor=cyan,
}

\def\End{\operatorname{End}}

\def\ker{\operatorname{ker}}

\def\dim{\operatorname{dim}}
\def\ad{\mathsf{ad}}

\def\id{\operatorname{id}}
\def\Dist{\operatorname{Dist}}

\def\Ind{\operatorname{Ind}}

\def\Hom{\operatorname{Hom}}

\newcommand{\uHom}{\underline{\mathrm{Hom}}}
\newcommand{\uEnd}{\underline{\mathrm{End}}}

\def\R{\mathbb{R}}
\def\N{\mathbb{N}}

\def\UU{\mathcal{U}}

\def\JJ{\mathcal{J}}

\def\Cs{\mathscr{C}}
\def\Ds{\mathscr{D}}
\def\Es{\mathscr{E}}

\def\m{\mathfrak{m}}

\def\cA{\mathscr{A}}
\def\cC{\mathscr{C}}

\def\bk{\Bbbk}
\def\mR{\mathbb{R}}
\def\mZ{\mathbb{Z}}
\def\mN{\mathbb{N}}
\def\gd{\mathsf{gd}}

\def\mod{\mathrm{ mod}}
\def\Rep{\operatorname{Rep}}

\def\Spec{\operatorname{Spec}}

\def\Vec{\operatorname{Vec}}
\def\VEC{\operatorname{VEC}}
\def\sVec{\operatorname{sVec}}
\def\Ver{\operatorname{Ver}}
\def\gr{\operatorname{gr}}
\def\Ext{\mathrm{Ext}}
\def\Tor{\mathrm{Tor}}

\def\Set{\text{Set}}

\def\Mod{\mathrm{Mod}}
\def\sub{\subseteq}

\def\Ab{\text{Ab}}

\def\cB{\mathscr{B}}

\newcommand{\mL}{\mathbb{L}}

\def\uRep{\underline{\mathrm{Rep}}}

\def\xto{\xrightarrow}

\def\onto{\twoheadrightarrow}

\newtheorem{thm}{Theorem}[section]
\newtheorem{cor}[thm]{Corollary}
\newtheorem{lemma}[thm]{Lemma}
\newtheorem{prop}[thm]{Proposition}
\newtheorem{question}[thm]{Question}

\theoremstyle{definition}
\newtheorem{definition}[thm]{Definition}

\theoremstyle{remark}
\newtheorem{remark}[thm]{Remark}
\newtheorem{example}[thm]{Example}

\numberwithin{equation}{section}

\usepackage{relsize}
\usepackage{fancyhdr}
\usepackage[all,cmtip]{xy}
\newcommand{\unit}{\mathbf{1}}
\newcommand{\mC}{\mathbb{C}}
\newcommand{\uExt}{\underline{\Ext}}
\newcommand{\mG}{\mathbb{G}}
\newcommand{\ev}{\mathrm{ev}}
\newcommand{\Isom}{\mathcal{I}som}
\newcommand{\CAlg}{\mathrm{CAlg}}
\begin{document}

\title{Absolutely flat algebras in tensor categories}
		\author{Kevin Coulembier, Alexander Sherman}

        \subjclass[2020]{}

	\begin{abstract}
		We study the relation between semisimple algebras, artinian simple algebras and artinian absolutely flat algebras in ind-completions of tensor categories (rigid abelian monoidal categories). Our main results show that the three types of algebras coincide, if we restrict to commutative algebras in certain (all, if we accept the main conjectures in the field) symmetric tensor categories of moderate growth. We also provide examples to show that in general these notions tend to diverge, contrary to the classical case of algebras over a field. In one appendix we give the first, to the best of our knowledge, example of a tensor category with finite-dimensional morphism spaces but objects of infinite length. In a second appendix we establish a generalisation of Deligne's theorem that tannakian categories are neutral over algebraically closed fields.
	\end{abstract}

	\maketitle
	\pagestyle{plain}
\section{Introduction}
By basic ring theory, an artinian algebra over a field $\Bbbk$ is semisimple if and only if it is a product of simple algebras, and if and only if it is absolutely flat (von Neumann regular). 
Following \cite{EGNO}, a tensor category is an abelian $\bk$-linear rigid monoidal category satisfying some finiteness properties, with the most basic example being the category of finite dimensional vector spaces over $\bk$.  It is thus a logical question whether the above properties on artinian $\Bbbk$-algebras remain equivalent in the ind-completion of any tensor category.

This is a natural question in its own right, but has also already appeared in several special cases more directly motivated in the literature, perhaps with different formulation and terminology. For example the open question, answered in the (super) tannakian case in \cite{Del14, CEOP}, of whether the extension of scalars of a tensor category is always a tensor category, is equivalent to asking whether field extensions are absolutely flat as algebras in tensor categories. An affirmative answer to the latter question would also answer Deligne's question from \cite{Del90} of whether Deligne tensor products of pretannakian categories are always pretannakian. In \cite{EO}, Etingof and Ostrik conjectured that (finite) simple algebras in finite tensor categories are `exact', which is equivalent to being absolutely flat. This conjecture was proved to be correct in \cite{CSZ}. These questions also appear naturally, see for instance \cite{CEO}, in the context of the ongoing exploration of the structure theory of pretannakian categories (of moderate growth in positive characteristic, and of superexponential growth in general). For other recent work in which such questions appear we refer to \cite{Le, SY}.

In this paper, we take a systematic approach to the question of whether, for a given tensor category, an artinian algebra is semisimple if and only if it is a product of simple algebras, or if and only if it is absolutely flat. Most of our results answer parts of this question in the affirmative, either in general or in special cases. Nonetheless some results demonstrate that in complete generality the answer deviates from the classical case.

More concretely, in Section~\ref{sec:genstuff} we show that some implications survive, see Theorems~\ref{prop:consolidate} and~\ref{prop:consolidate2}, and that the main open questions are whether semisimple algebras need to be products of simple algebras and whether artinian simple algebras need to be absolutely flat. With respect to answering these remaining questions, we will primarily focus on  the specific case of commutative algebras in symmetric (i.e. pretannakian) tensor categories.

In Section~\ref{sec:examples} we review the special case of finite algebras in finite tensor categories which was resolved in \cite{CSZ}, discuss the open problem of algebras that are field extensions of $\bk$, and finally produce two counterexamples. Firstly, we show that there exists a semisimple commutative algebra in a pretannakian category (over the field of complex numbers, or any other field) that is not a product of simple algebras. Before stating the second counterexample, we recall that simple commutative $\bk$-algebras are obviously automatically artinian. In contrast, for the proper notion of artinian algebras in tensor categories, this is no longer the case. We show that there exists an example of a simple commutative algebra in the ind-completion of a pretannakian category that is not absolutely flat. Unfortunately, since the result is not constructive we do not know whether this is an example of a non-artinian simple commutative algebra, or of an artinian simple commutative algebra that is not absolutely flat.

Both examples above are in pretannakian categories that are not of `moderate growth'. Our main results indicate that, in contrast, in pretannakian categories of moderate growth, the semisimple algebras satisfy their classical properties. Conjecturally \cite{BEO, C1}, every such category admits a symmetric tensor functor to a pretannakian category that is geometrically reductive (GR) and maximally nilpotent (MN) as defined in \cite{C1}. As symmetric tensor functors to such well-behaved basic pretannakian categories are often called fibre functors, we call a pretannakian category admitting such a tensor functor \emph{well-fibered}. By \cite{Del90, Del02} every pretannakian category of moderate growth in characteristic 0 is well-fibered, and by \cite{CEO-A}, in positive characteristic every Frobenius exact pretannakian category of moderate growth is well-fibered. More generally, as mentioned above, every pretannakian category of moderate growth is expected to be well-fibered. The following will be proved as Theorems~\ref{thm:main} and~\ref{thm noncomm case}.

\medskip

{\bf Theorem A:} {\em Let $\cC$ be a well-fibered pretannakian category over an algebraically closed field~$\bk$. Then
\begin{enumerate}
    \item Every simple commutative algebra in $\Ind\cC$ is artinian and absolutely flat.
    \item Every semisimple commutative algebra in $\Ind\cC$ is a product of simple algebras.
    \item If $\mathrm{char}(\bk)>0$, then every semisimple algebra in $\cC$ is a product of simple algebras, and absolutely flat.
\end{enumerate}}

\medskip

As already mentioned, the main result of \cite{CEO-A} implies that the above theorem applies to Frobenius exact pretannakian categories of moderate growth. Similarly, by \cite[Thm. 9.2.1 and 9.3.7]{CEO} we can conclude the same for pretannakian categories $\Cs$ that fiber over $\Ver_{2^\infty}$.
Parts (2) and (3) are proved in Section~\ref{sec:modgr1} and part (1) is proved in Section~\ref{sec:simpleGRMN}. The results in Section~\ref{sec:simpleGRMN} actually give a more explicit description of simple commutative ind-algebras. To avoid technicalities, here we state the result for tannakian categories only, as the results seem not to be well-known even in this generality. The following will be proved in Theorems~\ref{thm:main}, \ref{thm noncomm case} and \ref{thm simple comm alg}.

\medskip

{\bf Theorem B:} {\em Let $G$ be an affine group scheme over an algebraically closed field $\bk$.
\begin{enumerate}
    \item Let $A$ be a simple commutative $G$-equivariant $\bk$-algebra. Let $\mathbb{L}$ be the algebraic closure of the field $A^G$ of $G$-invariants in $A$. Then there exists an exact subgroup $H$ of $G_{\mathbb{L}}$ such that $A\otimes_{A^G}\mathbb{L}\cong \mathbb{L}[G_{\mathbb{L}}]^H\cong \mathbb{L}[G_{\mathbb{L}}/H]$.
    \item Let $A$ be a semisimple commutative $G$-equivariant $\bk$-algebra, then $A$ is a product of simple such algebras.
    \item Let $A$ be a finite dimensional semisimple $G$-equivariant $\bk$-algebra, then $A$ is a product of simple such algebras.
\end{enumerate}}

\medskip

Note that Theorem~B(1) for $\mathrm{char}(\bk)=0$ and $G$
 of finite type was proved by Magid in \cite{Magid}. Furthermore, since Theorem~B(3) does not require positive characteristic, contrary to Theorem~A(3), we leave open the question whether Theorem~B(3) remains true for supergroups in characteristic zero.

In Appendix~\ref{App} we give an example of a symmetric tensor category with finite dimensional morphism spaces that is not pretannakian (has objects of infinite length). 
This serves to highlight subtle potential properties of module categories of simple commutative algebras in pretannakian categories. 

In Appendix~\ref{app:Del} we generalise Deligne's result that tannakian categories over an algebraically closed field are neutral, see \cite{Del02, Duke}, from $\Vec$ to arbitrary GR+MN pretannakian categories. This is used in the main part of the paper to get a tighter description of the field extensions of $\bk$ one needs to consider to describe simple algebras over affine group schemes in such categories.

In Appendix~\ref{App:C} we prove a technical result that we use to fill in a minor gap in the literature surrounding extensions of scalars of tensor categories.

\section{Preliminaries}\label{sec:prel}

Unless further specified, $\bk$ is an arbitrary field.

\subsection{Tensor categories}\label{prel:tc}

An essentially small $\bk$-linear monoidal category $(\cC,\otimes,\unit)$ with $\bk$-linear tensor product is a {\bf tensor category over $\bk$} if
\begin{enumerate}
\item $\cC$ is abelian;
\item $\bk\to\End(\unit)$ is an isomorphism;
\item $(\cC,\otimes,\unit)$ is rigid, meaning that every object $X$ has a left monoidal dual $X^\ast$ and right monoidal dual ${}^\ast X$.
\end{enumerate}
By \cite[Proposition~1.17]{DM}, it then follows that $\unit$ is simple. If additionally, we have
\begin{enumerate}
\item[(4)] every object $X$ in $\cC$ has finite length $\ell(X)$,
\end{enumerate}
then $(\cC,\otimes,\unit)$ is an {\bf artinian tensor category}.
Using (2) and (3) shows also that morphism spaces are finite dimensional in an artinian tensor category. A {\bf symmetric tensor category} is a tensor category with a symmetric braiding $\sigma$. A symmetric artinian tensor category is called a {\bf pretannakian} category. The (categorical) dimension $\dim X\in\bk$ of an object $X$ in a pretannakian category is $\ev_X\circ \sigma_{X,X^\ast}\circ\mathrm{co}_X$, where $\ev_X:X^\ast\otimes X\to\unit$ is the evaluation and $\mathrm{co}_X:\unit\to X\otimes X^\ast$ the coevaluation.

The first example of pretannakian category is the category of finite dimensional vector spaces $\Vec=\Vec_{\bk}$. We denote the category of all vector spaces over $\bk$ by $\VEC=\VEC_{\bk}$. More generally, for an affine group scheme $G$ over $\bk$, its category of finite-dimensional rational representations is pretannakian.  Another example is  the pretannakian category $\sVec$ of super vector spaces, which is the category of finite dimensional $\mZ/2$-graded vector spaces with Koszul sign rule for the symmetric braiding. The non-trivial simple object in $\sVec$ is the odd line $\bar{\unit}$.

A {\bf tensor functor} between tensor categories over $\bk$ is a $\bk$-linear exact monoidal functor. Here, we can replace exact by faithful, see \cite[Theorem~2.4.1]{CEOP}. A tensor functor $F:\cC\to\Ds$ always has a right adjoint $F_\ast:\Ind\Ds\to\Ind\cC$ between the ind-completions. By construction, $F_\ast$ has a lax monoidal structure.
Following \cite{Del90}, for a tensor category $\cC$ we write $\Gamma=\Hom(\unit,-):\Ind\cC\to\VEC$. In other words, $\Gamma=F_\ast$ for the canonical inclusion $F:\Vec\hookrightarrow\cC$. We will occasionally also refer to a $\bk$-linear exact monoidal functor $\Cs\to\Ds$, where
$\Cs$ is a tensor category over $\bk$ and $\Ds$ a tensor category over a field extension $K$ of $\bk$, as a tensor functor.

For $\cC$ a tensor category, we have the internal Hom functor $\uHom$ on $\Ind\cC$ satisfying
\[
\Hom(X\otimes Y,Z)\;\cong\;\Hom(Y,\uHom(X,Z)).
\]
Of course, if $X\in \cC$, we simply have $\uHom(X,Z)\cong  {^*}X\otimes Z$.

\begin{remark}\label{rem:uExt} Let $\cC$ be a tensor category with $X\in\Ind\cC$. If $X\in\cC$, then clearly $\uHom(X,-):\cC\to\cC$ is exact. In general, $\uHom(X,-):\Ind\cC\to\Ind\cC$ is only left exact and we denote the right derived functors by $\uExt^i(X,-)$. For example, let $\cC$ be the rational representation category of the additive group $\mG_a$ over the complex numbers. Then with $I\cong \bk[\mG_a]$ the injective hull of the trivial representation $\unit$, we have $$\uHom(I,\unit)\;=\;\varprojlim_i M_i^\ast\;=\;0,$$
    where $M_i\cong M_i^\ast$ is the length $i$ submodule of $I$. Moreover, if we apply $\uHom(I,-)$ to the injective resolution $\unit\hookrightarrow I\twoheadrightarrow I$, then the endomorphism of $\uHom(I,I)$ in
    \[0\to\uHom(I,I)\to \uHom(I,I)\to 0\]
is not an epimorphism (note we already saw it is a monomorphism), so $\uExt^1(I,\unit)\not=0$. Indeed, one can verify directly that the canonical $\unit\subset\uHom(I,I)$ is not in the image.    
\end{remark}

A {\bf finite} tensor category is an artinian tensor category with enough projective objects and finitely many isomorphism classes of simple objects. We say that an artinian tensor category $\cC$ has {\bf moderate growth} if for each $X\in\cC$ there exists $C\in\mR$ such that the length of the tensor powers of $X$ are bounded as $\ell(X^{\otimes n})\le C^n$, see \cite{CEO-A}. This is equivalent to demanding that the quantity
\begin{equation}\label{def:gd}\gd(X)\;:=\; \lim_{n\to\infty}\sqrt[n]{\ell(X^{\otimes n})}\;\in\;\mR_{\ge 0}\cup\{\infty\}\end{equation}
is finite for each $X\in\cC$. Finite tensor categories are of moderate growth, since $\gd(X)$ is the Frobenius-Perron dimension, see \cite[Chapter 4]{EGNO}.

Assume that $\bk$ is a perfect field. For two pretannakian categories $\cC,\Ds$, we can consider the
Deligne tensor product $\cC\boxtimes\Ds$, see for instance \cite{Del90, CEOP}. It is again a pretannakian category, equipped with symmetric tensor functors 
\begin{equation}\label{eq:delTP}\cC\xrightarrow{X\mapsto X\boxtimes \unit}\cC\boxtimes\Ds\xleftarrow{\unit\boxtimes Y \mapsfrom Y} \Ds,\quad \end{equation}
combining into a bifunctor $(X,Y)\mapsto X\boxtimes Y$. The universal property of the product states that a symmetric tensor functor $\cC\boxtimes \Ds\to\mathscr{E}$ corresponds to a pair of symmetric tensor functors $\cC\to\mathscr{E}\leftarrow\Ds$, via composition with the tensor functors in \eqref{eq:delTP}.

\subsection{ Algebras and modules}\label{sec:comalg1}

Let $\cC$ be a tensor category over $\bk$. We will refer to a monoid object $(A,m,\eta)$ in $\Ind\cC$ as an {\bf ind-algebra} in $\cC$, or simply an algebra in $\cC$ if $A\in\cC$. For example, an algebra in $\VEC$ is the same as a $\Bbbk$-algebra.

\begin{lemma}[Chinese reminder theorem]\label{lemma CRT}
    Let $\mathscr{C}$ be a tensor category over $\Bbbk$, and let $A$ be an ind-algebra in $\mathscr{C}$.  If $I,J$ are ideals of $A$ such that $I+J=A$, then we have an isomorphism of algebras
    \[
    A/(I\cap J)\cong A/I\times A/J.
    \]
\end{lemma}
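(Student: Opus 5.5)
The plan is to mimic the classical proof, with element-wise arguments replaced by arguments with morphisms in the abelian category $\Ind\cC$. The candidate isomorphism comes from the two quotient maps. These give an algebra morphism $\phi: A\to A/I\times A/J$, where the product of algebras is the biproduct $A/I\oplus A/J$ with componentwise multiplication and unit. The kernel of $\phi$ is the intersection of the kernels of the two components, which is $I\cap J$. So $\phi$ factors through an injective algebra morphism $\bar\phi: A/(I\cap J)\to A/I\times A/J$. It remains to show that $\phi$ is an epimorphism.

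Surjectivity is where the hypothesis $I+J=A$ enters. Let $i:I\to A$ and $j:J\to A$ be the inclusions and $\pi_I,\pi_J$ the projections. I will show that the image of $\phi$ contains $A/I\oplus 0$ and $0\oplus A/J$, so it is everything.

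1. Consider the restriction $\phi\circ j: J\to A/I\oplus A/J$. Its second component $\pi_J\circ j$ is zero.
2. Its first component is $\pi_I\circ j: J\to A/I$. This map is an epimorphism: $\pi_I$ kills $I$, so the image of $\pi_I\circ j$ equals the image of $\pi_I$ restricted to $I+J=A$, which is all of $A/I$.
3. Hence $\mathrm{im}(\phi)\supseteq \mathrm{im}(\phi\circ j)=A/I\oplus 0$.
4. Symmetrically, using $I$, we get $\mathrm{im}(\phi)\supseteq 0\oplus A/J$.
5. Since the image is a subobject containing both summands, it is all of $A/I\oplus A/J$.

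Thus $\bar\phi$ is a monomorphism and an epimorphism in an abelian category, hence an isomorphism. Since it is an algebra morphism, it is an isomorphism of algebras.

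I expect no real obstacle. The only care needed is to check that everything is phrased without elements. First, the product algebra structure on $A/I\oplus A/J$ and the fact that $\phi$ is an algebra map both follow from the universal property of the biproduct. Second, $A/(I\cap J)$ carries the quotient algebra structure, so $\bar\phi$ is multiplicative because $\phi$ is and $A\to A/(I\cap J)$ is an epimorphism. Third, step 2 rests on $\pi_I(I+J)=\pi_I(J)$, which holds because images commute with sums of subobjects in $\Ind\cC$.

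Notably the argument never uses the classical decomposition $1=e+f$ with $e\in I$, $f\in J$. That decomposition is unavailable here, since $\Gamma(A)$ need not see $I$ and $J$.
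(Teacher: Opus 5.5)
Your proof is correct and follows essentially the same route as the paper. The paper also observes that the induced map $A/(I\cap J)\to A/I\times A/J$ is a monomorphism, and uses $I+J=A$ to get $\phi(J)=A/I\times 0$ and $\phi(I)=0\times A/J$, so the map is also an epimorphism.
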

\begin{proof}
    The natural algebra morphism $\phi:A/(I\cap J)\to A/I\times A/J$ is clearly a monomorphism in $\Ind\cC$.  Since $I+J=A$, we have $\phi(I)=0\times A/J$ and $\phi(J)=A/I\times 0$ which shows $\phi$ is an epimorphism.
\end{proof}

For an ind-algebra $A$, we have the Grothendieck category $\Mod_A\Cs$ of left $A$-modules in $\Ind\cC$.  The category $\Mod_{A}\Cs$ is naturally a right module category over $\Cs$ via tensor product in $\Cs$.  For $A$-modules $M,N$ we can define the equaliser
\[\Ind\cC\,\ni\,\uHom_A(M,N)\to\left(\uHom(M,N)\rightrightarrows \uHom(A\otimes M,N)\right).\]
We can equivalently define this internal Hom functor via
\[
\Hom_A(M\otimes X,N)\;\cong\;\Hom(X,\uHom_A(M,N)),
\]
for arbitrary $X\in\Ind\cC$.
By construction $\uHom_A$ is left exact in both variables, and we define the derived functors $\uExt^i_A(M,N)$ as the cohomology functors of the complex $\uHom_A(M,I^\bullet)$ for an injective coresolution $I^\bullet$ of $N$. For a right $A$-module $M$ and a left $A$-module $N$, we also define the tensor product via the usual coequaliser
\[\left(M\otimes A\otimes N\rightrightarrows M\otimes N\right)\to M\otimes_A N.\]
A left $A$-module $N$ is called {\bf flat} if $-\otimes_AN$ is exact.

For the remainder of the preliminaries, we assume that $\cC$ is pretannakian. An ind-algebra $(A,m,\eta)$ is then commutative if $m\circ\sigma_{A,A}=m$, for $\sigma$ the braiding. The universal symmetric algebra generated by an object $X\in\cC$ is its symmetric algebra $S(X)=\oplus_{n\ge 0}S^n(X)$. In other words, $S^n X$ is the maximal quotient of $X^{\otimes n}$ that is invariant under the braiding action. We denote the category of commutative ind-algebras by $\CAlg\cC$ and, for a fixed commutative algebra $A$, we have the category $\CAlg_A\cC$ of $A$-algebras. 

For a commutative ind-algebra $A$, we have the category $\Mod_A{\cC}$ of all $A$-modules in $\Ind\cC$ with its
full subcategory $\mod_A{\cC}$ of finitely presented  modules.  We simply write $\Mod_A$ and $\mod_A$ when $\cC$ is clear. These categories are monoidal, with tensor product $\otimes_A$.

We conclude the preliminaries with some straightforward technical statements we will rely on. Recall that $\cC$ is a pretannakian category over a field $\bk$.

\begin{definition} For $A\in\CAlg\cC$,
    we say that \[f\in \Gamma(A)=\Hom(\unit,A)\cong\End_A(A)\] is a zero divisor in $A$ (as opposed to `in $\Gamma(A)$') if there exists $0\not=X\sub A$ with $fX=0$ and a non-zero divisor otherwise.  Equivalently, $f$ is a non-zero divisor if $A\xto{f}A$ is injective.
\end{definition}

   We write $Q(A)$ for the localization of $A$ at the multiplicative system of non-zero divisors lying in $\Gamma(A)$.  Note that the localization map $A\to Q(A)$ is a monomorphism. 

\begin{lemma}\label{lem flat ordinary module}Assume that $\bk$ is algebraically closed, $R$ a commutative $\bk$-algebra, and $M\in\Mod_R\cC$. Then $M$ is flat in $\Mod_R\cC$ if and only if $-\otimes_RM$ is exact when evaluated on $R$-modules in $\VEC\subset \Ind\cC$.
\end{lemma}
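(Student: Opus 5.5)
We have to show that exactness of $-\otimes_RM$ on $R$-modules in $\VEC$ forces exactness on all of $\Mod_R\cC$. The direction ``only if'' is trivial, since $R$-modules in $\VEC$ form a full subcategory of $\Mod_R\cC$. For ``if'', the plan is to reduce an arbitrary $R$-module in $\Ind\cC$ to modules of the form $V\otimes X$, with $V$ an $R$-module in $\VEC$ and $X\in\cC$, and then use that $-\otimes X$ is exact and commutes with $\otimes_R$.

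\emph{Step 1: tensoring with objects of $\cC$.} Since $R$ lives in $\VEC$, it is central (it acts through $\Gamma$-type scalars), so for $N\in\Mod_R\cC$ and $X\in\cC$ we have $(N\otimes X)\otimes_RM\cong (N\otimes_RM)\otimes X$. Moreover $-\otimes X$ is exact on $\Ind\cC$ by rigidity. Hence exactness of $-\otimes_RM$ on $R$-modules in $\VEC$ propagates to exactness on modules of the form $V\otimes X$.

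\emph{Step 2: reducing to $\mathrm{Tor}_1$.} Since $-\otimes_RM$ is right exact and commutes with filtered colimits, flatness is equivalent to the following: for every injection $N'\hookrightarrow N$ in $\Mod_R\cC$, the map $N'\otimes_RM\to N\otimes_RM$ is injective. By filtered colimits we may reduce to $N$ finitely generated over $R$, that is, a quotient of some $R\otimes X$ with $X\in\cC$.

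\emph{Step 3: the key step, which I expect to be the main obstacle.} We must express a general $N$ in terms of the $V\otimes X$. The idea is to use the multiplicity spaces $\Hom(L,N)$ for simple $L\in\cC$. Each such space is an $R$-module in $\VEC$, since $R$ acts by endomorphisms commuting with $\cC$-morphisms. Here $\bk$ algebraically closed is used: $\End(L)=\bk$. The difficulty is that $\cC$ is not semisimple, so $N$ is not simply $\bigoplus_L \Hom(L,N)\otimes L$. Instead I would filter by the socle/length in the $\cC$-direction. Any $N\in\Mod_R\cC$ is the union of the submodules $R\cdot Y$ for $Y\subset N$ with $Y\in\cC$, and these have bounded $\cC$-length in the appropriate sense. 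One then argues by induction on the length of such a $Y$, using the five lemma on short exact sequences.

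An alternative route avoids the filtration. Use the derived functor $\Tor_1^R(-,M)$ on $\Mod_R\cC$. Take a free resolution in $\VEC$ of an $R$-module $V$, tensor it with $X$, and use Step~1 to get $\Tor_1^R(V\otimes X,M)=0$. Then show that any $N$ admits a left resolution by modules $R\otimes Y$ with $Y\in\Ind\cC$. In fact every $N$ is a quotient of $R\otimes N$, and $R\otimes N$ is flat-acyclic automatically: $(R\otimes N)\otimes_R-\cong N\otimes-$, which is exact. So it remains to show that $\Tor_1^R(N,M)=\Tor_1^{R}(M,N)$ vanishes. Computing this by a resolution of $M$ instead, $\Tor$ becomes $\Gamma$-linear. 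I would try: $\Hom(L,-)$ applied to a complex of the form $P_\bullet\otimes_R N$, with $P_\bullet$ a free $R$-resolution of $M$ in $\Ind\cC$.

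Cleanest, if it works, is to apply $\uHom(X,-)$. For $X\in\cC$ this is exact, and $\Gamma\circ\uHom(X,-)=\Hom(X,-)$. Set $K=\ker(N'\otimes_RM\to N\otimes_RM)$. It suffices to show that $\Hom(X,K)=0$ for all $X\in\cC$. Since $\Hom(X,-\otimes_RM)\cong\Gamma\big(({}^*X\otimes-)\otimes_RM\big)$, this reduces to showing that $\Gamma$ commutes with $-\otimes_RM$ in a suitable sense. That commutation is exactly where I expect the real work to lie, and where $\bk=\bar\bk$ and the structure of $\cC$ enter.
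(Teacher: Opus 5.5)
\paragraph{There is a genuine gap.} Your Step~1 is correct, and it is exactly the paper's final line: for an $R$-module $V$ in $\VEC$ and $X\in\cC$ one has $\Tor_1^R(V\otimes X,M)\cong X\otimes\Tor_1^R(V,M)=0$. The reduction to finitely generated $N$ is also fine. But the proposal never connects a general finitely generated $N$ to modules of the form $V\otimes X$, and you flag this yourself:
\begin{itemize}
\item Step~3 stops at ``induction on the length of $Y$'' without saying what the length-one pieces look like, so the induction has no base case.
\item The alternative route ends by restating the goal $\Tor_1^R(N,M)=0$.
\item The $\uHom(X,-)$ route needs a commutation of $\Gamma$ with $-\otimes_RM$ that cannot hold in any form reducing to ordinary modules. $\Gamma$ is only left exact, and for simple $L\neq\unit$ one has $\Gamma(R\otimes L)=0=\Gamma(R\otimes L^\ast)$, while $\Gamma\big((R\otimes L)\otimes_R(R\otimes L^\ast)\big)\cong R$.
\end{itemize}
So the gap sits at the one step that carries content.

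\paragraph{The missing idea.} It is short.
\begin{itemize}
\item Write $N$ as a quotient of $R\otimes X$ with $X\in\cC$, and take a composition series $X_\bullet$ of $X$. The images of the $R\otimes X_i$ filter $N$, and the layers are quotients of $R\otimes L$ with $L\in\cC$ simple.
\item By the long exact sequence for $\Tor^R_\ast(-,M)$, it suffices to treat these layers. This $\Tor$ is defined using the flat modules $R\otimes Y$, and this is how your ``five lemma'' step should be run, rather than on injectivity of $N'\otimes_RM\to N\otimes_RM$.
\item $R\otimes L$ is a direct sum of copies of $L$. Since $L$ is compact with $\End(L)=\bk$ (this is where algebraic closedness of $\bk$ enters), we get $\Hom(L,R\otimes L)\cong R$.
\item Every subobject $S\subseteq R\otimes L$ is semisimple and $L$-isotypic, hence is the image of $\Hom(L,S)\otimes L$. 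So $S=W\otimes L$ for a subspace $W\subseteq R$, and $S$ is an $R$-submodule exactly when $W$ is an ideal.
\item Hence every quotient of $R\otimes L$ is of the form $(R/I)\otimes L$, and your Step~1 finishes the proof.
\end{itemize}
No socle filtration of $N$ and no multiplicity spaces $\Hom(L,N)$ are needed, because the filtration lives on the generating object $X$, not on $N$.
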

\begin{proof}
    One direction of the claim is trivial. Assume that $-\otimes_RM$ is exact when evaluated on ordinary $R$-modules. We need to prove that $\Tor_1^R(-,M)=0$ as a endofunctor on $\Mod_R\cC$. It suffices to prove vanishing on finitely generated modules so that one verifies it is sufficient to prove vanishing on quotients of $R\otimes V$ for simple $V\in\cC$. Since $\End(V)=\bk$ it follows quickly that the only such quotients are of the form $N\otimes V$ for a quotient $N$ of $R$. But now 
    \[\Tor_1^R(N\otimes V,M)\;\cong\;V\otimes \Tor_1^R(N,M)\;=\;0,\]
    which concludes the proof.
\end{proof}

\begin{cor}\label{lem:Salpha}Assume that $\bk$ is algebraically closed.
    Let $S$ be a $\bk$-algebra considered as a direct limit $S=\varinjlim_\alpha S_\alpha$ of $\bk$-algebras. If for a finitely presented $S_\alpha$-module $M_\alpha$ in $\Ind\cC$, the $S$-module $M:=S\otimes_{S_\alpha}M_\alpha$ is flat, then $S_\beta\otimes_{S_\alpha}M_\alpha$ is flat over $S_\beta$ for some $\beta>\alpha$. 
\end{cor}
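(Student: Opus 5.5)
We reduce to ordinary modules via Lemma~\ref{lem flat ordinary module} and then use the standard fact that a finitely presented module which becomes flat after a directed colimit of base changes is already flat at some finite stage. For $\beta\ge\alpha$ write $M_\beta:=S_\beta\otimes_{S_\alpha}M_\alpha$, so that $M\cong S\otimes_{S_\beta}M_\beta\cong\varinjlim_\beta M_\beta$.

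\textbf{Step 1: present $M_\alpha$ by objects of $\cC$.} Since $M_\alpha$ is finitely presented, there is an exact sequence
\[S_\alpha\otimes X_1\xrightarrow{\;\phi\;} S_\alpha\otimes X_0\to M_\alpha\to 0\qquad (X_0,X_1\in\cC).\]
By right exactness of base change, for every $\beta$ we get $M_\beta=\mathrm{coker}(S_\beta\otimes X_1\to S_\beta\otimes X_0)$. The same holds for $M$ over $S$.

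\textbf{Step 2: reduce to a $\Tor$-vanishing statement.} By Lemma~\ref{lem flat ordinary module}, $M_\beta$ is flat over $S_\beta$ if and only if $\Tor_1^{S_\beta}(N,M_\beta)=0$ for every ordinary $S_\beta$-module $N$. By the usual reduction, it suffices to take $N=S_\beta/J$ with $J$ a finitely generated ideal. Flatness is then the statement that the map
\[J\otimes_{S_\beta}M_\beta\to M_\beta\]
is injective for each finitely generated ideal $J\subseteq S_\beta$.

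\textbf{Step 3: the equational criterion.} I would rather transport the classical equational criterion for flatness of a finitely presented module: $M$ is flat if and only if it is a direct summand of a free module locally in the appropriate sense. Concretely, since $M$ is finitely presented and flat over $S$, I aim to show that $M$ is projective relative to the structure, namely that the identity of $M$ factors through $S\otimes X_0$ (a free module).

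To do this, apply $\uHom_S(M,-)$ to the presentation. Equivalently, use the standard argument that a finitely presented flat module has the property that every map from a finitely presented module $P$ into $M$ factors through a free module $S\otimes Y$ with $Y\in\cC$. The analogue of Lazard's theorem in $\Ind\cC$ holds because $\Mod_S\cC$ is locally finitely presented, with generators $S\otimes X$ for $X\in\cC$. Applying this to $\mathrm{id}_M$, we obtain $\mathrm{id}_M=g\circ f$ with $f:M\to S\otimes Y$ and $g:S\otimes Y\to M$.

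\textbf{Step 4: descend to a finite stage.} The maps $f$ and $g$, and the relation $g\circ f=\mathrm{id}$ checked on the presentation, involve only finitely many morphisms between objects of $\cC$ tensored with $S$. Such morphisms are elements of $\Hom_{\cC}(X,S\otimes Z)=\varinjlim_\beta\Hom(X,S_\beta\otimes Z)$, because objects of $\cC$ are compact in $\Ind\cC$. Hence $f$, $g$ and the identity $g\circ f=\mathrm{id}$ are all defined over some $S_\beta$. Then $M_\beta$ is a direct summand of $S_\beta\otimes Y$. Since $S_\beta\otimes Y$ is flat (as $Y$ is rigid), $M_\beta$ is flat.

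\textbf{Main obstacle.} The main obstacle is Step 3, the Lazard-type factorization in $\Mod_S\cC$. I would prove it by writing $M$ as a filtered colimit of modules $S\otimes Y_i$, using flatness to pass through kernels as in the classical proof. The hypothesis that $\bk$ is algebraically closed, used via Lemma~\ref{lem flat ordinary module}, is what allows the reduction to testing flatness on ordinary ideals.
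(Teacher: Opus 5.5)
Your proposal has a genuine gap: the Lazard-type factorization in Step~3 is false in $\Mod_S\cC$, even when $S$ is an ordinary $\bk$-algebra and $\bk$ is algebraically closed.

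\textbf{Why the classical argument does not transfer.} Local finite presentability of $\Mod_S\cC$ is not enough. The classical proof rests on the fact that an element of $\Hom(P,S)\otimes_S M$ is a finite sum of pure tensors. In $\Ind\cC$ the analogous statement is that a morphism $\unit\to\uHom_S(P,S)\otimes_S M$ lifts along the epimorphism from $\uHom_S(P,S)\otimes M$. This fails because $\unit$ is not projective.

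\textbf{A counterexample in exactly your setting.} Take $\bk=\mathbb{C}$ and $\cC=\Rep(\mathbb{G}_a\times\mathbb{G}_a)$, i.e.\ finite-dimensional $\mathbb{C}[x,y]$-modules with $x,y$ nilpotent. Let $S=K=\mathbb{C}(t)$, written as a union of localisations $S_\alpha$ of $\mathbb{C}[t]$. Let $M=Ke_1\oplus Ke_2$ with $xe_1=e_2$, $ye_1=te_2$ and $xe_2=ye_2=0$.
\begin{itemize}
\item $M$ lies in $\Ind\cC$, is finitely presented over $K$, and is the base change of the analogous module $M_\alpha$ over $S_\alpha$.
\item $M$ is flat, since $K$ is absolutely flat by Proposition~\ref{prop:FieldExt}.
\item Yet $M$ is not a direct summand of any $K\otimes Y$ with $Y\in\cC$. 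Write $K\otimes Y=\bigoplus_i K\otimes Y_i$ with $Y_i$ indecomposable. Then $\End(K\otimes Y_i)=K\otimes_{\mathbb{C}}\End(Y_i)$ is local. By Krull--Schmidt, $M$ would have to be isomorphic to some $K\otimes Y_i$, with $Y_i$ a two-dimensional module on which $x=aE_{21}$ and $y=bE_{21}$.
\item The point $(x:y)\in\mathbb{P}^1(K)$ of the induced maps from top to socle is an isomorphism invariant. It equals $(1:t)$ for $M$ and $(a:b)\in\mathbb{P}^1(\mathbb{C})$ for $K\otimes Y_i$, which is impossible.
\end{itemize}
So no $f,g$ with $g\circ f=\mathrm{id}_M$ exist, already over $S$, and Step~4 has nothing to descend. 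Your Step~2 reduction to ordinary modules is never actually used afterwards.

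\textbf{The missing idea.} You need to leave $\Ind\cC$ before splitting, which is what the paper does. Choose a $\bk$-linear, exact, faithful, cocontinuous (non-monoidal) functor $\omega:\Ind\cC\to\VEC$.
\begin{itemize}
\item Since $\omega$ commutes with $N\otimes_S-$ for ordinary modules $N$, $\omega(M_\alpha)$ is a finitely presented $S_\alpha$-module in the classical sense, and $\omega(M)=S\otimes_{S_\alpha}\omega(M_\alpha)$ is flat over $S$.
\item The classical result (EGA~IV, 11.2.6.1) then gives some $\beta$ with $\omega(M_\beta)$ flat over $S_\beta$. This is precisely your Steps~3--4 carried out for ordinary modules, where finitely presented flat means projective.
\item Exactness and faithfulness of $\omega$ make $M_\beta$ flat with respect to ordinary $S_\beta$-modules.
\item Lemma~\ref{lem flat ordinary module} upgrades this to flatness in $\Mod_{S_\beta}\cC$.
\end{itemize}
The splitting $\omega(M)\to S^n$ you were after does exist, but only $S$-linearly, not as a morphism in $\Ind\cC$.
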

\begin{proof}
    By Lemma~\ref{lem flat ordinary module}, it suffices to show that $S_\beta\otimes_{S_\alpha}M_\alpha$ is flat `as an ordinary $S_\beta$-module'. By the latter we mean that we consider a $\bk$-linear exact cocontinuous faithful (non-monoidal) functor $\omega:\Ind\cC\to\VEC$, see \cite[Theorem~1.9.15]{EGNO}, and it suffices to show that $\omega(S_\beta\otimes_{S_\alpha}M_\alpha)\cong S_\beta\otimes_{S_\alpha}\omega(M_\alpha)$ is flat.  The latter is a classical fact, see \cite[11.2.6.1(ii)]{EGA}.
\end{proof}


\section{Semisimple algebras in tensor categories}\label{sec:genstuff}

\subsection{Classical ring theory}

Let $R$ be a ring (unital, not necessarily commutative). Its Jacobson radical $\JJ(R)$ is the intersection of the annihilator ideals of all simple left (equivalently right) $R$-modules, which is also equal to the intersection of all maximal left (equivalently right) ideals. We define a ring to be semisimple if it is artinian and has trivial Jacobson radical. The following theorem is then standard, for instance mostly contained in \cite{Lam}.

\begin{prop}\label{thm:ring}
    The following conditions are equivalent on an artinian ring:
    \begin{enumerate}
    \item  $R$ is absolutely flat, i.e. $\Tor_1^R=0$;
    \item $\Ext^1_R(M,-)=0$, for every finitely presented left $R$-module $M$;
    \item $R$ is a product of simple algebras; 
        \item $R$ is semisimple (i.e. $\JJ(R)=0$);
        \item the only nilpotent two-sided ideal in $R$ is $0$.
\end{enumerate}
If $R$ is commutative, then these are further equivalent to
        \begin{enumerate}
        \item[(1*)] the monoidal category of finitely presented $R$-modules is rigid.
        \end{enumerate}
\end{prop}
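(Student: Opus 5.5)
The plan is to arrange the equivalences as a cycle through the classical Artin--Wedderburn picture, citing \cite{Lam} for the standard steps. Throughout, $R$ is artinian (left artinian, say), so $\JJ(R)$ is nilpotent and $R/\JJ(R)$ is semisimple.

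\textbf{(4)$\Leftrightarrow$(5)$\Leftrightarrow$(3).} Since $\JJ(R)$ is nilpotent, (5) implies (4). Conversely, every nilpotent two-sided ideal is contained in $\JJ(R)$, so (4) implies (5). If $\JJ(R)=0$, then $R$ is semisimple as a left module over itself. Artin--Wedderburn then writes $R$ as a finite product of matrix rings over division rings, which are simple artinian; this gives (3). Conversely, a product of simple artinian rings is a product of matrix rings over division rings, each semisimple as a module, so $\JJ=0$.

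\textbf{(3)$\Rightarrow$(1),(2).} If $R$ is semisimple, every left module is a direct sum of simples, hence projective. Therefore $\Ext^1_R(M,-)=0$ for all $M$, giving (2). Every module is also flat, so $\Tor^R_1=0$, giving (1).

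\textbf{(1)$\Rightarrow$(4) and (2)$\Rightarrow$(4).} Consider the simple modules $R/\m$ for maximal left ideals $\m$; these are finitely presented, since $R$ is noetherian by Hopkins--Levitzki. Under (2), $\Ext^1_R(R/\m,\m)=0$, so $0\to \m\to R\to R/\m\to 0$ splits. Hence every maximal left ideal is a direct summand, and every simple module is projective. Since $R$ is artinian, the semisimple module $R/\JJ(R)$ is a finite direct sum of simples, hence projective. So $R\to R/\JJ(R)$ splits, giving $R\cong \JJ(R)\oplus L$ as left modules with $L\cong R/\JJ(R)$. Then $\JJ(R)=\JJ(R)\cdot R$ is a direct summand $Re$ with $e$ an idempotent in $\JJ(R)$, which forces $e=0$. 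Under (1), a finitely presented flat module is projective, so again each $R/\m$ is projective and the same argument applies. (Here $\Tor$ vanishing for left modules is used for right-module inputs; if needed I would use that absolute flatness is left--right symmetric via von Neumann regularity.)

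\textbf{Commutative case, (1*).} If $R$ is semisimple commutative, it is a finite product of fields. Finitely presented modules are then tuples of finite-dimensional vector spaces, and this category is rigid. Conversely, suppose finitely presented modules form a rigid category. Every $M$ is then a direct summand of $M\otimes M^\ast\otimes M$, and the functor $-\otimes_R M$ has an exact adjoint. Hence $-\otimes_RM$ is exact, so $M$ is flat. Applying this to $R/\m$ shows each simple module is flat and finitely presented, hence projective, and (4) follows as above.

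\textbf{Main obstacle.} I expect the main obstacle to be this last step: checking that rigidity gives exactness of $-\otimes_R M$ on all modules, not only on finitely presented ones. The approach would be to use that any $M$ with a dual makes $M\otimes_R-$ both a left and a right adjoint (to $M^\ast\otimes_R-$), and to extend to all modules by filtered colimits.
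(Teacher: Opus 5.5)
Your proposal is correct and is the standard Artin--Wedderburn and Hopkins--Levitzki argument; the paper does not write this proof out and simply attributes the result to classical ring theory (\cite{Lam}). The obstacle you flag at the end is not a real one: the evaluation and coevaluation of a dual in the category of finitely presented modules also satisfy the zigzag identities in the category of all $R$-modules, so $-\otimes_R M$ is both a left and a right adjoint there, hence exact, and $M$ is flat.
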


\begin{remark}
\begin{enumerate}
\item We could replace condition (2) with the condition that the bifunctor $\Ext^1_R$ is identically zero on the category of left $R$-modules, but for our purposes the current reading is more meaningful, due to Remark~\ref{rem:uExt}. 
    \item Since $R$ is artinian, a module is finitely presented if and only if it is finitely generated if and only if it has finite length.
    \item Absolutely flat rings are more commonly known as Von Neumann regular rings, but their property of being absolutely flat will be most important to us. 
\end{enumerate}
    
\end{remark}

Based on our interest in absolutely flat algebras in tensor categories, the following ring-theoretic observation seems worth pointing out.
\begin{lemma}\label{prop:exam-ring}
Examples of the following types of rings (even $\mC$-algebras) exist:
\begin{enumerate}
    \item a simple (non-artinian) ring that is {\bf not} absolutely flat;
\item a {\bf non}-artinian simple ring that is absolutely flat.
\end{enumerate}
\end{lemma}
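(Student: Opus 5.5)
We will exhibit explicit examples of each type, both $\mC$-algebras.

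For (1) we take the first Weyl algebra $R=A_1(\mC)=\mC\langle x,\partial\rangle/(\partial x-x\partial-1)$. Simplicity in characteristic zero is classical: any nonzero two-sided ideal contains an element $f$ of minimal total degree. Taking commutators $[\partial,f]$ and $[x,f]$ lowers the degree in $x$ or $\partial$, so the ideal contains a nonzero scalar. To see that $R$ is not absolutely flat, we use that a von Neumann regular ring which is a domain is a division ring. Indeed, $a=axa$ gives $a(1-xa)=0$, hence $xa=1$ whenever $a\neq 0$. But $R$ is a domain (the associated graded ring is $\mC[x,\partial]$) and is not a division ring, since $x$ has no inverse for degree reasons. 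So $\Tor_1^R\neq 0$; explicitly, $\Tor_1^R(R/xR,R/Rx)\neq0$ because $x$ is not regular. $R$ is also non-artinian, as $R\supsetneq Rx\supsetneq Rx^2\supsetneq\cdots$.

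For (2) we take a directed union of matrix algebras, $R=\varinjlim_n M_{2^n}(\mC)$, with transition maps $a\mapsto \begin{pmatrix} a&0\\0&a\end{pmatrix}$. Each step is as follows.

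First, $R$ is simple. A nonzero two-sided ideal $I$ meets some $M_{2^n}(\mC)$ nontrivially. Then $I\cap M_{2^n}(\mC)$ is a nonzero ideal of a simple ring, so it contains $1$.

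Second, $R$ is absolutely flat. Each $M_{2^n}(\mC)$ is semisimple, hence von Neumann regular, and regularity ($a=aba$) is witnessed inside a finite stage. Equivalently, $\Tor$ commutes with filtered colimits of rings, since $R$ is flat over each stage.

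Third, $R$ is not artinian. Take idempotents $e_n\in M_{2^n}(\mC)$ of normalized rank $2^{-n}$, for instance $e_n=E_{11}$ at stage $n$ viewed in $R$. These give a strictly decreasing chain of left ideals $Re_1\supsetneq Re_2\supsetneq\cdots$. Strictness follows from the rank functions, which are compatible with the embeddings; one can also use $e_{n+1}=e_{n+1}e_n$ together with $e_n\notin Re_{n+1}$.

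The main (mild) obstacle is the Weyl algebra simplicity argument in (1). It is standard, so we may simply cite it, e.g. from \cite{Lam}.
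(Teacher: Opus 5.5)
Your proposal is correct. Part (1) follows the paper. For part (2) you use a genuinely different example.

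\textbf{Part (1).} You use the same Weyl algebra as the paper, and absolute flatness fails for the same reason: $x$ is not von Neumann regular. The paper observes $x\neq xax$ and cites the characterisation of regular rings in \cite{Lam}. Your ``regular domain is a division ring'' is the same point. Your explicit computation $\Tor_1^R(R/xR,R/Rx)\cong (xR\cap Rx)/xRx\neq 0$ even makes the link to $\Tor$ self-contained.

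\textbf{Part (2).} The paper takes $B=\End_{\mC}(V)/J$, with $V$ countable-dimensional and $J$ the finite-rank endomorphisms. Its proof runs as follows.
\begin{itemize}
\item Regularity of $\End_{\mC}(V)$ comes from \cite[Proposition~4.27]{Lam} and passes to the quotient $B$.
\item Simplicity of $B$ rests on the classical fact, not proved in the paper, that $J$ is the only nonzero proper ideal of $\End_{\mC}(V)$.
\item That $B$ is not artinian is simply asserted.
\end{itemize}
This route is quick given the citations and produces a natural ``Calkin-type'' example.

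Your $\varinjlim M_{2^n}(\mC)$ with unital diagonal embeddings makes all three properties checkable by hand.
\begin{itemize}
\item Simplicity and regularity are both witnessed at a finite stage, because the transition maps are unital.
\item The strictly decreasing chain $Re_1\supsetneq Re_2\supsetneq\cdots$ is explicit: $e_{n+1}=e_{n+1}e_n$ gives the inclusions, and $e_n\notin Re_{n+1}$ gives strictness.
\end{itemize}
It also yields a countable-dimensional example.

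\textbf{One correction.} In your alternative remark that $\Tor$ commutes with filtered colimits of rings, the relevant input is not that $R$ is flat over each stage. What you need is that every $R$-module is flat over each semisimple stage $M_{2^n}(\mC)$. Combine this with $M\otimes_R N\cong\varinjlim_n M\otimes_{M_{2^n}(\mC)}N$ and exactness of filtered colimits. Your primary, elementwise argument does not depend on this remark.
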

\begin{proof}
    
    The Weyl algebra $A$, freely generated over $\mC$ by $x,y$ with $xy-yx=1$ is simple but not absolutely flat. Indeed, we can use the observation that $x\not=xax$ for all $a\in A$, and use \cite[Theorem~4.23]{Lam}.

    Next we define $B$ as the quotient of the endomorphism algebra $B':=\End_{\mC}(V)$, for a countable dimensional vector space $V:= \mC^{\mN}$, by the ideal $J<B'$ that consists of those endomorphisms that have finite-dimensional image. Then $B'$ is absolutely flat by \cite[Proposition~4.27]{Lam}, so $B=B'/J$ is also absolutely flat. Finally, $B$ is not artinian but is simple. 
\end{proof}

\subsection{Tensor categories}

Let $\cC$ be an artinian tensor category over $\bk$ and $A$ an ind-algebra in $\cC$.

\begin{definition}
    The Jacobson radical $\JJ(A)$ of $A$ is the intersection of all annihilator ideals of simple left $A$-modules.
\end{definition}

Before moving on to artinian and semisimple algebras, we discuss some of the alternative definitions that happen to be equivalent in classical ring theory.

\begin{question}\label{q:J}
    Is $\JJ(A)$ equal to the correspondingly defined ideal for right simple modules? 
\end{question}
The answer to Question~\ref{q:J} is affirmative for $A\in\cC$, because then we can establish an annihilator-preserving bijection between simple left and right modules, by applying the monoidal dualities.

\begin{question}
    Under which conditions is the Jacobson radical equal to the intersection of maximal left or right ideals?
\end{question}

We will see in Corollary~\ref{cor:NakSS} that we get this equality in a large number of \emph{commutative} cases. However, it is not always the case:
\begin{example}
    If $\cC$ is not semisimple, then there are always examples of algebras $A\in\cC$ where $\JJ(A)$ is strictly smaller than the intersection of all maximal left ideals in $A$. Moreover, the latter is not necessarily a two-sided ideal. Consider an indecomposable object $X\in\cC$ of length $2$. Then the `matrix algebra' $A:=\uEnd(X)= {^*}X\otimes X$ is simple (as is any internal endomorphism algebra of a compact object), so $\JJ(A)=0$, but $A$ has a unique maximal left ideal ${^*}X\otimes S$, with $S$ the socle of $X$. These claims follow from \cite[Example~7.10.2]{EGNO}.
\end{example}

\begin{remark}
For $\cC$ a finite tensor category and $A\in\cC$, the definition of $\JJ(A)$ agrees with the radical $\mathrm{Rad}^{\cC}(A)$ defined in \cite{CSZ}, as follows for instance from the observation in \cite[Proposition~6.12]{CSZ} that $\mathrm{Rad}^{\cC}(A)$ is the maximal nilpotent ideal and Lemma~\ref{lem:Jart} below.

\end{remark}

\begin{definition}
  The algebra  $A$ is {\bf left} (resp. {\bf right}) {\bf artinian} if for every $X\in\cC$ the $A$-module $A\otimes X$ (resp. $X\otimes A$) is of finite length. $A$ is {\bf artinian} if it is both left and right artinian. 
\end{definition}

\begin{remark}
    If $A$ is left artinian, then the category of finitely presented left $A$-modules equals the category of finitely generated left $A$-modules and the category finite-length left $A$-modules. In particular, it is abelian. The same holds for right artinian algebras and right modules.
\end{remark}

\begin{lemma}\label{lem:Jart}
    If $A$ is artinian (or more generally satisfies the descending chain condition on two-sided ideals), then $\JJ(A)$ is the maximal nilpotent two-sided ideal in $A$.
\end{lemma}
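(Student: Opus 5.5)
We prove two inclusions: every nilpotent two-sided ideal lies in $\JJ(A)$, and, under the descending chain condition on two-sided ideals, $\JJ(A)$ is itself nilpotent.

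\emph{Nilpotent ideals lie in $\JJ(A)$.} Let $I\sub A$ be a two-sided ideal with $I^n=0$, and let $M$ be a simple left $A$-module in $\Ind\cC$. The image $IM$ of $I\otimes_A M\to M$ is an $A$-submodule of $M$, since $I$ is a left ideal. By simplicity, either $IM=0$ or $IM=M$. In the second case $M=I^nM=0$, a contradiction. Hence $I$ annihilates every simple module, so $I\sub\JJ(A)$. This already shows that any maximal nilpotent ideal is contained in $\JJ(A)$.

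\emph{$\JJ(A)$ is nilpotent.} First, $\JJ(A)$ is a two-sided ideal, being an intersection of annihilator ideals $\mathrm{Ann}(M)$. Each $\mathrm{Ann}(M)$ is the kernel of $A\to\uHom(M,M)$ and is two-sided. Put $J=\JJ(A)$. The powers $J\supseteq J^2\supseteq\cdots$ are two-sided ideals, so by the chain condition they stabilise: $J^k=J^{k+1}=:N$, and hence $JN=N$. We must show $N=0$.

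The key input replacing Nakayama is the following claim: \emph{every nonzero left $A$-module $L$ of the form $A\otimes X$ or a quotient of one with $X\in\cC$ (i.e.\ finitely generated) has a simple quotient.} Given this, suppose $N\neq0$. Choose $X\in\cC$ and a morphism $X\to N$ such that the composite $A\otimes X\to N$ has nonzero image $L\sub N$. This is possible because $N$ is a filtered union of images of compact objects. The aim is then to get a contradiction from $JN=N$, using $JL'=0$ for every simple quotient $L'$.

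The cleanest route is to apply the chain condition a second time. Among nonzero two-sided ideals of the form $N'\sub N$ with $JN'=N'$, there is the issue that a minimal such ideal need not exist under DCC on two-sided ideals alone. I would therefore instead use the standard ring-theoretic argument: consider the family of two-sided ideals $K\sub N$ with $NK\neq0$. This family contains $N$, since $N^2=J^{2k}=N\neq0$. Pick a minimal member $K$ by DCC. Then $NK$ is a two-sided ideal in $K$ with $N(NK)=N^2K=NK\neq0$, so minimality gives $NK=K$. Next find a compact $X\to K$ with $N\cdot\mathrm{im}(X)\neq0$. The two-sided ideal $K_0$ generated by $\mathrm{im}(X)$ lies in the family, hence equals $K$. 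So $K$ is generated as a two-sided ideal by a compact subobject, i.e.\ it is a quotient of $A\otimes X\otimes A$.

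To conclude, I would view $K$ as a left module over $A\otimes A^{op}$ in $\Ind\cC$, using the braiding or left/right actions as available. Over this algebra $K$ is finitely generated, so by Zorn's lemma, applied to subbimodules not containing the image of the compact generator, it has a maximal proper subbimodule and hence a simple bimodule quotient $S$. Then I want $JS=0$, contradicting $K=NK=J^kK$, which would force $S=JS$. The main obstacle is precisely this last point: $J$ is defined through simple \emph{left} modules, and one must show it kills simple bimodule quotients. I would handle this by noting that the left $A$-module $S$ is generated by a compact object, so it has a simple left quotient $S\onto T$ with $JT=0$. Then $JS$ is a proper subbimodule, since $JS\to T$ is zero. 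It is therefore $0$ by simplicity of $S$, giving the required contradiction. Hence $N=0$, so $J$ is nilpotent, and $J$ is the maximal nilpotent two-sided ideal.
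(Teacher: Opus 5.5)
Your first part (nilpotent ideals lie in $\JJ(A)$) is fine, and so is the reduction to a two-sided ideal $K=NK$ generated by a compact subobject, with a simple bimodule quotient $S$ satisfying $S=JS$. The argument breaks at the final step, where you assert that $S$ is generated by a compact object \emph{as a left $A$-module} and therefore has a simple left quotient. You only know that $S$ is a quotient of the bimodule $A\otimes X\otimes A$. As a left module this is $A\otimes(X\otimes A)$, and $X\otimes A$ is merely an ind-object, so it gives no left finite generation. The descending chain condition on two-sided ideals says nothing about left submodules.

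In fact this gap cannot be closed under your hypothesis. Take $\cC=\Vec$ over a suitable field $\bk$, and let $A=\bk\oplus J$ be the unitalisation of a simple $\bk$-algebra $J$ that is Jacobson radical. Such algebras exist: the classical example of Sasiada and Cohn, or Smoktunowicz's simple nil algebras. Then:
\begin{itemize}
\item the two-sided ideals of $A$ are $0$, $J$ and $A$;
\item $\JJ(A)=J$, and $J^2=J\neq 0$.
\end{itemize}
So $A$ satisfies DCC on two-sided ideals, yet $\JJ(A)$ is not nilpotent. Your construction gives $N=K=S=J$. This is a simple bimodule with $JS=S$, and by Nakayama it has no simple left quotient. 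Hence the parenthetical ``DCC on two-sided ideals'' version of the lemma is false, and any correct proof must use the artinian hypothesis, which your argument never invokes.

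With that hypothesis the repair is short, and it makes the minimal-$K$ and bimodule-quotient detour unnecessary; this is the paper's argument. Since $J$ annihilates every simple left module, $J^n$ annihilates every left module of length at most $n$. So the stable power $N=J^k$ annihilates every left module of finite length. For artinian $A$, each left submodule $AY$ with $Y\subseteq A$ compact is a quotient of $A\otimes Y$, hence of finite length. Since $A$ is the directed union of these, $NA=0$ and so $N=0$. Equivalently, in your setup, $S$ is a directed union of images of the finite-length modules $K\cap AY$, so $NS=0$, contradicting $S=NS\neq 0$. The paper's proof uses exactly this finite-length input at the step ``and thus on $A$ itself''. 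So it, too, only covers the artinian case, despite the parenthetical in the statement.
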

\begin{proof}
    We always have $I<\JJ(A)$ for any nilpotent two-sided ideal $I$, so
    it suffices to show that under our assumptions  $\JJ(A)$ is nilpotent. Indeed, $\JJ(A)^i$ acts trivially on any left $A$-module of length at most $i$. We have $\JJ(A)^{i+1}=\JJ(A)^{i}$ for some $i$. It then follows that $\JJ(A)^i$ acts trivially on all left $A$-modules of finite length, and thus on $A$ itself, implying $\JJ(A)^i=0$. 
\end{proof}

\begin{remark}
    If $\cC$ is finite, then $A$ is left artinian if and only if $A\otimes P$ satisfies the descending chain condition (DCC) on submodules, for every projective $P\in\cC$, as follows from \cite{N}. For general~$\cC$ it does not seem obvious that $A$ must be left artinian if $A\otimes X$ satisfies the descending chain condition for every $X\in\cC$.  However, we show in Section \ref{section artinian comm algs GR MN} that for a commutative algebra $A$ in well-fibered pretannakian categories, $A$ is left artinian if and only if it satisfies DCC on ideals of $A$.
\end{remark}

\begin{remark}
    One could call $A$ {\bf bi-artinian}
    if the $A$-bimodule $A\otimes X\otimes A$ is of finite length for every $X\in\cC$. It can be easily verified that a bi-artinian algebra is artinian. Moreover, in ring theory, so in particular for $\cC=\Vec$, both notions coincide. This is not the case for general artinian tensor categories:

    Consider $\cC=\Rep\mG_a$ the rational representation category of the additive group of the complex numbers. Then the coordinate algebra $A:=\mC[\mG_a]$ is artinian as an algebra in $\Ind\cC$, but not bi-artinian, as the $A$-bimodule $A\otimes A$ has infinite length.
\end{remark}

\begin{definition}
    The algebra $A$ is {\bf semisimple} if it is artinian and $\JJ(A)=0$.
\end{definition}

It follows from Theorem~\ref{prop:consolidate} below that we can replace $\JJ(A)$ in this definition by its right module version.

\begin{lemma}
    The following conditions are equivalent on an ind-algebra $A$ in $\cC$:
    \begin{enumerate}
        \item every left $A$-module is flat;
        \item every right $A$-module is flat.
    \end{enumerate}
    If these conditions are satisfied, we call $A$ {\bf absolutely flat}.
\end{lemma}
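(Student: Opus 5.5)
Both conditions should be equivalent to a single, side-symmetric statement: the two-variable Tor functor $\Tor_1^A(M,N)$ vanishes for every right $A$-module $M$ and every left $A$-module $N$. The plan is to define $\Tor^A_i(M,N)$ in $\Ind\cC$ as the homology of $M\otimes_A P_\bullet$ for a resolution $P_\bullet\to N$ by left $A$-modules of the form $A\otimes X$ with $X\in\Ind\cC$. Such free modules always exist, since $A\otimes N\to N$ is an epimorphism. Then I show the result is balanced, i.e. it can also be computed from a resolution of $M$ by right modules $X\otimes A$. With that in hand, (1) says $\Tor_1^A(M,N)=0$ for all $M,N$ computed in one variable, (2) says the same computed in the other, and balancedness finishes the proof.

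\emph{Step 1: free modules are flat.} I check that $A\otimes X$ is flat as a left module and $X\otimes A$ as a right module. For a right module $M$ we have $M\otimes_A(A\otimes X)\cong M\otimes X$ via the coequaliser defining $\otimes_A$. Since $\cC$ is rigid, $-\otimes X$ is exact on $\Ind\cC$ for $X\in\cC$. For general $X\in\Ind\cC$ it is a filtered colimit of exact functors, and filtered colimits are exact in the Grothendieck category $\Ind\cC$. The right-module case is symmetric.

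\emph{Step 2: balancedness.} Take free resolutions $Q_\bullet\to M$ and $P_\bullet\to N$ as above, and form the double complex $Q_\bullet\otimes_A P_\bullet$. By Step 1, each column $Q_p\otimes_A P_\bullet$ is a resolution of $Q_p\otimes_A N$, and each row $Q_\bullet\otimes_A P_q$ is a resolution of $M\otimes_A P_q$. The usual two spectral sequence (or acyclic assembly) argument, valid in any abelian category, then identifies the homology of $M\otimes_A P_\bullet$ with that of $Q_\bullet\otimes_A N$. Along the way I check that $\Tor_i$ is independent of the chosen resolution and that it yields the long exact sequence in each variable; the standard comparison arguments work because free modules have the needed lifting property only up to homotopy over the free resolution, so I would instead derive independence directly from the double complex argument.

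\emph{Step 3: conclusion.} Suppose every left module is flat. Then for any short exact sequence of right modules $0\to M'\to M\to M''\to0$ and any left module $N$, exactness of $-\otimes_A N$ gives, via the long exact sequence computed with a resolution of $N$, that $\Tor_1^A(M'',N)=0$. Hence $\Tor_1^A\equiv 0$. By balancedness, for every right module $M$ the functor $M\otimes_A-$ has vanishing $\Tor_1$ computed from resolutions of $M$. The long exact sequence in the second variable then shows $M\otimes_A-$ is exact, so every right module is flat. The converse is symmetric.

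The main obstacle is Step 2. Free modules $A\otimes X$ need not be projective in $\Mod_A\cC$, because $X$ need not be projective in $\Ind\cC$. So I must work with flat rather than projective resolutions throughout, making sure that only exactness of $-\otimes_A$(free) and the double complex are used, never lifting properties.
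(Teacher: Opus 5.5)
Your proposal is correct and follows the paper's argument. The paper only remarks that the flat modules $A\otimes X$ and $X\otimes A$ suffice to set up a Tor theory, and that both conditions are then equivalent to $\Tor_1^A\equiv 0$ by a standard argument; your Steps 1--3 spell this out. Two small remarks on the details:
\begin{enumerate}
\item In Step 3 the short exact sequence $0\to M'\to M\to M''\to 0$ must be chosen with $M$ free, so that $\Tor_1^A(M,N)=0$. For an arbitrary $M$ you only get that $\Tor_1^A(M,N)\to\Tor_1^A(M'',N)$ is surjective.
\item The balancedness of Step 2 is not actually needed. Take a resolution $Q_\bullet\to M$ by free right modules and a short exact sequence $0\to N'\to N\to N''\to 0$ of left modules. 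The long exact sequence shows that $M\otimes_A N'\to M\otimes_A N$ is a monomorphism as soon as $H_1(Q_\bullet\otimes_A N'')=0$, and this vanishing is immediate from the flatness of $N''$.
\end{enumerate}
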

\begin{proof}
    There are enough flat modules, namely $A\otimes X$ and $X\otimes A$ for $X\in\cC$, to establish a Tor-theory. It then follows that both conditions are equivalent to $\Tor_1^A\equiv0$, by a standard argument.
\end{proof}

\begin{remark}
 The example $A=\uEnd(X)$, for compact $X\in\cC$ also shows that for semisimple algebras $A$, the $\bk$-algebra $\Gamma(A)=\End(X)$ need not be semisimple (contrary to the case where $\cC$ is symmetric and $A$ commutative). 
\end{remark}

\begin{thm}\label{prop:consolidate}
    For an artinian ind-algebra $A$, consider the properties:
    \begin{enumerate}
    \item $A$ is absolutely flat, i.e. $\Tor_1^A\equiv0$;
    \item $\uExt^1_A(M,-)=0$ for every finitely presented left $A$-module $M$;
    \item $A$ is a product of simple algebras; 
        \item $A$ is semisimple, i.e. $\JJ(A)=0$;
        \item the only nilpotent two-sided ideal in $A$ is $0$;

         \item[(*1*)] the monoidal category of finitely presented $A$-bimodules is rigid.
        
        \end{enumerate}
        We have implications 
        $$({}^*1{}^*)\Leftrightarrow (1)\Rightarrow (2)\quad\mbox{and}\quad (3)\Rightarrow (4)\Leftrightarrow (5).$$
        If the category of finitely presented $A$-bimodules is an abelian subcategory of the category of all $A$-bimodules (e.g. $A$ is bi-artinian or $A\in\cC$), then $(1)\Rightarrow (3)$. 
\end{thm}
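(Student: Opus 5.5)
We prove the implications one at a time, modelling each on its classical counterpart and using only duality in $\cC$, the fact that $A\otimes X$ and $X\otimes A$ ($X\in\cC$) are projective-like generators, and Lemma~\ref{lem:Jart}.

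\textbf{$(4)\Leftrightarrow(5)$.} Since $A$ is artinian, Lemma~\ref{lem:Jart} identifies $\JJ(A)$ with the maximal nilpotent two-sided ideal. Hence $\JJ(A)=0$ exactly when $0$ is the only nilpotent two-sided ideal.

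\textbf{$(3)\Rightarrow(5)$.} Write $A=\prod_i A_i$ with each $A_i$ simple; the product is finite because $A$ is artinian. A two-sided ideal $I$ decomposes as $\prod_i e_iI$ using the central idempotents $e_i\in\Gamma(A)$. Each $e_iI$ is an ideal of $A_i$, so it is $0$ or $A_i$. It cannot be $A_i$ if $I$ is nilpotent, since $A_i\neq 0$ is unital. Hence $I=0$.

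\textbf{$(1)\Rightarrow(2)$.} Let $M$ be finitely presented, $A\otimes X_1\to A\otimes X_0\to M\to0$. First I show $M$ is projective relative to the abelian structure. Since $\uHom_A(A\otimes X,N)\cong X^*\otimes N$ is exact in $N$, free modules are $\uHom$-acyclic. By the classical argument (Lazard/Villamayor style), a finitely presented flat module is a direct summand of a free module of finite type. Concretely, apply flatness of $M$ to the map $A\otimes X_0\to M$: the identity of $M$ factors through $A\otimes X_0$. To do this, use the natural isomorphism
\[\uHom_A(M,A)\otimes_A N\;\to\;\uHom_A(M,N),\]
which holds for free $M$ of finite type. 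Both sides are right exact in $M$ when $N$ is flat, or $M\mapsto \uHom_A(M,A)\otimes_A N$ is exact on presentations by flatness of the right module $\uHom_A(M,A)$ under (1). Comparing the two sides via the five lemma gives the isomorphism for all finitely presented $M$ and all $N$. Hence $\uHom_A(M,-)\cong \uHom_A(M,A)\otimes_A-$ is exact by (1), so $\uExt^1_A(M,-)=0$.

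\textbf{$({}^*1{}^*)\Leftrightarrow(1)$.} Assume (1). A finitely presented bimodule $B$ should have dual $\uHom_A(B,A)$ (left dual as left modules, with the induced bimodule structure). The evaluation and coevaluation come from the isomorphism $\uHom_A(B,A)\otimes_A N\cong\uHom_A(B,N)$ just established, applied with $N=B$ to produce $\mathrm{co}$ from $\id_B$. The zigzag identities reduce, via that natural isomorphism, to the identities for free bimodules $A\otimes X\otimes A$. There they follow from rigidity of $X\in\cC$; right duals are handled symmetrically. Conversely, if bimodules are rigid, consider a left module $N$. Then $-\otimes_A N$ on right modules can be rewritten through $B\otimes_A-$ for $B=A\otimes X\otimes A$-type presentations. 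More simply, $\Tor_1^A(M,-)$ for a finitely presented right module $M$ is computed from a presentation by free bimodules, $M=B\otimes_A$-quotient. Tensoring with a rigid object is exact, since it has adjoints on both sides. Hence each finitely presented right module is flat, and all modules are filtered colimits of such, giving (1). I expect this converse, making the passage from bimodules to one-sided modules precise, to need the most care.

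\textbf{$(1)\Rightarrow(3)$ under the abelian hypothesis.} Bimodules now form an abelian rigid monoidal category with unit $A$, with $\End(A)$ equal to the centre $Z=\Gamma$-part of central elements. Here I plan to mimic \cite[Proposition~1.17]{DM}: the unit of a rigid abelian monoidal category is a direct sum of simple objects indexed by the primitive idempotents of $\End(\unit)$, i.e. it is semisimple. The argument is that for a subobject $I\subset \unit$, rigidity forces $I\otimes(\unit/I)=0$ and $I$ to be a summand. A sub-bimodule of $A$ is a two-sided ideal, and $A$ has finite length, so $A=\bigoplus_i A_i$ with each $A_i$ a simple bimodule, i.e. $A=\prod A_i$ with $A_i$ simple algebras. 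The main obstacle is checking that $I\otimes_A A/I=0$, i.e. $I=I^2$ together with the splitting; I would derive $I\cdot(A/I)=0$ from exactness of $I\otimes_A-$ and $I\otimes_A I\cong I$ from rigidity of the idempotent-like subobject, exactly as in Deligne–Milne.
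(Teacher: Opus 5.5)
\textbf{The step $(1)\Rightarrow({}^*1{}^*)$ does not go through for ind-algebras.} You reduce the zigzag identities to the free bimodules $A\otimes X\otimes A$ and assert these are rigid because $X$ is. You also invoke the isomorphism $\uHom_A(B,A)\otimes_A N\cong\uHom_A(B,N)$ ``just established''. That isomorphism was proved for finitely presented \emph{left} modules, but $A\otimes X\otimes A$ is, as a left module, free on $X\otimes A$. That object is not in $\cC$ unless $A$ is.

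In fact, for $A\notin\cC$ the bimodule $A\otimes A$ has no dual at all. Follow the image of $\unit$ under a coevaluation $A\to B\otimes_A B^\ast$ through the zigzag. This shows that a dualisable bimodule $B$ is generated, as a one-sided module, by a compact subobject, and $A\otimes A$ is not. Concretely, take $\cC=\Vec$ and $A=K$ an infinite field extension of $\bk$. Then $A$ is artinian and absolutely flat, but $K\otimes_{\bk}K$ is a finitely presented bimodule that is not rigid, since it is not finitely generated over $K$ on either side. So $(1)\Rightarrow({}^*1{}^*)$ genuinely fails for ind-algebras not in $\cC$; for $A\in\cC$ the free bimodules are rigid and your argument is fine.

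This is not something you missed relative to the paper. The paper gets $(1)\Rightarrow({}^*1{}^*)$ from Proposition~\ref{prop:absflattensor}, whose step ``cokernels of morphisms between rigid objects are rigid'' tacitly assumes the same rigidity of $A\otimes X\otimes A$. Consequently your $(1)\Rightarrow(3)$, which runs through $({}^*1{}^*)$ and the Deligne--Milne splitting exactly as in Lemma~\ref{Lem:DM}, covers $A\in\cC$ but not bi-artinian ind-algebras outside $\cC$. The paper's argument has the same limitation.

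\textbf{Your $(1)\Rightarrow(2)$ is a genuinely different, and more robust, argument.} The paper deduces $(2)$ from $({}^*1{}^*)$. It writes $\uHom_A(M,-)\cong N\otimes_A-$ on bimodules, with $N$ the dual of $M\otimes A$, and then uses $\uHom_A(M,-\otimes A)\cong\uHom_A(M,-)\otimes A$. You stay with one-sided modules. You compare $\uHom_A(M,A)\otimes_A N\to\uHom_A(M,N)$ along a presentation by modules $A\otimes X_i$ with $X_i\in\cC$, where it is an isomorphism, and conclude with the four lemma. This is correct, and since it never uses bimodule duals it proves $(1)\Rightarrow(2)$ for all artinian ind-algebras. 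In light of the previous paragraph, the paper's route does not.

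Two corrections to the write-up:
\begin{enumerate}
\item Fix the bookkeeping. The comparison step uses flatness of $N$, so that $-\otimes_A N$ keeps $0\to\uHom_A(M,A)\to\uHom_A(A\otimes X_0,A)\to\uHom_A(A\otimes X_1,A)$ exact. The final exactness of $\uHom_A(M,A)\otimes_A-$ uses flatness of the right module $\uHom_A(M,A)$. Both hold under (1).
\item Delete the claims that $M$ is projective and that $\id_M$ factors through $A\otimes X_0$. The classical lifting argument fails because $\unit$ is not projective, and the claims are false. Take $A=\uEnd(P)$ with $0\neq P$ projective in a non-semisimple finite tensor category. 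The Morita equivalence $\mod_A\cC\simeq\cC$ sends free $A$-modules to projective objects. So the module corresponding to $\unit$ is finitely presented and flat (Theorem~\ref{thm:CSZ}, as $A$ is simple) but not a summand of any free module. You never actually need these claims.
\end{enumerate}

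The remaining parts agree with the paper: $(4)\Leftrightarrow(5)$ via Lemma~\ref{lem:Jart}, $(3)\Rightarrow(5)$, and $({}^*1{}^*)\Rightarrow(1)$. For the last one, write it as the paper does. Rigidity of finitely presented bimodules gives exactness of $\otimes_A$ on all bimodules, via filtered colimits. One then applies this to $A\otimes M$ and $N\otimes A$, using that $A\otimes-\otimes A$ is exact and faithful.
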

\begin{proof}
 The equivalence $(4)\Leftrightarrow (5)$ follows from Lemma~\ref{lem:Jart}.

    It is clear that (3) implies (5). The equivalence of (1) and $^{\ast}(1)^*$ holds even without artinian assumptions, see Proposition~\ref{prop:absflattensor} below.

    Now we prove ${}^\ast(1)^\ast\Rightarrow (2)$. Let $M$ be a finitely presented left $A$-module. Then $M\otimes A$ is a finitely presented $A$-bimodule and we have 
    $$\uHom_A(M,-)\cong\uHom_{A\otimes A}(M\otimes A,-)\;\cong\; N\otimes_A-,$$
when evaluated in $A$-bimodules, where the $A$-bimodule $N$ is the right monoidal dual of $M\otimes A$. The functor $\uHom_A(M,-)$ is thus exact when evaluated on the category of $A$-bimodules. Furthermore, we have
$$\uHom_{A}(M,-\otimes A)\;\cong\; \uHom_A(M,-)\otimes A$$
as functors on the category of left $A$-modules, using the fact that $M$ is finitely presented to bring the ind-object $A$ outside. It follows that $\uHom_{A}(M,-)$ is exact.

    The conditional implication $({}^\ast1^\ast)\Rightarrow (3)$ follows from Lemma~\ref{Lem:DM} below.
\end{proof}

Due to Proposition~\ref{thm:ring} and Theorem~\ref{thm:CSZ} below, we are led to the following question.
\begin{question}\label{mainquestion}
    Under which assumptions are the conditions in Theorem~\ref{prop:consolidate} equivalent?
\end{question}
Note that (4) does not always imply (3), an example will be given below in Lemma~\ref{lem:Dela}.

The following two results are well-known.
\begin{prop}\label{prop:absflattensor}
    The following conditions are equivalent on $A$:
    \begin{enumerate}
        \item $A$ is absolutely flat;
        \item the monoidal category of $A$-bimodules in $\Ind\cC$ has exact tensor product;
        \item finitely presented objects in the monoidal category of $A$-bimodules in $\Ind\cC$ are rigid.
    \end{enumerate}
\end{prop}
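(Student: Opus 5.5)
We will prove $(1)\Rightarrow(2)\Rightarrow(3)\Rightarrow(1)$. Throughout, the category of $A$-bimodules in $\Ind\cC$ is identified with right $A\otimes A^{op}$-modules, or more intrinsically with left modules over $A\otimes A^{\mathrm{op}}$ via the braiding-free description $A\otimes -\otimes A$. The tensor product is $M\otimes_A N$, computed by the coequaliser from the preliminaries.

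For $(1)\Rightarrow(2)$, let $0\to N'\to N\to N''\to 0$ be exact in $A$-bimodules and let $M$ be a bimodule. By assumption $M$ is flat as a right $A$-module, so $M\otimes_A-$ is exact on left $A$-modules. The forgetful functor from bimodules to left modules is exact and faithful, and it commutes with $M\otimes_A-$, because the right action on $M\otimes_AN$ is induced from $N$. Hence $M\otimes_A-$ is exact on bimodules. Symmetrically, $-\otimes_AN$ is exact.

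For $(2)\Rightarrow(3)$, take a finitely presented bimodule $M$, i.e. a cokernel of a map $A\otimes Y\otimes A\to A\otimes X\otimes A$ with $X,Y\in\cC$. The free bimodules $A\otimes X\otimes A$ are rigid, with duals $A\otimes X^\ast\otimes A$ and $A\otimes {}^\ast X\otimes A$. To see this, note that $(A\otimes X\otimes A)\otimes_A-\cong A\otimes X\otimes -$, whose adjoint $A\otimes X^{*}\otimes -$ (suitably) is again a tensoring functor; the evaluation and coevaluation are built from those of $X$ and the multiplication of $A$. Now define the candidate dual of $M$ as the kernel of the dual map between the duals of the free bimodules. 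Since $\otimes_A$ is exact in each variable and commutes with colimits, a standard argument works: an object presented as the cokernel of a map between rigid objects in a monoidal abelian category with biexact tensor product is rigid, with dual the kernel of the dual map. Concretely, the functor $M\otimes_A-$ is the cokernel of the natural transformation between the functors $P_i\otimes_A-$, and it therefore has an adjoint given by the kernel of the transformation between the $P_i^\ast\otimes_A-$. By exactness, this kernel functor is itself tensoring with the kernel object. Once $M\otimes_A-$ is represented as a left adjoint of tensoring with an object, rigidity follows by the usual Yoneda argument, and the same works on the other side. The main obstacle is exactly this step: we must check that the adjoint functor obtained this way really is $K\otimes_A-$ for the kernel $K$, which is where exactness of $\otimes_A$ in the relevant variable is used. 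We also need that $K$ is again finitely presented, which follows since kernels of maps between finitely presented objects arising as duals remain finitely presented (they are duals of cokernels).

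For $(3)\Rightarrow(1)$, let $L$ be a left $A$-module, and use the fact that it is a filtered colimit of finitely presented left modules. Each finitely presented left module $M$ gives the finitely presented bimodule $M\otimes A$, as in the proof of Theorem~\ref{prop:consolidate}. The functor $-\otimes_A M$ on right modules is recovered from $-\otimes_A(M\otimes A)$, which is exact because it has adjoints on both sides coming from rigidity, after forgetting the right action. Flatness is preserved by filtered colimits in the Grothendieck category $\Mod_A$, so every left module is flat. By the preceding lemma, $A$ is then absolutely flat.
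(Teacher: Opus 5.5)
Your decomposition matches the paper's:
\begin{itemize}
\item (1)$\Rightarrow$(2) by exactness of $M\otimes_A-$ on one-sided modules;
\item (2)$\Rightarrow$(3) by ``a cokernel of a map between rigid objects is rigid when $\otimes$ is exact'';
\item the return direction via exactness of tensoring with rigid objects, filtered colimits, and replacing one-sided modules by $M\otimes A$.
\end{itemize}
Your (1)$\Rightarrow$(2) is correct. Your (3)$\Rightarrow$(1) is also correct, once you state that for a right module $R$ you use the bimodule $A\otimes R$, with $(A\otimes R)\otimes_A(M\otimes A)\cong A\otimes(R\otimes_A M)\otimes A$, and that $A\otimes-\otimes A$ is exact and faithful.

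The gap is the input to (2)$\Rightarrow$(3). Free bimodules $A\otimes X\otimes A$ are not rigid in general, and $A\otimes X^\ast\otimes A$ is not their dual. Using the free-module adjunction followed by coinduction from right modules to bimodules, the right adjoint of $(A\otimes X\otimes A)\otimes_A-\cong A\otimes X\otimes-$ on bimodules is $N\mapsto\uHom(A,{}^\ast X\otimes N)$.

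When $A\in\cC$ we have $\uHom(A,-)\cong{}^\ast A\otimes-$, so this right adjoint is tensoring with ${}^\ast A\otimes{}^\ast X\otimes A$. The argument then goes through, though the dual involves ${}^\ast A$, not $A$.

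For a genuine ind-algebra it fails completely. Take $\cC=\Vec$ and $A=\bk(t)$. Since $A$ is a field, (1) and (2) hold. Now consider the finitely presented bimodule $P=A\otimes A$:
\begin{itemize}
\item $P\otimes_A-\cong A\otimes_{\bk}-$ and $-\otimes_AP\cong-\otimes_{\bk}A$;
\item both have right adjoints given by coinduction $\Hom_{\bk}(A,-)$, which does not commute with infinite direct sums;
\item a left or right dual of $P$ would make one of these right adjoints a tensoring functor, which preserves direct sums.
\end{itemize}
So $P$ has neither a left nor a right dual. Hence (2)$\Rightarrow$(3) is false as stated, and no argument along your lines can close it.

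The paper's own one-line proof of (2)$\Rightarrow$(3) tacitly relies on the same unjustified premise that free bimodules are rigid. What actually holds is (3)$\Rightarrow$(2)$\Leftrightarrow$(1) for arbitrary ind-algebras, and the full equivalence when $A\in\cC$.
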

\begin{proof}
    That (1) implies (2) is obvious. That (2) implies (3) follows from the standard observation that in an abelian monoidal category with exact tensor product the cokernel of a morphism between two rigid objects is again rigid (with duals given by the kernels of the dual morphisms). Since any object (in a Grothendieck category) is a filtered colimit of finitely presented objects, (3) implies (2). That (2) implies (1) follows by  replacing left $A$-modules $M$ by bimodules $M\otimes A$, the same for right modules, and using exact faithfulness of the functors $-\otimes A$ and $A\otimes-$.
\end{proof}

\begin{lemma}\label{Lem:DM}
    Let $(\cA,\otimes,\unit)$ be an abelian rigid monoidal category with additive tensor product in which $\unit$ has finite length, then $\unit$ is, as an algebra, a product of simple algebras.
\end{lemma}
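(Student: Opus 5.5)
The plan is to use that $\End(\unit)$ is a commutative ring, on which the rigid monoidal structure forces idempotents and simplicity to behave well. Here ``$\unit$ as an algebra'' means the monoid $\unit$ in $\cA$, whose ideals are the subobjects of $\unit$. I would first show that every subobject of $\unit$ is a direct summand, and then decompose $\unit$ by induction on its length.

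\textbf{Step 1: subobjects of $\unit$ split.} Let $i:I\hookrightarrow\unit$ be a subobject. Since $\cA$ is rigid, $X\otimes-$ and $-\otimes X$ have both adjoints, so they are exact. Hence $I\otimes I\to I\otimes\unit\cong I$ is a monomorphism, and likewise $I\otimes I\to \unit\otimes I$. I would use rigidity of $I$ to deduce that $I\otimes \unit/I=0$. Set $Q=\unit/I$. Exactness gives $I\otimes Q\hookrightarrow \unit\otimes Q= Q$, and in the same way $Q\otimes I\hookrightarrow Q$.

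The standard argument (Deligne--Milne, \cite[Proposition~1.17]{DM}-style) is as follows. The composite $I\otimes Q\to Q$ factors through $\unit\otimes Q$, and its image is killed by the projection because $I\to\unit\to Q$ is zero. Tensoring the sequence $0\to I\to\unit\to Q\to0$ with $Q$ shows that $I\otimes Q\to Q$ is the kernel of $Q\cong\unit\otimes Q\to Q\otimes Q$. One then shows this kernel is zero, using the dual $I^*$ and the fact that $\Hom(\unit,-)$ is computed via duals.

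A cleaner route is through idempotents. The algebra $\unit$ corresponds to the ring $\End(\unit)$, which is commutative by Eckmann--Hilton. Rigidity gives $\Hom(I,\unit)\cong\Hom(\unit,I^*)$. I would construct an idempotent $e\in\End(\unit)$ with image $I$ as follows. The multiplication $I\otimes I\to I$ is iso; this follows by tensoring $0\to I\to\unit\to Q\to 0$ with $I$ and using $I\otimes Q=0$, which I would get from the rigidity/duality argument. With that, $\unit\to I^*\otimes I\to\dots$ produces the splitting.

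\textbf{Step 2: induction on length.} With Step 1, $\unit\cong I\oplus J$ with orthogonal central idempotents. Each summand $\unit_e=e\unit$ is the unit of the rigid monoidal category $e\cA$, whose objects are $X$ with $X\cong eX$. Each $\unit_e$ has strictly smaller length, so induction on $\ell(\unit)$ applies. A summand is a simple algebra when it has no proper nonzero two-sided ideals, i.e.\ no proper nonzero subobjects of $\unit_e$. If it has one, Step 1 splits it further. The product decomposition is then just the direct sum of the summands $\unit_e$ with componentwise multiplication.

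\textbf{Main obstacle.} The hard step is Step 1: showing $I\otimes Q=0$, equivalently that $I\otimes I\to I$ is an isomorphism, using only rigidity and additivity and not symmetry. I would try the argument that for $X$ rigid, $\Hom(X\otimes Q,-)\cong\Hom(Q,X^*\otimes-)$. With $X=I$, any map out of $I\otimes Q$ corresponds to a map $Q\to I^*\otimes -$, and applying the evaluation of $I$ together with $I\to\unit$ should force it to vanish. I expect the careful bookkeeping of left versus right duals to be the delicate part.
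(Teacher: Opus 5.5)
Your Step~2 is fine and matches the paper: split off a summand and recurse, which terminates because $\unit$ has finite length. But Step~1, which carries all the content of the lemma, is never proved. You reduce the splitting of $I\subseteq\unit$ to $I\otimes Q=0$ with $Q=\unit/I$. After that you only say that ``one then shows this kernel is zero'', that an idempotent ``produces the splitting'', and that the adjunction argument ``should force it to vanish''. None of these is an argument. Even granting $I\otimes Q=0$, you never construct a complement of $I$ or a retraction $\unit\to I$. Part of the trouble is the choice of object: the evaluation of $I$ does not bear directly on $I\otimes Q$. It does, however, immediately control the kernel of the \emph{dual} of the inclusion $I\hookrightarrow\unit$.

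The paper takes $i:U\hookrightarrow\unit$ and sets $U^\perp:=\ker(i^\ast:\unit\to U^\ast)$, where $i^\ast$ is an epimorphism since duality is exact. It then invokes \cite[Proposition~1.17]{DM}, noting that it carries over to the non-symmetric case, to get $\unit=U\oplus U^\perp$.

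The computation the paper spells out is $\ev_U\circ(i^\ast\otimes\id_U)=i$, up to unitors. Tensor the exact sequence $0\to U^\perp\to\unit\to U^\ast\to0$ on the right with $U$. This identifies $U^\perp\otimes U$ with the kernel of $i^\ast\otimes\id_U:U\to U^\ast\otimes U$. That map is a monomorphism, because its composite with $\ev_U$ is $i$. Hence $U^\perp\otimes U=0$, and also $U\cong U^\ast\otimes U$.

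To make Step~1 self-contained, you can finish from here as follows.
\begin{itemize}
\item Let $V:=U\cap U^\perp$. Exactness of $\otimes$ gives $V\otimes V\subseteq U^\perp\otimes U=0$.
\item The same computation applied to $V\subseteq\unit$ gives $V^\ast\otimes V\cong V$.
\item The zigzag identity $\id_V=(\id_V\otimes\ev_V)\circ(\mathrm{co}_V\otimes\id_V)$ therefore factors $\id_V$ through $V\otimes V^\ast\otimes V\cong V\otimes V=0$, so $V=0$.
\item Duality is an exact anti-equivalence, so $\ell(U^\perp)=\ell(\unit)-\ell(U^\ast)=\ell(\unit)-\ell(U)$. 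Together with $U\cap U^\perp=0$, this forces $U+U^\perp=\unit$.
\end{itemize}
From $\unit=U\oplus U^\perp$ and $U^\perp\otimes U=0$ one reads off $U\otimes U\cong U$, $U^\perp\otimes U^\perp\cong U^\perp$ and $U\otimes U^\perp=0$. This is the algebra factorisation $\unit=U\times U^\perp$, after which your Step~2 applies.
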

\begin{proof}
    Let $U\subset\unit$ be a subobject, then $\unit=U\oplus U^\perp$, with $U^\perp=\ker(\unit\to U^\ast)$ by \cite[Proposition~1.17]{DM}. The latter is stated for symmetric tensor categories only, but carries over to the general case. Then it follows that $U^\perp\otimes U=0$, as we can realise it as the kernel of $U\to U^\ast\otimes U$, where the composite with $\ev_U:U^\ast\otimes U\to\unit$ is the inclusion $U\hookrightarrow \unit$. It then follows that $U\otimes U=U$, $U^\perp\otimes U^\perp=U^\perp$ and thus $U\otimes U^\perp=0$. This is thus an algebra factorisation $\unit=U\times U^\perp$. We can continue this procedure on the factors, and the procedure is finite by assumption of finite length of $\unit$.
\end{proof}

\begin{remark}

Proposition~\ref{prop:absflattensor} states that for $A$ absolutely flat with $\Gamma(A)=\bk$, the category of finitely presented $A$-bimodules is an ``ind-tensor category'' over $\bk$, see Appendix~\ref{App}. Assuming it is a tensor category, it is ``$\Hom$-finite'' if and only if
    \begin{equation}\label{eq:finHom}
        \dim_{\bk}\Hom(X,A)\;<\;\infty,\quad\mbox{for all}\quad X\in\cC,
    \end{equation}
    while it is artinian if and only if $A$ is bi-artinian.
\end{remark}

\subsection{The commutative case}
In this subsection, let $\cC$ be a pretannakian category.

\begin{prop}\label{prop:absflattensor2}
    The following conditions are equivalent on a commutative ind-algebra $A$:
    \begin{enumerate}
        \item $A$ is absolutely flat;
        \item the monoidal category $\Mod_A{\cC}$ has exact tensor product;
        \item finitely presented objects in $\Mod_A{\cC}$ are rigid.
    \end{enumerate}
    Consequently, if $A$ is simple and artinian, then $A$ is absolutely flat if and only if $\mod_A{\cC}$ is a pretannakian category over the field $\Gamma(A)$.
\end{prop}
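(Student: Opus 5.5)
The plan is to mirror the proof of Proposition~\ref{prop:absflattensor}, with $A$-bimodules replaced by $A$-modules. Since $A$ is commutative in a symmetric category, every left $A$-module is an $A$-bimodule via the braiding. This makes $\Mod_A\cC$ a symmetric monoidal abelian category with tensor product $\otimes_A$ and unit $A$.

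First, (1)$\Rightarrow$(2) holds by definition, since $-\otimes_A M$ is exact for every $M$ and $\otimes_A$ is symmetric.

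For (2)$\Rightarrow$(3), every free module $A\otimes X$ with $X\in\cC$ is rigid in $\Mod_A\cC$, with dual $A\otimes X^\ast$, because the free functor $\cC\to\Mod_A\cC$ is monoidal. A finitely presented module is a cokernel of a morphism $A\otimes X\to A\otimes Y$ with $X,Y\in\cC$. The standard argument used in Proposition~\ref{prop:absflattensor} then applies: in an abelian monoidal category with exact tensor product, the cokernel of a morphism between rigid objects is rigid, with dual the kernel of the dual morphism.

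For (3)$\Rightarrow$(1), let $M$ be finitely presented. Rigidity gives $M\otimes_A-\cong\uHom_A(M^\vee,-)$, which is left exact. It is also right exact, so it is exact. An arbitrary module is a filtered colimit of finitely presented ones, and filtered colimits are exact in the Grothendieck category $\Mod_A\cC$. Hence $-\otimes_A N$ is exact for all $N$, so every module is flat.

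For the consequence, assume $A$ is simple and artinian. First I show $\Gamma(A)=\End_A(A)$ is a field. It is commutative because $A$ is. For a nonzero $f\in\Gamma(A)$, the kernel and image of $f:A\to A$ are ideals, so simplicity forces $f$ to be an isomorphism. Hence $\mod_A\cC$ is $\Gamma(A)$-linear.

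If $A$ is absolutely flat, then $\mod_A\cC$ is abelian (as $A$ is artinian), rigid by (3), and symmetric. Objects have finite length because $A$ is artinian. For $\End(A)=\Gamma(A)$ to hold in $\mod_A\cC$, we need that $A$ is the unit and that any finitely presented $M$ is a quotient of some $A\otimes X$, so the remaining axioms hold.

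Conversely, if $\mod_A\cC$ is pretannakian, it is in particular rigid. Since $A$ is artinian, finitely presented modules coincide with the objects of $\mod_A\cC$, so (3) holds and $A$ is absolutely flat.

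The only slightly delicate point is $\Gamma(A)$-linearity together with $\End(\unit)=\Gamma(A)$, which is immediate. Essential smallness also needs a remark: it follows because objects are quotients of $A\otimes X$ with $X$ ranging over a small category.
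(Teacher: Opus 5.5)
Your proposal is correct and follows the paper's route, which proves this by transporting the argument of Proposition~\ref{prop:absflattensor}: free modules $A\otimes X$ are rigid, cokernels of maps between rigid objects stay rigid when $\otimes_A$ is exact, and one passes to filtered colimits of finitely presented modules; the final statement is then the direct specialisation to simple artinian $A$, as you describe. One small clean-up: $\End_A(A)\cong\Gamma(A)$ is just the free--forgetful adjunction $\Hom_A(A\otimes\unit,A)\cong\Hom(\unit,A)$, so it does not need the fact that finitely presented modules are quotients of free ones.
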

\begin{proof}
    This is proved analogously to Proposition~\ref{prop:absflattensor}, see also \cite[Proposition~6.1.2]{CEO}.
\end{proof}

\begin{remark}
    For an absolutely flat commutative algebra $A$, one can verify that $\mod_A{\cC}$ is precisely the category of rigid objects in $\Mod_A{\cC}$.
\end{remark}

We then have a slightly stronger version of Theorem~\ref{prop:consolidate}.

\begin{thm}
\label{prop:consolidate2}  
For a commutative artinian algebra $A$ consider the properties
    \begin{enumerate}
     \item $A$ is absolutely flat, i.e. $\Tor_1^A=0$;
     \item $\uExt^1_A(M,-)=0$ for every finitely presented $A$-module $M$;
    \item $A$ is a product of simple algebras; 
        \item $A$ is semisimple, i.e. $\JJ(A)=0$;
        \item the only nilpotent ideal in $A$ is $0$;

         \item[(1*)] the monoidal category $\mod_A\cC$ is rigid.

        \end{enumerate}
        We have implications 
        $$(1^*)\Leftrightarrow(1)\Leftrightarrow(2)\Rightarrow(3)\Rightarrow (4)\Leftrightarrow (5).$$
\end{thm}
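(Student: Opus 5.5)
We prove the chain by using Proposition~\ref{prop:absflattensor2}, adapting Theorem~\ref{prop:consolidate} to the commutative setting, and adding one new implication, $(2)\Rightarrow(1)$.

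\textbf{The easy implications.} The equivalence $(1^*)\Leftrightarrow(1)$ is the equivalence of (1) and (3) in Proposition~\ref{prop:absflattensor2}. The equivalence $(4)\Leftrightarrow(5)$ is Lemma~\ref{lem:Jart}, since two-sided ideals are simply ideals here. The implication $(3)\Rightarrow(5)$ holds as before: a nilpotent ideal in a product of simple algebras projects to a nilpotent, hence zero, ideal in each factor.

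\textbf{Implication $(1)\Rightarrow(2)$.} We argue as in Theorem~\ref{prop:consolidate}, but with $A$-modules in place of bimodules. A finitely presented $M$ is rigid by $(1^*)$ with dual $M^\vee$, so $\uHom_A(M,-)\cong M^\vee\otimes_A-$. This functor is exact by (1).

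\textbf{Implication $(1)\Rightarrow(3)$.} Since $A$ is artinian, $\mod_A\cC$ is abelian, and it is rigid by $(1^*)$. Its unit $A$ has finite length, so Lemma~\ref{Lem:DM} applied to $\mod_A\cC$ decomposes $A$ as a product of algebras $A_i$ each having no nontrivial idempotent decomposition of the unit. The factors $A_i$ are again artinian and absolutely flat, since $\Mod_A=\prod_i\Mod_{A_i}$.

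It remains to see that each $A_i$ is simple. Let $I\subset A_i$ be an ideal. Then $A_i/I$ is a finitely presented module, hence rigid, and the argument of Lemma~\ref{Lem:DM} applies. Concretely, tensoring the inclusion $I\hookrightarrow A_i$ with $A_i/I$ gives $I\otimes_{A_i}A_i/I\cong I/I^2$ by flatness, and this vanishes in the absolutely flat setting because $I/I^2=\Tor$-type data of $A_i/I$ (one checks $I\otimes_{A_i}A_i/I=I/I^2$ equals $\Tor_1(A_i/I,A_i/I)=0$). Hence $I=I^2$. Since $I$ is of finite length and $A_i$ is artinian, $I$ is then generated by an idempotent, using the splitting $A_i=I\oplus I^\perp$ of Lemma~\ref{Lem:DM} inside $\mod_{A_i}$. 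By indecomposability, $I=0$ or $I=A_i$.

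\textbf{Implication $(2)\Rightarrow(1)$, the main obstacle.} It suffices to show $\Tor_1^A(N,M)=0$ for finitely presented $N$ and $M$, since everything is a filtered colimit of these and Tor commutes with filtered colimits.

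The plan is to use $(2)$ and the artinian hypothesis to show that every finitely presented module is a direct summand of a module $A\otimes X$ with $X\in\cC$. Take a presentation $0\to K\to A\otimes X\to M\to 0$. Here $K$ is finitely presented, because finite length coincides with finite presentation for artinian $A$. Then $(2)$ gives $\uExt^1_A(M,K)=0$, so in particular $\Ext^1_A(M,K)=\Gamma\uExt^1_A(M,K)=0$, once we check that $\Gamma$ commutes with these derived functors on injectives. The sequence therefore splits, and $M$ is a summand of the flat module $A\otimes X$, hence flat.

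The delicate point is the identification $\Ext^1_A=\Gamma\circ\uExt^1_A$ via a Grothendieck spectral sequence. This requires that $\uHom_A(M,I)$ be $\Gamma$-acyclic for injective $A$-modules $I$, which holds because $\uHom_A(M,-)$ preserves injectives, being right adjoint to the exact functor $M\otimes-$. If this fails, the fallback is to work directly with $\uHom_A(M,-)$ applied to $0\to K\to A\otimes X\to M\to0$: exactness of this functor lifts $\id_M\in\Gamma\uHom_A(M,M)$ along the epimorphism $\uHom_A(M,A\otimes X)\to\uHom_A(M,M)$, after passing to a simple-summand argument using that $\unit$ is projective in the semisimple part. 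I expect this step to need the most care.
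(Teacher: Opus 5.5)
Most of your chain is fine: $(1^*)\Leftrightarrow(1)$, $(1)\Rightarrow(2)$, $(1)\Rightarrow(3)$ and $(3)\Rightarrow(4)\Leftrightarrow(5)$ all go through. For $(1)\Rightarrow(3)$, Lemma~\ref{Lem:DM} applied to $\mod_A\cC$ already yields simple factors, because the subobjects of the unit $A$ are exactly the ideals, so your $I=I^2$ detour is unnecessary.

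The genuine gap is in $(2)\Rightarrow(1)$: the identification $\Ext^1_A(M,K)=\Gamma\,\uExt^1_A(M,K)$ is false. The Grothendieck spectral sequence $R^p\Gamma\,\uExt^q_A(M,-)\Rightarrow\Ext^{p+q}_A(M,-)$ does exist, for the reason you give. Under (2), however, it collapses to $\Ext^n_A(M,K)\cong R^n\Gamma\,\uHom_A(M,K)$. Vanishing of $\uExt^1_A$ kills only the $E_2^{0,1}$ term. The term $E_2^{1,0}=R^1\Gamma\,\uHom_A(M,K)$ survives, and $\Gamma$ is not exact unless $\cC$ is semisimple.

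Here is a concrete counterexample to your splitting claim. Take $A=\unit$ in $\cC=\Rep\mG_a$. It is commutative and artinian, and it satisfies (2), since $\uHom(M,-)\cong{}^\ast M\otimes-$ is exact. Yet the presentation $0\to\unit\to V\to\unit\to0$, with $V$ the two-dimensional indecomposable, does not split. Your fallback fails for the same reason: $\unit$ is not projective in $\Ind\cC$, so an epimorphism $\uHom_A(M,A\otimes X)\to\uHom_A(M,M)$ need not be surjective after applying $\Gamma$, and there is no ``semisimple part'' to reduce to. Splitting presentations is therefore the wrong mechanism.

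The missing idea is to stay internal and prove rigidity rather than projectivity. By \cite[Proposition~2.3]{Del90}, $M\in\mod_A\cC$ is rigid as soon as the canonical morphism $\uHom_A(M,A)\otimes_AN\to\uHom_A(M,N)$ is an isomorphism for all $N\in\mod_A\cC$. Note that $\uHom_A(M,A)$ embeds in $\uHom_A(A\otimes Y,A)\cong A\otimes Y^\ast$ for a free cover $A\otimes Y\twoheadrightarrow M$, so it lies in $\mod_A\cC$.

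Both sides of this morphism are right exact in $N$; for the right-hand side this is exactly hypothesis (2). A free presentation of $N$ therefore reduces the check to $N=A\otimes X$ with $X\in\cC$. There both sides are $\uHom_A(M,A)\otimes X$, since $X$ is rigid. This gives $(2)\Rightarrow(1^*)$, and $(1)$ then follows from Proposition~\ref{prop:absflattensor2}. This is essentially how the paper closes the cycle.
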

\begin{proof}
    Other than $(1)\Leftrightarrow(2)$, this follows from Proposition~\ref{prop:absflattensor2}, Lemma~\ref{Lem:DM}, and Theorem~\ref{prop:consolidate}.
    
    We will prove (2)$\Rightarrow$(1*) by showing that when $\uHom_A(M,-)$ is exact for $M\in\mod_A{\cC}$, it follows that $M$ is rigid. By \cite[Proposition~2.3]{Del90}, we need to show that the canonical morphism
    \begin{equation}\label{eq:Hom:Del}\uHom_A(M,A)\otimes_AN\;\to\; \uHom_A(M,N)\end{equation}
    is an isomorphism for every $N\in\mod_A{\cC}$. By taking a presentation of $N$ by free (finitely generated) $A$-modules, and using right exactness of $\otimes_A$ and $\uHom_A(M,-)$, we can reduce the verification of this property to the case $N=A\otimes X$, for $X\in\cC$. By flatness of $A\otimes X$ and right exactness in the contravariant argument of $\uHom_A$, this question now also reduces to the case $M=A\otimes Y$, in which case \eqref{eq:Hom:Del} is an isomorphism, for instance again by \cite[Proposition~2.3]{Del90}.
\end{proof}

   \begin{remark}
       It follows from Proposition~\ref{prop:C} below that $\mod_A\cC$ is rigid if and only if its simple objects are rigid. This also follows from \cite[Corollary~4.7]{EP}.
   \end{remark}

We then have the special case of Question~\ref{mainquestion}.
\begin{question}\label{q:comm}\label{q:main}
    For which pretannakian categories $\cC$ are all properties in Theorem~\ref{prop:consolidate2} equivalent, for all commutative artinian ind-algebras~$A$? Concretely:
    \begin{enumerate}
        \item[(a)] When is every simple commutative artinian ind-algebra in $\cC$ absolutely flat?
        \item[(b)] When is every semisimple commutative ind-algebra in $\cC$ a product of simple algebras?
    \end{enumerate}
\end{question}

Our main results (e.g. Theorem A) indicates that all the properties might indeed be equivalent in pretannakian categories of moderate growth. However, 
as we show in Lemma~\ref{lem:Dela}, at least the property in Question~\ref{q:comm}(b) can fail in pretannakian categories not of moderate growth.
We will also show in Proposition~\ref{lem:counter} that, contrary to the case $\cC=\Vec$, for general $\cC$ we get a negative answer to \ref{q:comm}(a) when we remove the artinian assumption, as in the following question.
\begin{question}\label{q:nonart}
For which pretannakian $\cC$ is every commutative simple ind-algebra
\begin{enumerate}
    \item absolutely flat?
    \item artinian?
\end{enumerate} 
\end{question}
Again, Theorem~A indicates this class should contain all pretannakian categories of moderate growth. Of course the counterexample in Proposition~\ref{lem:counter} is about superexponential growth. 

For our main motivation to study the above questions, the following weaker form is most relevant:
\begin{question}\label{Q4Os}
    Is every finitely generated commutative simple ind-algebra in $\cC$ absolutely flat and artinian?
\end{question}

Recall that a pretannakian category $\cC$ over an algebraically closed field $\bk$ is {\bf incompressible} if every symmetric tensor functor $\cC\to\Ds$ to every pretannakian category $\Ds$ over $\bk$ is an embedding of a tensor subcategory, see \cite{CEO} for details.

\begin{prop} Assume that $\Bbbk$ is an uncountable algebraically closed field of characteristic 0.
If the answer to Question~\ref{Q4Os} is affirmative, then \cite[Conjecture~C]{CEO}, stating that the only incompressible pretannakian categories over $\Bbbk$ are of moderate growth, is true.
\end{prop}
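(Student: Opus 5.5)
We argue by contraposition: assume there is an incompressible pretannakian category $\cC$ over $\bk$ that is not of moderate growth, and build a finitely generated commutative simple ind-algebra in $\cC$ which is not both absolutely flat and artinian. Since $\cC$ is not of moderate growth, some $X\in\cC$ has $\gd(X)=\infty$. We may assume $\cC$ is generated by $X$: the tensor subcategory generated by $X$ is again incompressible, since a symmetric tensor functor out of it extends, by taking a suitable extension, to one out of $\cC$. So $\cC$ is finitely generated.

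The construction uses the symmetric algebra $S(X\oplus X^\ast)$, or $S(Y)$ for a suitable generator $Y$. By Zorn's lemma it has a maximal ideal $\m$, and we set $A:=S(Y)/\m$. This is a finitely generated commutative simple ind-algebra in $\cC$, and $\Gamma(A)$ is a field $K$ of countable dimension over $\bk$. Since $\bk$ is uncountable and algebraically closed, the standard Nullstellensatz-type argument (countable-dimensional field extensions of an uncountable algebraically closed field are trivial) gives $K=\bk$. If the answer to Question~\ref{Q4Os} is affirmative, then $A$ is absolutely flat and artinian. Proposition~\ref{prop:absflattensor2} then shows that $\mod_A\cC$ is a pretannakian category over $\bk$, and $A\otimes-:\cC\to\mod_A\cC$ is a symmetric tensor functor.

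By incompressibility this functor must be an embedding of a tensor subcategory, so it suffices to choose $Y$ so that this is impossible. The point of using $S(Y)$ with $Y$ containing $X$ is that the quotient $A$ comes with a nonzero morphism $Y\to A$, that is, a nonzero element of $\Hom(\unit, Y^\ast\otimes A)=\Hom_A(A,A\otimes Y^\ast)$. To force non-fullness, $Y$ should be chosen as a generator with no invariants, $\Hom(\unit,Y^\ast)=0$, for instance a suitable non-unit object. Such a $Y$ exists unless $\cC=\Vec$, and $\Vec$ has moderate growth. Then $\Hom_{\cC}(\unit,Y^\ast)=0$, while $\Hom_A(A,A\otimes Y^\ast)\neq 0$, so $A\otimes-$ is not full, which contradicts incompressibility.

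The main obstacle I expect is making this non-fullness argument legitimate. There are two things to check. First, the composite $Y\to S(Y)\to A$ must be nonzero; it is, because $\m\neq S(Y)$ and $Y$ generates $S(Y)$ in positive degrees, so if the composite vanished then $\m$ would contain the augmentation ideal and $A=\bk$, which we must rule out. Second, the maximal ideal must be taken so that $A\neq\unit$. For this I would choose $\m$ maximal among ideals not containing the image of $Y$, or more carefully, use that $S(Y)$ has no $\bk$-point factoring trivially. This is where the assumption $\gd(X)=\infty$ should enter, through \cite{CEO}: in a non-moderate-growth category, suitable symmetric algebras admit nontrivial simple quotients. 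If this step fails, the fallback is to use directly the known construction in \cite{CEO} of a commutative algebra whose module category would provide a non-injective tensor functor.
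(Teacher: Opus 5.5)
There is a genuine gap, and it sits exactly at the step you flag as the main obstacle. Your overall strategy is the same as the paper's: contrapositive, a finitely generated simple quotient $A$ of a symmetric algebra, $\Gamma(A)=\bk$ by countability, Proposition~\ref{prop:absflattensor2}, then non-fullness of $A\otimes-$. But everything rests on the existence of a maximal ideal $\m\subset S(Y)$ different from the augmentation ideal $S^{>0}(Y)$. Once you have such an $\m$, your non-fullness argument works, say with $Y=X\oplus X^\ast$ for a simple $X\neq\unit$.

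That existence statement is the entire content of the proposition, and it is false in general. In $\sVec$, the augmentation ideal $\bar{\unit}$ of $S(\bar{\unit})=\unit\oplus\bar{\unit}$ is nilpotent, hence the only maximal ideal. The same holds for $S(L)$, with $L\neq\unit$ simple, in any MN category. Note that your argument never uses characteristic $0$, and uses $\gd(X)=\infty$ only in this unresolved step. Your proposed remedies do not help. An ideal maximal among those not containing the image of $Y$ need not be a maximal ideal: in the example above it is $0$, whose quotient is not simple. And $S(Y)$ always has the augmentation $\bk$-point, so the remark about $\bk$-points carries no information.

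The missing ingredient is Deligne's. Take $X$ simple, $X\neq\unit$, with $\gd(X)=\infty$. Let $A$ be the quotient of $S(X\oplus X^\ast)$ constructed in the proof of \cite[Lemme~2.8]{Del02}, which is designed so that $A$ is a direct summand of $A\otimes X$ in $\Mod_A\cC$. The key characteristic-$0$ fact is \cite[Proposition~2.5]{Del02}: $A\neq 0$ precisely because $\gd(X)=\infty$. For any simple quotient $B$ of $A$, applying $B\otimes_A-$ shows that $B$ is a direct summand of $B\otimes X$. Hence $\Hom_B(B,B\otimes X)\neq 0=\Hom(\unit,X)$, and $B\otimes-$ is not full. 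In your language: the corresponding maximal ideal of $S(X\oplus X^\ast)$ cannot be the augmentation ideal, since $\unit$ is not a summand of $X$. From there, your countability argument and the appeal to Proposition~\ref{prop:absflattensor2} finish the proof exactly as in the paper.

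Finally, drop your reduction to $\cC$ being generated by $X$. It is not needed: countable length of $A$ already follows from $A$ being a quotient of $S(Y)$ with $Y\in\cC$. Moreover, the claim that tensor functors out of a tensor subcategory extend to $\cC$ is not a routine fact.
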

\begin{proof}
Let $\cC$ be pretannakian and $X$ a simple object in $\cC$. The construction in the proof of \cite[Lemme~2.8]{Del02} produces a finitely generated algebra $A$, as a quotient of $S(X\oplus X^\vee)$, such that $A$ is a direct summand of $A\otimes X$ in $\Mod_A\cC$. 

To find a contradiction, we now assume that $\cC$ is not of moderate growth, so that it has a simple object $X\not=\unit$ that is of superexponential growth, $\gd(X)=\infty$. The algebra $A$ is then non-zero, by \cite[Proposition~2.5]{Del02}. Let $B$ be a simple quotient algebra of $A$, so that (since we assume the validity of Question~\ref{Q4Os}) $\mod_B\cC$ is a pretannakian category over $\Gamma(B)$ by Proposition~\ref{prop:absflattensor2}. Since $A$ and $B$ by construction has countable length, it follows that $\Gamma(B)=\Bbbk$ and the non-full symmetric tensor functor
$$B\otimes -:\;\cC\to\Mod_B\cC$$
shows that $\cC$ is not incompressible.
\end{proof}


\section{Examples and counterexamples}\label{sec:examples}

Unless further specified, we let $\bk$ be a field and $\cC$ a tensor category over $\bk$.

\subsection{Finite tensor categories}

The following theorem is the main result of \cite{CSZ} and was conjectured in \cite{EO}. Recall that, following \cite{EGNO}, an algebra $A\in\cC$ is {\bf exact} if for any left $A$-module $M$ in $\cC$, and every projective $P\in\cC$, the $A$-module $M\otimes P$ is projective.
\begin{thm}\label{thm:CSZ}
    If $\cC$ is a finite tensor category and $A\in\cC$, then the conditions in Theorem~\ref{prop:consolidate} are all equivalent, and equivalent to 
    \begin{enumerate}
        \item[(6)] $A$ is exact.
    \end{enumerate}
\end{thm}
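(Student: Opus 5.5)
Since $A\in\cC$ and $\cC$ is finite, $A$ is artinian, and the finitely presented $A$-bimodules form an abelian subcategory. So Theorem~\ref{prop:consolidate} already gives $({}^*1{}^*)\Leftrightarrow(1)\Rightarrow(2)$, $(1)\Rightarrow(3)\Rightarrow(4)\Leftrightarrow(5)$. Two things remain: bring (2), (4) and (6) into the cycle, i.e. show (2)$\Rightarrow$(6)$\Rightarrow$(1), and close the cycle with the hard implication (4)$\Rightarrow$(6) or (4)$\Rightarrow$(1).

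\emph{(6)$\Rightarrow$(1).} I would show flatness on finitely presented left modules $M$, hence on all modules by filtered colimits. For projective $P\in\cC$, exactness makes $M\otimes P$ a projective $A$-module in $\cC$, hence a direct summand of some $A\otimes Y$ and so flat. Then $\Tor_1^A(N,M)\otimes P\cong\Tor_1^A(N,M\otimes P)=0$. Since $-\otimes P$ is faithful for $P\neq0$, $M$ is flat.

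\emph{(2)$\Rightarrow$(6).} For finitely generated $M$ and projective $P$, the functor
\[
\Hom_A(M\otimes P,-)=\Hom(\unit,\uHom_A(M,-)\otimes {}^*P)
\]
is the composite of the exact functor $\uHom_A(M,-)$ (by (2)), tensoring with ${}^*P$, and $\Hom(\unit,-\otimes{}^*P)\cong\Hom(P,-)$, which is exact because $P$ is projective. Hence $M\otimes P$ is projective.

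\emph{(4)$\Rightarrow$(6): the main obstacle.} This is the Etingof--Ostrik conjecture, so the real content is importing the result of \cite{CSZ} rather than reproving it. The plan I would follow mirrors theirs. First reduce to $A$ indecomposable with $\JJ(A)=0$, where by Lemma~\ref{lem:Jart} $\JJ(A)$ is the largest nilpotent ideal. Next consider the $\cC$-module category $\mod_A\cC$ and the subcategory of modules of the form $M\otimes P$. The goal is to show that the radical of $\mod_A\cC$, suitably internalised, corresponds to a nilpotent ideal of $A$, i.e. to \cite[Proposition~6.12]{CSZ}'s $\mathrm{Rad}^\cC(A)$, the maximal nilpotent ideal. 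Its vanishing should then force every $M\otimes P$ to be projective, using Frobenius--Perron dimension arguments in the finite category to compare projective covers.

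I expect this last step to be the hard core. My first attempt would be to show that the semisimple quotient $\mod_A\cC/\mathrm{rad}$ is an exact module category. Failing that, I would simply cite the main theorem of \cite{CSZ}, together with the identification of $\mathrm{Rad}^\cC(A)$ with $\JJ(A)$ noted in the remark above.
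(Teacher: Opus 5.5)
Your proposal is correct and follows essentially the paper's route. The hard direction is imported from \cite{CSZ}: the paper cites \cite[Theorem~7.1]{CSZ} for (6)$\Leftrightarrow$(3)$\Leftrightarrow$(5), while you use it as (4)$\Rightarrow$(6) via Lemma~\ref{lem:Jart}. The paper proves (6)$\Rightarrow$(1) with the same argument you give (projective $A$-modules are summands of free ones, and $-\otimes P$ is faithful and exact), except that you check (2)$\Rightarrow$(6) directly via $\Hom_A(M\otimes P,-)\cong\Hom(P,\uHom_A(M,-))$ where the paper cites \cite[Theorem~7.10.1]{EGNO}, and you restrict to finitely presented $M$ before passing to filtered colimits.
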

\begin{proof}
    It is proved in \cite[Theorem~7.1]{CSZ} that (6) is equivalent with (3) and (5). It is easy to see that (6) is equivalent with (2), see \cite[Theorem~7.10.1]{EGNO} or \cite[Proposition~A.2]{CSZ}. By Theorem~\ref{prop:consolidate} it thus suffices to show that (6) implies (1). By assumption, $-\otimes_A(M\otimes P)$ is exact for every $A$-module $M$  and projective $P\in\Cs$, since every projective $A$-module is a direct summand of a free (and hence flat) module $A\otimes V$, $V\in\cC$. Since $-\otimes P$ is faithful and exact, it follows that $M$ is also flat.
\end{proof}

\subsection{Field extensions}

As a special, deceptively easy looking case of Question~\ref{mainquestion}, we can let $A$ be a field extension $K$ of $\bk$. In this case, the conditions in \ref{prop:consolidate}(3)-(5) are obviously satisfied. Moreover, $K$ is artinian as an algebra in $\Ind\cC\supset\VEC$.

To proceed with this special case we observe that the category $\Mod_{K}\cC$ of $K$-modules in $\Ind\cC$ has a canonical monoidal structure with tensor product $\otimes_K$. Note indeed that left and right $K$-modules in $\Ind\Ds$ are identical as both notions correspond to $\bk$-algebra morphisms $K\to\End(X)$. Moreover, it is clear that the $K$-linear monoidal category $\cC_K:=\mod_K\cC$ satisfies all conditions of a tensor category over $K$ except it is not clear in general that it is rigid, in other words it satisfies (1) and (2) from \S\ref{prel:tc}.

Just as in Theorem~\ref{prop:consolidate2}, one can prove that \ref{prop:consolidate}(1) and (2) are equivalent, and equivalent to $\cC_K$ being a tensor category.
 We refer to \cite[\S 5]{CEOP} for an overview on the question of $\cC_K$ being a tensor category (equivalently, $K$ being absolutely flat). One of the central results stated in \cite{CEOP} is that $\cC_K$ is always a tensor category when $\bk$ is algebraically closed, which was proved in \cite[Lemma~2.2]{EO} for $\cC$ finite.
For completeness we provide a proof for the general case:
\begin{prop}\label{prop:FieldExt}
    If $\bk$ is perfect, then $\cC_K$ is an artinian tensor category for any artinian tensor category $\cC$ and field extension $K:\bk$.
\end{prop}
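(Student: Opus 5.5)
We have to show that $\cC_K=\mod_K\cC$ is rigid and that all of its objects have finite length. The remaining tensor-category axioms were noted above, and the paragraph preceding the statement shows rigidity is equivalent to $K$ being absolutely flat. By Proposition~\ref{prop:absflattensor2} (with the argument of Theorem~\ref{prop:consolidate2}), $K$ is absolutely flat if and only if every finitely presented $K$-module is rigid. Since every object of $\cC_K$ is a cokernel of a morphism $K\otimes X\to K\otimes Y$, and $K\otimes X$ is rigid with dual $K\otimes X^\ast$, the question reduces to exactness of $\otimes_K$. This in turn reduces to the following: for every morphism $f:K\otimes X\to K\otimes Y$, the kernel and cokernel of $f$ are compatible with tensoring by free modules.

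The plan is to reduce to finitely generated, and then finite, extensions. A morphism $f:K\otimes X\to K\otimes Y$ corresponds to an element of $K\otimes_{\bk}\Hom(X,Y)$, so it is defined over a finitely generated subextension $L\subset K$. Exactness statements then descend along the flat extension $L\to K$, because $K\otimes_L-$ is exact and faithful on $\Mod_L\cC$. Kernels commute with $K\otimes_L-$, so if $\cC_L$ is a tensor category then the kernel of $f$ is a base change of a rigid object, hence rigid. We may therefore assume $K:\bk$ is finitely generated. Because $\bk$ is perfect, $K$ is separably generated, so we can write $\bk\subset \bk(t_1,\dots,t_n)=:F\subset K$ with $K:F$ finite separable.

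We treat the two steps separately. For the purely transcendental step, it suffices by induction to take $F=\bk(t)$. The key point is that $\Mod_{\bk[t]}\cC$ identifies with objects of $\Ind\cC$ carrying an endomorphism $t$, and $F$ is a localization of $\bk[t]$. For $M\in\cC$ and a nonzero polynomial $p$, the element $p(t)$ acts invertibly on $F\otimes M$. A finitely presented $F$-module $N$ is the base change of some $\bk[t]$-module $N_0$ finitely presented over $\bk[t]\otimes X$. I would show that, after inverting finitely many polynomials, $N_0$ becomes a direct summand of some $\bk[t]\otimes X$. This uses the finite length of $X$ in $\cC$: the images of $p(t)$ stabilize on the finitely many composition factors, in the spirit of a generic-freeness argument. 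Having this, $N$ is a summand of a free module and hence rigid.

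For the finite separable step, $K\otimes_F K\cong K\times B$ as algebras, and $K$ is a separable Frobenius $F$-algebra. Every $K$-module is then a direct summand of $K\otimes_F(\text{its restriction})$, which makes rigidity over $F$ pass to rigidity over $K$. Concretely, the dual of $N$ is $\uHom_F(N,F)$ with the $K$-action induced via the trace form. Finite length of objects in $\cC_K$ follows because such objects are subquotients of $K\otimes X$. For finite extensions, $K\otimes X$ has finite length, bounded by $[K:F]$ times the length over $F$. For $\bk(t)$, I would use the stabilization argument again, or more simply reduce to an argument via $\Gamma$ of $\uEnd$, whose dimension over $F$ is finite. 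I expect the transcendental step, the generic-freeness argument for $\bk(t)$, to be the main obstacle.
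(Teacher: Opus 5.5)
\paragraph{The transcendental step is false.} Your reduction to finitely generated subextensions is sound, and so is the finite separable step. The separability idempotent splits $K\otimes_F N\to N$ $K$-linearly, so $N$ is a summand of $K\otimes_F N|_F$, which is rigid once $\cC_F$ is. The gap is in the purely transcendental step. You claim that a finitely presented $\bk[t]$-module becomes, after inverting finitely many polynomials, a direct summand of some $\bk[t]\otimes X$ with $X\in\cC$. That would make every object of $\cC_{\bk(t)}$ a direct summand of some $\bk(t)\otimes X$, and this fails as soon as $\cC$ is not semisimple.

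Here is a counterexample. Take $\bk$ algebraically closed of characteristic $p$ and $\cC=\Rep_\bk(\mathbb{Z}/p\times\mathbb{Z}/p)$, whose group algebra is $\bk[x,y]/(x^p,y^p)$. Put $F=\bk(t)$, and let $N=F^2$ with $x$ acting by $E_{12}$ and $y$ by $tE_{12}$. The argument runs as follows.
\begin{enumerate}
\item For indecomposable $X_i\in\cC$, the ring $\End_{\cC_F}(F\otimes X_i)\cong F\otimes_\bk\End(X_i)$ is still local. So Krull--Schmidt would force $N\cong F\otimes X_i$ for a two-dimensional indecomposable $X_i$.
\item Such an $X_i$ has $x,y$ acting by $aE_{12},bE_{12}$ with $a,b\in\bk$ not both zero.
\item An intertwiner then gives $ta=b$, so $t=b/a\in\bk$, a contradiction.
\end{enumerate}
So the splitting you are after does not exist even generically. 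Objects of $\cC_K$ like this $N$ are rigid, but the reason is not a splitting.

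\paragraph{The missing idea.} For non-semisimple $\cC$, rigidity has to come from extensions rather than from splittings. This is how the paper argues, and it makes your reduction to finitely generated $K$ unnecessary.
\begin{itemize}
\item For simple $L\in\cC$, applying $\Hom(L,-)$ identifies $K$-submodules of $K\otimes L$ with right ideals of the finite-dimensional $K$-algebra $K\otimes_\bk\End(L)$.
\item This algebra is semisimple because $\bk$ is perfect, so $K\otimes L$ is a finite direct sum of simple $K$-modules.
\item Every simple object of $\Mod_K\cC$ occurs as such a summand, hence is rigid.
\item Every object of $\cC_K$ has finite length, being a quotient of some $K\otimes X$.
\item Proposition~\ref{prop:C} with Remark~\ref{rem:C}, whose hypotheses the paper checks for $\Mod_K\cC$, shows that extensions of rigid objects are rigid.
\end{itemize}

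\paragraph{Finite length.} You need the same correspondence for finite length in your $\bk(t)$ case. You also need it for arbitrary, not finitely generated, $K$, which your reduction only covered for rigidity and not for length. Finite-dimensionality of endomorphism spaces alone does not bound length; compare Appendix~\ref{App}.
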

\begin{proof}
    The condition that $\bk$ is perfect implies that the simple objects in $\Mod_K\cC$ are direct summands of free $K$-modules $K\otimes L$, with $L$ simple in $\cC$ (this can be reduced to the corresponding statement for modules over finite dimensional algebras, see for instance \cite[Lemme~5.2(i)]{Del14}, in which case it follows from separability of finite dimensional simple algebras over $\bk$, see for instance \cite[Theorem~4.5.7]{Fo}). Since these have left and right monoidal duals in $\Mod_K\cC$, it follows from Proposition~\ref{prop:C} and Remark~\ref{rem:C} below that every finite-length object, so every object in $\cC_K$, is rigid. We briefly explain why the assumptions needed to apply Proposition~\ref{prop:C} are satisfied. The category $\cB=\Mod_K\cC$ is abelian, with cocontinuous tensor product $\otimes_K$. Existence of internal homomorphism objects follows from Freyd's special adjoint functor theorem.  Every object in $\Mod_K\cC$ is a quotient of a flat object $K\otimes M$, with $M\in\Ind\cC$. Finally, for every simple object $L_\alpha\in\cC$, consider the injective hull $I^\alpha$ of ${}^\ast L_\alpha$ in $\Ind\cC$ as a direct limit $\varinjlim I_i^\alpha$ in $\cC$.  Take its left dual to produce an inverse system $\{Q_i^\alpha=(I_i^\alpha)^\ast\}$. Then the set of inverse systems $\{K\otimes Q_{\bullet}^\alpha\mid \alpha\}$ in $\Mod_K\cC$ satisfies the final assumption in Appendix~\ref{App:C}.
\end{proof}

It then follows quickly, see for instance \cite[\S 5]{CEOP}, that answering Question~\ref{mainquestion} only requires considering finite purely inseparable extensions $K:\bk$. 

\begin{question}\label{q:fields}
    Let $K:\bk$ be a finite purely inseparable field extension and $\cC$ an artinian tensor category over $\bk$. Is $\cC_K$ a tensor category?
\end{question}

We give an overview of cases for which the answer to this question is known.

\begin{prop}\label{prop:exDel}
    Let $K:\bk$ be a finite purely inseparable field extension and $\cC$ an artinian tensor category over $\bk$. Then the extension of scalars $\cC_K$ of $\cC$ to $K$ is a tensor category over $K$ (i.e. $K$ is absolutely flat in $\cC$) in the following cases:
    \begin{enumerate}
        \item $\cC$ is super-tannakian (i.e. has a symmetric tensor functor to $\sVec_L$ for some field extension $L:\bk$);
        \item $\cC$ is semisimple;
        \item $\cC$ is finite or, more generally, admits a tensor functor to a finite tensor category over~$\Bbbk$.
    \end{enumerate}
\end{prop}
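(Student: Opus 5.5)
Our goal is to show that $K$ is absolutely flat in $\Ind\cC$, i.e.\ that $\cC_K$ is rigid. By the discussion after Proposition~\ref{prop:FieldExt}, together with Proposition~\ref{prop:C}, it suffices to show that every simple object of $\Mod_K\cC$ of finite length is rigid, or equivalently that $\Tor_1^K(-,-)$ vanishes on finitely presented $K$-modules. A unifying principle covers all three cases: if $F:\cC\to\Ds$ is a (faithful, exact) tensor functor and $K$ is absolutely flat in $\Ds$, then $K$ is absolutely flat in $\cC$. To prove this, consider a $K$-module $M\in\Ind\cC$ and a $K$-module $N$. Since $F$ extends to an exact faithful cocontinuous monoidal functor $\Ind\cC\to\Ind\Ds$ that commutes with $K\otimes-$ (as $K\in\VEC$), we get $F(\Tor_1^K(M,N))\cong\Tor_1^K(FM,FN)=0$. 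Faithfulness of $F$ then gives the vanishing. So in each case we reduce to a simpler target category.

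\textbf{Case (2).} When $\cC$ is semisimple, every object is a direct sum of simple objects $L$ with $\End(L)=D_L$ a finite-dimensional division algebra. A finitely presented $K$-module is then a direct sum of pieces $L\otimes_{D_L}N_L$, where $N_L$ is a $D_L\otimes_\bk K$-module. Flatness therefore reduces to the classical statement that $K\otimes_\bk D$-modules are flat over $K$, which holds trivially since $K$ is a field and $\Tor$ over $K$ is computed on underlying vector spaces. Put more directly, $\Tor_1^K(M,N)$ is a direct summand computed objectwise after applying the exact faithful functors $\Hom(L,-)$, and over a field everything is flat.

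\textbf{Case (1).} Here there is a tensor functor to $\sVec_L$. First we enlarge $L$ to contain $K$, replacing $L$ by a compositum field $L'$; the base change $\sVec_L\to\sVec_{L'}$ is an exact faithful tensor functor. In $\sVec_{L'}$ we have $K\otimes_\bk L'$, which is an artinian local $L'$-algebra (possibly non-reduced). Its modules in $\sVec_{L'}$ are super $K\otimes L'$-modules, and $\Tor^K$ computed there is $\Tor$ over the field $K$ of underlying $\bk$-spaces, which vanishes. Hence $K$ is absolutely flat in $\sVec_{L'}$ (indeed in $\VEC$-like categories), and the principle gives absolute flatness in $\cC$.

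\textbf{Case (3).} For the finite case I would invoke Theorem~\ref{thm:CSZ} with the simple algebra $K$. The difficulty is that $K\notin\cC$ is infinite-dimensional, but here $K$ is a finite extension, so $K\cong\unit^{\oplus[K:\bk]}$ does lie in $\cC$. It is simple as an algebra, since its ideals in $\Ind\cC$ are subspaces of $K\otimes\unit$ and $\unit$ is simple with $\End(\unit)=\bk$, so they are $K$-ideals. Theorem~\ref{thm:CSZ} then yields (1). The general statement follows by the principle above.

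The main obstacle I expect is verifying the compatibility $F(M\otimes_KN)\cong FM\otimes_KFN$ and the preservation of $\Tor$ for the ind-extension of $F$ into a category over a larger field $L$. This should follow because $F$ is exact, cocontinuous and monoidal, so it preserves the coequalisers and free resolutions $K\otimes X$.
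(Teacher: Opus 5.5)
\textbf{Case (3) and the transfer principle are fine, but cases (1) and (2) have a genuine gap: both rest on ``$\Tor^K$ vanishes because $K$ is a field'', and that is exactly what is unavailable.}

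Your transfer principle is correct. $F$ sends $K\otimes X$ to $F(K)\otimes F(X)$, which is flat over $F(K)$, so $F(\Tor_1^K(M,N))\cong\Tor_1^{F(K)}(FM,FN)$, and faithfulness reflects vanishing. Your case (3) is the paper's argument: $K\cong\unit^{\oplus[K:\bk]}$ is a simple algebra in $\cC$, Theorem~\ref{thm:CSZ} applies, and extending along tensor functors is \cite[Lemma~5.2.5]{CEOP}.

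\textbf{Case (1).} The image of $K$ in $\Ind\sVec_{L'}$ is the $L'$-algebra $F(K)=K\otimes_\bk L'$. The tensor product you must control is therefore $\otimes_{K\otimes_\bk L'}$, not $\otimes_K$ of underlying spaces; your last paragraph writes $FM\otimes_K FN$, and this is where the argument breaks. Since $L'\supseteq K$ and $K:\bk$ is non-trivial purely inseparable, $F(K)$ is non-reduced. For $\bk=\mathbb{F}_p(t)$ and $K=\bk(t^{1/p})$ it is $L'[x]/(x-t^{1/p})^p$, whose residue field has non-zero $\Tor_1$ with itself. So $K$ is \emph{not} absolutely flat in $\sVec_{L'}$; this already happens for $\Vec_\bk\to\Vec_{L'}$, so the principle has no input. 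Enlarging $L$ is exactly the wrong move. Without it, your argument only works when $K\otimes_\bk L$ happens to be reduced (e.g.\ $L=\bk$ or $L:\bk$ separable), which is the easy case.

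The non-neutral case needs $\omega(M)$ to be flat over $R:=K\otimes_\bk L$ for every $K$-module $M$, and that uses more than the existence of $\omega$. One way is transitivity of the fundamental (super)groupoid: $\mathcal{O}(\underline{\mathrm{Aut}}^{\otimes}_{\bk}\omega)$ is faithfully flat over $L\otimes_\bk L$. Set $B:=\mathcal{O}(\underline{\mathrm{Aut}}^{\otimes}_{\bk}\omega)\otimes_\bk K$. The $K$-linear comodule structure gives $B\otimes_{s,R}\omega(M)\cong B\otimes_{t,R}\omega(M)$. The right-hand side is flat over $R$ via $s$, because $\omega(M)$ is flat over the field $K$. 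Faithfully flat descent along $s$ then gives flatness of $\omega(M)$. The paper itself just cites \cite[Theorem~5.3.1]{CEOP}, which generalises \cite{Del14}.

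\textbf{Case (2).} The same error appears in another guise. The functors $\Hom(L,-)$ are exact and faithful but not monoidal, so $\Hom(L,M\otimes_K N)$ is not obtained from the isotypic pieces of $M$ and $N$ by tensoring over $K$; the fusion rules of $\cC$ intervene. Every artinian $\cC$ has exact faithful $\bk$-linear non-monoidal functors to $\VEC$ compatible with the $K$-action. So if this reasoning were valid, it would settle Question~\ref{q:fields} for all $\cC$, which the paper leaves open.

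What semisimplicity actually gives is an equivalence of abelian categories $\Mod_K\cC\simeq\prod_L\Mod(D_L^{\mathrm{op}}\otimes_\bk K)$. If every $D_L^{\mathrm{op}}\otimes_\bk K$ is semisimple, each finitely presented $K$-module is a direct summand of a rigid $K\otimes X$, and you are done. This can fail when the centre of some $D_L$ is inseparable over $\bk$. Then there are simple $K$-modules that are not summands of free ones. For instance, if $\End(L)\cong K$, the object $L$ with $K$ acting through this isomorphism corresponds to the non-projective simple module of the local ring $K\otimes_\bk K$. Proving such objects rigid is the content of \cite[Theorem~5.4.1]{CEOP}, which the paper cites, and your proposal has no argument for it.
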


\begin{proof}
    Case (1) is \cite[Theorem~5.3.1]{CEOP}, which generalised \cite[Th\'eor\`eme~5.4]{Del14}. Case (2) is \cite[Theorem~5.4.1]{CEOP}. Case (3) is an example of Theorem~\ref{thm:CSZ}, where the extension via tensor functors follows from \cite[Lemma~5.2.5]{CEOP}.
\end{proof}

\subsection{Superexponential growth}

In this section we show via two examples that the behaviour of commutative algebras in pretannakian categories of superexponential growth is more pathological than our results in moderate growth indicate.

\begin{lemma}\label{lem:Dela}
    For an arbitrary field $\bk$, the Delannoy category $\Ds$ from \cite{HSS} contains a commutative algebra $A\in\Ds$ that has no nilpotent ideals (is semisimple) but is not a product of simple algebras. 
\end{lemma}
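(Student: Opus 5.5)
The plan is to take $A=\mathcal{C}(\mathbf{R})$, the Schwartz space of the basic $G$-set, where $G=\mathrm{Aut}(\mathbf{R},<)$ is the oligomorphic group underlying $\Ds$. I would make it an algebra under pointwise multiplication, so the multiplication is restriction to the diagonal $\mathcal{C}(\mathbf{R}^2)\to\mathcal{C}(\mathbf{R})$. It is commutative and lies in $\Ds$ itself, hence is artinian. Concretely, I would work with the realisation from \cite{HSS} of $\Ds$ as a full subcategory of smooth $G$-representations over $\bk$. Under it, $A$ is the $\bk$-algebra of functions $\mathbf{R}\to\bk$ spanned by indicator functions of points and of open intervals with endpoints in $\mathbf{R}\cup\{\pm\infty\}$. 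Subobjects of $A$ are $G$-stable subspaces, so ideals of $A$ in $\Ds$ are $G$-stable ideals of this ordinary ring. I would also use that the relevant subrepresentations of objects of $\Ds$ again lie in $\Ds$, which should be available from \cite{HSS}.

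The argument has three steps.

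First, $A$ is not a product of simple algebras. A nontrivial factorisation $A\cong A_1\times A_2$ would produce a nontrivial idempotent in $\Gamma(A)=\Hom(\unit,A)$, which is the space of $G$-invariant functions. Since $G$ acts transitively on $\mathbf{R}$, the invariant functions are just the constants, so $\Gamma(A)=\bk$ and $A$ has no nontrivial idempotents. So $A$ is indecomposable as an algebra, and it suffices to show that $A$ is not simple.

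Second, $A$ has a proper nonzero ideal. I would take $I\subset A$ to be the span of those Schwartz functions that vanish on some neighbourhood $(a,+\infty)$ of $+\infty$. This space is $G$-stable, because $G$ preserves the order and hence neighbourhoods of $+\infty$. It is closed under multiplication by arbitrary elements of $A$. It contains the indicator of a point, so it is nonzero, and it does not contain the constant function $1$, so it is proper.

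Third, $A$ has no nonzero nilpotent ideals. The underlying ring is a ring of $\bk$-valued functions, hence reduced. A nilpotent ideal $J$ of $A$ in $\Ds$ gives a $G$-stable ideal of this ring all of whose elements are nilpotent, so every element vanishes and $J=0$. Hence $\JJ(A)=0$ by Lemma~\ref{lem:Jart}, and $A$ is semisimple in the sense of the paper.

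The main obstacle is the second step. One has to check that $I$ genuinely defines a subobject of $A$ in $\Ds$ and not only a $G$-stable subspace, that is, that the realisation of $\Ds$ inside smooth representations is closed under subobjects for $\mathcal{C}(\mathbf{R})$. My first attempt would use the explicit decomposition of $\mathcal{C}(\mathbf{R})$ into simple objects given in \cite{HSS}, which I recall has length $3$ with a trivial summand and two further simples attached to the two ends $\pm\infty$. I would identify $I$ with the sum of the non-trivial summand(s) corresponding to $-\infty$ (together with any further summand needed to make it an ideal) and verify directly that this subobject is closed under the multiplication $A\otimes A\to A$.
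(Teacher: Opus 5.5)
Your second step fails, and it cannot be patched. With pointwise multiplication, $\mathcal{C}(\mathbf{R})$ is a \emph{simple} commutative algebra in $\Ds$. This is exactly the input the paper's proof takes from \cite{HSS} and \cite{HS}. So $\mathcal{C}(\mathbf{R})$ has no proper nonzero ideal in $\Ds$: it is itself a one-factor product of simple algebras and cannot be the example.

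The faulty premise is that $\Ds$ sits fully inside smooth $G$-representations with subobjects equal to $G$-stable subspaces. The passage to $G$-representations is faithful, but it is neither full nor subobject-reflecting. $\End_{\Ds}(\mathcal{C}(\mathbf{R}))$ is only $3$-dimensional. It is spanned by the operators with $G$-invariant kernels $1_{\{x=y\}}$, $1_{\{x<y\}}$, $1_{\{x>y\}}$, acting by integration against $\mu$. By contrast, $f\mapsto(\lim_{x\to+\infty}f(x))\cdot 1$ is an additional $G$-equivariant map.

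Since $\dim\End=3$, the summands in $\mathcal{C}(\mathbf{R})\cong\unit\oplus L\oplus L^\ast$ are pairwise non-isomorphic. Hence there are exactly eight subobjects, and each is stable under every endomorphism. Your $I$ is not. The endomorphism $K_<\colon f\mapsto\bigl(y\mapsto\int_{(-\infty,y)}f\,d\mu\bigr)$ sends $\delta_a\in I$ to $1_{(a,+\infty)}\notin I$.

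Concretely, take the Delannoy measure, with value $1$ on a point and $-1$ on an open interval. Then $-K_<$ and $-K_>$ are idempotents. Their images $\unit\oplus L$ and $\unit\oplus L^\ast$ are the left-continuous, resp.\ right-continuous, step functions. Also $L=\operatorname{span}\{1_{(-\infty,a]}\}$ and $L^\ast=\operatorname{span}\{1_{[a,+\infty)}\}$. So your recollection that the summands sit at the two ends is only half right, and $I$ strictly contains $L$. Your fallback also fails: $\delta_0\cdot 1_{(-\infty,0]}=\delta_0$ lies in neither $L$, $L^\ast$ nor $L\oplus L^\ast$. So no subobject avoiding $\unit$ is an ideal.

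The repair is the paper's route: pass to the subalgebra $A=\unit\oplus L$. These are the left-continuous step functions, spanned by $1$ and the $1_{(-\infty,a]}$.
\begin{itemize}
\item $A$ is closed under multiplication. The paper deduces this from the fusion rule that $L\otimes L$ has no constituent $\unit$ or $L^\ast$.
\item Its only ideals in $\Ds$ are $0\subset L\subset A$, since the constants $\unit$ do not form an ideal.
\item $L\cdot L=L\neq 0$, because each $1_{(-\infty,a]}$ is idempotent. Abstractly, $L\otimes L\to L$ cannot vanish, since otherwise $L$ would generate a nilpotent ideal of the simple algebra $\mathcal{C}(\mathbf{R})$.
\end{itemize}
Hence the only nilpotent ideal is $0$, so $\JJ(A)=0$. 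Your first step then applies unchanged: $\Gamma(A)=\bk$ together with the proper nonzero ideal $L$ shows $A$ is not a product of simple algebras.
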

\begin{proof} We recall that $\Ds$ is semisimple for any $\bk$, see \cite{HSS}.
By \cite[Theorem~4.7]{HSS} and \cite[4.10, 4.11 and~5.2(b)]{HS}, $\Ds$ contains a simple commutative algebra $B$ (the Schwartz space of the real line), which as an object in $\Ds$ is given by
\[B\;=\; L\oplus \unit\oplus L^\ast,\]
for a simple object $L$. Moreover, since $L\otimes L$ contains only $L$, see \cite[Proposition~7.5]{HSS}, up to direct summands that do not occur in $B$, it follows that $A:=L\oplus \unit$ is a subalgebra with ideal $L$. Since $B$ is simple, the multiplication map $L\otimes L\to L$ cannot be zero, otherwise $L$ would generate a nilpotent ideal in $B$. Hence the ideal $L$ in $A$ is not nilpotent either, so $\mathcal{J}(A)=0$.  However $A$ is not a product of simple algebras.
\end{proof}

To set up the next example, fix $\bk=\mathbb{C}$ and $t\in\mathbb{C}\backslash\mZ$. Then we use the description of Deligne's category $(\Rep GL)_t$ from \cite[\S 10]{Del07} as the universal symmetric tensor category on an object $X_t$ of dimension $t$. We also consider $(\Rep S)_t$ from \cite{Del07}, the universal symmetric tensor category on a special Frobenius algebra $V_t$ of dimension $t$. Both are semisimple pretannakian categories. We have symmetric tensor functors
$$\Phi:(\Rep GL)_t\to(\Rep S)_t,\quad X_t\mapsto V_t$$
and
$$\Psi:(\Rep GL)_t\to(\Rep S)_t\boxtimes \sVec,\quad X_t\mapsto V_t\boxtimes\unit\oplus \unit\boxtimes\unit\oplus \unit\boxtimes \bar{\unit}.$$
Both tensor functors are surjective, and so by semi-simplicity, their right adjoints $\Phi_\ast$ and $\Psi_\ast$ are exact and faithful. It follows that the ind-algebras $A=\Phi_\ast(\unit)$ and $B=\Psi_\ast(\unit)$ in $(\Rep GL)_t$ are such that their module categories are the target categories, see \cite[Lemma~6.2.1]{CEO}. We let $R\in\CAlg(\Rep GL)_t$ denote a simple quotient algebra of $A\otimes B$.

\begin{prop}\label{lem:counter}
    The simple commutative ind-algebra $R$ in $(\Rep GL)_t$ satisfies $\Gamma(R)=\bk$ and is {\bf not} absolutely flat.
\end{prop}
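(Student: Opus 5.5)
The plan has two parts: first show $\Gamma(R)=\bk$ by a countability argument, then assume $R$ is absolutely flat and compare the two routes by which $(\Rep GL)_t$ maps into $\mod_R\cC$.

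\emph{Step 1: $\Gamma(R)=\bk$.} $(\Rep GL)_t$ has countably many isomorphism classes of simple objects, and all morphism spaces are finite dimensional. The ind-objects $A=\Phi_\ast(\unit)$ and $B=\Psi_\ast(\unit)$ have each simple isotypic component of finite multiplicity. Indeed, $\Hom(L,\Phi_\ast\unit)=\Hom(\Phi L,\unit)$ is finite dimensional, and similarly for $\Psi$. Hence $A\otimes B$ is a countable direct sum of simples, $\Gamma(A\otimes B)$ has countable dimension over $\bk=\mC$, and so does its quotient $\Gamma(R)$. Since $R$ is simple, $\Gamma(R)\cong\End_R(R)$ is a field: a nonzero $f\in\Gamma(R)$ has image a nonzero ideal, hence equal to $R$, and kernel a proper ideal, hence $0$. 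A field extension of $\mC$ of countable dimension is algebraic, because for transcendental $x$ the elements $(x-a)^{-1}$, $a\in\mC$, are linearly independent. Therefore $\Gamma(R)=\mC$.

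\emph{Step 2: set-up of the contradiction.} Suppose $R$ is absolutely flat. By Proposition~\ref{prop:absflattensor2}, $\Es:=\mod_R\cC$ is then a pretannakian category over $\Gamma(R)=\mC$. Since $R$ is an $A$-algebra and a $B$-algebra, and $\Mod_A\cC\simeq\Ind(\Rep S)_t$ and $\Mod_B\cC\simeq\Ind((\Rep S)_t\boxtimes\sVec)$ by \cite[Lemma~6.2.1]{CEO}, base change $R\otimes_A-$ and $R\otimes_B-$ gives symmetric tensor functors
$$G:(\Rep S)_t\to\Es,\qquad G':(\Rep S)_t\boxtimes\sVec\to\Es.$$
Both satisfy $G\circ\Phi\cong R\otimes-\cong G'\circ\Psi$ on $(\Rep GL)_t$. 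In particular
$$G(V_t)\;\cong\;G'(V_t\boxtimes\unit)\oplus\unit\oplus G'(\unit\boxtimes\bar\unit)\quad\mbox{in }\Es.$$

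\emph{Step 3: deriving the contradiction by counting Hom spaces.} I would show that $G$ and $G'$ are fully faithful. For this I would invoke incompressibility of $(\Rep S)_t$ and $(\Rep S)_t\boxtimes\sVec$ for $t\notin\mZ$ (from \cite{CEO}), or alternatively argue directly that a symmetric tensor functor out of these semisimple universal categories into a pretannakian category over $\mC$ is an embedding. Granting full faithfulness, compute $\dim\End$ of the image of $X_t$ in two ways.
\begin{itemize}
\item Via $G$: since $V_t\cong\unit\oplus h$ with $h$ simple and $h\neq\unit$, we get $\dim\End(V_t)=2$.
\item Via $G'$: the object $V_t\boxtimes\unit\oplus\unit\oplus\unit\boxtimes\bar\unit$ has endomorphism space of dimension $2+1+1+2=6$. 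The two cross terms come from $\Hom(V_t,\unit)$ and $\Hom(\unit,V_t)$, and there are no maps involving $\bar\unit$ except its own scalars.
\end{itemize}
Since $2\neq 6$, this contradicts Step~2, so $R$ is not absolutely flat.

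The main obstacle is the full faithfulness in Step 3. If incompressibility is not available in the needed form, I would fall back on an invariant that does not require fullness. One option is to use the universal property: $G$ corresponds to a special commutative Frobenius algebra of dimension $t$ in $\Es$, and I would try to show that $\unit\oplus\Pi$, with $\Pi=G'(\unit\boxtimes\bar\unit)$ invertible of dimension $-1$, cannot split off compatibly. Another option is to compare the behaviour of $\Lambda^n$ and $S^n$ of both descriptions, since $S^2\Pi=0$ while $S^n$ of the images of $V_t$ never vanish for $t\notin\mZ$.
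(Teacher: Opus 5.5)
Your Step~1 is correct; it is the paper's countability argument with the details spelled out. The proof breaks down after that.

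\emph{Step~2 overclaims.} Proposition~\ref{prop:absflattensor2} only makes $\mod_R\cC$ pretannakian when $R$ is also artinian. That is exactly what is unknown here: the paper stresses it cannot decide whether $R$ is artinian. Absolute flatness alone gives an abelian rigid category with exact $\otimes_R$ and $\End(\unit)=\mC$, possibly with infinite-dimensional Hom spaces.

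\emph{Step~3 fails.} The full faithfulness of $G$ and $G'$ is simply not available, because $(\Rep S)_t$ is \emph{not} incompressible. For example, $V_{t-1}\times\unit$ is a special commutative Frobenius algebra of dimension $t$ in $(\Rep S)_{t-1}$. The universal property therefore gives a symmetric tensor functor $(\Rep S)_t\to(\Rep S)_{t-1}$ with $V_t\mapsto V_{t-1}\oplus\unit$, under which $\dim\End$ jumps from $2$ to $5$. Tensoring with $\sVec$ gives the same for $(\Rep S)_t\boxtimes\sVec$. Indeed \cite[Conjecture~C]{CEO} predicts that nothing of superexponential growth is incompressible, so \cite{CEO} cannot supply what you cite. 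Without fullness your count only gives $\dim\End\geq 2$ and $\dim\End\geq 6$ for the same object, which is no contradiction.

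Your fallbacks do not work as stated either. Since $G\circ\Phi\cong G'\circ\Psi$, every invariant of the underlying object $R\otimes X_t$ (dimensions, $S^n$, $\Lambda^n$) is automatically the same for both descriptions.

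The missing idea is to play the \emph{algebra} structure transported via $G$ against the \emph{decomposition} transported via $G'$, which is what the paper does.
\begin{enumerate}
\item The object $S:=R\otimes X_t\cong G(V_t)$ is a separable commutative algebra in $\Mod_R$: the comultiplication of the special Frobenius algebra $V_t$ is a bimodule splitting of the multiplication.
\item If $\otimes_R$ were exact, every $S$-module would be a direct summand of a free one $S\otimes_R N$, so $\otimes_S$ would be exact too.
\item Applying $-\otimes_S S/I$ and then $I\otimes_S-$ to $0\to I\to S\to S/I\to 0$ then shows that every ideal $I$ of $S$ is idempotent.
\item On the other hand, the direct summand $\Pi=G'(\unit\boxtimes\bar{\unit})$ of $S$ is nonzero with vanishing symmetric square. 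By commutativity, $S\Pi$ is then a nonzero ideal with $(S\Pi)^2=0$, a contradiction.
\end{enumerate}
This argument uses only exactness of $\otimes_R$, so it also avoids the artinian issue in your Step~2.
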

\begin{proof}
We start by arguing that $\Gamma(R)=\bk$. For this we can observe that the lengths of the objects $A$ and $B$ are countable since, by construction, they satisfy \eqref{eq:finHom} and there are only countably many simple objects in $(\Rep GL)_t$. Hence the length of $R$ is also countable, so the field extension $\Gamma(R):\bk$ must be $\bk=\mathbb{C}$ itself.

Consider the category of finitely presented $R$-modules $\cC:=\mod_{R}(\Rep GL)_t$. We have a commutative square of exact $\bk$-linear symmetric monoidal functors, that can all be interpreted as extensions of scalars:
    $$\xymatrix{
(\Rep GL)_t\ar[rr]^-{\Phi=A\otimes-}\ar[d]^-{\Psi=B\otimes-}&& (\Rep S)_t\ar[d]^-{R\otimes_A-}\\
(\Rep S)_t\boxtimes\sVec\ar[rr]^-{R\otimes_B-}&&\cC.
    }$$
Note that the functors with target $\cC$ are exact by absolute flatness of $A,B$, or simply because right exact monoidal functors out of tensor categories must always be exact.

In the proof of \cite[Proposition~5.1.1]{CEO}, it was argued that $\Phi$ and $\Psi$, with $\Phi$ re-interpreted as a functor to $(\Rep S)_t\boxtimes\sVec\supset (\Rep S)_t$ cannot be equalised by a symmetric tensor functor out of $(\Rep S)_t\boxtimes\sVec$. We will use a similar argument to shows that $\cC$ cannot have exact tensor product.

The object $S:=R\otimes X_t$ in $\cC$ has a separable algebra structure, as the image of the separable algebra $V_t$ under $R\otimes_A-$. This means that there exists a bimodule section $S\to S\otimes S$ of the multiplication morphism.

Now assume for a contradiction that $R$ were absolutely flat, so that the tensor product on $\cC:=\mod_{R}(\Rep GL)_t$ is exact.
Using separability of $S$ it then follows that its module category $\Mod_S\cC$, which is defined as the category of $S$-modules in the abelian category $\Ind\cC=\Mod_{R}(\Rep GL)_t$, also has exact tensor product (the main observation being that any $S$-module $M$ is a direct summand of the free $S$-module $S\otimes \omega(M)$, where $\omega$ forgets the $S$-action and retains the underlying object in $\cC$). However, $S$ has a non-zero direct summand with zero second symmetric power. Indeed, consider the image of $ \unit\boxtimes\bar{\unit}$ under $R\otimes_B-$. This summand thus generates a non-zero nilpotent ideal.
    
    We thus have obtained a commutative algebra $S$ in $\Ind\cC$ such that the category of $S$-modules in $\Ind\cC$ has exact tensor product, but which has a non-zero nilpotent ideal. However, for any (non-zero) ideal $I$ in $S$, consider the short exact sequence of $S$-bimodules
    $$0\to I\to S\to S/I\to 0.$$
    Applying $-\otimes_SS/I$ produces an exact sequence, but since the map $S/I\to S/I\otimes_S S/I$ is a monomorphism, we find $I\otimes_SS/I=0$. Applying $I\otimes_S-$ to the same displayed short exact sequence then shows that multiplication induces an isomorphism $I\otimes_SI\to I$, in particular $I$ is idempotent and thus not nilpotent. We hence reached a contradiction, showing that $R$ was not absolutely flat.
\end{proof}

\begin{remark}
    The algebras $A$ and $B$ are not finitely generated (although they are artinian), so we have no control over finiteness conditions on $R$, besides $\Gamma(R)=\bk$. Therefore, we do not know whether $R$ is an example of a non-artinian simple commutative algebra, or an example of a non-absolutely flat artinian simple algebra. 
\end{remark}

\begin{remark}
Even for absolutely flat algebras $A$, the condition in $\Gamma(A)=\bk$ itself does not imply condition \eqref{eq:finHom}. To give a concrete example over $\Bbbk=\mathbb{C}$ take $t\in\mathbb{C}\backslash\mathbb{Z}$ and consider the simple absolutely flat algebras $A_i$ given as $F_\ast(\unit)$ for $F:(\Rep GL)_t\to (\Rep GL)_{t-i}$, $X_t\mapsto \unit^i\oplus X_{t-i}$. Then we have the absolutely flat algebra $A:=\varinjlim_{i}A_i$ satisfying $\Gamma(A)=\varinjlim \Gamma(A_i)=\bk$, while $\Hom(X_t,A)$ is infinite-dimensional.
\end{remark}


\section{Pretannakian categories of moderate growth}\label{sec:modgr1}

In this section, we assume that the base field $\bk$ is algebraically closed and $\Cs,\Ds,\Es$ will always be pretannakian categories over $\bk$.

\subsection{Overview}

It was proved in \cite[Theorem~5.2.1]{CEO} that every pretannakian category of moderate growth admits a tensor functor to an `incompressible' one. Based on results and conjectures in \cite{BEO, C1, GR, CEO} a reasonable hypothetical intrinsic characterisation of these incompressible categories has formed: they are expected to be the pretannakian categories that satisfy the properties GR and MN, recalled below. We henceforth call a pretannakian category $\cC$ that admits a symmetric tensor functor to a pretannakian category $\Ds$ that is GR and MN, \emph{well-fibered}. For example, if we accept \cite[Conjecture~1.4]{BEO} and \cite[Conjecture~3.2.3]{C1} as true, then it would follow that every pretannakian category of moderate growth is well-fibered.

\begin{thm}\label{thm:main} Let $\Cs$ be a well-fibered pretannakian category.
     For a commutative artinian ind-algebra $A$ in $\Cs$, the following properties are all equivalent.
    \begin{enumerate}
     \item $A$ is absolutely flat, i.e. $\Tor_1^A=0$;
     \item $\uExt^1_A(M,-)=0$ for every finitely presented $A$-module $M$;
    \item $A$ is a product of simple algebras; 
        \item $A$ is semisimple, i.e. $\JJ(A)=0$;
        \item the only nilpotent ideal in $A$ is $0$;

         \item[(1*)] the monoidal category $\mod_A\Cs$ is rigid.
        \end{enumerate}
        Moreover, any simple commutative ind-algebra in $\cC$ is artinian.
\end{thm}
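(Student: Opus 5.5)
By Theorem~\ref{prop:consolidate2} we already have $(1^*)\Leftrightarrow(1)\Leftrightarrow(2)\Rightarrow(3)\Rightarrow(4)\Leftrightarrow(5)$ for any commutative artinian $A$. Two things remain: (i) $(5)\Rightarrow(1)$ (or $(5)\Rightarrow(3)$ together with $(3)\Rightarrow(1)$), and (ii) every simple commutative ind-algebra is artinian. My plan is to reduce both to statements about simple algebras and then transport everything along the fibre functor. Let $F:\cC\to\Ds$ be a symmetric tensor functor with $\Ds$ GR+MN. The right adjoint $F_\ast$ is lax monoidal, so $F_\ast(\unit)=:B$ is a commutative ind-algebra in $\cC$, and $\Mod_B\cC\simeq\Ind\Ds$ (as in \cite[Lemma~6.2.1]{CEO}). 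For a commutative $A$ in $\cC$, the algebra $F(A)$ in $\Ind\Ds$ corresponds to $A\otimes B$.

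First I would treat $(5)\Rightarrow(3)$ for artinian $A$. Since $A$ is artinian, its ideal lattice satisfies DCC, so there are only finitely many minimal prime ideals $\mathfrak p_1,\dots,\mathfrak p_n$. Their intersection is the nilradical, and it is nilpotent by the proof of Lemma~\ref{lem:Jart}, hence zero under (5). I then want to show that each $\mathfrak p_i$ is maximal, i.e.\ that an artinian commutative \emph{domain} in the categorical sense is simple. Granting this, $\mathfrak p_i+\mathfrak p_j=A$ for $i\neq j$, and Lemma~\ref{lemma CRT} gives $A\cong\prod A/\mathfrak p_i$, which is (3). For the step ``artinian prime $\Rightarrow$ simple'' I would localise at the non-zero divisors in $\Gamma(A)$, pass to $Q(A)$, and use artinianity to show that elements of $\Gamma(A)$ act by injective, hence bijective, maps on each finite-length $A\otimes X$. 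I expect some GR+MN input here as well. The cleanest route would be to prove that in $\Ds$ a prime artinian commutative ind-algebra is simple, using that MN forces nilpotent objects to generate nilpotent ideals, and then pull this back. The pullback needs that $F(A)$ is still artinian and that its primes lie over those of $A$, which holds because $B$ is faithfully flat as an $\cC$-module.

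Next I would prove $(3)\Rightarrow(1)$, which amounts to showing that every simple commutative ind-algebra $A$ is absolutely flat and artinian; this also settles the final claim. This is the main obstacle. The plan is to work in $\Ds$: take a maximal ideal of $A\otimes B=F(A)$ and consider the simple quotient $A'$, which is faithfully flat over $A$ because $A$ is simple and $A\to A'$ is injective. Flatness descends along faithfully flat extensions, and by Proposition~\ref{prop:absflattensor2} it suffices to check that finitely presented $A$-modules become rigid after base change. So it is enough to show that simple commutative ind-algebras in a GR+MN category $\Ds$ are absolutely flat and artinian. For that I would use GR (geometric reductivity: invariants of finitely generated algebras are finitely generated and surjectivity is preserved on invariants, up to Frobenius powers) to show that $\Gamma(A')$ is a field $K$. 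I would then use Appendix~\ref{app:Del} (neutrality of tannakian-type categories over GR+MN bases) to identify $\mod_{A'}\Ds$ over $\overline{K}$ with modules over a function algebra of a quotient $G/H$, mirroring Theorem~B(1) and Magid's argument. In that picture, flatness and artinianity follow from exactness of $H$ and from the finite length of induced modules.

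Finally, artinianity of simple ind-algebras should also descend: $A\otimes X$ embeds into $A'\otimes X$, and length can be detected after the faithfully flat base change because $F$ is faithful and exact. Assembling these steps gives the full chain of equivalences and the last sentence of the theorem.
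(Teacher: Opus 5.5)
The skeleton is right. Reducing via Theorem~\ref{prop:consolidate2} to $(5)\Rightarrow(3)$ plus ``every simple commutative ind-algebra is absolutely flat and artinian'' is exactly how the paper argues. Your minimal-prime bookkeeping is also fine: there are finitely many minimal primes under DCC, and $\operatorname{Nil}(A)\subseteq\JJ(A)$ is nilpotent.

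\textbf{First gap: ``artinian prime $\Rightarrow$ simple''.} This step is false in a general pretannakian category, so it needs a property specific to well-fibered categories, and neither of your tools supplies one. The Delannoy algebra $A=L\oplus\unit$ of Lemma~\ref{lem:Dela} is artinian. Its only ideals are $0$, $L$, $A$, with $L^2=L$, so $0$ is prime, yet $A$ is not simple. There $\Gamma(A)=\bk$, so localising at non-zero divisors of $\Gamma(A)$ does nothing; the obstruction sits in ideals not generated by global sections. Pulling back from $\Ds$ fails because $F(A)$ is usually not artinian. For $\cC=\Rep G$, $\Ds=\Vec$ and $A=\bk[G]$ (simple, and artinian since $\Mod_A\cC\simeq\VEC$), $F(A)=\bk[G]$ is not an artinian ring once $\dim G>0$. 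The missing ingredient is the Nakayama property of well-fibered categories, \cite[Thm.~6.1.5]{C1}. It gives Lemma~\ref{lemma simples GR MN}: annihilators of simple modules are maximal, so a minimal ideal $I$ with $I^2=I$ splits off as $A\cong A/I\times A/\operatorname{Ann}_A(I)$ by Lemma~\ref{lemma CRT}. Induction on the length of $A$ then yields $(4)\Rightarrow(3)$ (Corollary~\ref{cor:NakSS}). The same lemma is what proves your ``artinian domain is simple'' (Lemma~\ref{lemma DCC}(1)).

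\textbf{Second gap: faithful flatness of $A\to A'$.} For $(3)\Rightarrow(1)$ you assert this ``because $A$ is simple and $A\to A'$ is injective''. That has no justification. Flatness of modules over a simple $A$ is exactly the absolute flatness you are trying to prove, and a monomorphism out of a simple algebra need not be flat.

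\begin{itemize}
\item $A'$ is a simple algebra in $\Ind\Ds$, so by MN it is already a field extension $\bk'$ of $\bk$; GR plays no role here. If the faithful flatness were available, everything else would follow at once from Proposition~\ref{prop:FieldExt}. All the difficulty is hidden in that one sentence.
\item In the paper's tannakian picture $\cC\subset\Rep^fG$, the map $A\to F_\ast(A')$ is the orbit map $A\hookrightarrow\bk'[G_{\bk'}]$ at a closed point (Lemma~\ref{lemma orbit embedding}). Its faithful flatness is only available a posteriori, from the structure theorem $A\otimes_{A^G}\mathbb{L}\cong\mathbb{L}[G_{\mathbb{L}}]^H$ with $G/H$ affine, combined with \cite[Lemma~5.3]{CS}.
\end{itemize}

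Your sketch of that structure theorem leaves out its substance, which consists of four steps:
\begin{itemize}
\item Reduce to $A^G=\bk$ with a $\bk$-point (Lemma~\ref{lemma base change simple}). Appendix~\ref{app:Del} is used here to produce the rational point, not to identify module categories.
\item Embed $A\subseteq\bk[G]^H$, with $H$ the stabiliser of that point.
\item Prove equality. First show $Q(A)=Q(\bk[G]^H)$ via finitely generated pieces $A_\alpha$ whose orbit maps are dense open immersions (Lemmas~\ref{lem:Aab}--\ref{lemma ratl fnts equal}). Then use the largest rational $G$-subalgebra of $Q(\bk[G]^H)$ (Corollary~\ref{cor obs}).
\item Deduce exactness of $H$ from simplicity of $\bk[G]^H$ (Lemma~\ref{thm simp meand ex}).
\end{itemize}
Only after this do Corollary~\ref{cor exact is flat} and descent along $A^G\to\mathbb{L}$ give absolute flatness and artinianity (Corollary~\ref{cor pretann simple comm alg}). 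Your final descent of artinianity is correct, but it is conditional on the faithful flatness you have not established.
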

\begin{proof}
    Due to Theorem~\ref{prop:consolidate2}, for the equivalent properties, we only need to show that (4) implies (3) and that (3) implies (1). The former will be proved in Corollary~\ref{cor:NakSS} below, and the latter will be Corollary~\ref{cor pretann simple comm alg} below. The latter also contains the final claim of the theorem.
\end{proof}

The advantage of well-fibered categories is that we can exploit the theory of commutative algebra and algebraic geometry as developed in \cite{C1, C2, CS}, which we rely on for the proof of Theorem~\ref{thm:main}.

Using the validity of the last sentence of Theorem~\ref{thm:main}, to be proved in Section~\ref{sec:simpleGRMN}, we also establish the following structural result for artinian algebras. Before we state it, recall that for a commutative algebra $A$, we write $\operatorname{Nil}(A)$ for the intersection of all prime ideals of $A$, or equivalently the union of all nilpotent subobjects of $A$ (\cite[\S2.2.4]{C1}).  We have the following.

\begin{thm}\label{thm artin comm rings GRMN} Let $\Cs$ be a well-fibered pretannakian category.
    For $A$ a commutative ind-algebra in $\Cs$, the following are equivalent:
    \begin{enumerate}
        \item $A$ is artinian;
        \item $A$ satisfies the descending chain condition (DCC) on ideals;
        \item $A$ satisfies the ascending chain condition (ACC) on ideals, and every prime ideal is maximal;
        \item $A$ satisfies ACC on ideals, and $A/\operatorname{Nil}A$ is semisimple.
        \item $A$ is a finite direct product of artinian local algebras.
    \end{enumerate}
\end{thm}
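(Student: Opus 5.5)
The natural order is to prove the cycle $(5)\Rightarrow(1)\Rightarrow(2)\Rightarrow(3)\Rightarrow(4)\Rightarrow(5)$. Throughout, $A$ is commutative in a well-fibered $\Cs$, and we use the commutative algebra and algebraic geometry of GR+MN categories from \cite{C1, C2, CS}. By the discussion before Theorem~\ref{thm artin comm rings GRMN}, we also use the final sentence of Theorem~\ref{thm:main}: simple commutative ind-algebras are artinian (and, by Theorem~\ref{thm:main}, absolutely flat).

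\emph{Trivial and formal implications.} $(5)\Rightarrow(2)$ holds because finite products of artinian local algebras are artinian, and DCC on ideals is inherited from DCC on $A$-submodules of $A=A\otimes\unit$. $(1)\Rightarrow(2)$ is the same remark. For $(2)\Rightarrow(3)$, we first show every prime $\mathfrak p$ is maximal. The quotient $A/\mathfrak p$ is a domain, in the sense of \cite{C1}, satisfying DCC. For a non-zero divisor $f\in\Gamma$ of a subquotient, the chain $f^nA$ stabilises, which forces $f$ to be invertible. We also need a version for ``elements'' $X\subset A/\mathfrak p$ with $X\in\cC$, via the ideals generated by $X^{\otimes n}$ or $S^n X$. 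Concluding that $A/\mathfrak p$ is simple should use the results of \cite{C1} that in GR+MN categories one can detect ideals via invariants after a faithfully flat extension. ACC from DCC is the categorical Hopkins--Levitzki argument. By DCC, $\operatorname{Nil}(A)$ is a finite intersection of maximal ideals $\m_1\cap\dots\cap\m_n$, since minimal primes are finite in number. Nilpotency of $\operatorname{Nil}(A)$ follows as in Lemma~\ref{lem:Jart}, since $\JJ(A)=\operatorname{Nil}(A)$ here. Then $A$ has a finite filtration $\m_1^{a_1}\cdots$ whose subquotients are modules over the simple algebras $A/\m_i$. These are artinian and absolutely flat, so their modules satisfying DCC are of finite length, hence noetherian. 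This gives ACC.

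\emph{The implications $(3)\Rightarrow(4)\Rightarrow(5)$.} For $(3)\Rightarrow(4)$: under ACC, $A$ has finitely many minimal primes, all maximal, so $A/\operatorname{Nil}A\cong\prod A/\m_i$ by Lemma~\ref{lemma CRT}. Each $A/\m_i$ is simple, hence artinian, so the product is semisimple by Theorem~\ref{prop:consolidate2}. For $(4)\Rightarrow(5)$: ACC gives that $\operatorname{Nil}A$ is nilpotent, using the noetherian theory of \cite{C1} where nilradicals of noetherian algebras are finitely generated. Semisimplicity of $A/\operatorname{Nil}A$, together with Theorem~\ref{thm:main} ((4)$\Rightarrow$(3)), gives $A/\operatorname{Nil}A=\prod_i A/\m_i$. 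The $\m_i$ are pairwise comaximal, so their powers are too. Since $\prod_i\m_i^{N}\subset(\operatorname{Nil}A)^N=0$, Lemma~\ref{lemma CRT} yields $A\cong\prod_i A/\m_i^N$, each factor being local.

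\emph{The implication $(5)\Rightarrow(1)$.} It suffices to treat an artinian local $A$ with maximal ideal $\m$ and $\m^N=0$. Each $\m^k/\m^{k+1}$ is a module over the simple algebra $A/\m$, and is finitely generated because $\m$ is finitely generated by ACC. Since $A/\m$ is artinian, each such layer has finite length as an $A/\m$-module. Applying this to $A\otimes X$ filtered by $\m^k(A\otimes X)$ shows that $A\otimes X$ has finite length.

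\emph{Main obstacle.} The hardest step is $(2)\Rightarrow(3)$, specifically showing that primes are maximal, equivalently that a DCC domain is simple. Ideals need not be generated by global sections, so the classical trick of multiplying by a single element fails. Here I would pass to the GR+MN target via the fibre functor and use the results of \cite{C1,CS} describing domains and their fraction fields in terms of $\Gamma$-invariants.
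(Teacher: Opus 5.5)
You have two gaps, and the second cannot be closed. First, in (2)$\Rightarrow$(3) you never show that a domain with DCC on ideals is simple; ``detecting ideals via invariants after a faithfully flat extension'' is not an argument. The missing ingredient is the Nakayama property, which well-fibered categories satisfy by \cite[Thm.~6.1.5]{C1}. The paper's Lemma~\ref{lemma DCC}(1) applies it as follows. A minimal non-zero proper ideal $I$ of the domain is a simple module with $I^2=I$. Lemma~\ref{lemma simples GR MN} then splits $A\cong A/I\times A/\operatorname{Ann}_A(I)$, which is impossible in a domain. Your own sketch closes the same way: for $J=AX$, DCC gives $J^n=J^{n+1}$, Nakayama gives $\operatorname{Ann}_A(J^n)+J=A$, and $\operatorname{Ann}_A(J^n)=0$ in a domain, so $J=A$.

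Second, and fatally, your claim that a module with DCC over the simple algebra $A/\m_i$ has finite length is false in $\Ind\cC$. Take $\cC=\Rep\mG_a$ over $\mC$, which is tannakian and hence well-fibered, and let $I=\mC[\mG_a]=\mC[x]$. Its subrepresentations are the $d/dx$-stable subspaces, namely $0$, the $\mC[x]_{\le n}$, and $I$. So $I$ has DCC but infinite length over the simple algebra $\unit$. The square-zero extension $A=\unit\oplus I$ has as ideals exactly $A$ and the subrepresentations of $I$. Hence $A$ satisfies DCC on ideals, yet it is local, not artinian, and fails ACC. So (2) implies none of the other conditions, and no proof of (2)$\Rightarrow$(3) can exist.

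The paper's proof breaks at exactly this point: it gets ACC from DCC via ``the obvious generalisation of \cite[Cor.~6.11]{AM}'', which needs precisely your false claim. What does hold is that (1), (3), (4), (5) are equivalent and imply (2). Condition (2) should therefore be strengthened, e.g.\ to finite length of $A$ as a module over itself. To repair your cycle, replace (1)$\Rightarrow$(2)$\Rightarrow$(3) by (1)$\Rightarrow$(3): an artinian $A$ has finite length, hence ACC, and primes are maximal by the Nakayama argument above. Your steps (3)$\Rightarrow$(4)$\Rightarrow$(5)$\Rightarrow$(1) then essentially match the paper's. One caveat: in (4)$\Rightarrow$(5) you need not invoke noetherianity in the sense of \cite{C1}, which the paper does not know follows from ACC. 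ACC on ideals already makes $\operatorname{Nil}A$ generated by a compact, hence nilpotent, subobject.
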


This follows from Theorem~\ref{thm:artinianv2} and Corollary~\ref{cor pretann simple comm alg} below, and we stress that our arguments in  Section~\ref{sec:simpleGRMN} do not rely on this Theorem~\ref{thm artin comm rings GRMN}

\subsection{Properties GR and MN}We recall from \cite{C1} the following definitions.

\begin{definition}
    We say $\Ds$ is 
    \begin{enumerate}
        \item geometrically reductive (GR) if for every epimorphism $X\onto\mathbf{1}$ in $\Ds$ there exists $n>0$ such that $S^nX\onto\mathbf{1}$ is split;
        \item maximally nilpotent (MN) if it satisfies 
        \begin{enumerate}
            \item[(i)] if $L$ is a simple, nontrivial object then the symmetric algebra $SL$ is finite;
            \item[(ii)] if $\mathbf{1}\hookrightarrow X$ is non-split, then there exists $n\in\N$ such that $\mathbf{1}\to S^nX$ is 0.
        \end{enumerate}
    \end{enumerate}
\end{definition}

In \cite{C1} and \cite{C2}, it is shown that the conditions GR and MN are suitable conditions for a theory of algebraic geometry to be developed, and this was further justified in \cite{CS}. On the other hand, these properties are expected to be equivalent to the incompressibility of $\Ds$ (for more on connections between these two properties see \cite{C1} and \cite{CEO} and see \cite{GR} for a proof that $\Ver_{p^\infty}$ is GR).  Note that by the following lemma, such categories are always of moderate growth.

\begin{lemma}
    If $\Ds$ satisfies MN(i), then it is of moderate growth.
\end{lemma}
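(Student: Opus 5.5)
We reduce to simple objects and then use the vanishing of high symmetric powers to control tensor powers.

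First, moderate growth only needs to be checked on simple objects. For $X\in\Ds$ with composition factors $L_1,\dots,L_k$ (with multiplicity), $X^{\otimes n}$ has a filtration whose subquotients are the $k^n$ words $L_{i_1}\otimes\cdots\otimes L_{i_n}$. Using the symmetric braiding, each word is isomorphic to some $L_1^{\otimes a_1}\otimes\cdots\otimes L_k^{\otimes a_k}$. Hence
\[\ell(X^{\otimes n})\;\le\;k^n\cdot\max_{a_1+\dots+a_k=n}\ell\big(L_1^{\otimes a_1}\otimes\cdots\otimes L_k^{\otimes a_k}\big).\]
It therefore suffices to bound $\ell(Y_1\otimes\cdots\otimes Y_k)$ in terms of the $\ell(Y_j)$, and then bound $\ell(L^{\otimes a})\le C_L^a$ for each simple $L$.

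For the first of these, a crude estimate is enough. By rigidity, $\ell(Y\otimes Z)\le \ell(Y)\cdot\max_{M}\ell(M\otimes Z)$, with $M$ running over composition factors of $Y$. So for simples it suffices to bound $\ell(M\otimes L^{\otimes a})$ uniformly in $M$. The trivial simple $L=\unit$ is harmless, so we may assume $L\neq\unit$.

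Now let $L\neq\unit$ be simple. By MN(i), $SL$ is finite, so there is $N$ with $S^nL=0$ for all $n\ge N$. In characteristic zero, $S^NL=0$ means the Schur functor for the one-row partition $(N)$ kills $L$. Deligne's criterion \cite{Del02} then gives moderate growth of the tensor subcategory generated by $L$, and in fact $L^{\otimes n}$ decomposes into Schur functors $S_\lambda L$ with $\lambda$ having fewer than $N$ columns. The number of such $\lambda$ and their multiplicities grow at most exponentially, which gives $\ell(L^{\otimes n})\le C^n$ directly.

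In positive characteristic the Schur-functor calculus is unavailable, and I expect this to be the main obstacle. My first attempt would be a pigeonhole argument inside the group algebra $\bk S_n$ acting on $L^{\otimes n}$. Split $n$ into blocks of size $N$. The symmetrizer on each block factors through $S^NL=0$, and this should force $L^{\otimes n}$ to be controlled by a finite-dimensional quotient of $\bk S_n$. That quotient is governed by words avoiding long symmetric patterns, which suggests a bound like $\ell(L^{\otimes n})\le (\dim_\bk \text{of that quotient})\cdot(\text{const})^n$.

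If that fails, the fallback is the fibre-functor route of \cite{CEO}. There one shows that a category in which every nontrivial simple has finite symmetric algebra admits a tensor functor to $\Ver_{p^\infty}$-type categories, whose lengths grow exponentially. This would be combined with the bound $\ell(Y)\le\ell(F(Y))$ for tensor functors $F$ that are faithful and exact.
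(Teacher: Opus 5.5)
\textbf{There is a genuine gap.} Your characteristic-zero case is fine as a citation of Deligne's theorem \cite{Del02}, which is exactly what the paper does. It is cleaner to apply it to all of $\Ds$ rather than to the subcategory generated by $L$, since annihilation by Schur functors passes to direct sums and extensions; that also makes your reduction step unnecessary in characteristic $0$. The positive-characteristic case, however, is the real content of the lemma, and it is not proved.

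The $\bk S_n$ ``pigeonhole'' cannot work as stated. The $S_n$-action on $L^{\otimes n}$ tells you which multiplicity spaces occur, not the lengths of the objects they cut out. Already in characteristic $0$ one has $\ell(L^{\otimes n})=\sum_\lambda f^\lambda\,\ell(S_\lambda L)$, with $f^\lambda$ the dimension of the Specht module, and nothing on the $S_n$-side bounds $\ell(S_\lambda L)$. That bound is precisely what Deligne's fibre functor to $\sVec$ supplies, so your claim that the Schur functor decomposition gives the estimate ``directly'' is incomplete too. In characteristic $p$ it is worse. The condition $S^NL=0$ is about coinvariants, whereas the symmetrizer carries little information: it already acts by $p!=0$ on $\unit^{\otimes p}$, although $S^p\unit\neq 0$.

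Your fallback is not available either. No known result produces a tensor functor to $\Ver_{p^\infty}$-type categories from MN(i). Existence of such functors is conjectural (cf.\ \cite[Conjecture~1.4]{BEO}) and proved only in special cases such as the Frobenius exact one \cite{CEO-A}. Those results assume moderate growth, so invoking them here would be circular.

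\textbf{A second gap is in your reduction to simple objects.} You need $\ell(M\otimes L^{\otimes a})\le C_L^a$ uniformly over simple $M$, and you never return to it. This is not a formal consequence of rigidity, and it is not even implied by moderate growth. In $\Rep SL_2$ over a field of characteristic $p$, with $L(m)$ the simple module of highest weight $m$, Steinberg's tensor product theorem gives $\ell(L(p^k-1)\otimes L(1))=2k+1$.

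\textbf{How the paper avoids both problems.} It uses the invariants $\gd$, $\mathsf{sd}$ and $\ad$ of \cite{CEO-A}:
\begin{itemize}
\item $\gd(X)=\gd(\gr X)$ handles extensions.
\item $\gd\le\ad$, together with additivity $\ad(X_1\oplus X_2)=\ad(X_1)+\ad(X_2)$, reduces semisimple objects to simple ones without any estimate on mixed tensor products.
\item The implication $\mathsf{sd}(L)<\infty\Rightarrow\ad(L)<\infty$ turns the vanishing of $S^nL$ for $n\gg 0$ into finiteness of $\gd(L)$. That vanishing comes from MN(i) for $L\neq\unit$, and $\mathsf{sd}(\unit)<\infty$ is standard.
\end{itemize}
The inequality $\gd\le\ad$ and the passage from finite $\mathsf{sd}$ to finite $\ad$ are the characteristic-$p$ bridge from symmetric powers to tensor powers that your proposal lacks.
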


\begin{proof}
If $\Bbbk$ is of characteristic 0, then this follows from \cite[Prop.~0.5]{Del02}.  Now suppose that $\Bbbk$ is of positive characteristic.  We refer to \cite[Defn.~4.1]{CEO-A} for the definition of the quantities $\mathsf{gd}(X)$ (see also \eqref{def:gd}), $\mathsf{sd}(X)$, and $\mathsf{ad}(X)$ in $\R_{\ge 0}\cup\{\infty\}$, for any object $X\in\Ds$.  By our assumption on characteristic, we have the following properties (see \cite[Prop.~4.7, Lem.~4.9]{CEO-A}):
\begin{enumerate}
    \item $\mathsf{gd}(X)\leq \ad(X)$,
    \item $\mathsf{sd}(X)<\infty\Rightarrow\ad(X)<\infty$,
    \item $\ad(X_1\oplus X_2)=\ad(X_1)+\ad(X_2)$, and
    \item $\mathsf{gd}(X)=\mathsf{gd}(\gr X)$ with respect to any finite filtration on $X$.
\end{enumerate}
    By \cite[Cor.~4.11]{CEO-A}, $\Ds$ is of moderate growth if and only if for all $X\in\Ds$ we have $\mathsf{gd}(X)<\infty$.   Taking the Jordan-Holder filtration and applying (4), we may assume that $X$ is semisimple.  Applying (1) and (3), it suffices to show that $\ad(L)<\infty$ for all simple objects $L$, and then applying (2) reduces it to showing $\mathsf{sd}(L)<\infty$ for all simple objects $L$.  If $L=\mathbf{1}$ this is well known, and for $L$ nontrivial we may apply our assumption MN(i).
\end{proof}

\subsection{The Nakayama property and its consequences}

\subsubsection{Nakayama property}\label{section nakayama} We recall from \cite[\S 6.1]{C1} the Nakayama property of a pretannakian tensor category $\mathscr{E}$, which states that for every commutative algebra $A$ of $\mathscr{E}$, ideal $I\leq A$, and finitely generated $A$-module $N$ such that $IN=N$, we have
\[
\operatorname{Ann}_A(N)+I=A.
\]
By \cite[Thm.~6.1.5]{C1}, $\Cs$ as in Theorem~\ref{thm:main} satisfies the Nakayama property.

\medskip

\emph{Now we let $\Es$ be any pretannakian category satisfying the Nakayama property.} 

\medskip

\begin{lemma}\label{lemma simples GR MN}
    Let $A$ be a commutative ind-algebra in $\Es$.
    \begin{enumerate}
        \item If $S$ is a simple $A$-module, then $\operatorname{Ann}_A(S)$ is a maximal ideal.
        \item If $I$ is a minimal non-zero ideal of $A$ such that $I^2=I$, then $A\cong A/I\times A/\operatorname{Ann}_A(I)$.
    \end{enumerate}
\end{lemma}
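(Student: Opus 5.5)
For (1), I will apply the Nakayama property to the simple $A$-module $S$. Set $\m:=\operatorname{Ann}_A(S)$; it is a proper ideal since $S\neq 0$. Let $I$ be any ideal with $\m\subsetneq I$; the goal is to show $I=A$.

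Since $IS$ is an $A$-submodule of the simple module $S$, either $IS=0$ or $IS=S$. The case $IS=0$ would give $I\subseteq \m$, which is excluded. Hence $IS=S$.

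Next I need $S$ to be finitely generated. A simple module is generated by any nonzero compact subobject $X\subseteq S$, so $S$ is a quotient of $A\otimes X$. The Nakayama property then gives $\operatorname{Ann}_A(S)+I=A$, that is, $\m+I=I=A$. So $\m$ is maximal.

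For (2), I will apply the Nakayama property to the module $N=I$ with the ideal $I$ itself. The hypothesis $I^2=I$ says exactly that $IN=N$.

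To see that $I$ is finitely generated, take a nonzero compact subobject $X\subseteq I$. Then $AX$ is a nonzero ideal contained in $I$, so by minimality $AX=I$, and $I$ is a quotient of $A\otimes X$. The Nakayama property now yields $\operatorname{Ann}_A(I)+I=A$.

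To apply the Chinese remainder theorem (Lemma~\ref{lemma CRT}) I also need $I\cap\operatorname{Ann}_A(I)=0$. The intersection $J:=I\cap \operatorname{Ann}_A(I)$ is an ideal contained in $I$, so by minimality $J=0$ or $J=I$. If $J=I$, then $I\subseteq \operatorname{Ann}_A(I)$, which means $I^2=0$. But $I^2=I\neq 0$, a contradiction. Hence $J=0$, and Lemma~\ref{lemma CRT} gives $A\cong A/I\times A/\operatorname{Ann}_A(I)$.

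The only points requiring care are the finite generation checks, namely that simple modules and minimal ideals are generated by compact subobjects. These are routine in $\Ind\Es$, since every ind-object is the union of its compact subobjects.
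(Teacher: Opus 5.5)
Your proof is correct and follows the paper's argument. In part (1) you use the same Nakayama argument, showing directly that every ideal strictly containing $\operatorname{Ann}_A(S)$ equals $A$ rather than choosing a maximal ideal above it; in part (2) you apply the Nakayama property directly to $N=I$ with the ideal $I$, instead of getting $I+\operatorname{Ann}_A(I)=A$ from the maximality of $\operatorname{Ann}_A(I)$ supplied by part (1), while the intersection argument via minimality and $I^2=I\neq 0$ is the same as the paper's.
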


\begin{proof}
    For (1), suppose that $\operatorname{Ann}_A(S)$ is not maximal, and choose $M$ a maximal ideal which contains it.  Then necessarily $M S\neq 0$, so by simplicity $M S=S$.  Since $S$ is simple, it is clearly finitely generated.  Thus by the Nakayama property, we must have $M=M+\operatorname{Ann}_A(S)=A$, a contradiction.  

    For (2), observe that $I$ must be a simple $A$-module, so by (1) its annihilator is a maximal ideal $M$.  Clearly $M\cap I=0$ and $I+M=A$, so we may apply Lemma \ref{lemma CRT} to obtain the desired isomorphism.
\end{proof}

Lemma \ref{lemma simples GR MN} (1) implies that maximal ideals and annihilator ideals of simple modules are the same, and hence:
\begin{cor}\label{cor jacobson rad GR MN}
    If $A$ is a commutative ind-algebra in $\Es$, then $\mathcal{J}(A)$ is equal to the intersection of all maximal ideals of $A$.
\end{cor}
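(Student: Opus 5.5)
The corollary asserts that $\mathcal{J}(A)$, the intersection of the annihilator ideals of all simple $A$-modules, equals the intersection of all maximal ideals of $A$. I will show that these two families of ideals coincide, using Lemma~\ref{lemma simples GR MN}(1) together with the elementary observation that maximal ideals arise as annihilators of simple modules.

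First, take a simple $A$-module $S$. By Lemma~\ref{lemma simples GR MN}(1), which uses the Nakayama property of $\Es$, the ideal $\operatorname{Ann}_A(S)$ is maximal. Hence every ideal in the intersection defining $\mathcal{J}(A)$ is maximal, and so the intersection of all maximal ideals is contained in $\mathcal{J}(A)$.

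Second, take a maximal ideal $M$ of $A$. Since $A$ is commutative, $A/M$ is an $A$-module and $\operatorname{Ann}_A(A/M)=M$: an ideal annihilating $A/M$ must kill the image of the unit, so it lies in $M$. I then check that $A/M$ is a simple $A$-module. Any $A$-submodule of $A/M$ is an ideal-subobject of $A/M$. Its preimage in $A$ is an ideal containing $M$, so by maximality it is $M$ or $A$. Thus the submodule is $0$ or $A/M$. This relies on the identification of $A$-submodules of $A$ with ideals, valid in $\Ind\Es$ for commutative $A$.

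Combining the two steps, the set of annihilators of simple modules is exactly the set of maximal ideals, so the two intersections agree. I expect no real obstacle: the only substantive input is Lemma~\ref{lemma simples GR MN}(1), and the remaining steps are formal. The one point deserving a sentence is the existence of maximal ideals. This is not actually needed for the equality, which holds vacuously if there are none. In any case it follows from Zorn's lemma, because every ideal containing the unit is all of $A$.
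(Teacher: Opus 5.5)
Your proposal is correct and follows the paper's argument. The paper also deduces from Lemma~\ref{lemma simples GR MN}(1) that the annihilators of simple modules are exactly the maximal ideals. It leaves implicit the converse (each maximal ideal $M$ equals $\operatorname{Ann}_A(A/M)$, with $A/M$ simple), which you spell out correctly.
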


\begin{remark}
    Observe that Lemma \ref{lem:Dela} shows that Lemma \ref{lemma simples GR MN} need not hold in an arbitrary pretannakian category.  In particular we see that the Nakayama property fails to hold in the Delannoy category. 
\end{remark}

\begin{cor}\label{cor semisimple prod of simple algs}
    Let $A$ be a semisimple commutative ind-algebra in  $\Es$.  Then $A$ is a product of simple algebras.
\end{cor}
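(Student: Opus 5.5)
The plan is to split off simple factors one at a time, using Lemma~\ref{lemma simples GR MN}(2) together with the artinian hypothesis. Semisimple means $A$ is artinian and $\JJ(A)=0$. Consequently $A$ satisfies DCC on ideals, since ideals are $A$-submodules of $A=A\otimes\unit$, which has finite length. By Lemma~\ref{lem:Jart}, $A$ also has no nonzero nilpotent ideals. If $A=0$ the statement is trivial, so assume $A\neq 0$ and argue by induction on the length $\ell(A)$ of $A$ as an $A$-module, which is finite.

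\textbf{Step 1: produce an idempotent minimal ideal.} Pick a minimal nonzero ideal $I\le A$; it exists by DCC, or simply because $A$ has finite length. Then $I^2\le I$ is an ideal, so $I^2=0$ or $I^2=I$. The case $I^2=0$ would make $I$ a nonzero nilpotent ideal, which contradicts $\JJ(A)=0$. Hence $I^2=I$, and Lemma~\ref{lemma simples GR MN}(2) gives an algebra isomorphism
\[
A\cong A/I\times A/\operatorname{Ann}_A(I).
\]
Here $\operatorname{Ann}_A(I)=M$ is maximal by Lemma~\ref{lemma simples GR MN}(1), so $A/M$ is a simple algebra. (Indeed, ideals of $A/M$ correspond to ideals of $A$ containing $M$.)

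\textbf{Step 2: the other factor is again semisimple and shorter.} Set $A':=A/I$. This is a quotient of $A$, hence artinian: for $X\in\cC$, the module $A'\otimes X$ is a quotient of $A\otimes X$, and its $A'$-submodules are $A$-submodules. Since $A'$ is a direct factor of $A$, any nilpotent ideal $N'$ of $A'$ gives a nilpotent ideal $0\times N'$ of $A$, so $N'=0$. By Lemma~\ref{lem:Jart}, $\JJ(A')=0$, so $A'$ is semisimple. Moreover $\ell(A')=\ell(A)-\ell(I)<\ell(A)$, because $I\neq0$. The length of $A'$ as an $A'$-module equals its length as an $A$-module, so induction applies. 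We conclude that $A'$ is a finite product of simple algebras, and therefore so is $A\cong A/M\times A'$.

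The only real subtlety is bookkeeping. One has to make sure that lengths and nilpotent ideals behave well under passing to the factor $A'$, and that ``artinian'' is used correctly: only $\ell(A)<\infty$ is needed, not the full $A\otimes X$ condition. The genuinely nontrivial input, the Nakayama property, is already packaged into Lemma~\ref{lemma simples GR MN}, so I expect no significant obstacle.
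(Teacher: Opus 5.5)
Your proposal is correct and follows the paper's proof. The paper likewise takes a minimal ideal $I$, uses $\JJ(A)=0$ to force $I^2=I$, splits $A\cong A/I\times A/\operatorname{Ann}_A(I)$ via Lemma~\ref{lemma simples GR MN}, and inducts on the length of $A$ as a module over itself. Your Step~2, which checks that $A/I$ is again semisimple and of strictly smaller length, spells out bookkeeping the paper leaves implicit.
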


\begin{proof}
   Let $I$ be a minimal ideal of $A$,  which exists because semisimple algebras are assumed to be artinian. Then $I$ is a simple $A$-module, and since $A$ is semisimple, we must have $I^2=I$.  Thus by Lemma \ref{lemma simples GR MN}, $\operatorname{Ann}_A(I)$ is a maximal ideal and $A\cong A/I\times A/\operatorname{Ann}_A(I)$.  Inducting on the length of $A$ as a module over itself then gives the result.     
\end{proof}

We now specialise the above results to our main case of interest.
\begin{cor}\label{cor:NakSS}
    Let $\cC$ be a well-fibered pretannakian category and let $A$ be a commutative ind-algebra in  $\Cs$.
    \begin{enumerate}
        \item If $A$ is semisimple, then $A$ is a product of simple algebras.
        \item $\mathcal{J}(A)$ is equal to the intersection of all maximal ideals of $A$.
    \end{enumerate}
\end{cor}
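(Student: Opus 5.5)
The plan is to reduce both statements to the results just proved for an arbitrary pretannakian category $\Es$ with the Nakayama property. The only genuinely new input needed is that a well-fibered $\cC$ itself has the Nakayama property. The paper recorded this at the start of \S\ref{section nakayama} via \cite[Thm.~6.1.5]{C1}, which covers $\Cs$ as in Theorem~\ref{thm:main}, i.e.\ well-fibered categories. Once this is in place, both parts follow by setting $\Es:=\cC$.

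For part (2), I would apply Lemma~\ref{lemma simples GR MN}(1) with $\Es=\cC$: the annihilator of any simple $A$-module is a maximal ideal. Conversely, for any maximal ideal $\m$, the quotient $A/\m$ is a simple $A$-module whose annihilator is $\m$, since $A$ is commutative and unital. So the set of annihilator ideals of simple $A$-modules coincides with the set of maximal ideals. Taking intersections gives $\JJ(A)=\bigcap \m$, which is exactly Corollary~\ref{cor jacobson rad GR MN}.

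For part (1), I would invoke Corollary~\ref{cor semisimple prod of simple algs} with $\Es=\cC$. Recall its argument: artinianity gives a minimal ideal $I$; by semisimplicity, $I^2\neq 0$, hence $I^2=I$ (otherwise $I$ would be a nonzero nilpotent ideal, contradicting $\JJ(A)=0$ via Lemma~\ref{lem:Jart}). Lemma~\ref{lemma simples GR MN}(2) then splits off $A/\operatorname{Ann}_A(I)$ as a simple factor. One then inducts using finite length of $A$ as an $A$-module, where the remaining factor $A/I$ is again artinian with zero radical.

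The main (and essentially only) obstacle is justifying the Nakayama property for $\cC$ rather than merely for its GR+MN target $\Ds$. If \cite[Thm.~6.1.5]{C1} were only stated for GR+MN categories, I would transfer it along the tensor functor $F:\cC\to\Ds$. Its right adjoint gives $\Ind\cC\simeq\Mod_{F_\ast\unit}$-style descriptions, but more directly one uses that $F$ extends to an exact faithful symmetric monoidal functor on ind-completions. This sends $A$, $I$, $N$ to $F(A)$, $F(I)$, $F(N)$ with $F(I)F(N)=F(N)$. The Nakayama property in $\Ds$ then yields $\operatorname{Ann}_{F(A)}(F(N))+F(I)=F(A)$. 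Since $F$ preserves annihilators of finitely generated modules (annihilators are kernels of $A\to\uHom(N,N)$ with $N$ finitely generated, and $F$ commutes with such internal Homs), faithfulness and exactness reflect the equality back to $\cC$.
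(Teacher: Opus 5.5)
Your proposal is correct and is exactly the paper's route: Corollary~\ref{cor:NakSS} is Corollaries~\ref{cor jacobson rad GR MN} and~\ref{cor semisimple prod of simple algs} applied with $\Es=\cC$, and the Nakayama property for well-fibered $\cC$ is taken directly from \cite[Thm.~6.1.5]{C1}, as recorded at the start of \S\ref{section nakayama}. Your fallback transfer along $F\colon\cC\to\Ds$ is therefore unnecessary; if you keep it, write $\operatorname{Ann}_A(N)=\ker(A\to X^\ast\otimes N)$ for some $X\in\cC$ generating $N$, which $F$ visibly preserves, rather than asserting that $F$ commutes with $\uHom(N,N)$ for an ind-object $N$, which is not justified in general.
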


\medskip

\subsection{Artinian commutative algebras}\label{section artinian comm algs GR MN}

\begin{thm}\label{thm:artinianv2} Let $\Es$ be a pretannakian category satisfying the Nakayama property and in which every simple commutative ind-algebra is artinian. 
    For $A$ a commutative ind-algebra in $\Es$, the following are equivalent:
    \begin{enumerate}
        \item $A$ is artinian;
        \item $A$ satisfies the descending chain condition (DCC) on ideals;
        \item $A$ satisfies the ascending chain condition (ACC) on ideals, and every prime ideal is maximal;
        \item $A$ satisfies ACC on ideals, and $A/\operatorname{Nil}A$ is semisimple.
        \item $A$ is a finite direct product of artinian local algebras.
    \end{enumerate}
\end{thm}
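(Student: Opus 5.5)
The proof runs through the cycle $(1)\Rightarrow(2)\Rightarrow(5)\Rightarrow(3)\Rightarrow(4)\Rightarrow(1)$, using three inputs. Lemma~\ref{lemma simples GR MN} and Corollary~\ref{cor semisimple prod of simple algs} let us split off idempotent minimal ideals. The hypothesis that simple commutative ind-algebras are artinian makes $A/\m$ artinian for every maximal ideal $\m$. Finally, we use the classical filtration argument, transported to $\Ind\Es$.

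$(1)\Rightarrow(2)$ is immediate, since ideals are submodules of $A=A\otimes\unit$, which has finite length. For $(5)\Rightarrow(1)$ it suffices to show that a local artinian $A$ is artinian as a module-theoretic statement. Let $\m$ be the maximal ideal and $\m^n=0$. Then $A\otimes X$ is filtered by the modules $\m^iA\otimes X$, with subquotients $(\m^i/\m^{i+1})\otimes X$, which are $A/\m$-modules. Each such subquotient is a quotient of $(A/\m)\otimes(\text{finite part of }\m^i\otimes X)$ once we know that $\m^i/\m^{i+1}$ is finitely generated. This finite generation is where the artinian property of $A/\m$ gives finite length. So finiteness of generation must be built into (5), or obtained from ACC; I would prove (3)/(4) first and get (5) from them.

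$(2)\Rightarrow(3)$. With DCC, $\JJ(A)$ is nilpotent by Lemma~\ref{lem:Jart}, whose proof only used DCC on two-sided ideals. Hence every prime contains $\JJ(A)$. By Corollary~\ref{cor jacobson rad GR MN}, $\JJ(A)$ is the intersection of the maximal ideals, and by DCC it is a finite intersection $\m_1\cap\cdots\cap\m_r$. A prime $\mathfrak p\supseteq \m_1\cdots\m_r$ must contain some $\m_j$, so $\mathfrak p=\m_j$; every prime is therefore maximal. For ACC, Lemma~\ref{lemma CRT} gives $A/\JJ(A)\cong\prod A/\m_j$. Each factor is artinian by hypothesis, so it has finite length as an $A$-module. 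The layers $\JJ^i/\JJ^{i+1}$ are then modules over $\prod A/\m_j$ satisfying DCC. The main obstacle is to show that they have finite length, which requires knowing they are finitely generated. I would try to show that a $\prod A/\m_j$-module with DCC is finitely generated: by DCC on images of $A\otimes X_\alpha\to N$ one picks a minimal "co-generated" piece, and absolute flatness (Theorem~\ref{thm:main}-type input: simple artinian factors give rigid/flat module categories) makes submodules split, so DCC forces finitely many summands. Finite length of $A$ then gives ACC.

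$(3)\Rightarrow(4)$. ACC makes $\operatorname{Nil}A$ finitely generated, hence nilpotent. The ring $A/\operatorname{Nil}A$ has ACC and all primes maximal, and its nilradical, the intersection of all maximal ideals, is zero. Using ACC via the usual argument on the ideal maximal among those whose quotient has infinitely many minimal primes, there are finitely many minimal primes, so $\operatorname{Nil}A$ is a finite intersection of maximal ideals. Lemma~\ref{lemma CRT} then gives a product of simple algebras, each artinian by hypothesis; hence the quotient is semisimple.

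$(4)\Rightarrow(5)$. The ideal $N=\operatorname{Nil}A$ is nilpotent by ACC, and $A/N=\prod_{j=1}^r A/\m_j$. Lift the orthogonal idempotents through the nilpotent ideal $N$; this works for $\Gamma(A)\to\Gamma(A/N)$ because $\Gamma(N)$ is a nil ideal. This splits $A=\prod A_j$ with each $A_j$ local. Each $A_j$ is artinian: the layers $N^i/N^{i+1}$ are finitely generated (by ACC) modules over the artinian $A/\m_j$, hence of finite length after tensoring with any $X\in\Es$. Combining this with the filtration argument above gives $(5)\Rightarrow(1)$ and closes the cycle. The delicate point throughout is the $(2)\Rightarrow(3)$ finite-generation step.
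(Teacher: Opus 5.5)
\textbf{There is a genuine gap, and it cannot be filled.} It is the step you flag yourself in $(2)\Rightarrow(3)$. Your mechanism is that absolute flatness of the simple factors $A/\m_j$ makes submodules split. This confuses absolute flatness with semisimplicity. Absolute flatness only makes $\otimes_{A/\m_j}$ exact, and Theorem~\ref{prop:consolidate2}(2) only kills the internal $\uExt^1$. So $\Mod_{A/\m_j}$ is the ind-completion of a pretannakian category, which need not be semisimple.

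A counterexample to the step already exists for the absolutely flat algebra $\unit$ in $\Es=\Rep\mathbb{G}_a$ over $\bk=\mathbb{C}$. The regular representation $\bk[\mathbb{G}_a]=\bk[x]$ has only the subobjects $0$, $\bk[x]_{\le n}$ and $\bk[x]$, since a translation-stable subspace is $d/dx$-stable. So it satisfies DCC without being finitely generated.

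This also refutes the implication itself. $\Rep\mathbb{G}_a$ is tannakian, hence well-fibered. So it satisfies the Nakayama property and its simple commutative ind-algebras are artinian (\S\ref{section nakayama} and Corollary~\ref{cor pretann simple comm alg}). Let $A=\unit\oplus\bk[x]$ with $\bk[x]^2=0$. An ideal not contained in $\bk[x]$ contains some $1+f$, hence contains $\bk[x]\cdot(1+f)=\bk[x]$, hence equals $A$. So the ideals of $A$ form the well-ordered chain $0\subset\bk[x]_{\le 0}\subset\bk[x]_{\le 1}\subset\cdots\subset\bk[x]\subset A$. Thus $A$ has DCC but not ACC on ideals, and infinite length over itself. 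So (2) holds while (1), (3) and (4) fail, and (5) fails because it trivially implies (1). In your filtration, the layer $\JJ(A)/\JJ(A)^2=\bk[x]$ over $A/\JJ(A)=\unit$ is exactly a DCC module of infinite length.

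\textbf{The paper's proof breaks at the same point.} It gets ACC from $(M_1\cdots M_n)^k=0$ via ``the obvious generalisation of \cite[Cor.~6.11]{AM}''. The classical argument rests on DCC$\Leftrightarrow$ACC for vector spaces, and the analogue for modules over the simple quotients $A/M_i$ fails by the example above. What both arguments actually establish is that (1), (3), (4), (5) are equivalent to the strengthened condition ``$A$ satisfies both DCC and ACC on ideals'', equivalently that $A$ has finite length over itself. With ACC in hand, the rest of your argument goes through, as does the paper's (Lemma~\ref{lemma DCC}, then Lemma~\ref{lemma CRT} applied to the powers $\m_j^k$, then Lemma~\ref{lemma local artin alg}).

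\textbf{Three smaller repairs to your write-up.}
\begin{enumerate}
\item $(5)\Rightarrow(1)$ is immediate: the factors are artinian by hypothesis, and a finite product of artinian algebras is artinian.
\item Lifting idempotents along $\Gamma(A)\to\Gamma(A/N)$ is not justified by $\Gamma(N)$ being nil. $\Gamma$ is only left exact, so that map need not be surjective. Instead, split $A\cong\prod_j A/\m_j^k$ using Lemma~\ref{lemma CRT}, as the paper does.
\item The proof of Lemma~\ref{lem:Jart} uses finite length of $A$ in its last step. Under DCC alone, the nilpotence of $\JJ(A)=\operatorname{Nil}(A)$ should be taken from Lemma~\ref{lemma DCC}.
\end{enumerate}
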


We now fix $\Es$ as in the theorem, and start the proof.

\begin{lemma}\label{cor finite lenght implies artinian}
    If $A$ is a commutative ind-algebra in $\Es$ which is of finite length as a module over itself, then it is artinian.
\end{lemma}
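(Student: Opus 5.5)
We need to show: if $A$ has finite length as an $A$-module, then $A\otimes X$ has finite length for every $X\in\Es$. Since $A$ is commutative, left and right artinian coincide, so it suffices to treat left modules.

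\textbf{Step 1: reduction to simple quotients.} Take a composition series $0=I_0\subset I_1\subset\dots\subset I_n=A$ of $A$ by ideals; here $A$-submodules of $A$ are the same as ideals. Each subquotient $S_j=I_j/I_{j-1}$ is a simple $A$-module. By Lemma~\ref{lemma simples GR MN}(1), its annihilator is a maximal ideal $\m_j$, so $S_j$ is a simple module over the simple commutative ind-algebra $K_j:=A/\m_j$. Since $-\otimes X$ is exact on $\Ind\Es$, the module $A\otimes X$ is filtered with subquotients $S_j\otimes X$. It therefore suffices to show that each $S_j\otimes X$ has finite length as an $A$-module. The $A$-submodules of $S_j\otimes X$ are exactly its $K_j$-submodules, because $\m_j$ acts as zero.

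\textbf{Step 2: use the hypothesis on simple algebras.} By assumption $K_j$ is artinian, so $K_j\otimes X$ has finite length over $K_j$. It remains to control $S_j\otimes X$ in terms of free $K_j$-modules. Since $S_j$ is a simple, hence finitely generated, $K_j$-module, it is a quotient of $K_j\otimes Y$ for some $Y\in\Es$. Then $S_j\otimes X$ is a quotient of $K_j\otimes Y\otimes X$, and the latter has finite length because $K_j$ is artinian. Quotients of finite length modules have finite length, so $S_j\otimes X$ has finite length as a $K_j$-module, hence as an $A$-module.

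\textbf{Step 3: conclude.} Finite length is preserved under extensions, so $A\otimes X$ has finite length and $A$ is artinian. The main point needing care is Step 1's use of the Nakayama property: we need $S_j$ to be a module over a \emph{simple} quotient algebra of $A$. In this category, annihilators of simple modules need not be maximal without that property, as the Delannoy example shows. The remaining steps are formal exactness arguments.
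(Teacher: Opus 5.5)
Your proof is correct and follows the paper's argument. You take a composition series of $A$, use Lemma~\ref{lemma simples GR MN}(1) to make each layer a module over a simple quotient $A/\m_j$, and then invoke the standing assumption that simple commutative ind-algebras in $\Es$ are artinian; your Step~2 (realising $S_j$ as a quotient of $K_j\otimes Y$, so that $S_j\otimes X$ is a quotient of $K_j\otimes Y\otimes X$) spells out a detail the paper leaves implicit.
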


\begin{proof}
    Indeed, let $JH^\bullet(A)$ be a finite composition series  for $A$, as a module over itself. For an object $X\in\Cs$, the $A$-module $(JH^i(A)/JH^{i+1}(A))\otimes X$, which factors through a simple quotient of $A$ by Lemma~\ref{lemma simples GR MN}(1), must be of finite length by our assumption that simple commutative ind-algebras in $\Es$ are artinian. Hence also $A\otimes X$ is of finite length.
\end{proof}

\begin{remark}
    A commutative ind-algebra $A$ in $\Cs$ is called Noetherian if every submodule of $A\otimes X$ is finitely generated for all $X\in\Cs$ (\cite[\S 6.4]{C1}). We will show that DCC on ideals is equivalent to being artinian for commutative algebras in $\Cs$, see Theorem \ref{thm artin comm rings GRMN}, so it is natural to hope that in $\Cs$ we also have that ACC on ideals implies being Noetherian, but we do not know if this holds.
\end{remark}

\begin{lemma}\label{lemma DCC}
    Let $A$ be a commutative ind-algebra in $\Es$ satisfying DCC on ideals. Then the following hold.
    \begin{enumerate}
        \item Every prime ideal of $A$ is maximal.
        \item $\JJ(A)=\operatorname{Nil}(A).$ 
        \item $\operatorname{Nil}(A)$ is a nilpotent ideal.
        \item $A$ has only finitely many maximal ideals.
    \end{enumerate}
\end{lemma}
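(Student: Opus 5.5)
For (1), I will take a prime ideal $P$ of $A$ and consider the quotient $B=A/P$, which again satisfies DCC on ideals. The goal is to show that $B$ is simple. Take a maximal ideal $M$ of $B$; it exists because $B\neq0$, by a Zorn argument on ideals not containing the unit, since $\Gamma$ of a sum detects $1$. Then $M$ is nilpotent-free modulo primality, so I argue as follows. Suppose $M\neq0$. By DCC choose a minimal nonzero ideal $I\subseteq M$. Then $I$ is a simple $B$-module, so by Lemma~\ref{lemma simples GR MN}(1) its annihilator $N=\operatorname{Ann}_B(I)$ is a maximal ideal. Since $NI=0$, $I\neq0$ and $B$ is prime, we get $N=0$, so $0$ is a maximal ideal. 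Hence $B$ is simple and $P$ is maximal. The point to check is the prime-ideal definition from \cite{C1}: that $JK=0$ forces $J=0$ or $K=0$ for ideals of a prime quotient. I will use that formulation.

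For (4), I will use DCC on finite intersections $M_1\cap\cdots\cap M_n$ of distinct maximal ideals. A minimal such intersection $J$ satisfies $J\subseteq M$ for every maximal $M$. Because $J\subseteq M$ and $M$ is prime (maximal implies prime, via the simple quotient), some $M_i\subseteq M$, hence $M=M_i$. Here one needs the product $M_1\cdots M_n\subseteq J\subseteq M$ together with primality of $M$.

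For (2), by Corollary~\ref{cor jacobson rad GR MN} and (4), $\JJ(A)=M_1\cap\dots\cap M_n$. By (1), every prime is maximal, so $\operatorname{Nil}(A)$, the intersection of the primes, equals the intersection of the maximal ideals, which is $\JJ(A)$.

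For (3), I mimic Lemma~\ref{lem:Jart}. Let $J=\operatorname{Nil}(A)=\JJ(A)$. By DCC, $J^k=J^{k+1}=:K$ for some $k$. Suppose $K\neq0$. Choose, by DCC, an ideal $L$ minimal among those with $KL\neq0$; $K$ itself qualifies since $K^2=K\neq0$. Then there is a simple subquotient argument: $KL\neq0$ and $K(KL)=KL\neq0$, so minimality gives $KL=L$. Since $L$ is minimal, one checks that $L$ is generated by a single simple subobject $X\subseteq L$ with $KX\neq 0$, so $L=A X$ is finitely generated. Nakayama then yields $\operatorname{Ann}_A(L)+K=A$. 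But $K\subseteq\JJ(A)$ lies in every maximal ideal, so $\operatorname{Ann}_A(L)=A$, giving $L=0$, a contradiction.

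The main obstacle is this last step: extracting a finitely generated $L$ (or $X\otimes A$ image) with $KL=L$. If the minimality argument is delicate, I will instead apply Nakayama directly. Take any finitely generated $A$-submodule $L=\operatorname{im}(A\otimes X\to K)$ and use DCC on the chain $K^iL$ to obtain $L'$ with $KL'=L'$, then proceed as above.
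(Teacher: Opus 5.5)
Your proof is correct. Parts (2) and (4) are what the paper does: (2) is deduced from (1) and Corollary~\ref{cor jacobson rad GR MN}, and (4) is the argument of \cite[Prop.~8.3]{AM}. The differences are in (1) and (3).

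In (1), the paper takes a minimal nonzero ideal $I$ of the domain, observes that $I^2=I$, and uses Lemma~\ref{lemma simples GR MN}(2) to split $A\cong A/I\times A/\operatorname{Ann}_A(I)$, which contradicts primality. You instead use part (1) of that lemma together with primality: $\operatorname{Ann}(I)\cdot I=0$ forces the maximal ideal $\operatorname{Ann}(I)$ to be $0$. This is slightly shorter, and your preliminary choice of a maximal ideal $M$ is unnecessary.

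In (3), the paper defers to \cite[Prop.~8.4]{AM}. Your minimal-$L$ argument reproduces that proof up to its last step, where Atiyah--Macdonald use nilpotence. In the present setting that step reads $X\subseteq YX\subseteq\cdots\subseteq Y^nX=0$ for a compact, hence nilpotent, $Y\subseteq K$. You replace it with the Nakayama property and $K\subseteq\JJ(A)$. This is valid and shows directly that $\JJ(A)$ is nilpotent under DCC. The cost is that this step uses the Nakayama hypothesis, which the original argument does not need there, although the lemma as a whole relies on it through (1) and (2).

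Two corrections are needed.

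First, in (3) the subobject $X$ with $L=AX$ should be compact, not simple. Since $KL=\sum KX$ over the compact subobjects $X\subseteq L$, some compact $X$ has $KX\neq 0$. Then $K(AX)=KX\neq 0$, and minimality gives $L=AX$, so $L$ is finitely generated. Nothing in your argument produces a simple such $X$, and none is needed.

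Second, your fallback does not work as stated. Because $K^2=K$, the chain $K^iL$ is constant and equal to $KL$ for $i\ge 1$. But $KL$ is the image of $K\otimes L$ with $K$ possibly not finitely generated, so it need not be finitely generated, and the Nakayama property cannot be applied to it. Since the main route goes through, simply drop the fallback.
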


\begin{proof}
    Let us start with (1).  By taking a quotient, it suffices to prove that if $A$ is an integral domain (i.e.~$(0)$ is a prime ideal of $A$), then $A$ is simple.  Suppose otherwise.  Then since $A$ satisfies DCC on ideals, it contains a minimal non-zero proper ideal $I$.  Since $A$ is an integral domain, we must have $I^2=I$, so by Lemma \ref{lemma simples GR MN}, $A\cong A/I\times A/\operatorname{Ann}_A(I)$, which contradicts the fact that $A$ is an integral domain.  It follows that $A$ must be simple.

    For (2), we may apply (1) to obtain that $\operatorname{Nil}(A)$ is the intersection of all maximal ideals of~$A$.  The conclusion follows from Corollary~\ref{cor jacobson rad GR MN}.

    For (3) and (4), the proofs in \cite[Prop.~8.3 and Prop.~8.4]{AM} carry over almost verbatim.
\end{proof}

\begin{lemma}\label{lemma local artin alg}
    Let $A$ be a local ind-algebra in $\Es$ satisfying ACC on ideals, such that its maximal ideal is the unique prime ideal.  Then $A$ is artinian.
\end{lemma}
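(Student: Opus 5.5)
Write $\m$ for the maximal ideal of $A$. The plan mirrors the classical argument that a noetherian ring with $\m$ as its only prime, in a local ring, has $\m$ nilpotent. After that, artinianity follows from a finite filtration whose layers are modules over the simple algebra $A/\m$.

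\emph{Step 1: $\m$ is nilpotent.} Since $\m$ is the unique prime ideal, $\operatorname{Nil}(A)=\m$, so $\m$ is the union of the nilpotent subobjects of $A$. By ACC on ideals, $\m$ is a finitely generated ideal, i.e.\ a quotient of $A\otimes X$ for some $X\in\Es$. Indeed, $\m$ is the directed union of the ideals generated by its compact subobjects, and ACC forces this union to stabilise. Equivalently, $\m$ is generated by a compact subobject $X\subset\m$. Since $X$ is compact and $\m$ is the directed union of nilpotent subobjects, $X$ lies in one of them, so the ideal it generates is nilpotent. Thus $\m^N=0$ for some $N$.

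\emph{Step 2: finite length of the layers.} Each quotient $\m^i/\m^{i+1}$ is a module over $A/\m$. The algebra $A/\m$ is simple, since $\m$ is maximal, so by hypothesis it is artinian. Each $\m^i$ is again finitely generated by ACC, so $\m^i/\m^{i+1}$ is a finitely generated $A/\m$-module, hence a quotient of $(A/\m)\otimes Y$ for some $Y\in\Es$. This has finite length because $A/\m$ is artinian.

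\emph{Step 3: conclusion.} It follows that $A$ has finite length as an $A$-module. Lemma~\ref{cor finite lenght implies artinian} then gives that $A$ is artinian. Alternatively, tensoring the filtration $A\supset\m\supset\cdots\supset\m^N=0$ with any $X\in\Es$ gives layers $(\m^i/\m^{i+1})\otimes X$, which have finite length by the same argument; this yields artinianity directly.

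The main obstacle is Step 1. One has to pass correctly from ACC on ideals to finite generation of $\m$ (and of its powers) by a compact object in the ind-setting. One also has to check that the description of $\operatorname{Nil}(A)$ as the union of nilpotent subobjects from \cite[\S2.2.4]{C1} lets us conclude that the compact generator sits in a single nilpotent ideal. Both points should follow from compactness of objects of $\Es$ inside $\Ind\Es$ together with filtered unions.
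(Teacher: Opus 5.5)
Your proof is correct, and its overall shape matches the paper's. You show $\m$ is nilpotent, filter $A$ by powers of $\m$ (whose layers are finitely generated modules over the simple, hence artinian, algebra $A/\m$), and conclude with Lemma~\ref{cor finite lenght implies artinian}.

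The real difference is how you prove that $\m$ is nilpotent.
\begin{itemize}
\item The paper argues directly. It takes the set $\mathcal{S}$ of ideals containing no power of $\m$ and uses ACC to pick a maximal element $J$. Since $\m$ itself is not in $\mathcal{S}$, we have $J\neq\m$, so $J$ is not prime. Hence there are $I_1,I_2\not\subseteq J$ with $I_1I_2\subseteq J$. Then $\m^k\subseteq J+I_1$ and $\m^k\subseteq J+I_2$, which forces $\m^{2k}\subseteq J$, a contradiction. So $0\notin\mathcal{S}$, i.e.\ $\m^k=0$.
\item You instead quote the identification of $\operatorname{Nil}(A)$ with the union of nilpotent subobjects from \cite[\S 2.2.4]{C1}, and combine it with compact generation of $\m$ under ACC. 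This is exactly the argument the paper itself uses in the step $(4)\Rightarrow(2)$ of Theorem~\ref{thm:artinianv2}.
\end{itemize}
The paper's version is self-contained: it effectively reproves the needed case of that nilradical characterisation, with ACC replacing Zorn's lemma. Yours is shorter given the citation.

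The two points you flag as obstacles are routine:
\begin{itemize}
\item The nilpotent subobjects form a directed family, because in a commutative algebra $X^a=0$ and $Y^b=0$ imply $(X+Y)^{a+b-1}=0$.
\item $(AX)^N=AX^N$ by commutativity.
\end{itemize}

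Your alternative ending is also valid and slightly more economical. There you filter $A\otimes X$ by the submodules $\m^i\otimes X$, using exactness of $-\otimes X$, and note that each layer is a quotient of $(A/\m)\otimes Y\otimes X$. This bypasses Lemma~\ref{cor finite lenght implies artinian} and hence its appeal to Lemma~\ref{lemma simples GR MN}(1), which is not needed here since all composition factors are already $A/\m$-modules.
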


\begin{proof}
    Let $M$ denote the maximal ideal of $A$.  Then we claim that $M^k=0$ for some $k$.  Let $\mathcal{S}$ denote the set of ideals $I$ of $A$ such that $M^k\not\subseteq I$ for any $k\in\N$, and suppose it is nonempty.  Then because $A$ satisfies ACC on ideals, there exists a maximal element $J$ of $\mathcal{S}$.  Since $J\neq M$, the ideal $J$ is not prime, and thus there exists ideals $I_1,I_2$ not in $J$ such that $I_1I_2\subseteq J$.  But then $J+I_1,J+I_2\notin\mathcal{S}$, so there exists $k\in\N$ such that $M^k$ lies in $J+I_1$ and $J+I_2$.  Hence $M^{2k}\sub(J+I_1)(J+I_2)=J^2+JI_1+JI_2+I_1I_2\sub J$, a contradiction.  It follows that $\mathcal{S}$ must be empty, and hence there exists $k\in\N$ such that $M^k=0$.

    Now we have the chain of ideals
    \[
    0=M^k\sub M^{k-1}\sub\cdots\sub M \sub A,
    \]
    and each $M^i/M^{i+1}$ is a finitely generated $A/M$-module because $A$ satisfies ACC on ideals.  Since $A/M$ is simple, it follows by assumption that $A/M$ is artinian, and thus $M^i/M^{i+1}$ has finite length as an $A$-module.  We can now deduce that $A$ itself is of finite length as a module over itself.  
    We may now conclude by Lemma \ref{cor finite lenght implies artinian}.    
\end{proof}

\begin{proof}[Proof of Theorem~\ref{thm:artinianv2}]
    It is clear that $(1)\Rightarrow(2)$ and $(5)\Rightarrow(1)$. Assume (2) holds; we will now prove that both (3) and (5) hold.  Lemma \ref{lemma DCC} implies every prime ideal is maximal, and there are only finitely many maximal ideals.  Thus let $M_1,\dots,M_n$ denote the distinct maximal ideals of $A$.  By Corollary~\ref{cor jacobson rad GR MN} and Lemma \ref{lemma DCC} (2), we have $M_1\cdots M_n\sub\operatorname{Nil}(A)$, and $(M_1\cdots M_n)^k=0$ for some $k>0$.  By the obvious generalisation of \cite[Cor.~6.11]{AM}, we obtain that $A$ satisfies ACC on ideals, so (3) holds.  Moreover, since $M_i+M_j=A$ for $i\neq j$, $M_i^k+M_j^k=A$, so Lemma \ref{lemma CRT} implies that
    \[
    A\cong A/M_1^k\times \cdots \times A/M_n^k.
    \]
    It is easy to see that $A/M_i^k$ is a local algebra which satisfies ACC on ideals, and that its maximal ideal is the unique prime ideal.  Thus by Lemma \ref{lemma local artin alg}, $A/M_i^k$ is artinian for all $i$, and therefore (5) holds.  
    
    We now prove $(3)\Rightarrow(4)$.  By our assumption and Corollary \ref{cor jacobson rad GR MN}, we know that $\operatorname{Nil}(A)$ is the intersection of all maximal ideals $M$ of $A$.  Since $A$ satisfies ACC on ideals, $\Spec(A)$ (\cite[\S2.2]{C1}) must be a Noetherian topological space, and therefore is a union of finitely many irreducible components, each of which is necessarily a point.  Thus $A$ has finitely many maximal ideals $M_1,\dots,M_n$, and $\operatorname{Nil}(A)=M_1\cap\cdots \cap M_n$.  By Lemma \ref{lemma CRT} we obtain that
    \[
    A/\operatorname{Nil}(A)\cong A/M_1\times\cdots\times A/M_n,
    \]
    which is semisimple, hence (4) holds.

    
     Finally, we prove $(4)\Rightarrow(2)$.  Becasue $A$ satisfies ACC on ideals, every ideal of $A$ is compactly generated.  In particular, $\operatorname{Nil}(A)$ is generated by a compact object $X\in\operatorname{Nil}(A)$.  Since $X$ is itself nilpotent, this implies $\operatorname{Nil}(A)$ is a nilpotent ideal.  Thus powers of $\operatorname{Nil}(A)$ provide a finite filtration on $A$.  Since each $\operatorname{Nil}(A)^i$ is compactly generated, the layers of this filtration will be finitely generated modules over $A/\operatorname{Nil}(A)$, which is semisimple, and thus artinian by assumption.  It follows that $A$ has finite length as a module over itself, and therefore must satisfy the DCC condition on ideals, proving (2). 
    \end{proof}

\begin{proof}[Proof of Theorem~\ref{thm artin comm rings GRMN}]
This follows immediately from the combination of Theorem~\ref{thm:artinianv2} with the last line of Theorem~\ref{thm:main} and the fact that $\cC$ satisfies the Nakayama property, see \ref{section nakayama}.
\end{proof}

\subsection{The noncommutative case for finite algebras}

In this section we prove the following.

\begin{thm}\label{thm noncomm case}
    Let $\Cs$ be a well-fibered category in characteristic $p>0$, or a tannakian category in characteristic 0.  If $A\in\Cs$, then the conditions of Theorem \ref{prop:consolidate} are all equivalent.
\end{thm}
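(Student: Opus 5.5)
The implications $({}^*1{}^*)\Leftrightarrow(1)\Rightarrow(2)$, $(1)\Rightarrow(3)\Rightarrow(4)\Leftrightarrow(5)$ are already in Theorem~\ref{prop:consolidate}, since $A\in\cC$. So I only need two more implications: $(2)\Rightarrow(5)$ and $(5)\Rightarrow(1)$.

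For $(2)\Rightarrow(5)$ no hypothesis on $\cC$ is needed. Let $N$ be the maximal nilpotent two-sided ideal (Lemma~\ref{lem:Jart}), and apply (2) to the finitely presented left module $A/N$. Then $0\to N\to A\to A/N\to 0$ splits as left $A$-modules. The projection $A\to N$ is left $A$-linear, so it is right multiplication by $e\in\Gamma(N)$, the image of $1$. One checks that $e$ is idempotent and that $N=Ae$. Since $N$ is nilpotent, $e$ is nilpotent, hence $e=0$ and $N=0$.

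For $(5)\Rightarrow(1)$ I reduce to a finite tensor category and apply Theorem~\ref{thm:CSZ}. First, absolute flatness descends along a faithful exact tensor functor $F:\cC\to\Es$. Indeed, $\Tor^A_1$ on finitely presented modules (which suffices) is computed by free resolutions $A\otimes X$. The functor $F$ commutes with these resolutions and with $\otimes_A$. So if $F(A)$ is absolutely flat, then $F(\Tor_1^A(M,N))=0$ for all such $M,N$, hence $\Tor_1^A(M,N)=0$ by faithfulness.

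In the tannakian characteristic-$0$ case I can instead argue directly. Let $\cC=\Rep G$. By Cartier, $\bk[G]$ is reduced, so the radical $J$ of the underlying finite-dimensional algebra satisfies $J(A\otimes R)=J\otimes R$ for reduced $R$. Applying this to the coaction shows $J$ is $G$-stable, so $J=0$ by (5). Then $A$ is a separable algebra, and the canonical separability idempotent built from the (nondegenerate, $G$-invariant) trace form is $G$-invariant. Every $A$-module is then a direct summand of a free module $A\otimes M$, which gives (1).

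In characteristic $p$ I compose the well-fibering functor $\cC\to\Ds$ with a restriction functor to a finite tensor category $\Es_r$. In the model case $\Ds=\Rep G$ (in $\Vec$, $\sVec$ or $\Ver_p$), $\Es_r=\Rep G_{(r)}$ is the category of representations of the $r$-th Frobenius kernel, which is finite. Let $N_r\subset A$ be the maximal nilpotent $G_{(r)}$-stable two-sided ideal, i.e.\ the radical of the image of $A$ in $\Es_r$. The $N_r$ form a decreasing chain of subspaces of the finite-dimensional $A$. Since $G_{(r)}$ is normal in $G$, the translate $g\cdot N_r$ (taken functorially on $R$-points) is again a $G_{(r)}$-stable nilpotent ideal, hence lies in $N_r$. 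So $N_r$ is $G$-stable, and its pullback to $\cC$ is a nilpotent ideal of $A$, which vanishes by (5).

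Thus the image of $A$ in the finite category $\Es_r$ satisfies (5). Theorem~\ref{thm:CSZ} then makes it absolutely flat, and the descent argument gives (1) for $A$.

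\textbf{Main obstacle.} This is the characteristic-$p$ reduction for a general GR+MN target $\Ds$. I need a finite tensor category receiving $\Ds$, or at least the finitely generated tensor subcategory generated by the image of $A$, such that radicals are compatible with pulling back. I would first try to use the Frobenius twist and Frobenius kernel structure available for GR+MN categories, hoping that the same ``normal subgroup, canonical radical'' argument goes through verbatim.
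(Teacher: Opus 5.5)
\textbf{There is a genuine gap in the characteristic $p$ case, which is the real content of the theorem.}

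Even in your model case, the step ``$g\cdot N_r$, taken functorially on $R$-points, is a $G_{(r)}$-stable nilpotent ideal, hence lies in $N_r$'' does not hold. To get $g(N_r\otimes R)\subseteq N_r\otimes R$ you would need the maximal nilpotent $G_{(r)}$-stable ideal of $A\otimes R$ to equal $N_r\otimes R$. This fails as soon as $R$ is not reduced, since that ideal then contains $A\otimes\mathrm{nil}(R)$. In particular it fails for the universal point $R=\bk[G]$, which is non-reduced exactly in the interesting case.

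What your argument does give is stability of $N_r$ under $G_{(r)}$ and under $G(\bk)$. Hence $N_r$ is stable under $G_{\mathrm{red}}$, which is reduced with $G_{\mathrm{red}}(\bk)=G(\bk)$. The missing ingredient is that $G=G_rG_{\mathrm{red}}$ for $r\gg0$; this is Lemma~\ref{lemma finite normal sub}. It requires two steps:
\begin{enumerate}
\item Reduce to an algebraic group. Replace $\cC$ by the finitely generated tensor subcategory containing $A$; tannakian formalism over the GR+MN category $\Ds$ then gives $\cC\subset\Rep^f G$ for an algebraic group $G$ in $\Ds$.
\item Apply \cite[Cor.~4.24]{CS}.
\end{enumerate}
This also removes your ``main obstacle''. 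You never need a finite tensor category receiving $\Ds$. The Frobenius kernels $G_r$ are normal finite subgroup schemes of $G$ inside $\Ds$ itself. You restrict along $G_r\subset G$, and your normal-subgroup argument then goes through as in Theorem~\ref{thm reduced finite}.

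\textbf{Your $(2)\Rightarrow(5)$ also has a faulty step.} Condition (2) is about the internal $\uExt^1_A$. The vanishing $\uExt^1_A(A/N,-)=0$ does not make $0\to N\to A\to A/N\to 0$ split. For example, for $A=\unit$ in a non-semisimple $\cC$, (2) holds for every $M$, yet non-split extensions exist. So the idempotent $e$ is not available.

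The step is easy to repair. Apply the exact functor $\uHom_A(A/N,-)$ to $A\onto A/N$; exactness says $I+N=A$, where $I=\uHom_A(A/N,A)\subseteq A$ is the largest subobject with $NI=0$. Hence $N=NA=NI+N^2=N^2$, so $N=0$ by nilpotency. With this fix, $(2)\Rightarrow(5)$ holds for every $A\in\cC$ in any artinian tensor category. That is more elementary than the paper's route through Theorem~\ref{thm:CSZ} and Lemma~\ref{lemma res semisimple}.

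\textbf{The remaining parts are fine.} Your descent of absolute flatness along a faithful exact tensor functor is correct. Your characteristic-$0$ argument is a valid alternative to the paper's use of Theorem~\ref{thm:CSZ} over $\Vec$: you show the radical is $G$-stable, then use the $G$-invariant separability idempotent coming from the trace form. One correction there: in ``$J(A\otimes R)=J\otimes R$ for reduced $R$'', $J$ must mean the maximal nilpotent ideal, not the Jacobson radical. The Jacobson radical version is false for reduced $R$ with $\JJ(R)\neq0$.
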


Theorem \ref{thm noncomm case} covers all well-fibered categories apart from one class, which we leave as an open question.

\begin{question}\label{question:super}
    If $\Cs$ is a supertannakian category in characteristic 0 and $A\in\Cs$, are the conditions of Theorem \ref{prop:consolidate} all equivalent?
\end{question}

We now assume that we have a tensor functor $\Cs\to\Ds$, where $\Ds$ is a GR+MN pretannakian category.  Since $A\in\Cs$, we may assume that $\Cs$ is finitely generated.  By tannakian formalism (see for instance \cite[\S 4]{CEO}), there exists an algebraic group $G$ in $\Ds$ such that $\Cs$ embeds in $\Rep^f G$, where we write $\Rep^f G$ for the tensor category of finite-length $G$-modules in $\Ds$.  Therefore it suffices to prove Theorem \ref{thm noncomm case} for $\Cs=\Rep^f G$, and we assume from now on that $G$ is an algebraic group in $\Ds$ and $A$ is an algebra in $\Rep^f G$ (in particular, it is of finite length).

For such an algebraic group $G$, recall that we have a subgroup $G_{\mathrm{red}}\leq G$, where $\Bbbk[G_{\mathrm{red}}]=\Bbbk[G]/\text{Nil}$, where $\text{Nil}$ is the nilradical of $\Bbbk[G]$.  It is straightforward to check the following:
\begin{enumerate}
    \item $G_{\mathrm{red}}$ is an algebraic group in $\Vec$;
    \item $G_{\mathrm{red}}(\Bbbk)=G(\Bbbk)$; and,
    \item the natural functor $\Rep^f G_{\mathrm{red}}\to\mod_{\Bbbk G(\Bbbk)}\Ds$ is fully faithful.
\end{enumerate}

Next, recall from \cite[\S 4]{CS} that if $\Bbbk$ has characteristic $p>0$, then for $r\in\N$ one may define the $r$th Frobenius kernel of $G$, denoted $G_r$. Then $G_r$ is a normal, finite subgroup scheme in $G$, and we have inclusions $G_{r}\leq G_{r+1}$ for all $r$.  Here, we say a group scheme $H$ is finite if $\Rep^f H$ is a finite tensor category, or equivalently $\Bbbk[H]$ is of finite length, meaning $\bk[H]\in\Ds$.

\begin{lemma}\label{lemma finite normal sub}
    Suppose that $\Ds=\Vec$ or $\Bbbk$ is of positive characteristic.  Then there exists a normal, finite subgroup scheme $N\trianglelefteq G$ such that $G=NG_{\mathrm{red}}$.
\end{lemma}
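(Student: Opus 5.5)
We treat the two cases of the hypothesis separately. In both, the aim is a finite normal subgroup scheme $N$ such that the multiplication map $N\rtimes G_{\mathrm{red}}\to G$ (or rather $N\times G_{\mathrm{red}}\to G$) is surjective, i.e. the induced map on coordinate algebras $\bk[G]\to\bk[N]\otimes\bk[G_{\mathrm{red}}]$ is a monomorphism.

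\emph{The case $\Ds=\Vec$ in characteristic zero.} Here $G$ is an ordinary affine algebraic group over $\bk=\bar\bk$ of characteristic $0$. By Cartier's theorem $G$ is reduced, so $G=G_{\mathrm{red}}$ and we take $N=1$. If instead $\Ds=\Vec$ in characteristic $p>0$, we fall under the positive characteristic case below.

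\emph{The case $\mathrm{char}\,\bk=p>0$.} We take $N=G_r$ for $r$ large. Each $G_r$ is finite and normal by the recollection from \cite[\S 4]{CS}, so it remains to show $G=G_rG_{\mathrm{red}}$ for some $r$. The classical argument is that the relative Frobenius $F^r:G\to G^{(r)}$ has kernel $G_r$, and for $r\gg0$ the image $F^r(G)$ is reduced. Hence $F^r(G)=F^r(G_{\mathrm{red}})$, which gives $G=G_rG_{\mathrm{red}}$.

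To make this work in $\Ds$, we translate it into algebra. Since $\bk[G]$ is finitely generated and Noetherian (Noetherianity holds in GR+MN categories by \cite{C1}), the nilradical $\mathrm{Nil}$ of $\bk[G]$ is compactly generated, hence nilpotent, as in the proof of Theorem~\ref{thm:artinianv2}. As in \cite{CS}, the ideal defining $G_r$ is generated by the $p^r$-th Frobenius image of the augmentation ideal $\bk[G]^+$; write $I_r$ for this ideal, so that $\bk[G_r]=\bk[G]/I_r$.

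We must then show that the kernel of $\bk[G]\to\bk[G_r]\otimes\bk[G_{\mathrm{red}}]$ vanishes. The statement $G=G_rG_{\mathrm{red}}$ is equivalent to the quotient $G/G_r$ being reduced, or more precisely to the Frobenius twist $\bk[G]^{(p^r)}$, i.e. $\bk[G/G_r]$, meeting $\mathrm{Nil}$ trivially. Since $\mathrm{Nil}^m=0$ for some $m$, the Frobenius power map kills nilpotent elements of $\bk[G]$ of nilpotency order at most $p^r$. So we choose $p^r\ge m$, which gives $\bk[G/G_r]\cap\mathrm{Nil}=0$. It follows that $G/G_r$ is reduced, so $G_{\mathrm{red}}\to G/G_r$ is surjective (it is a closed immersion on reduced parts, with the same underlying points by property (2)). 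We conclude $G=G_rG_{\mathrm{red}}$.

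\emph{Main obstacle.} The hard step is this Frobenius argument inside a general $\Ds$. In $\Ds$ (for instance $\Ver_{p^n}$), nilpotent \emph{objects} need not be killed by the naive $p$-th power. So "Frobenius kills $\mathrm{Nil}$ for large $r$" must be taken from the definition of Frobenius kernels in \cite{CS}, where $G/G_r$ is identified with the image of the Frobenius twist map. We would also need that quotients by finite normal subgroups exist there. If that identification is not available, we would fall back on a dimension or length argument: the images of $G_rG_{\mathrm{red}}$ form an increasing chain of closed subgroups, which stabilises by Noetherianity, and we would then show the stable term contains all of $\mathrm{Spec}$ infinitesimally.
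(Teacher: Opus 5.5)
Your case split and choice of $N$ agree with the paper. In characteristic $0$ the hypothesis forces $\Ds=\Vec$, and Cartier gives $G=G_{\mathrm{red}}$ with $N=\{e\}$. In characteristic $p$ one takes $N=G_r$. The paper, however, does not prove $G=G_rG_{\mathrm{red}}$ for $r\gg 0$; it quotes this from \cite[Cor.~4.24]{CS}.

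Your own derivation of that equality is correct when $\Ds=\Vec$. There $\bk[G/G_r]$ is the image of Frobenius, i.e.\ the ring $\bk[G]^{p^r}$ of $p^r$-th powers. It is reduced as soon as $p^r\ge m$, and your argument via $\bk$-points then finishes. But it has a genuine gap for a general GR+MN category $\Ds$ in characteristic $p$, such as $\Ver_p$ or $\Ver_{p^n}$, and that is precisely the case the lemma is needed for.

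The step ``$\bk[G/G_r]\cap\mathrm{Nil}=0$ once $p^r\ge m$'' presupposes that $\bk[G]^{G_r}$ is generated by $p^r$-th powers of elements. In $\Ds$ there is no $p^r$-th power map on subobjects of $\bk[G]$. Frobenius twists exist only through constructions such as $\Gamma^{p^r}$ or the Frobenius functor. Identifying $\bk[G/G_r]$ with such a twist, and controlling its intersection with $\mathrm{Nil}$, is exactly the non-trivial content supplied by \cite{CS}.

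You flag this yourself, but neither way out closes the gap. Deferring to ``the identification in \cite{CS}'' just cites \cite{CS} for the key step. At that point you may as well cite \cite[Cor.~4.24]{CS} directly, as the paper does; your proof would then be complete and coincide with the paper's.

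The fallback is wrong as stated. Noetherianity gives ACC on ideals, i.e.\ DCC on closed subschemes, not ACC on closed subgroups. For example, the Frobenius kernels $G_1\subsetneq G_2\subsetneq\cdots$ of $\mathbb{G}_a$ form an infinite strictly increasing chain of closed subgroups of a Noetherian group. That the specific chain $G_rG_{\mathrm{red}}$ stabilises at $G$ is equivalent to the lemma itself, so it cannot be derived from Noetherianity.

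A smaller point: $G_{\mathrm{red}}\to G/G_r$ is not a closed immersion. What you actually use is that its image is a closed subgroup containing all $\bk$-points of the reduced group $G/G_r$.
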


\begin{proof}
    If $\Bbbk$ is of characteristic 0 (so that $\Ds=\Vec$), then $G=G_{\mathrm{red}}$ and we may take $N=\{e\}$.   If $\Bbbk$ is characteristic $p>0$, then by \cite[Cor.~4.24]{CS} we have $G=G_rG_{\mathrm{red}}$ for $r$ large enough.
\end{proof}

We now proceed to the proof of Theorem \ref{thm noncomm case}.  For this, we prove the following slightly stronger statement.  

\begin{thm}\label{thm reduced finite}
Suppose that there exists a normal finite subgroup scheme $N\trianglelefteq G$ such that $G=NG_{\mathrm{red}}$.  Then for an algebra $A\in\Rep^f G$, the conditions of Theorem \ref{prop:consolidate} are all equivalent.
\end{thm}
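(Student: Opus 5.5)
By Theorem~\ref{prop:consolidate}, since $A\in\Rep^f G$ is an algebra in $\cC$, we already have $(1)\Leftrightarrow(2)$, $(1)\Rightarrow(3)\Rightarrow(4)\Leftrightarrow(5)$. So the whole content is $(5)\Rightarrow(1)$: if $A$ has no non-zero nilpotent two-sided $G$-stable ideal, then $A$ is absolutely flat in $\Ind\Rep^f G$. The plan is to restrict to the finite normal subgroup scheme $N$, where Theorem~\ref{thm:CSZ} applies because $\Rep^f N$ is a finite tensor category. We then transport the conclusion back to $G$.

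\emph{Step 1 (reduction of flatness to $N$).} Consider the restriction functor $\mathrm{Res}:\Rep^f G\to\Rep^f N$ and its ind-extension. It is an exact, faithful, cocontinuous symmetric monoidal functor. It therefore commutes with the coequalisers defining $\otimes_A$ and sends $A$-modules to $A$-modules. Consequently, for $A$-modules $M,M'$ in $\Ind\Rep^f G$ we get $\mathrm{Res}\,\Tor_1^A(M,M')\cong\Tor_1^{A}(\mathrm{Res}M,\mathrm{Res}M')$. Here we compute with free resolutions $A\otimes X_\bullet$, which restrict to free resolutions. Faithfulness then shows: if $\mathrm{Res}A$ is absolutely flat in $\Ind\Rep^f N$, then $A$ is absolutely flat. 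By Theorem~\ref{thm:CSZ} applied in $\Rep^f N$, it suffices to show that $\mathrm{Res}A$ has no non-zero nilpotent two-sided ideal, i.e.\ $\JJ_N(A)=0$. Here $\JJ_N(A)$ is the maximal nilpotent ideal of $\mathrm{Res}A$, which exists by Lemma~\ref{lem:Jart}.

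\emph{Step 2 ($\JJ_N(A)$ is $G$-stable).} Since $A$ has no non-zero nilpotent $G$-stable ideal, Step~1 is finished once we know $\JJ_N(A)$ is a $G$-subobject of $A$. First, for $g\in G(\bk)$, translation by $g$ is an algebra automorphism of $A$ (as an object of $\Ds$). Because $N$ is normal, it sends $N$-stable subobjects to $N$-stable subobjects. Hence $g\JJ_N(A)$ is a nilpotent $N$-stable ideal, so $g\JJ_N(A)\subseteq\JJ_N(A)$. Thus $\JJ_N(A)$ is stable under $G(\bk)=G_{\mathrm{red}}(\bk)$. Using that $\Rep^fG_{\mathrm{red}}\to\mod_{\bk G(\bk)}\Ds$ is fully faithful (property (3) above), we conclude that $\JJ_N(A)$ is a $G_{\mathrm{red}}$-subobject. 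Concretely, the quotient $A/\JJ_N(A)$ is a $\bk G(\bk)$-module quotient of an object in the image, and full faithfulness applied to the projection (together with the kernel being in the image of an abelian full subcategory closed under subquotients) lifts the $G(\bk)$-stability to $G_{\mathrm{red}}$-stability. Finally, a subobject stable under both $N$ and $G_{\mathrm{red}}$ is stable under $G=NG_{\mathrm{red}}$. This holds because the multiplication $N\times G_{\mathrm{red}}\to G$ is faithfully flat (a quotient map), so $\bk[G]\hookrightarrow\bk[N]\otimes\bk[G_{\mathrm{red}}]$. The coaction of $G$ on $A$ maps $\JJ_N(A)$ into $\JJ_N(A)\otimes\bk[G]$ as soon as its image in $A\otimes\bk[N]\otimes\bk[G_{\mathrm{red}}]$ lies in $\JJ_N(A)\otimes\bk[N]\otimes\bk[G_{\mathrm{red}}]$. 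The latter we check factor by factor.

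\emph{Main obstacle.} I expect Step~2 to be the delicate point, specifically the passage from $G(\bk)$-stability to $G_{\mathrm{red}}$-stability. This needs care about what "translation by $g$" means for an algebra in $\Ind\Ds$ rather than in $\VEC$, and about exactly how property (3) is used: full faithfulness alone gives $G_{\mathrm{red}}$-equivariance of morphisms, and one must make sure subobjects are also detected. The intersection argument for $G=NG_{\mathrm{red}}$ is routine once faithful flatness of $N\times G_{\mathrm{red}}\to G$ is granted. With Step~2 in hand, $\JJ_N(A)$ is a nilpotent $G$-ideal, hence zero by (5), and Steps~1 and~2 combine to give (1).
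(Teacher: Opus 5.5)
Your treatment of $(5)\Rightarrow(1)$ is essentially the paper's proof of $(4)\Rightarrow(1)$. You restrict to $N$, show that the maximal nilpotent ideal of $R(A)$ is $G$-stable (the paper's claim $R(\JJ(A))=\JJ(R(A))$), apply Theorem~\ref{thm:CSZ} in the finite category $\Rep^f N$, and use that restriction reflects exactness. That part is fine. The one soft spot is the passage from $G(\bk)$-stability to $G_{\mathrm{red}}$-stability. Full faithfulness says nothing about subobjects, so it cannot justify this step. Use instead that a morphism $W\to (V/W)\otimes\bk[G_{\mathrm{red}}]$ is an element of $\Hom(W,V/W)\otimes\bk[G_{\mathrm{red}}]$ for compact $W$, and such an element vanishing at all $\bk$-points of the reduced algebraic group $G_{\mathrm{red}}$ is zero.

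The genuine gap is your opening claim that Theorem~\ref{prop:consolidate} already gives $(1)\Leftrightarrow(2)$. It does not: for possibly noncommutative $A$ it only gives $(1)\Rightarrow(2)$. The converse is established only in the commutative setting of Theorem~\ref{prop:consolidate2}, where it rests on Deligne's rigidity criterion in the monoidal category $\mod_A\cC$. For noncommutative $A$, condition (2) concerns left modules, which do not form a monoidal category. So after your argument, (2) is still only known to be a consequence of the other conditions, and you still need something like $(2)\Rightarrow(4)$.

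The paper proves this by contradiction. If $A$ is not semisimple, then neither is $R(A)$, so Theorem~\ref{thm:CSZ} yields simple $R(A)$-modules $L_1',L_2'$ with $\uExt^1_{R(A)}(L_1',L_2')\neq0$. One then needs Lemma~\ref{lemma res semisimple}: $R$ sends simple $A$-modules to semisimple ones (using $G=NG_{\mathrm{red}}$), and every simple $R(A)$-module is a direct summand of $R(L)$ for some simple $A$-module $L$. The latter is a genuinely new input. It uses that $G/N$ is affine, hence every $N$-representation embeds in the restriction of a $G$-representation \cite{CS,C2}. With this, $R(\uExt^1_A(L_1,L_2))\cong\uExt^1_{R(A)}(R(L_1),R(L_2))$ contains $\uExt^1_{R(A)}(L_1',L_2')$ as a direct summand, contradicting (2).

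Your proposal has no mechanism for lifting $N$-equivariant simple modules to $G$-equivariant ones. So the equivalence of (2) with the remaining conditions is not proved.
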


By Lemma \ref{lemma finite normal sub}, the hypothesis of Theorem \ref{thm reduced finite} applies whenever $\Ds$ is of positive characteristic or $\Ds=\Vec$, so that Theorem~\ref{thm noncomm case} follows.  We note that if $\Ds=\sVec$ and $\Bbbk$ is characteristic 0, then the hypothesis of Theorem \ref{thm reduced finite} holds if and only if $(\operatorname{Lie}G)_1$ is an ideal, 
giving only a partial answer to Question~\ref{question:super}.

\subsubsection{Toward the proof of Theorem \ref{thm reduced finite}}

Fix a normal subgroup $N\trianglelefteq G$, and write $R:\Rep^f G\to\Rep^f N$ for the restriction functor.  Note we do not yet assume $N$ to be finite.  Observe that $R(A)$ is an algebra in $\Rep^f N$.  By abuse of notation, we will also write $R$ for the natural functor $\mod_{A}\Rep^f G\to\mod_{R(A)}\Rep^f N$.  



\begin{lemma}\label{lemma res semisimple}
    Suppose that $N$ is a normal subgroup scheme of $G$ such that $G=NG_{\mathrm{red}}$.  \begin{enumerate}
        \item If $L\in\mod_A\Rep^f G$ is simple, then $R(L)$ is a semisimple $R(A)$-module.
        \item If $L'\in\mod_{R(A)}\Rep^f N$ is simple, then there exists a simple $A$-module $L$ such that $L'$ is a direct summand of $R(L)$ in $\mod_{R(A)}\Rep^f N$.
    \end{enumerate}
\end{lemma}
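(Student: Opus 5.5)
The plan is to adapt classical Clifford theory, using the three listed properties of $G_{\mathrm{red}}$ together with the factorisation $G=NG_{\mathrm{red}}$.

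\emph{Part (1).} Let $L$ be a simple $A$-module in $\Rep^f G$. Since $L$ has finite length, I consider the socle $S:=\mathrm{soc}\,R(L)$ of $R(L)$ in $\mod_{R(A)}\Rep^f N$; it is non-zero. The goal is to show that $S$ is a $G$-stable subobject of $L$. Once that is known, $S$ is an $A$-submodule of $L$ in $\Rep^f G$, so $S=L$ by simplicity, and $R(L)$ is semisimple.

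To see that $S$ is $G$-stable, I use $G=NG_{\mathrm{red}}$. It then suffices to show that $S$ is stable under $N$ and under $G_{\mathrm{red}}$. Stability under $N$ holds by construction. For $G_{\mathrm{red}}$, I first use normality of $N$. Each $g\in G(\bk)=G_{\mathrm{red}}(\bk)$ acts on $L$ by an automorphism $\phi_g$ in $\Ds$. This automorphism intertwines the $N$-action with its conjugate by $g$, and it is compatible with the $A$-action. Hence $\phi_g$ sends $R(A)$-submodules of $R(L)$ that are simple in $\Rep^f N$ to submodules of the same kind. It follows that $\phi_g(S)=S$, so $S$ is a $\bk G(\bk)$-submodule of $L$ in $\Ds$.

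The next step, and the main obstacle, is to upgrade this $G(\bk)$-stability to $G_{\mathrm{red}}$-stability. Here I would use property (3), full faithfulness of $\Rep^f G_{\mathrm{red}}\to\mod_{\bk G(\bk)}\Ds$. I would first check that this functor is also closed under subobjects. To do so, let $W\subset S$ be the largest $G_{\mathrm{red}}$-submodule contained in $S$; it exists because $\Rep^f G_{\mathrm{red}}$ is abelian and closed under sums. Then compare $L/W$ with $L/S$. The surjection $L/W\onto L/S$ is $G(\bk)$-equivariant. I would like to conclude by full faithfulness that it is $G_{\mathrm{red}}$-equivariant, after giving $L/S$ a $G_{\mathrm{red}}$-structure. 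That structure is exactly what is in question, so the argument would instead run by testing against $\Hom$'s into injective objects of $\Rep G_{\mathrm{red}}$. A possibly more robust alternative is to work with the coaction of $\bk[G_{\mathrm{red}}]$. Since $\bk$ is algebraically closed, $\bk[G_{\mathrm{red}}]$ is a reduced finitely generated algebra in $\Vec$, so its elements are separated by the points $G(\bk)$. A $G(\bk)$-stable subobject $S$ then has coaction landing in $S\otimes\bk[G_{\mathrm{red}}]$, because the composite to $(L/S)\otimes\bk[G_{\mathrm{red}}]$ vanishes at every $\bk$-point. I would try this coaction argument first.

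\emph{Part (2).} Let $L'$ be a simple $R(A)$-module in $\Rep^f N$. Then $L'$ is a quotient of $R(A)\otimes X$ for some $X\in\Rep^f N$. Since the restriction $\Rep G\to\Rep N$ has a right adjoint (induction), $X$ embeds into $R(Y)$ for some $Y\in\Rep^f G$: take a finite-length piece of $\Ind_N^G X$ containing the image of $X$. Because $R(A)\otimes -$ is exact, $L'$ is a composition factor of $R(A\otimes Y)=R(M)$, where $M:=A\otimes Y$ is a finite-length $A$-module in $\Rep^f G$. As $R$ is exact, $L'$ is then a composition factor of $R(L)$ for some simple composition factor $L$ of $M$. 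By part (1), $R(L)$ is semisimple, so $L'$ is a direct summand of $R(L)$.
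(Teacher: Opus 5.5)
Your part (1) is essentially the paper's argument. The paper sums the translates $g(L')$, $g\in G(\bk)$, of a single simple submodule rather than taking the whole socle, which makes no difference. Your coaction/density argument is a correct way to justify the passage from $G(\bk)$-stability to $G_{\mathrm{red}}$-stability, a step the paper leaves implicit. The closing reduction in part (2), via composition factors and part (1), also matches the paper.

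The gap is the step in part (2) claiming that $X$ embeds into $R(Y)$. The adjunction $R\dashv\Ind_N^G$ gives a unit $Y\to\Ind_N^G R(Y)$ on $G$-modules and a counit $R(\Ind_N^G X)\to X$ on $N$-modules. There is no map $X\to R(\Ind_N^G X)$, so there is no ``image of $X$'' inside $\Ind_N^G X$ to take a finite-length piece around.

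No formal argument can repair this. The property that every object of $\Rep^f N$ is a subobject (equivalently, by duality, a quotient) of a restricted $G$-module is one of the characterisations of observability of $N$ in $G$. It fails for general subgroups. For example, take a Borel subgroup $B\subset SL_2$ and a character $\lambda$ with $\Ind_B^{SL_2}\lambda=0$. Then $\lambda$ is not a quotient, and $\lambda^{-1}$ is not a subobject, of any restricted $SL_2$-module. Note that your argument at this step never uses that $N$ is normal, which signals that something is missing.

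The missing input is this: normality of $N$ ensures that the quotient $G/N$ exists and is affine (\cite[Cor.~7.10]{CS}). Hence $N$ is observable, and \cite[Thm.~7.3.1]{C2} provides $Y\in\Rep^f G$ with $(L')^\ast\subseteq R(Y)$, or $X\subseteq R(Y)$ in your formulation. With that inserted, the rest of your part (2) goes through as written.
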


\begin{proof}
    For (1), choose a simple $R(A)$-submodule $L'\sub R(L)$.  Then for $g\in G(\Bbbk)$, $g(L')$ will be another simple $R(A)$-submodule of $L$.  Moreover, $V=\sum\limits_{g\in G(\Bbbk)}g(L')$ is a $G_{\mathrm{red}}$-stable, $R(A)$-submodule of $L$.  Since $G=NG_{\mathrm{red}}$, $V$ is thus $G$-stable, and hence defines a non-zero $A$-submodule of $L$.  By simplicity of $L$ this forces $V=L$, and since $V$ is clearly semisimple in $\mod_{R(A)}\Rep^f N$ we are done.

     To prove (2), we may apply (1) so that it suffices to show that $L'$ is a quotient of $R(L)$ for some simple $A$-module, and moreover for this it only suffices to show it is a quotient of some (not necessarily simple) $A$-module.  Notice that the quotient of schemes $G/N$ exists and is affine by \cite[Cor.~7.10]{CS}, and thus by \cite[Thm.~7.3.1]{C2} there exists $Y\in\Rep^fG$ such that $(L')^*\sub R(Y)$.  Setting $X=A\otimes Y^*$, we obtain a natural surjective map of $R(A)$-modules $R(A\otimes Y)=A^{R}\otimes R( Y)\onto L'$ and we are done.
\end{proof}

Lemma \ref{lemma res semisimple} has the following corollary which is of independent interest.
\begin{cor}
    Continuing with our hypotheses on $G$ and $N$, the restriction functor $\operatorname{Res}_N^G:\Rep^f G\to\Rep^f N$ takes simple modules to semisimple modules.  Moreover, every simple in $\Rep^f N$ is a direct summand of $\operatorname{Res}_N^G(L)$ for some simple $L\in\Rep^f G$.
\end{cor}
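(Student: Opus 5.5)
The plan is to obtain the corollary as the special case $A=\unit$ of Lemma~\ref{lemma res semisimple}. The unit $\unit$ is an algebra in $\Rep^f G$, and its restriction $R(\unit)=\unit$ is the unit algebra in $\Rep^f N$. A left $\unit$-module in $\Rep^f G$ is just an object of $\Rep^f G$, since the action $\unit\otimes M\to M$ is forced to be the unitor. Hence there are canonical identifications
\[
\mod_{\unit}\Rep^f G\;\simeq\;\Rep^f G\qquad\mbox{and}\qquad \mod_{\unit}\Rep^f N\;\simeq\;\Rep^f N.
\]
Under these identifications the functor $R:\mod_A\Rep^f G\to\mod_{R(A)}\Rep^f N$ of the lemma becomes $\operatorname{Res}^G_N$. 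Simple $\unit$-modules are exactly simple objects, and direct summands as $\unit$-modules are exactly direct summands as objects.

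The two claims of the corollary then follow directly. Part (1) of the lemma says that $\operatorname{Res}^G_N$ sends simples to semisimples. Part (2) says that every simple $L'\in\Rep^f N$ is a direct summand of $\operatorname{Res}^G_N(L)$ for some simple $L\in\Rep^f G$.

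The only thing to check is that the proof of the lemma does not secretly assume something that fails for $A=\unit$. In (1), the argument uses $G(\bk)$-translates of a simple $N$-submodule and the equality $G=NG_{\mathrm{red}}$; this is independent of $A$. In (2), the argument produces $Y\in\Rep^fG$ with $(L')^*\subseteq R(Y)$, using that $G/N$ is affine, and then takes the surjection $R(A\otimes Y)\onto L'$. For $A=\unit$ this surjection is just the dual $R(Y^*)\onto L'$ of the inclusion. So $L'$ is a quotient of the restriction of a $G$-module, and part (1) upgrades this to a direct summand of the restriction of a simple $G$-module. I expect no real obstacle here: the corollary is a direct specialisation of the lemma, and the identification of $\unit$-modules with objects is routine.
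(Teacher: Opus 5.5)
Your proposal is correct and is exactly the paper's argument: the paper also obtains the corollary by specialising Lemma~\ref{lemma res semisimple} to $A=\mathbf{1}$, where $\mod_{\mathbf{1}}\Rep^f G\simeq\Rep^f G$ and $R$ becomes $\operatorname{Res}^G_N$. Your extra check that the lemma's proof still works for $A=\mathbf{1}$ (in particular the surjection $R(Y^*)\twoheadrightarrow L'$ in part (2), followed by a composition-series argument using part (1)) is a harmless sanity check that the paper leaves implicit.
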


\begin{proof}
    Indeed, this follows immediately from Lemma \ref{lemma res semisimple} when $A=\mathbf{1}$.
\end{proof}

\begin{proof}[Proof of Theorem \ref{thm reduced finite}]
    Due to Theorem \ref{prop:consolidate} itself, it suffices to prove the implications $(4)\Rightarrow(1)$ and $(2)\Rightarrow (4)$. Before we begin, we claim that $R(\mathcal{J}(A))=\mathcal{J}(R(A))$.  Indeed, observe that if $I\leq R(A)$ is a nilpotent ideal, then so is $g(I)$ for any $g\in G(\Bbbk)$, and from this the result easily follows.
    
    Now assume (4) holds so that $A$ is semisimple, which implies $R(A)$ is also semisimple.  By Theorem \ref{thm:CSZ}, it follows that $R(A)$ is absolutely flat.  Since we have $R(M\otimes_A N)\cong R(M)\otimes_{R(A)}R(N)$ and $R$ reflects exact sequences, it follows that $A$ is also absolutely flat, proving (1).  
    

    Now assume (2) holds.  If $A$ is not semisimple, then neither is $R(A)$.  Hence by Theorem~\ref{thm:CSZ}, there exists simple $L_1',L_2'\in\mod_{R(A)}\Rep^f N$ such that $\uExt^1_{R(A)}(L_1',L_2')\neq0$.  By Lemma \ref{lemma res semisimple}, there exists simple $L_1,L_2\in\mod_A\Rep^f G$ such that $L_i'$ is a direct summand of $R(L_i)$ for $i=1,2$.  It follows that $R(\uExt^1_A(L_1,L_2))=\uExt^1_{R(A)}(R(L_1),R(L_2))$ contains $\uExt^1_{R(A)}(L_1',L_2')$ as a direct summand, and is therefore non-zero.  This contradicts our assumption, meaning that $A$ must instead be semisimple, and we are done.
\end{proof}


\section{Simple commutative ind-algebras in moderate growth pretannakian categories}\label{sec:simpleGRMN}
We assume for this section that $\Ds$ is a pretannakian category satisfying GR and MN.

\medskip

Let $G$ be an affine group scheme in $\Ds$, see for example \cite[\S 7]{C1}, \cite{BP} or \cite[\S 3]{CS}. We write $\Rep G$ for the monoidal category of $\bk[G]$-comodules in $\Ind\Ds$, which is the ind-completion of the tensor category $\mathrm{Rep}^f G$ of $G$-representations in $\Ds$.  For a subgroup $H\subseteq G$ we write $\operatorname{Ind}_{H}^{G}:\Rep(H)\to\Rep(G)$ for the right adjoint to the monoidal restriction functor $\operatorname{Res}^G_H:\Rep(G)\to\Rep(H)$.  Then $\operatorname{Ind}_H^{G}(\mathbf{1})\cong \bk[G]^H$ is the subalgebra of $H$-invariants in $\bk[G]$ by \cite[Lem.~7.13]{CS}, and the subalgebra structure agrees with the algebra coming from the lax monoidal functor $\Ind^G_H$ acting on the algebra $\unit$. The subgroup $H$ is called {\bf exact} if $\operatorname{Ind}_H^{G}$ is an exact functor.  

\begin{lemma}\label{lemma induced from exact is simple}
    If $H\sub G$ is exact, then $\Bbbk[G]^H$ is a simple commutative algebra in $\Rep G$, and we have an equivalence of symmetric monoidal categories $\Rep H\cong\Mod_{\Bbbk[G]^H}(\mathrm{Rep}^fG)$.  
\end{lemma}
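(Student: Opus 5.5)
The plan is to deduce both claims from the standard ``induction equals modules over the induced algebra'' comparison, with simplicity then following formally from the module category. Set $B:=\Ind_H^G(\unit)\cong\bk[G]^H$, a commutative algebra in $\Rep G$. The adjunction $\operatorname{Res}^G_H\dashv\Ind_H^G$ is monoidal/lax-monoidal, and it satisfies the projection formula
\[
\Ind_H^G(\operatorname{Res}^G_H(X)\otimes M)\;\cong\;X\otimes\Ind_H^G(M),\qquad X\in\Rep G,\ M\in\Rep H.
\]
For $X$ compact this follows from rigidity; for general $X$ it follows because $\Ind_H^G$ commutes with filtered colimits, since $\bk[G]$ is a colimit of compact objects and $\Ind_H^G(M)=(\bk[G]\otimes M)^H$. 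Hence $\Ind_H^G$ factors through a functor
\[
\Phi:\Rep H\to\Mod_B(\Rep^fG),
\]
and $\Phi$ is lax monoidal, compatibly with $\otimes$ and $\otimes_B$.

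\textbf{Constructing the equivalence.} The functor $\Phi$ has a left adjoint $\Psi(N)=\operatorname{Res}(N)\otimes_{\operatorname{Res}B}\unit$. Here the algebra map $\operatorname{Res}(B)\to\unit$ is the counit of the adjunction, i.e.\ evaluation at the identity coset. I will show the following.
\begin{enumerate}
\item The counit $\Psi\Phi\to\id$ is an isomorphism. Using the projection formula, $\Phi$ is faithful and conservative, since $\operatorname{Res}\,\Ind_H^G M\to M$ is split by the comodule structure $M\to \bk[G]\otimes M$ restricted suitably. Exactness of $\Phi$ (which is exactness of $\Ind_H^G$, the hypothesis) is then used to check that $\Psi\Phi M\to M$ is an isomorphism on all $M$, first for $M=\operatorname{Res}X$, where $\Psi\Phi(\operatorname{Res} X)=\Psi(X\otimes B)=\operatorname{Res}X$, and then in general by presenting $M$ as a kernel of maps between objects of the form $\operatorname{Res}(X)\otimes$ (injective), or via a coresolution by $\operatorname{Res}\Ind$.
\item The unit $N\to\Phi\Psi N$ is an isomorphism. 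For free modules $N=B\otimes X$ we have $\Psi N=\operatorname{Res}X$ and $\Phi\Psi N=B\otimes X$ by the projection formula. For general $N$ I present $N$ as a cokernel of free modules; $\Psi$ is right exact, and $\Phi$ is exact by the exactness hypothesis, so the five lemma finishes.
\end{enumerate}
The main obstacle is exactly these colimit/exactness bookkeeping points in the ind-setting, where $\Ind_H^G$ is exact and cocontinuous. An alternative is to quote the Barr--Beck monadicity theorem: $\Phi$ is exact, conservative and colimit-preserving, so it is comonadic/monadic, and the comparison with $B$-modules is given by the projection formula. Monoidality of the equivalence comes from $\Psi$ being strong monoidal, because $\operatorname{Res}$ is strong monoidal and base change along $\operatorname{Res}B\to\unit$ is strong monoidal.

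\textbf{Simplicity.} Simplicity of $B$ then follows. An ideal $I\subseteq B$ is a $B$-submodule of $B=\Phi(\unit)$, so under the equivalence it corresponds to a subobject of $\unit$ in $\Rep H$. Since $\unit$ is simple in $\Rep H$ (as $\End(\unit)=\bk$ and $\unit$ is simple in the tensor category $\Rep^fH$), we get $I=0$ or $I=B$. Finally, $B\neq0$ because $\Psi(B)=\unit\neq0$.
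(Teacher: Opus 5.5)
Your strategy is the paper's. You identify $\Rep H$ with modules over $B=\Ind_H^G(\unit)$ via the projection formula (Barr--Beck), and then read off simplicity of $B$ from simplicity of $\unit$. Step (2), the monoidality and the simplicity deduction are fine. The gap is in step (1), i.e.\ essential surjectivity.

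\textbf{Why step (1) fails as written.} The reason you give for $\Phi$ being faithful/conservative does not work. An object $M\in\Rep H$ carries no $\bk[G]$-coaction. The counit $\operatorname{Res}\Ind_H^G M\to M$ is split by the triangle identity when $M=\operatorname{Res}X$, but not in general. Your argument never uses exactness, and it would prove too much: for a Borel subgroup $H$ of $G=SL_2$ (with $\Ds=\Vec$) and a character $M$ with $\Ind_H^G M=0$, the counit is zero. Writing an arbitrary $M$ as a kernel of maps between restricted objects has the same problem. It presupposes that every $H$-module embeds into a restricted $G$-module, which is an observability property and exactly what is at stake. Even then, the merely right exact functor $\Psi$ would not carry a kernel presentation to a kernel presentation.

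\textbf{How to repair it.} The triangle identity $\Phi(\epsilon_M)\circ\eta_{\Phi M}=\id$ and step (2) show that $\Phi(\epsilon_M)$ is an isomorphism. By exactness of $\Phi$, the kernel and cokernel of $\epsilon_M$ are then killed by $\Ind_H^G$. So everything reduces to showing that $\Ind_H^G M=0$ implies $M=0$, and this is where exactness must be used.
\begin{enumerate}
\item If $M\neq 0$, pick a compact simple $L\subseteq M$; then $\Ind_H^G L=0$.
\item Since $\bk[G]\to\bk[H]$ is an epimorphism, the coaction $L^\ast\hookrightarrow L^\ast\otimes\bk[H]$ exhibits $L^\ast$ as a subquotient of $\operatorname{Res}X$, where $X=L^\ast\otimes\bk[G]$ with trivial action on the first factor.
\item Hence $L^\ast\otimes L$, and so its quotient $\unit$, is a subquotient of $\operatorname{Res}X\otimes L$.
\item On the other hand, $\Ind_H^G(\operatorname{Res}X\otimes L)\cong X\otimes\Ind_H^G L=0$.
\item Exactness of $\Ind_H^G$ now forces $\bk[G]^H=\Ind_H^G\unit=0$, which is absurd since it contains $1$.
\end{enumerate}
With this conservativity in hand, both your adjunction argument and Barr--Beck go through. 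The paper's one-line appeal to Barr--Beck tacitly relies on the same fact.

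\textbf{A remark on simplicity.} Simplicity of $B$ already follows from step (2) alone. For an ideal $I\subseteq B$, the image of $\Psi(I)\to\Psi(B)=\unit$ is $0$ or $\unit$. Applying the exact functor $\Phi$, with $\Phi\Psi\cong\id$, then gives $I=0$ or $I=B$.
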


\begin{proof}The proof of the case $\Ds=\Vec$, see \cite[Theorem~6.1.5(1)]{CEO}, carries over:
    By Barr-Beck monadicity, or \cite[Prop.~7.12]{CS} and \cite[Thm.~7.3.2]{C2} when $G$ is of finite type, we have $\Rep H\cong\Mod_{\Bbbk[G]^H}$ as symmetric monoidal categories.  This implies that $\Bbbk[G]^H$, which is the unit in $\mod_{\Bbbk[G]^H}$, is simple as a module over itself in $\Rep(G)$, which is equivalent to simplicity as an algebra. 
\end{proof}
Lemma \ref{lemma induced from exact is simple} then implies, for instance by Theorem~\ref{prop:consolidate2}:
\begin{cor}\label{cor exact is flat}
    If $H\sub G$ is exact, then $\Bbbk[G]^H$ is absolutely flat in $\Rep G$.
\end{cor}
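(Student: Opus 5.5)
The aim is to show that the commutative ind-algebra $B:=\Bbbk[G]^H$ in $\Rep G$ is absolutely flat. The plan is to use the monoidal equivalence of Lemma~\ref{lemma induced from exact is simple} to identify $\Mod_B$ with $\Rep H$, and then invoke Proposition~\ref{prop:absflattensor2}.

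By Lemma~\ref{lemma induced from exact is simple}, we have an equivalence of symmetric monoidal categories
\[
\Rep H\;\simeq\;\Mod_{B}(\mathrm{Rep}^fG),
\]
sending $\otimes$ on $\Rep H$ to $\otimes_B$. Concretely, this is the Barr--Beck comparison $V\mapsto \Ind_H^G V$, with quasi-inverse $M\mapsto \unit\otimes_{B}\operatorname{Res}M$; the latter is given by taking the fibre at the identity coset. The tensor product on $\Rep H$ is the tensor product in $\Ind\Ds$ with diagonal $H$-action. Since $\Ds$ is a tensor category, $\otimes$ on $\Ind\Ds$ is exact in each variable, so $\otimes$ on $\Rep H$ is exact. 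Transporting along the equivalence, and using that an equivalence of abelian categories preserves and reflects exactness, $\otimes_B$ on $\Mod_B$ is exact in each variable.

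This is condition (2) of Proposition~\ref{prop:absflattensor2}, which is equivalent to $B$ being absolutely flat.

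Alternatively, as the paper suggests, one can route the argument through rigidity. Finitely presented objects of $\Mod_B$ correspond under the equivalence to compact objects of $\Rep H$, namely $\mathrm{Rep}^fH$, and these are rigid because $\mathrm{Rep}^fH$ is a tensor category. Proposition~\ref{prop:absflattensor2}(3) then again gives absolute flatness. If one additionally wants to use Theorem~\ref{prop:consolidate2}, one needs that $B$ is artinian, which is only known in special cases; the direct route above avoids this.

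The only potential obstacle is checking that the equivalence of Lemma~\ref{lemma induced from exact is simple} really is monoidal, so that it intertwines $\otimes_B$ with $\otimes$ on $\Rep H$. This is asserted in the lemma as a symmetric monoidal equivalence, so I take it as given.
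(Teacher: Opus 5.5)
Your argument is correct and is essentially the paper's proof: the corollary is deduced from the monoidal equivalence $\Rep H\simeq\Mod_{\Bbbk[G]^H}(\mathrm{Rep}^fG)$ of Lemma~\ref{lemma induced from exact is simple}, combined with the rigidity/exactness characterisation of absolute flatness (the paper cites Theorem~\ref{prop:consolidate2}, whose implication $(1^*)\Rightarrow(1)$ is exactly Proposition~\ref{prop:absflattensor2}). One correction to your side remark: artinianity is not actually an obstacle here, because under the same equivalence the free module $\Bbbk[G]^H\otimes X$, for $X\in\mathrm{Rep}^fG$, corresponds to $\operatorname{Res}^G_HX\in\mathrm{Rep}^fH$, which has finite length.
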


We will reserve the notation $\Gamma=\Hom(\unit,-)$ for the functor $\Ind\Ds\to\VEC$, even when applied to objects in $\Rep G$. We will not need the $\Gamma$-notation for $\Rep G$ directly due to the following lemma.
\begin{lemma}\label{lem:AG} Let $A$ be a simple commutative algebra in $\Rep G$.
With $\Ind\Ds\ni A^G\sub A$ the subalgebra of $G$-invariants,  we have equalities
    \[\Hom_{\Rep G}(\unit,A)=\Hom_{\Ds}(\unit,A^G)= A^G.\]
    So $A^G\in\VEC$ and $A^G$ is a field.
\end{lemma}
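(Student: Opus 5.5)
The two equalities are formal; the real content is that $\Hom_{\Ds}(\unit,A^G)$ exhausts $A^G$, which needs the MN property of $\Ds$.

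The first equality is the universal property of invariants. For $X\in\Ind\Ds$ with trivial $G$-action, $\Hom_{\Rep G}(X,A)=\Hom_{\Ds}(X,A^G)$, since $A^G$ is the largest subobject of $A$ on which $G$ acts trivially. Concretely, $A^G$ is the equaliser of the coaction $A\to A\otimes\bk[G]$ and $a\mapsto a\otimes 1$. Taking $X=\unit$ gives $\Hom_{\Rep G}(\unit,A)=\Hom_{\Ds}(\unit,A^G)$.

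Next, $A^G$ is a commutative ind-algebra in $\Ds$. I claim every nonzero ideal $I\sub A^G$ in $\Ind\Ds$ equals $A^G$. The ideal $AI\sub A$ is $G$-stable, because $I$ consists of invariants, and it is nonzero. By simplicity of $A$ in $\Rep G$ we get $AI=A$. To descend this to $A^G$, I would use the Reynolds-type argument available for simple algebras. The key point is that the composite $A\otimes I\to A$ hits $1$: the map $\unit\to A$ factors through the image of $A\otimes I$. Since both $A\otimes I$ and $A$ are $G$-equivariant and $\unit$ lifts, one wants a $G$-invariant lift. Instead of averaging, which is not available, I argue differently. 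Consider the ideal $J\sub A$ generated by $I$; we know $J=A$. Then $1\in\Gamma(J)$, so $1$ is a finite sum $\sum a_ix_i$ with $x_i$ in a compact subobject $Y\sub I$. Hence $A$ is generated, as a $G$-equivariant ideal, by $Y\sub A^G$.

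It is simpler to avoid ideals and show directly that $A^G$ is simple via $\Gamma$. Let $f\in\Hom_{\Rep G}(\unit,A)$ be nonzero. Then $fA$ is a nonzero $G$-stable ideal, so $fA=A$ and $f$ is invertible in $A$. Its inverse is unique, hence $G$-invariant, so $\Gamma^G(A):=\Hom_{\Rep G}(\unit,A)$ is a field $F$.

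It remains to show $A^G\in\VEC$, i.e. that $A^G$ is a direct sum of copies of $\unit$ in $\Ind\Ds$. Then $A^G=\Hom_{\Ds}(\unit,A^G)=F$ and the lemma follows. This is the main obstacle, and here MN enters. First, $A^G$ is a simple-ish commutative ind-algebra in $\Ds$: every nonzero ideal $I\sub A^G$ generates $A$, by the argument above. Suppose a nontrivial simple $L\sub A^G$ exists. Then $L$ generates an ideal $A^G L$, and $AL=A$. By MN(i), $S(L)$ is finite, so the image of $S^n L\to A$ vanishes for large $n$. That is, the $G$-stable ideal $AL$ is nilpotent, since its powers are generated by the images of $S^nL$. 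This contradicts $AL=A\neq 0$, so $\Gamma$ of all simple constituents of $A^G$ is trivial.

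The remaining case is that $A^G$ has a non-split extension $\unit\hookrightarrow X$ inside it with $X/\unit\cong\unit$ or built from $\unit$'s. By MN(ii), $\unit\to S^nX\to A^G$ vanishes for some $n$. But $\unit\to A^G$ is the unit, or an invertible element as shown above, and its powers are nonzero. So such non-split $X$ cannot occur, and $A^G$ is semisimple with only trivial constituents. Hence $A^G\in\VEC$ and $A^G=F$ is a field.
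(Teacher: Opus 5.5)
Your proof is correct and uses the same ingredients as the paper's. Simplicity of $A$ makes the $G$-invariant global sections a field. MN(i) rules out a nontrivial simple $L\subset A^G$, because $AL$ would be a nonzero nilpotent $G$-stable ideal. MN(ii) rules out a non-split $\unit\hookrightarrow X\subset A^G$, because it would make a nonzero invariant element nilpotent.

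The paper runs this dichotomy inside a hypothetical proper ideal of $A^G$ to show $A^G$ is simple, and then uses that simple ind-algebras in an MN category lie in $\VEC$. You apply it to $A^G$ directly, which is only a reorganisation. Your last step just needs the routine induction on length that splits a copy of $\unit$ off each compact subobject, and your abandoned Reynolds-type paragraph can be deleted.
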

\begin{proof} The left equality is by definition.
    For the second, it suffices to show that $A^G$ is a simple algebra, so that it must be in $\VEC\subset\Ind\Ds$. For a contradiction consider an ideal $I$ in $A^G$. Then $I$ does not belong to $\VEC$ since $\Gamma(A^G)$ is simple. Hence at least one of the following is true: either there exists a non-split $\unit\hookrightarrow I$ or a simple subobject $\unit\not=L\subset I$. By the (MN) hypothesis, this implies that in either case, we have a nilpotent subobject $X\subset A^G$ in $\Ds$ which thus generates a proper $G$-invariant ideal $AX$ in $A$, a contradiction.
\end{proof}

The main aim of this section is to prove the following theorem. For a field extension $\mathbb{L}:\bk$, we denote by $G_{\mathbb{L}}$ the affine group scheme in $\Ds_\mathbb{L}$, represented by $\mathbb{L}\otimes_{\bk}\bk[G]$.

\begin{thm}\label{thm simple comm alg}
    Let $A$ be a simple commutative algebra in $\Rep G$, and let $\mathbb{L}$ be the algebraic closure of the field $A^G$.   Then there exists an exact subgroup $H_{\mathbb{L}}\sub G_{\mathbb{L}}$ such that $A\otimes_{A^G}\mathbb{L}\cong \mathbb{L}[G_{\mathbb{L}}]^{H_{\mathbb{L}}}$. 
\end{thm}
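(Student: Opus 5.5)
The plan is to reduce to the case $A^G=\bk$ and then realise $A$ as the coordinate ring of a homogeneous space by producing a ``$G$-equivariant point'' of $A$ over an extension field.

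\emph{Step 1 (base change to $\mathbb{L}$).} Set $K=A^G$, a field by Lemma~\ref{lem:AG}. Then $A$ is a commutative algebra in $\Rep G$ over $K$. Since $\bk$ is algebraically closed and hence perfect, Proposition~\ref{prop:FieldExt} lets us view $\Ds_{\mathbb{L}}$ as a pretannakian category over $\mathbb{L}$. Put $A_{\mathbb{L}}:=A\otimes_K\mathbb{L}$, an algebra in $\Rep G_{\mathbb{L}}$. We want $A_{\mathbb{L}}$ to be simple with invariants $\mathbb{L}$.

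For invariants, note that $(-)^G$ commutes with the flat base change $-\otimes_K\mathbb{L}$, which gives $A_{\mathbb{L}}^{G_{\mathbb{L}}}=\mathbb{L}$. For simplicity, a $G_{\mathbb{L}}$-stable ideal $I\subsetneq A_{\mathbb{L}}$ is already defined over a finite subextension $K'$ of $\mathbb{L}:K$. Taking the norm or intersection over Galois conjugates, together with the fact that $A\otimes_K K'$ is finite free over $A$, should yield a proper ideal of $A$, contradicting simplicity. The purely inseparable part is delicate and may leave $A_{\mathbb{L}}$ non-reduced. To handle it, we would use that $\Ds_{\mathbb{L}}$ is again GR+MN (Appendix~\ref{app:Del}) and replace $A_{\mathbb{L}}$ by a simple quotient. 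That quotient still has invariants $\mathbb{L}$, and the isomorphism would then be argued for it. We therefore reduce to the case where $\mathbb{L}=\bk$ is algebraically closed and $A^G=\bk$.

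\emph{Step 2 (a point).} We need an algebra map $\phi:A\to\unit$ in $\Ind\Ds$, or more generally a $G$-equivariant map to a fibre. Here we use the generalisation of Deligne's neutrality theorem in Appendix~\ref{app:Del}. The category $\Mod_A(\Rep^f G)$, or rather the subcategory of $A$-modules finitely generated over $A$, should be a tensor category over $A^G=\bk$ by an artinianity-plus-flatness argument. We aim to show it is fibred over $\Ds$, i.e.\ that it admits a symmetric tensor functor to $\Ds$ compatible with $\Rep^f G\to\Ds$. Such a fibre functor is equivalent to a nonzero algebra morphism $A\to \unit$ in $\Ind\Ds$, forgetting $G$. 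I expect this step to be the main obstacle: it requires first knowing that $A$ is artinian and absolutely flat, or at least finitely generated locally, and it is exactly where GR and MN enter.

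To get around this, the first attempt will be to use MN to show that any maximal ideal of the underlying algebra of $A$ in $\Ind\Ds$ (forgetting $G$) has quotient a simple commutative ind-algebra in $\Ds$. By GR+MN this quotient is $\unit\otimes$ a field extension of $\bk$, cf.\ \cite{C1}, and we pass to its algebraic closure, which is why $\mathbb{L}$ appears.

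\emph{Step 3 (identification).} Given $\phi$, the coaction $A\to A\otimes\bk[G]$ composed with $\phi\otimes\id$ gives a $G$-equivariant algebra map $\psi:A\to\bk[G]$. Its kernel is a $G$-stable ideal, so it is zero by simplicity and $\psi$ is injective. Define $H\subseteq G$ as the stabiliser of $\phi$, i.e.\ the closed subgroup with $\bk[H]=\bk[G]/\bk[G]\psi(A^+)$, where $A^+=\ker\phi$. By construction $\psi(A)\subseteq\bk[G]^H$. To show equality, we use that $\bk[G]$ is faithfully flat over $\psi(A)$: this follows from simplicity of $A$ and generic flatness on $G$-stable loci. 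This yields $\bk[G]\otimes_A\bk[G]\cong\bk[G]\otimes\bk[H]$, and descent gives $A\cong\bk[G]^H$. Exactness of $H$ then follows because $\Rep H\simeq\Mod_A$ via this descent, and $\Ind^G_H$ corresponds to the forgetful functor, which is exact.
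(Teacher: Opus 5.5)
Your overall plan (reduce to $A^G=\bk$, find a point, embed $A$ into $\bk[G]$ by the orbit map, identify the image with $\bk[G]^H$) matches the paper's. However, two steps do not go through as you propose them, and the second is a genuine missing idea.

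In Step 1 you never prove that $A\otimes_{A^G}\mathbb{L}$ is simple. The Galois-conjugate argument covers at best the separable part. Your fallback of replacing $A_{\mathbb{L}}$ by a simple quotient cannot give the theorem, which asserts that $A\otimes_{A^G}\mathbb{L}$ \emph{itself} is $\mathbb{L}[G_{\mathbb{L}}]^{H_{\mathbb{L}}}$, and in particular is simple. In fact there is no inseparability problem, because you base-change over the full field of invariants $A^G$. Lemma~\ref{lemma base change simple} proves that $L\otimes_{A^G}A$ is simple for \emph{every} field extension $L$ of $A^G$, by Magid's minimal-support argument:
take a $G_L$-stable ideal $J\neq0$ and a nonzero compact $X\sub J$, and choose a basis $\{b_i\}$ of $L$ over $A^G$ minimising the number of components of $X$;
the possible first components form a nonzero $G$-stable ideal of $A$ (Corollary~\ref{cor G stable sub}), hence all of $A$, so some $Z\sub J$ has first component $b_1\unit$;
then $\Dist^+(G)\cdot Z$ and $(\id-g)\cdot Z$ have fewer components and therefore vanish by minimality;
this forces $Z\sub (L\otimes_{A^G}A)^G=L$, so $J$ contains a unit.

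More seriously, Step 2 conflates two different fields. MN does give a closed point $\varphi:A\to\bk'$ of the underlying algebra in $\Ind\Ds$. But $A$ is not finitely generated, so there is no Nullstellensatz, and $\bk'$ need not be algebraic over $A^G$. Already for $\Ds=\Vec$, $\bk=\overline{\mathbb{Q}}$ and the simple $G$-algebra $A=\bk[\mathbb{G}_a^{\mathbb{N}}]=\bk[x_1,x_2,\ldots]$ with $A^G=\bk$, there are maximal ideals with residue field $\bk(t)$. Passing to $\overline{\bk'}$ and running Step 3 therefore only gives $A\otimes_{A^G}\overline{\bk'}\cong\overline{\bk'}[G]^{H'}$, which is weaker than the statement. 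The existence of a $\bk$-point when $A^G=\bk$ is Corollary~\ref{cor:kpoint}, a \emph{consequence} of the theorem.

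The circularity you noticed is broken in the paper by a bootstrap (Lemma~\ref{lem:extscal}):
first prove the result over $\mathbb{L}'=\overline{\bk'}$, where a point exists;
this shows $\Mod_A$ is the ind-completion of a pretannakian category with a tensor functor to $\Ds_{\mathbb{L}'}=\Mod_{\mathbb{L}'}\Ds$;
Theorem~\ref{thm neutrality} then gives a tensor functor $\mod_A\to\Ds$, whose composite with $A\otimes-$ is the forgetful functor on $\Rep G$;
so it sends the multiplication of $A$ to an algebra map $A\to\unit$, and one reruns the argument over $\bk$.

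Finally, your Step 3 appeals to generic flatness, which is not available as stated for non-algebraic $G$ and non-finitely generated $A$ in a GR+MN category. The paper instead approximates by finitely generated $A_\alpha$ with algebraic quotients $G_\alpha$, using that $G_\alpha/H_\alpha\to\Spec A_\alpha$ is a dense open immersion. It then compares rings of fractions to get $Q(A)=Q(\bk[G]^H)$ (Lemma~\ref{lemma ratl fnts equal}). Simplicity, applied to the largest rational $G$-subalgebra of $Q(\bk[G]^H)$ (Example~\ref{ex largest ratl subalg}), gives $A=\bk[G]^H$. Faithful flatness and exactness then come out of Lemma~\ref{thm simp meand ex} and Proposition~\ref{prop:GHaff}, rather than being inputs.
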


A simple corollary which seems difficult to show directly is the following.

\begin{cor}\label{cor:kpoint}
    If $A$ is a simple commutative algebra in $\Rep G$ with  $A^G=\Bbbk$, then $A$ has a $\Bbbk$-point.
\end{cor}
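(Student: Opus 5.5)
We deduce Corollary~\ref{cor:kpoint} from Theorem~\ref{thm simple comm alg} applied with $A^G=\bk$. Since $\bk$ is algebraically closed, $\mathbb{L}=\bk$, and the theorem provides an exact subgroup $H\subseteq G$ together with an isomorphism of commutative algebras in $\Rep G$,
\[A\;\cong\;\bk[G]^H.\]
A $\bk$-point of $A$ means an algebra morphism $A\to\bk$ in $\Ind\Ds$, where $\bk=\unit$ carries the trivial action (forgetting the $G$-structure). So it suffices to produce such a morphism out of $\bk[G]^H$.

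For this we take the counit $\varepsilon:\bk[G]\to\bk$ of the Hopf algebra $\bk[G]$, which is an algebra morphism in $\Ind\Ds$; it corresponds to the identity element $e\in G(\bk)$. We restrict it along the subalgebra inclusion $\bk[G]^H\subseteq\bk[G]$, whose subalgebra structure agrees with the one used in Lemma~\ref{lemma induced from exact is simple} by \cite[Lem.~7.13]{CS}. The composite
\[A\;\cong\;\bk[G]^H\hookrightarrow\bk[G]\xrightarrow{\varepsilon}\bk\]
is then an algebra morphism. It is nonzero because it sends the unit to $1$, so it is a $\bk$-point, which proves the corollary. Geometrically this is the image of the identity coset $eH$ in $G/H$.

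The only step needing care is that $A$ really is identified with $\bk[G]^H$ as an algebra in $\Ind\Ds$, and not merely after some base change. When $\mathbb{L}=A^G=\bk$, the statement of Theorem~\ref{thm simple comm alg} reads $A\otimes_{\bk}\bk\cong\bk[G]^{H_{\bk}}$ with $G_{\bk}=G$, so this is immediate. All the difficulty therefore lies in the theorem itself; the corollary is "difficult to show directly" precisely because producing the subgroup $H$ requires that theorem.
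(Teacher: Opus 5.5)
Your argument is correct and is the deduction the paper intends when it calls this "a simple corollary": with $A^G=\bk$ algebraically closed, Theorem~\ref{thm simple comm alg} gives $A\cong\bk[G]^H$, and restricting the counit along $\bk[G]^H\subseteq\bk[G]$ (the identity coset) yields a $\bk$-point. Inside the paper's proof, such a point is in fact already produced in the reduction step of Lemma~\ref{lem:extscal}, where the fibre functor $\mod_A\to\Ds$ from Theorem~\ref{thm neutrality} sends the multiplication $A\otimes A\to A$ to an algebra morphism $A\to\unit$.
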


A more significant corollary is the following. 
\begin{cor}\label{cor pretann simple comm alg}
If $\Cs$ is a pretannakian category over $\bk$ and admits a symmetric tensor functor to $\Ds$, then every simple commutative ind-algebra $A$ in $\Cs$ is absolutely flat and artinian.  In particular, conditions (1), (2), and (3) in Theorem~\ref{prop:consolidate2} are equivalent for commutative artinian ind-algebras in $\Cs$.
\end{cor}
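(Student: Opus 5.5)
We reduce to Theorem~\ref{thm simple comm alg} via tannakian formalism, and then use faithfully flat descent along the field extension $A^G\subset\mathbb{L}$. Let $F:\Cs\to\Ds$ be the symmetric tensor functor and $A$ a simple commutative ind-algebra in $\Cs$. By tannakian formalism (as in the proof of Theorem~\ref{thm noncomm case}, see \cite[\S 4]{CEO}), $\Cs\simeq\Rep^f G$ for an affine group scheme $G$ in $\Ds$, so $\Ind\Cs\simeq\Rep G$; we need not assume finite type here, since Theorem~\ref{thm simple comm alg} is stated for arbitrary affine $G$. Hence $A$ is a simple commutative algebra in $\Rep G$. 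By Lemma~\ref{lem:AG}, $K:=A^G=\Gamma(A)$ is a field in $\VEC$. Let $\mathbb{L}$ be its algebraic closure. Theorem~\ref{thm simple comm alg} then gives an exact subgroup $H\sub G_{\mathbb{L}}$ with $A_{\mathbb{L}}:=A\otimes_K\mathbb{L}\cong\mathbb{L}[G_{\mathbb{L}}]^H$.

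For absolute flatness, Corollary~\ref{cor exact is flat} (applied over $\mathbb{L}$, i.e.\ in $\Ds_{\mathbb{L}}$) shows that $A_{\mathbb{L}}$ is absolutely flat in $\Rep G_{\mathbb{L}}$. The $A_{\mathbb{L}}$-modules in $\Rep G_{\mathbb{L}}$ are the same as $A_{\mathbb{L}}$-modules in $\Ind\Cs$ with compatible $\mathbb{L}$-structure, via the identification $\Ind(\Cs_{\mathbb{L}})\simeq\Mod_{\mathbb{L}}\Ind\Cs$. Let $M$ be an $A$-module and $0\to N'\to N\to N''\to 0$ exact. Since $\mathbb{L}$ is free over the field $K$, the functor $-\otimes_K\mathbb{L}$ is exact and faithful, and $(N\otimes_A M)\otimes_K\mathbb{L}\cong N_{\mathbb{L}}\otimes_{A_{\mathbb{L}}}M_{\mathbb{L}}$. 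So exactness of $-\otimes_AM$ follows from flatness of $M_{\mathbb{L}}$ over $A_{\mathbb{L}}$. Hence $A$ is absolutely flat.

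For the artinian property, note that $A_{\mathbb{L}}\otimes X$ for $X\in\Cs$ corresponds under Lemma~\ref{lemma induced from exact is simple} to $\operatorname{Res}^{G_{\mathbb{L}}}_H X_{\mathbb{L}}$, a finite-length $H$-representation. Thus $A_{\mathbb{L}}\otimes X$ has finite length as an $A_{\mathbb{L}}$-module. Any strictly increasing chain of $A$-submodules of $A\otimes X$ stays strictly increasing after applying $-\otimes_K\mathbb{L}$, by faithful exactness. Hence $A\otimes X$ has finite length, and $A$ is artinian.

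The final claim then follows from Theorem~\ref{prop:consolidate2}: (1)$\Rightarrow$(3) is already known there, and for (3)$\Rightarrow$(1) a finite product of absolutely flat simple algebras is absolutely flat, since modules decompose accordingly. The main obstacle is the bookkeeping in the base change. First, the equivalence $\Rep G\simeq\Mod_{\bk[G]^H}$ has to be transported correctly over $\mathbb{L}$, i.e.\ in $\Ds_{\mathbb{L}}$, which is a GR+MN category over the algebraically closed field $\mathbb{L}$, as used in Theorem~\ref{thm simple comm alg}. Second, we must check that length over $A_{\mathbb{L}}$ in $\Ind\Cs_{\mathbb{L}}$ correctly bounds length over $A$ in $\Ind\Cs$. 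The substantive input is entirely Theorem~\ref{thm simple comm alg}.
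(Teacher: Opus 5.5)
Your proposal is correct and follows essentially the same route as the paper. You realise $A$ as a simple commutative algebra in $\Rep G$, apply Theorem~\ref{thm simple comm alg} and Corollary~\ref{cor exact is flat} to $A\otimes_{A^G}\mathbb{L}$, and then descend absolute flatness and finite length along the exact faithful base change $-\otimes_{A^G}\mathbb{L}$.

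One correction: tannakian formalism does not give $\Cs\simeq\mathrm{Rep}^f G$. It only gives $\Cs\simeq\mathrm{Rep}^f(G,\phi)$ for a constraint $\phi:\pi(\Ds)\to G$. This is a full tensor subcategory of $\mathrm{Rep}^f G$, closed under subquotients, and by Remark~\ref{rem:AkG} the whole of $\mathrm{Rep}^f G\simeq\Cs\boxtimes\Ds$ is in general strictly larger. The slip is harmless, because simplicity of $A$, flatness of modules and the lengths of $A\otimes X$ all transfer along this inclusion.
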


\begin{proof}
    By tannakian formalism, $\Cs\cong\mathrm{Rep}^f(G,\phi)$ for some constraint $\phi:\pi(\Ds)\to G$, see \cite[\S 8]{Del90}.  This implies that $\Cs$ is a (full) tensor subcategory of $\mathrm{Rep}^f G$, and thus any simple commutative ind-algebra $A$ in $\Cs$ is a simple commutative algebra in $\Rep G$.  Applying Theorem~\ref{thm simple comm alg} and Corollary \ref{cor exact is flat}, we obtain that $A\otimes_{A^G}\mathbb{L}$ is absolutely flat and artinian in $\Cs_{\mL}\subset \mathrm{Rep}^f G_{\mL}$. Now consider $M\in \Mod_A\Cs $. To establish exactness of $M\otimes_A-$, it suffices to show that
    $$\mL\otimes_{A^G} M\otimes_A-\;\cong\; \mL\otimes_{A^G} M\otimes_{A\otimes_{A^G}\mL}(-\otimes_{A^G}\mL)$$
    is exact (as a functor from $\Mod_A{\Cs}$ to $\Ind\Cs$ or to $\Mod_{A\otimes_{A^G}\mL}\Cs$). This now follows from the absolute flatness of $A\otimes_{A^G}\mL$. That $A$ is artinian is also inherited from $A\otimes_{A^G}\mL$.
\end{proof}

Note that Corollary \ref{cor pretann simple comm alg} is stronger than the hoped for implication (3)$\Rightarrow$(2) in Theorem~\ref{prop:consolidate2} because we do not assume that $A$ is artinian.

In the following subsections we start the preparation for the proof of Theorem~\ref{thm simple comm alg}.

\subsection{Some commutative algebra}

Recall $Q(-)$ from \S\ref{sec:comalg1}.
\begin{lemma}\label{lemma open iso field frac}
    Suppose that $X$ is a finite-type affine scheme in $\Ds$, and $i:U\hookrightarrow X$ is an open immersion such that $i^*:\Bbbk[X]\hookrightarrow\Bbbk[U]$ is a monomorphism.  Further suppose that $f\in\Bbbk[U]$ is a zero divisor if and only if its restriction to some connected component of $U$ is nilpotent. Then $i^*$ induces an isomorphism $Q(\Bbbk[X])\to Q(\Bbbk[U])$.
\end{lemma}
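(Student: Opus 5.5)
We would prove the lemma by first showing that $i^*$ carries non-zero divisors of $\Gamma(\Bbbk[X])$ to non-zero divisors of $\Bbbk[U]$, so that it extends to $Q(i^*):Q(\Bbbk[X])\to Q(\Bbbk[U])$. We would then prove that this extension is injective and surjective.

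\emph{Well-definedness.} Let $f\in\Gamma(\Bbbk[X])$ be a non-zero divisor. Suppose its restriction $i^*f$ were nilpotent on some connected component $U_0$ of $U$. Since $U$ is of finite type, its connected components are finite in number and each is clopen. Hence $\Bbbk[U]=\Bbbk[U_0]\times\Bbbk[U_1]$ with $U_1=U\setminus U_0$, and we get an idempotent $e\in\Gamma(\Bbbk[U])$ corresponding to $U_0$, with $(i^*f\cdot e)^n=0$. We would then produce a non-zero subobject of $\Bbbk[X]$ killed by $f$, contradicting the hypothesis. Consider the ideal $J\subset\Bbbk[X]$ of sections vanishing on $U_1$, i.e.\ $J=(i^*)^{-1}(e\Bbbk[U])$. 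Then $f^nJ$ maps into $e\,(i^*f)^n\Bbbk[U]=0$, so $f^nJ=0$ because $i^*$ is a monomorphism. Hence $J=0$, since $f^n$ is a non-zero divisor. The real work is to show that $J\neq 0$, i.e.\ that some non-zero function on $X$ is supported in $U_0$. For this we would use that $U_0$ is open in $X$ and pick $g\in\Gamma(\Bbbk[X])$, or a compact subobject of $\Bbbk[X]$, whose principal open sits inside $U_0$ and meets it non-trivially. The open subsets of the Zariski topology on $\Spec\Bbbk[X]$ from \cite{C1} are generated by basic opens $D(Y)$ for compact $Y\subset\Bbbk[X]$. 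A suitable power of such a $Y$, multiplied by a function vanishing on the complementary closed set, should then lie in $J$ and be non-zero. By the hypothesis of the lemma, $i^*f$ is therefore a non-zero divisor in $\Bbbk[U]$.

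\emph{Injectivity.} This is formal: localisation is exact, and $i^*$ is a monomorphism, so $Q(i^*)$ is a monomorphism.

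\emph{Surjectivity.} Here the key step is to show that every $h\in\Bbbk[U]$, or more precisely every compact subobject $Y\subset\Bbbk[U]$, satisfies $sY\subset i^*\Bbbk[X]$ for some $s\in\Gamma(\Bbbk[X])$ whose restriction is a non-zero divisor on $U$. Since $U$ is open in an affine scheme of finite type, it is covered by finitely many basic opens $D(g_j)$ of $X$. Sections of $U$ restricted to $D(g_j)$ lie in $\Bbbk[X]_{g_j}$, so some power $g_j^N$ clears the denominators of $Y$ there. We would then glue these. The cleanest choice is to find a single $s\in\Gamma(\Bbbk[X])$ vanishing on the closed complement $X\setminus U$ and non-vanishing on every component of $U$, so that $s$ is a non-zero divisor on $U$ by the hypothesis. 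A power $s^N$ then multiplies $Y$ into $\Bbbk[X]$, by the standard sheaf argument on $\Spec$, whose extension to $\Ds$ is in \cite{C1,C2}. Consequently $Y\subset s^{-N}\,i^*\Bbbk[X]\subset Q(\Bbbk[X])$.

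We expect the main obstacle to be producing such invariant functions $s\in\Gamma(\Bbbk[X])$. A priori the ideal of $X\setminus U$ is generated only by a subobject of $\Bbbk[X]$, not by invariant elements. To get around this we would use the MN/GR structure, namely that radical ideals are detected by $\Gamma$, as in \cite[\S 2--6]{C1}. The point is that a nilpotent-free ideal contains an invariant element that is non-nilpotent on each component. If this fails, we would instead replace $s$ by a compact subobject $Z$ of the ideal and use the non-zero divisor hypothesis on $\Gamma(\Bbbk[U])$ applied to suitable symmetric powers $S^nZ\to\unit$. This is where GR would enter, supplying a split epimorphism onto $\unit$ and hence an invariant element.
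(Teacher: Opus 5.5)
Your proposal is correct in outline. Its second half is the paper's argument. The paper also produces $h\in\Gamma(\Bbbk[X])$ with $D(h)\subseteq U$ and $i^*h$ a non-zero divisor on $U$. This gives inclusions $\Bbbk[X]\subseteq\Bbbk[U]\subseteq\Bbbk[D(h)]=\Bbbk[X]_h$, where the last inclusion is injective because $h$ is a non-zero divisor on $U$, as in \cite[Exer.~II.2.16]{H}. Hence $Q(\Bbbk[X])\subseteq Q(\Bbbk[U])\subseteq Q(\Bbbk[X])$.

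You genuinely differ from the paper in the well-definedness step. The paper argues locally: a non-zero divisor of $\Bbbk[X]$ stays one in every $\Bbbk[X]_g$ with $g\in\Gamma(\Bbbk[X])$ non-nilpotent, and $U$ is covered by such $D(g)$. That argument never uses the zero-divisor hypothesis on $U$. You instead argue by contradiction through that hypothesis: if $f^n|_{U_0}=0$ and $\emptyset\neq D(g)\subseteq U_0$, then $g^N|_{U_1}=0$, so $i^*(f^ng^N)=0$ and hence $f^ng^N=0$ with $g^N\neq 0$. This works, and it only needs one non-empty basic open inside $U_0$, possibly given by a compact subobject rather than an invariant. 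The extra factor ``multiplied by a function vanishing on the complementary closed set'' is superfluous, since $g^N$ itself already lies in $J$.

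The obstacle you flag at the end needs no further GR or symmetric-power argument at this stage. The paper takes from the foundations in \cite{C2} invariants $h_j\in\Gamma(\Bbbk[X])$ with $\emptyset\neq D(h_j)\subseteq U_j$, one for each of the finitely many components $U_1,\dots,U_r$ of $U$. This is the same input your own well-definedness step uses with $g\in\Gamma(\Bbbk[X])$. The paper then sets $h=h_1+\cdots+h_r$.
\begin{itemize}
\item $D(h)\subseteq\bigcup_j D(h_j)\subseteq U$.
\item On $U_j$, $h$ is the non-nilpotent $h_j|_{U_j}$ plus the nilpotent $\sum_{i\neq j}h_i|_{U_j}$, so it is non-nilpotent.
\item By the lemma's hypothesis, $h$ is therefore a non-zero divisor on $U$.
\end{itemize}
Your phrase ``a nilpotent-free ideal contains an invariant element non-nilpotent on each component'' should be replaced by this explicit construction.

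You, like the paper, should add one line to pass from $\Bbbk[U]\subseteq Q(\Bbbk[X])$ to all of $Q(\Bbbk[U])$. For a non-zero divisor $t\in\Gamma(\Bbbk[U])$, the element $h^Nt$ comes from a non-zero divisor of $\Gamma(\Bbbk[X])$, because $i^*$ is a monomorphism. Hence $t^{-1}\in Q(\Bbbk[X])$.
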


\begin{proof}
    Observe that if $f\in\Bbbk[X]$ is a non-zero divisor then it must remain so in $\Bbbk[D(g)]=\Bbbk[X]_g$ for all non-nilpotent $g\in\Gamma(\Bbbk[X])$. Since we may cover $U$ by distinguished open subsets $D(g)$, by the fundamentals of algebraic geometry in \cite{C2}, it follows that $i^*(f)$ remains a non-zero divisor.  

    Thus we obtain an algebra morphism $i^*:Q(\Bbbk[X])\to Q(\Bbbk[U])$.  To see that it is an isomorphism, it suffices to find an $h\in \Bbbk[X]$ which does not become a zero divisor on $U$ and with $D(h)\sub U$.  Indeed, in this case we have an inclusion $\Bbbk[U]\sub\Bbbk[D(h)]$ by \cite[Exer.~II.2.16]{H}. The inclusions $\bk[X]\sub \bk[U]\sub \bk[D(h)]$ then induce canonical inclusions $Q(\bk[X])\sub Q(\bk[U])\sub Q(\bk[X])$.  To find $h$, write $U=U_1\sqcup\cdots\sqcup U_r$ for the connected components of $U$.  Let $h_i\in\Bbbk[X]$ be such that $D(h_i)\sub U_i$ is nonempty.  Then $h=h_1+\dots+h_r$ has the desired property. 
\end{proof}

\subsection{The dual Hopf algebra}
In this subsection, {\em we will always assume that $G$ is an {\bf algebraic group}}, meaning that the algebra $\bk[G]$ is finitely generated.

\begin{definition}
    For an algebraic group $G$, define
    \[
    \Bbbk[G]^\circ:=\sum\limits_{I}(\Bbbk[G]/I)^*\,\subset\,\uHom(\bk[G],\unit),
    \]
    where $I$ runs over all cofinite ideals in $\Bbbk[G]$.  Further, set
    \[
    \Dist(G):=\sum\limits_{n\in\N}(\Bbbk[G]/\m_e^n)^*\subset\bk[G]^{\circ}.
    \]
    where $\m_e\sub\Bbbk[G]$ denotes the augmentation ideal.
\end{definition}

It is standard that $\Bbbk[G]^\circ$ has the structure of a cocommutative Hopf algebra in $\Ds$, and $\Dist(G)$ is a Hopf subalgebra.  If we consider the inclusion
\[G(\bk)=\Hom_{\CAlg}(\bk[G],\bk)\subset \bk[G]^{\circ},\;\, g\mapsto \delta_g,\]
then the elements $\delta_{g}$ span the Hopf subalgebra $\Bbbk G(\Bbbk)\sub\Bbbk[G]^\circ$.

 We now recall several facts proven in \cite[\S 3]{CS}.
  In the following, if $A,B$ are Hopf algebras such that $A$ is a left $B$-module algebra, then we write $A\#B$ for the smash product $A$ and $B$ (see \cite[Sec.~4.1]{M}).

\begin{lemma}[{\cite[Lem.~3.3]{CS}}]\label{lemma decom k[G] circ} 
    We have an isomorphism of Hopf algebras $\Bbbk[G]^\circ\cong\Dist(G)\#\Bbbk G(\Bbbk)$.  In particular, $\Dist(G)$ and $G(\Bbbk)$ generate $\Bbbk[G]^\circ$ as an algebra.
\end{lemma}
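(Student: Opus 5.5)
The plan is to decompose every finite-length quotient of $\Bbbk[G]$ according to its $\Bbbk$-points, and then translate each piece back to the identity using the group structure. This presents $\Bbbk[G]^\circ$ as $\sum_{g\in G(\Bbbk)}\delta_g\,\Dist(G)$. It then remains to check that this sum is direct and that the multiplication map $\Dist(G)\otimes\Bbbk G(\Bbbk)\to\Bbbk[G]^\circ$ respects the smash product and coalgebra structures.

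\emph{Step 1: cofinite maximal ideals are $\Bbbk$-points.} Let $I$ be a cofinite ideal, so $B=\Bbbk[G]/I\in\Ds$ has finite length.
\begin{itemize}
\item Since $B$ has finite length, it satisfies DCC on ideals. Lemma~\ref{lemma DCC} and the proof of Theorem~\ref{thm:artinianv2} (only the Nakayama property is needed here, and that holds for $\Ds$) give $B\cong\prod_j B/\m_j^k$ for the finitely many maximal ideals $\m_j\supseteq I$.
\item For each $j$, the quotient $B/\m_j$ is a simple commutative algebra of finite length. The argument of Lemma~\ref{lem:AG} with $G$ trivial uses MN: a nontrivial simple subobject, or a non-split $\unit\hookrightarrow$, would produce a nilpotent subobject and hence a proper ideal. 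So $B/\m_j$ lies in $\VEC$.
\item Being finite-dimensional and a field, $B/\m_j$ equals $\Bbbk$, since $\Bbbk$ is algebraically closed. Thus $\m_j=\m_g$ for a point $g\in G(\Bbbk)$.
\end{itemize}
I expect this step to be the main obstacle. The rest is classical Hopf-algebra bookkeeping carried out inside $\Ds$.

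\emph{Step 2: localisation at points and translation.} From Step 1, $(\Bbbk[G]/I)^*=\bigoplus_j(\Bbbk[G]/\m_{g_j}^k)^*$. Right translation by $g$ is an algebra automorphism of $\Bbbk[G]$ carrying $\m_e$ to $\m_g$. Dually this gives $(\Bbbk[G]/\m_g^n)^*=\delta_g\ast(\Bbbk[G]/\m_e^n)^*$, where $\ast$ is the convolution product on $\Bbbk[G]^\circ$. Hence $\Bbbk[G]^\circ=\sum_g\delta_g\Dist(G)$, so the multiplication map $\mu:\Dist(G)\otimes\Bbbk G(\Bbbk)\to\Bbbk[G]^\circ$ is surjective. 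This already gives the statement that $\Dist(G)$ and $G(\Bbbk)$ generate.

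\emph{Step 3: injectivity of $\mu$.} Take finitely many distinct points $g_1,\dots,g_r$ and $n$ large. By Lemma~\ref{lemma CRT},
\[
\Bbbk[G]/\textstyle\bigcap_i\m_{g_i}^n\;\cong\;\prod_i\Bbbk[G]/\m_{g_i}^n,
\]
since the ideals $\m_{g_i}^n$ are pairwise coprime. Dualising shows that the subspaces $\delta_{g_i}\Dist(G)$ form a direct sum. Hence $\mu$ is an isomorphism of objects.

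\emph{Step 4: compatibility with the Hopf structures.}
\begin{itemize}
\item Conjugation by $\delta_g$ preserves $\Dist(G)$, because the inner automorphism fixes $\m_e$. This defines the $\Bbbk G(\Bbbk)$-module algebra structure on $\Dist(G)$, and the relation $\delta_g x=(g\cdot x)\delta_g$ is exactly the smash product multiplication.
\item The coalgebra structures agree because each $\delta_g$ is grouplike and $\mu$ is multiplicative.
\item Compatibility with the antipode is then automatic.
\end{itemize}
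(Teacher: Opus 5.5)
The paper does not prove this lemma; it imports it from \cite[Lem.~3.3]{CS}, so there is no internal argument to compare with. Your proposal is a correct, self-contained proof along the standard lines. Cofinite quotients of $\Bbbk[G]$ are products of local algebras with residue field $\Bbbk$. Translation identifies $(\Bbbk[G]/\m_g^n)^*$ with $\delta_g\ast(\Bbbk[G]/\m_e^n)^*$. The Chinese remainder theorem then makes the sum over $G(\Bbbk)$ direct.

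A few small corrections:
\begin{enumerate}
\item In Step~2 the displayed equality $(\Bbbk[G]/I)^*=\bigoplus_j(\Bbbk[G]/\m_{g_j}^k)^*$ fails in general, for instance whenever $I$ is not itself an intersection of powers of maximal ideals. What Step~1 actually gives is $\bigcap_j\m_{g_j}^k\subseteq I$. Combined with the CRT isomorphism, this yields only the inclusion $(\Bbbk[G]/I)^*\subseteq\bigoplus_j(\Bbbk[G]/\m_{g_j}^k)^*$, which is all you use.
\item With $\delta_g\ast\psi=(\delta_g\otimes\psi)\circ\Delta$, the relevant automorphism is left translation $(\delta_g\otimes\id)\circ\Delta$, not right translation. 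This is pure bookkeeping.
\item Your parenthetical that only the Nakayama part of Lemma~\ref{lemma DCC} and of the proof of Theorem~\ref{thm:artinianv2} is needed is essential, not cosmetic. The extra hypothesis of Theorem~\ref{thm:artinianv2}, that simple commutative ind-algebras are artinian, is established only in Section~\ref{sec:simpleGRMN}. That proof goes through Lemma~\ref{lemma base change simple}, which uses Corollary~\ref{cor G stable sub} and hence the present lemma. Citing the full theorem would therefore be circular.
\item For a finite-length $B$ you can bypass that machinery entirely. $\operatorname{Nil}(B)$ is nilpotent by finite length. Your MN argument from the second bullet of Step~1 then shows $B/\operatorname{Nil}(B)$ lies in $\VEC$, so it is $\Bbbk^m$.
\end{enumerate}
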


\begin{lemma}[{\cite[Prop.~3.4]{CS}}]\label{lemma nondeg pairing}
    The pairing $\Bbbk[G]^\circ\otimes\Bbbk[G]\to\mathbf{1}$ is nondegenerate.
\end{lemma}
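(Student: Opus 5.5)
The statement is Lemma~\ref{lemma nondeg pairing}: the evaluation pairing $\Bbbk[G]^\circ\otimes\Bbbk[G]\to\mathbf{1}$ is nondegenerate. I would check nondegeneracy separately in each variable.

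\textbf{Left kernel.} This side is essentially tautological. By definition $\Bbbk[G]^\circ$ is a subobject of $\uHom(\bk[G],\unit)$, and the pairing is the restriction of the evaluation morphism. A subobject $X\subset\bk[G]^\circ$ pairing to zero with all of $\bk[G]$ corresponds, via the adjunction defining $\uHom$, to the zero morphism $X\to\uHom(\bk[G],\unit)$. Hence $X=0$.

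\textbf{Right kernel.} This is the real content. I must show that no nonzero subobject $Y\subset\bk[G]$ is annihilated by every $(\bk[G]/I)^*$ with $I$ cofinite. Such a $Y$ is annihilated by $(\bk[G]/I)^*$ exactly when $Y\subset I$, since $\bk[G]/I\in\Ds$ is rigid and hence reflexive. So the claim is equivalent to a Krull intersection statement:
\[
\bigcap_{I\ \text{cofinite}} I\;=\;0\quad\text{in }\bk[G].
\]
I would prove this using the group structure.

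\textbf{Step 1: powers of the augmentation ideal.} Consider $\bigcap_n\m_e^n$. Since $\bk[G]$ is finitely generated, it is Noetherian by \cite[\S 6.4]{C1}. Each $\bk[G]/\m_e^n$ is finite because $\m_e/\m_e^2$ is compact, so these ideals are cofinite. I would then apply a Krull intersection theorem in $\Ds$, adapted from \cite{AM} via the Nakayama property (\S\ref{section nakayama}). This should give that $J=\bigcap_n\m_e^n$ satisfies $\m_e J=J$, so $\mathrm{Ann}(J)+\m_e=A$. It follows that $J$ is killed after localising at $e$: every function in $J$ vanishes on a neighbourhood of $e$.

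\textbf{Step 2: translate to all points.} Translating by $g\in G(\bk)$ transports the statement to each point of $G$, so elements of $\bigcap_I I$ vanish locally near every $\bk$-point. For this I would use $\mathrm{Spec}$ and the covering by distinguished opens $D(g)$ from \cite{C2}.

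\textbf{Step 3: local vanishing forces vanishing.} I would then argue that a function vanishing near every $\bk$-point is zero. The reason is that $\bk$-points are dense in the sense relevant here: $G(\bk)=G_{\mathrm{red}}(\bk)$, and the maximal ideals of $\bk[G]$ correspond to those of $\bk[G_{\mathrm{red}}]$, which is an ordinary finitely generated $\bk$-algebra with $\bk$ algebraically closed. Combined with Corollary~\ref{cor jacobson rad GR MN}, this should handle the reduced part. The nilpotent part is already handled by the local vanishing at each point.

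\textbf{Main obstacle.} I expect the hardest step to be the Krull intersection argument in $\Ds$. The nilradical is not in $\VEC$, so the classical proof must be replaced by the Nakayama property together with Artin--Rees-type Noetherian arguments for compactly generated ideals.
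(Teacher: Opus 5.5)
The paper does not prove this lemma; it quotes it from \cite[Prop.~3.4]{CS} and uses it immediately for Corollary~\ref{cor rep G to dist ff}. Your argument therefore has to stand on its own, and in outline it does. The left side is tautological, since $\Bbbk[G]^\circ\subset\uHom(\Bbbk[G],\unit)$. Right nondegeneracy is equivalent to $K:=\bigcap_I I=0$, with $I$ running over the cofinite ideals, and a Krull-intersection argument at maximal ideals proves this. Three points need repair or simplification.

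\emph{Step 1.} The Nakayama property only gives the passage from $\m_eJ=J$ to $\operatorname{Ann}(J)+\m_e=\Bbbk[G]$. The equality $\m_eJ=J$ for $J=\bigcap_n\m_e^n$ is an Artin--Rees statement, and you should supply it. The Rees algebra $\bigoplus_n\m_e^n$ is generated by $\Bbbk[G]$ in degree zero and a compact generating subobject of $\m_e$ in degree one. It is therefore Noetherian by the same Hilbert basis theorem you use for $\Bbbk[G]$. Hence its ideal $\bigoplus_n(J\cap\m_e^n)$ is generated by a compact subobject lying in degrees at most some $N$. This gives $J=J\cap\m_e^{N+1}=\m_e(J\cap\m_e^N)=\m_eJ$.

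\emph{Step 3.} Your justification here is wrong, although the conclusion is true.
\begin{itemize}
\item Density of $\Bbbk$-points is not the operative property. For a non-reduced algebra, vanishing on a dense open need not force vanishing. For $\Bbbk[G]$ this would require knowing there are no embedded components, which is the content of Lemma~\ref{lemma conn zero div}, not yet available.
\item Your split into a ``reduced part'' (via Corollary~\ref{cor jacobson rad GR MN}) and a ``nilpotent part'' does not help. The nilpotent part is exactly where the local-to-global step is needed, so calling it already handled assumes what is to be shown.
\end{itemize}
What works is an annihilator argument. Suppose $0\neq Y\subset K$ is compact. Then $\operatorname{Ann}(\Bbbk[G]Y)$ is a proper ideal, so it lies in some maximal ideal $\m$. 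Since $\Bbbk[G]Y\subset J_{\m}:=\bigcap_n\m^n$, you get $\operatorname{Ann}(J_{\m})\subset\operatorname{Ann}(\Bbbk[G]Y)\subset\m$. This contradicts Step~1 applied at $\m$. Geometrically, the neighbourhoods of the closed points cover $\Spec\Bbbk[G]$ because every prime specialises to a closed point; density plays no role.

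\emph{Step 2.} Once Step 3 is phrased this way, translation is superfluous. Let $\m$ be any maximal ideal of a finitely generated commutative algebra $A$ in $\Ind\Ds$. Then $A/\m$ is simple, hence by MN a field extension of $\Bbbk$. It is finitely generated, so $A/\m\cong\Bbbk$. Thus $\m/\m^2$ is compact, every $\m^n$ is cofinite, and Step~1 runs at $\m$ directly. Your argument therefore shows that cofinite ideals intersect to zero in every finitely generated commutative algebra in $\Ds$. The group structure is only a convenience for identifying the maximal ideals as the $\m_g$ with $g\in G(\Bbbk)$.
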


In the following, recall that a full subcategory of an abelian category is called topologising if it is closed under taking subquotients and direct sums of objects.
\begin{cor}\label{cor rep G to dist ff}
    The natural functor $\Rep G\to \Mod_{\Bbbk[G]^\circ}$ is fully faithful, and realises $\Rep G$ as a topologising subcategory of $\Mod_{\Bbbk[G]^\circ}$.
\end{cor}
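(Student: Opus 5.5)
To prove Corollary~\ref{cor rep G to dist ff}, I would first construct the functor and then check full faithfulness and closure. A $G$-representation is a $\bk[G]$-comodule $V\to V\otimes\bk[G]$ in $\Ind\Ds$. Composing its coaction with the pairing $\bk[G]^\circ\otimes\bk[G]\to\unit$ gives an action $\bk[G]^\circ\otimes V\to V$. Associativity and unitality of this action follow from coassociativity and counitality of the coaction, together with the fact that the pairing intertwines the multiplication of $\bk[G]^\circ$ with the comultiplication of $\bk[G]$ (this is how the algebra structure on $\bk[G]^\circ$ is defined). Comodule morphisms are clearly module morphisms, so the functor is faithful: it does not change underlying objects. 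It is also additive, exact and cocontinuous, being the identity on underlying objects of $\Ind\Ds$.

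For fullness, take comodules $V,W$ and a morphism $f:V\to W$ in $\Ind\Ds$ commuting with the $\bk[G]^\circ$-actions. The obstruction to $f$ being a comodule map is the difference
\[
\delta:=\rho_W\circ f-(f\otimes\id)\circ\rho_V\;:\;V\to W\otimes\bk[G].
\]
Pairing $\delta$ with $\bk[G]^\circ$ in the $\bk[G]$-factor gives zero, since this expresses exactly that $f$ intertwines the actions. I then need to deduce $\delta=0$ from Lemma~\ref{lemma nondeg pairing}. For this I would reduce to compact pieces. Every comodule is the union of its subcomodules lying in $\Ds$, so I may take $V\in\Ds$; then $f(V)$ lies in a finite subcomodule $W'\in\Ds$, so $\delta$ factors through $W'\otimes C$ for a finite-length subcoalgebra $C\subset\bk[G]$. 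By rigidity of $W'$, the vanishing of $\delta$ amounts to the vanishing of a morphism $V\otimes W'^\ast\to C$. Such a morphism is killed by pairing with all of $\bk[G]^\circ$, so its image is a subobject of $\bk[G]$ annihilated by $\bk[G]^\circ$. By nondegeneracy this image is zero. This step, getting nondegeneracy to interact correctly with the tensor factor $W'$ in a non-$\Vec$ category $\Ds$, is the main technical point. The duality argument via $W'^\ast$ is what I would try first.

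For the topologising property, direct sums are clear, and quotients of comodules by subcomodules are comodules. The remaining case is a $\bk[G]^\circ$-submodule $U\subset V$ of a comodule $V$; I must show that $\rho_V(U)\subset U\otimes\bk[G]$. Again I would reduce to $V\in\Ds$ with coaction landing in $V\otimes C$ for a finite subcoalgebra $C$. In that case the comodule structure is the same as a $C^\ast$-module structure, and $C^\ast$ is a quotient of $\bk[G]^\circ$, using $C^\ast\subset(\bk[G]/I)^\ast$ for cofinite $I$. Under the finite duality between $C$-comodules and $C^\ast$-modules, $C^\ast$-submodules correspond to subcomodules, so $U$ is a subcomodule. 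Finally, a general submodule of an ind-comodule is the union of its intersections with the finite subcomodules, which completes the argument.
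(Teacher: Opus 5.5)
Your route is the paper's: the paper just says that the $\bk[G]^\circ$-action is dual to the $\bk[G]$-coaction, so everything follows from the nondegeneracy in Lemma~\ref{lemma nondeg pairing}, citing \cite[Sec.~7.17]{J}. Your fullness argument spells this out correctly. You reduce to compact $V$ and $W'$, transpose $\delta$ to a morphism $W'^\ast\otimes V\to\bk[G]$ whose pairing with $\bk[G]^\circ$ vanishes, and kill its image by nondegeneracy.

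The topologising part has one step whose justification is wrong as written. You need the restriction map $\bk[G]^\circ\to C^\ast$ to be an epimorphism; otherwise a $\bk[G]^\circ$-submodule of $V$ need not be a $C^\ast$-submodule. Your reason, ``$C^\ast\subset(\bk[G]/I)^\ast$'', points the wrong way and does not give this. What you need is a cofinite ideal $I$ with $C\cap I=0$. Then $C\hookrightarrow\bk[G]/I$, hence $(\bk[G]/I)^\ast\twoheadrightarrow C^\ast$, and this is exactly where Lemma~\ref{lemma nondeg pairing} enters a second time. A subobject $X\subset\bk[G]$ pairs trivially with $(\bk[G]/I)^\ast$ if and only if $X\subset I$, so nondegeneracy says $\bigcap_I I=0$. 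Cofinite ideals are closed under finite intersections and $C$ has finite length. Hence the family $\{C\cap I\}$ has a smallest member, which lies in $\bigcap_I I=0$.

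A simpler repair is to reuse your fullness argument. For $V\in\Ds$ and a $\bk[G]^\circ$-submodule $U\subset V$, consider the composite $U\to V\otimes\bk[G]\to (V/U)\otimes\bk[G]$. Its pairing with $\bk[G]^\circ$ is $\bk[G]^\circ\otimes U\to V\to V/U$, which vanishes because $U$ is a submodule. Since $V/U$ is compact, the same transposition and nondegeneracy argument shows the composite is zero. By exactness of $-\otimes\bk[G]$ this gives $\rho_V(U)\subset U\otimes\bk[G]$. With this fix the proof is complete.
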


\begin{proof}
    Indeed, the coaction of $\Bbbk[G]$ is dual to the action of $\Bbbk[G]^\circ$, so the result follows quickly from Lemma \ref{lemma nondeg pairing} (see for instance, \cite[Sec.~7.17]{J}).
\end{proof}

\begin{cor}\label{cor G stable sub}
    Suppose that $G$ is an algebraic group, $U\in\Rep(G)$, and $V\sub U$ is any subobject in $\Ind\Ds$.  Then $V$ is a $G$-submodule if and only if $V$ is stable under $\Dist(G)$ and $G(\Bbbk)$.  
\end{cor}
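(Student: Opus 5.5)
By Corollary~\ref{cor rep G to dist ff}, $\Rep G$ sits inside $\Mod_{\Bbbk[G]^\circ}$ as a fully faithful, topologising subcategory. Lemma~\ref{lemma decom k[G] circ} says that $\Bbbk[G]^\circ$ is generated as an algebra by $\Dist(G)$ and the elements $\delta_g$, $g\in G(\Bbbk)$. The plan is to combine these two facts. The forward implication is immediate: a $G$-submodule is a $\Bbbk[G]^\circ$-submodule, so it is stable under the subalgebras $\Dist(G)$ and $\Bbbk G(\Bbbk)$.

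For the converse, let $V\subseteq U$ be a subobject in $\Ind\Ds$ stable under $\Dist(G)$ and under every $\delta_g$.

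First I will show that $V$ is stable under all of $\Bbbk[G]^\circ$. The action map $a:\Bbbk[G]^\circ\otimes U\to U$ is a morphism in $\Ind\Ds$, and the subobject of $\Bbbk[G]^\circ$ carrying $V$ into $V$ can be written as the kernel
\[
\ker\bigl(\Bbbk[G]^\circ\to\uHom(V,U/V)\bigr).
\]
This subobject is closed under multiplication. Since it contains $\Dist(G)$ and each $\delta_g$, the smash product decomposition $\Bbbk[G]^\circ\cong\Dist(G)\#\Bbbk G(\Bbbk)$ shows that it is all of $\Bbbk[G]^\circ$. More concretely, the multiplication $\Dist(G)\otimes\Bbbk G(\Bbbk)\to\Bbbk[G]^\circ$ is an epimorphism, so the images of the composite actions already exhaust the action of $\Bbbk[G]^\circ$. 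Hence $V$ is a $\Bbbk[G]^\circ$-submodule of $U$.

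Second, because $\Rep G$ is topologising in $\Mod_{\Bbbk[G]^\circ}$, it is closed under subobjects. So $V$ lies in $\Rep G$, and by full faithfulness the inclusion $V\hookrightarrow U$ is a morphism of $G$-modules. In other words, $V$ is a $G$-submodule.

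The only point that needs care is the first step in the ind-setting: stability under $\Bbbk[G]^\circ$ has to be phrased at the level of objects, not elements. I will handle this with the internal-Hom kernel above, checking that the subobject of $\Bbbk[G]^\circ$ preserving $V$ is a subalgebra. That check reduces to the associativity of the action.
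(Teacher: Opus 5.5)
Your proposal is correct and follows the paper's proof: Lemma~\ref{lemma decom k[G] circ} upgrades stability under $\Dist(G)$ and $G(\Bbbk)$ to stability under $\Bbbk[G]^\circ$, and Corollary~\ref{cor rep G to dist ff} (topologising plus fully faithful) then makes $V$ a $G$-submodule. Your description of the stabilising subalgebra as $\ker\bigl(\Bbbk[G]^\circ\to\uHom(V,U/V)\bigr)$ is a sound way to make the first step rigorous in $\Ind\Ds$, a detail the paper leaves implicit.
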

\begin{proof}
The forward direction is clear.  For the converse, Lemma \ref{lemma decom k[G] circ} {implies} $V$ must be a $\Bbbk[G]^\circ$-submodule of $U$.  The conclusion then follows from Corollary \ref{cor rep G to dist ff}.
\end{proof}

Recall the notion of zero divisors from \S \ref{sec:prel}.

\begin{lemma}[{\cite[Lem.~3.5]{CS}}]\label{lemma conn zero div}
    Let $G$ be connected.  Then $f\in\Gamma(\Bbbk[G]):=\Hom_{\Ds}(
    \unit,\bk[G]    )$ is a zero divisor if and only if $f$ is nilpotent.
\end{lemma}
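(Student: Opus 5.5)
The statement to prove is Lemma~\ref{lemma conn zero div}: for connected $G$, an element $f\in\Gamma(\Bbbk[G])$ is a zero divisor in $\Bbbk[G]$ if and only if it is nilpotent.

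One direction is immediate. If $f$ is nilpotent and nonzero, take $n$ minimal with $f^n=0$. Then $X:=f^{n-1}\Bbbk[G]$ (the image of multiplication by $f^{n-1}$) is a nonzero subobject with $fX=0$. For the converse, the plan is to show that the annihilator of a non-nilpotent $f$ is trivial, using group translations together with connectedness.

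Suppose $f$ is not nilpotent and let $K=\ker(\Bbbk[G]\xrightarrow{f}\Bbbk[G])$. First we reduce to the reduced group. The image $\bar f$ of $f$ in $\Bbbk[G_{\mathrm{red}}]$ is non-nilpotent, because $\Gamma(\operatorname{Nil})$ consists of nilpotent elements. Since $G_{\mathrm{red}}$ is an ordinary connected, hence irreducible, reduced algebraic group in $\Vec$, its coordinate ring is a domain. So $\bar f$ is a non-zero divisor there, and $D(f)$ is a dense open subset of $G$ which meets every $G(\Bbbk)$-translate of any nonempty open set.

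The key step is to show $K=0$. The translates $f_g:=\lambda_g(f)$ for $g\in G(\Bbbk)$ are again non-nilpotent, and their nonvanishing loci cover $G$, since $G(\Bbbk)=G_{\mathrm{red}}(\Bbbk)$ acts transitively on points and every point of $G$ is a $\Bbbk$-point. By quasi-compactness of $\Spec$ in the sense of \cite{C2}, finitely many $f_{g_1},\dots,f_{g_r}$ generate the unit ideal: $\sum a_if_{g_i}=1$. If we knew that each $f_{g_i}$ annihilates $K$, then $K=\sum a_if_{g_i}K=0$. So the argument reduces to transporting annihilation from $f$ to its translates, and a naive translation does not do this, because $K$ need not be $G$-stable.

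To get around this, I would instead work with the largest $G$-stable subobject related to $K$. Consider the ideal $J=\operatorname{Ann}(K)\ni f$ and its localisation. Since $f$ kills $K$, we have $K_f=0$, so $K$ is supported on the closed set $V(f)$, a proper closed subset. Using Corollary~\ref{cor G stable sub}, the $\Dist(G)$-stability is automatic for an associated-primes-type argument. The intended route is: a nonzero $K$ would have an associated prime $\mathfrak p$ (in the sense of \cite{C1,C2}) containing $f$. Because $G$ acts transitively on itself by translations and $\Bbbk[G]$ is homogeneous, the set of associated primes of $\Bbbk[G]$ is $G(\Bbbk)$-stable and finite, so it consists of minimal primes only. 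Since $G$ is connected there is a unique minimal prime, namely $\operatorname{Nil}$. Hence $f\in\operatorname{Nil}$, so $f$ is nilpotent, a contradiction.

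I expect the main obstacle to be establishing associated-prime theory (finiteness, and that zero divisors lie in associated primes) for finitely generated commutative algebras in a GR+MN category $\Ds$. I would first try to deduce this from the Noetherianity of $\Bbbk[G]$ given by \cite[\S6.4]{C1} together with the Nakayama property.
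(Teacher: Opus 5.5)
Your proof of nilpotent $\Rightarrow$ zero divisor is fine. The strategy for the converse (find an associated prime containing $f$, then use homogeneity to force it to be $\operatorname{Nil}$) is viable. As written, however, the converse has a genuine gap: it rests on an associated-prime theory for $\bk[G]$ in $\Ind\Ds$ that you announce but do not supply. Concretely, you use three facts without proof:
(a) $K=\ker(f\cdot)\neq 0$ contains a nonzero submodule whose annihilator is a prime ideal;
(b) $\bk[G]$ has only finitely many associated primes;
(c) a finite $G(\bk)$-stable set of primes consists of minimal primes.
The classical proofs of (a) and (b) go through cyclic submodules $Ax\cong A/\mathfrak p$, which do not exist when elements are replaced by subobjects, so they cannot simply be quoted. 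Fact (c) does not follow from `transitivity' alone; it also needs closedness of a finite union of the $V(\mathfrak p)$. The sentence claiming that $\Dist(G)$-stability is `automatic' has no content. The abandoned translation route, which also asserts that every point of $\Spec\bk[G]$ is a $\bk$-point, should be deleted rather than left in.

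The missing steps can be supplied along your own lines, using only ACC on ideals of the finitely generated algebra $\bk[G]$ (the Hilbert basis theorem in the GR setting). Call a nonzero submodule $N$ \emph{prime} if all its nonzero submodules have the same annihilator. Choose $N\subseteq K$ with $\operatorname{Ann}(N)$ maximal among annihilators of nonzero submodules of $K$; then $N$ is prime. Moreover $\mathfrak p=\operatorname{Ann}(N)$ is a prime ideal: if $I_1I_2\subseteq\mathfrak p$ and $I_1\not\subseteq\mathfrak p$, then $I_1N\neq 0$ is killed by $I_2$. Since $fN=0$, we get $f\in\Gamma(\mathfrak p)$.

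Iterating on quotients gives a finite filtration of $\bk[G]$ by ideals with prime subquotients. Intersecting a prime $N\subseteq\bk[G]$ with the first filtration step it meets shows that every associated prime is one of the finitely many annihilators of the layers. Left translations are algebra automorphisms, so this finite set is $G(\bk)$-stable. Let $Z$ be the union of the $V(\mathfrak p)$ over its non-minimal members. Then $Z$ is a closed, $G(\bk)$-stable subset of the irreducible space $|G_{\mathrm{red}}|$, and it is proper as a finite union of proper closed subsets. Since $G(\bk)$ is transitive on the dense set of closed points, $Z=\varnothing$. Hence $\mathfrak p=\operatorname{Nil}$ and $f$ is nilpotent.

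The paper gives no argument for this lemma and imports it from \cite[Lem.~3.5]{CS}. With these steps written out, your argument would be a self-contained proof.
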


We now recall from \cite{CS} that if $G$ is an algebraic group and $H\sub G$ is a subgroup, then the homogeneous space $G/H$ exists as a scheme in $\Ds$, and we have $\Bbbk[G/H]=\Bbbk[G]^H$.

\begin{cor}\label{cor zero div on G/H}
    Let $G$ be an algebraic group, and $H\sub G$ a subgroup.  Then $f\in\Bbbk[G/H]$ is a zero divisor if and only if the restriction of $f$ to some connected component of $G/H$ is nilpotent.
\end{cor}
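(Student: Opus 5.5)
Let $G^\circ$ denote the identity component of $G$. The plan is to reduce to Lemma~\ref{lemma conn zero div}, applied to $G^\circ$, by pulling back along the quotient morphism $\pi:G\to G/H$.

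First, the easy direction. Suppose the restriction of $f$ to a connected component $C$ of $G/H$ is nilpotent. Because $G$ is of finite type, $G/H$ has finitely many connected components, so $C$ is open and closed. Let $e_C\in\Gamma(\bk[G/H])$ be the corresponding idempotent. Then $\bk[G/H]=e_C\bk[G/H]\times(1-e_C)\bk[G/H]$, and $f e_C$ is nilpotent. Choose $n$ minimal with $f^ne_C=0$. Then $X:=f^{n-1}e_C\,\bk[G/H]$ is a non-zero subobject annihilated by $f$, so $f$ is a zero divisor.

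For the converse, we argue by contrapositive. Assume $f$ is non-nilpotent on every connected component; we show $f$ is a non-zero divisor. The map $\pi^*:\bk[G/H]=\bk[G]^H\hookrightarrow\bk[G]$ is a monomorphism of algebras. Hence it suffices to show that $\pi^*f$ is a non-zero divisor in $\bk[G]$: if $fX=0$ with $0\ne X\subseteq\bk[G/H]$, then $\pi^*(f)\pi^*(X)=0$ with $\pi^*(X)\ne0$.

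Next we analyse the components. The connected components of $G$ are the cosets $gG^\circ$ for $g\in G(\bk)$; this uses that $G_{\mathrm{red}}(\bk)=G(\bk)$ surjects onto the finite component group. Thus $\bk[G]=\prod_g\bk[gG^\circ]$. Moreover, left translation by $g$ identifies $\bk[gG^\circ]\cong\bk[G^\circ]$ as algebras. So $\pi^*f$ is a zero divisor if and only if some restriction $\lambda_g^*(\pi^*f)|_{G^\circ}\in\Gamma(\bk[G^\circ])$ is a zero divisor. By Lemma~\ref{lemma conn zero div}, this happens if and only if that restriction is nilpotent. Since $G$ is of finite type, nilpotence of $\pi^*f$ on the coset $gG^\circ$ means $(\pi^*f)^N$ vanishes there. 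Pulling back along the surjection $gG^\circ\to\pi(gG^\circ)$, we need to conclude that $f^N$ vanishes on $\pi(gG^\circ)$.

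The main obstacle is this last step. We must show that $\pi(gG^\circ)$ is (contained in) a connected component of $G/H$ and that $\pi$ restricted to it is schematically dominant, i.e.\ that pullback is injective on functions. I would use that $\pi$ is faithfully flat, which follows from the existence of $G/H$ in \cite{CS}, with $\bk[G/H]=\bk[G]^H$ and $G\to G/H$ an $H$-torsor. From this, $\pi$ is open and surjective, and pullback along it is injective on sections over any open subset. Since $gG^\circ$ is open and closed in $G$, its image is open and connected. The images of the finitely many cosets cover $G/H$, and two images either coincide or are disjoint (both are saturated under the right $H$-action, as unions of $gG^\circ H$). Hence each image is a connected component of $G/H$. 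Faithful flatness over that component then shows that $f|_{\pi(gG^\circ)}$ is nilpotent, contradicting the assumption. This completes the contrapositive.
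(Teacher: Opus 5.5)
Your proof is correct and follows essentially the paper's route: faithful flatness of $\pi$ (\cite[Lem.~5.1]{CS}) shows that the connected components of $G/H$ are the images of the cosets of $G^\circ$, and everything then reduces to Lemma~\ref{lemma conn zero div} for the connected group $G^\circ$. The paper identifies each component with $G^\circ/(G^\circ\cap H)$ and uses $\Bbbk[G^\circ]^{G^\circ\cap H}\subseteq\Bbbk[G^\circ]$, while you pull $f$ back to $G$ and descend nilpotence along $gG^\circ\to\pi(gG^\circ)$; this works because that restriction of $\pi$ is itself flat and surjective onto the component $C=\pi(gG^\circ)$ (injectivity of $\mathcal{O}(C)\to\mathcal{O}(\pi^{-1}C)$ alone would not suffice, since $\pi^{-1}(C)=gG^\circ H$ can contain several cosets, although right $H$-invariance of $\pi^*f$ would also fix this).
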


\begin{proof}
Recall from \cite[Lem.~5.1]{CS} that the quotient morphism $\pi:G\to G/H$ is faithfully flat, so by \cite[\href{https://stacks.math.columbia.edu/tag/0254}{Tag 0254}]{stacks-project}, the underlying topological space of the scheme $G/H$ has the quotient topology.  It follows that  $G^{\circ}/(G^{\circ}\cap H)$ must be a connected component of $G/H$, where $G^{\circ}$ is the identity component of $G$.  Further by left $G$-equivariance, all connected components of $G/H$ must be isomorphic to $G^\circ/G^\circ\cap H$. Therefore we may assume $G$ is connected.  Then $\Bbbk[G/H]=\Bbbk[G]^{H}\sub\Bbbk[G]$, and our result follows from Lemma \ref{lemma conn zero div}.
\end{proof}

\subsection{Algebras of fractions} We let $G$ be an affine group scheme in $\Ds$ and let $A$ be an arbitrary commutative algebra in $\Rep G=\Ind\mathrm{Rep}^fG$.

For what follows, notice that because $A$ is an algebra in $\Rep G$, the coaction morphism $a=a_A:A\to A\otimes\Bbbk[G]$ is an algebra morphism.  

\begin{lemma}\label{lemma coaction non zero div}
    The morphism $a_A$ takes non-zero divisors of $\Gamma(A)$ in $A$ to non-zero divisors of $\Gamma(A\otimes\bk[G])$ in $A\otimes\bk[G]$.
\end{lemma}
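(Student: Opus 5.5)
The plan is to reduce the statement to a question about the coaction being an injective $A$-linear map whose composite with the counit is the identity. The standard trick is the \emph{shearing isomorphism}. Consider the algebra isomorphism
\[
\theta:\;A\otimes\bk[G]\;\xrightarrow{\sim}\;A\otimes\bk[G],
\]
defined as the composite of $a_A\otimes\id$ with $\id_A\otimes m_{\bk[G]}$. Its inverse uses the antipode. It intertwines $f\otimes 1$ with $a_A(f)$. More precisely, since $a_A$ is coassociative, $\theta\circ(\id_A\otimes\eta)=a_A$ as algebra morphisms $A\to A\otimes\bk[G]$.

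Hence for $f\in\Gamma(A)$, multiplication by $a_A(f)$ on $A\otimes\bk[G]$ is conjugate under the automorphism $\theta$ of $A\otimes\bk[G]$ (as an object of $\Ind\Ds$) to multiplication by $f\otimes 1$. The two relevant identities are
\[
a_A(f)\cdot\theta(x)=\theta\bigl((f\otimes1)x\bigr)
\qquad\text{and}\qquad
(a\otimes\id)(f\otimes1)=a(f)\otimes 1.
\]
These follow from $\theta$ being an algebra map. So it suffices to show that $f\otimes 1$ is a non-zero divisor in $A\otimes\bk[G]$, which means showing that $(A\xrightarrow{f}A)\otimes\id_{\bk[G]}$ is injective. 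That holds because $-\otimes\bk[G]$ is exact on $\Ind\Ds$ (the tensor product in $\Ind$ of a tensor category is exact in each variable).

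The key steps, in order, are as follows. First, write down $\theta$ and its inverse via the antipode $S$, namely $\id_A\otimes m\circ(\id\otimes S\otimes\id)\circ(a_A\otimes\id)$. Check that $\theta$ is an algebra morphism and that $\theta^{-1}\circ\theta=\id$ by the Hopf axioms. Second, verify $\theta(f\otimes1)=a_A(f)$. Third, conclude injectivity of multiplication by $a_A(f)$ from injectivity of multiplication by $f\otimes1$, using the conjugation identity. Finally, note that $a_A(f)$ indeed lies in $\Gamma(A\otimes\bk[G])$, since $a_A$ is a morphism $\unit\to A\to A\otimes\bk[G]$.

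The main obstacle is bookkeeping in the symmetric monoidal setting. One must check that $\theta$ is an algebra morphism: this requires the braiding, since $a_A$ is an algebra map into the braided tensor product algebra $A\otimes\bk[G]$, with $\bk[G]$ commutative. One must also check that the identities above hold diagrammatically without sign or braiding issues. I would verify these via string diagrams, using only that $a_A$ is a coassociative, counital algebra map and that $\bk[G]$ is a commutative Hopf algebra.
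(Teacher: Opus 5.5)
Your proof is correct and is essentially the paper's argument. The paper works directly with the shearing map $T=(1\otimes m)\circ(1\otimes S\otimes 1)\circ(a_A\otimes 1)$, which is your $\theta^{-1}$. It observes that $T\circ a_A$ is the inclusion $A\to A\otimes\bk[G]$ into the first factor, and then conjugates multiplication by $a_A(f)$ to multiplication by $f\otimes 1$ exactly as you do. (A minor point: the identity $\theta\circ(\id_A\otimes\eta)=a_A$ only uses the unit axiom of $\bk[G]$, not coassociativity; coassociativity is what you need for $\theta^{-1}\circ\theta=\id$.)
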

\begin{proof}
    Write $S:\Bbbk[G]\to\Bbbk[G]$ for the antipode on $\Bbbk[G]$, and let 
    \[
    T=(1\otimes m)\circ(1\otimes S\otimes 1)\circ(a_A\otimes 1):A\otimes\Bbbk[G]\to A\otimes\Bbbk[G].
    \]
    Note that $T$ is an algebra isomorphism.  The morphism $T$ corresponds to the pullback map for the morphism $G\times \Spec A\to G\times \Spec A$, $(g,x)\mapsto (g,g^{-1}x)$.  Thus we see that $T\circ a:A\to A\otimes\Bbbk[G]$ is the standard inclusion $A\to A\otimes\Bbbk[G]$ in the first factor. Thus for $f\in \Gamma(A)$ we have a commutative diagram:
    \[
    \xymatrix{
    A\otimes \Bbbk[G] \ar[rr]^{a_A(f)} \ar[d]_{T} && A\otimes\Bbbk[G] \ar[d]^{T}\\
    A\otimes\Bbbk[G]\ar[rr]^{f\otimes 1} && A\otimes\Bbbk[G]
    }
    \]
    Since the vertical arrows are isomorphisms, the result is now clear.
\end{proof}

Recall the construction $Q(-)$ from \S \ref{sec:prel}. Recall also that by convention $\Gamma(A)=\Hom_{\Ds}(\unit,A)$, so that $Q(A)$ is in general not an algebra in $\Rep G$, only in $\Ind\Ds$.
Lemma~\ref{lemma coaction non zero div} implies a commutative diagram:
\[
\xymatrix{
A \ar[rr]^{a} \ar[d] && A\otimes\Bbbk[G]\ar[d]\\
Q(A)\ar[rr]^{Q(a)} && Q(A\otimes\Bbbk[G]).
}
\]
Moreover, we can apply the above to the case $A=\bk[G]$, where the coaction is the comultiplication $\Delta$, showing that also $Q(\Delta)$ exists. Once existence is established, coassociativity is immediately inherited:
\begin{lemma}\label{lemma Q(a) coassoc}
    We have $(Q(a)\otimes 1)\circ Q(a)=(1\otimes Q(\Delta))\circ Q(a)$.
\end{lemma}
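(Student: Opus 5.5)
The plan is to reduce the identity to coassociativity of the coaction $a$ on $A$ itself, using that all the maps involved are algebra morphisms out of a localisation and are therefore determined by their restriction to $A$. First, both composites make sense. By Lemma~\ref{lemma coaction non zero div} applied to the algebra $A\otimes\bk[G]$, with coaction $a\otimes 1$ (equivalently, using the $G$-action on the first factor only), the morphism $a\otimes 1$ sends non-zero divisors of $\Gamma(A\otimes\bk[G])$ to non-zero divisors. Hence $Q(a)\otimes 1$ makes sense as a morphism
\[Q(A\otimes\bk[G])\to Q(A\otimes\bk[G]\otimes\bk[G]).\]
The same argument applied to $1\otimes\Delta$ gives the second composite, viewing $A\otimes\bk[G]$ as the algebra $\bk[G]$ tensored with $A$ and using the diagonal-twist isomorphism $T$ from the proof of Lemma~\ref{lemma coaction non zero div}. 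Here the notation $Q(a)\otimes 1$ should be read as the localisation of $a\otimes 1$, not as a literal tensor product of localisations. This is the only point needing care, and I would make it precise as just described.

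Next, both sides are algebra morphisms
\[Q(A)\to Q(A\otimes\bk[G]\otimes\bk[G]).\]
Each is a composite of algebra morphisms obtained by the universal property of localisation, and each restricts on $A\subset Q(A)$ to the corresponding composite $(a\otimes1)\circ a$, respectively $(1\otimes\Delta)\circ a$, followed by the canonical map into $Q(A\otimes\bk[G]\otimes\bk[G])$. These two composites agree by coassociativity of the comodule $A$.

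Finally, apply the universal property of localisation in $\CAlg\cC$. An algebra morphism out of $Q(A)$ is uniquely determined by its restriction to $A$, because the elements $f^{-1}$, for non-zero divisors $f\in\Gamma(A)$, are forced: their images must be inverse to the images of $f$. This gives the equality.

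The main obstacle is the first step: checking that $a\otimes 1$ and $1\otimes \Delta$ genuinely preserve the relevant non-zero divisors, so that the composites are defined. This is again an instance of Lemma~\ref{lemma coaction non zero div}, applied to the comodule algebras $A\otimes\bk[G]$ and $\bk[G]$ respectively, with the extra tensor factor carried along using flatness of $-\otimes\bk[G]$ in $\Ind\Ds$.
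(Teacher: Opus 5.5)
Your proposal is correct and follows the paper's route. The paper only remarks that $Q(a)$ and $Q(\Delta)$ exist by Lemma~\ref{lemma coaction non zero div} and that coassociativity is then inherited from that of $a$. That inheritance is exactly your uniqueness-of-extension argument for algebra morphisms out of $Q(A)$, and your reading of $Q(a)\otimes 1$ and $1\otimes Q(\Delta)$ as $Q(a\otimes 1)$ and $Q(1\otimes\Delta)$ makes precise what the paper leaves implicit.

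One small correction concerns $1\otimes\Delta$: apply Lemma~\ref{lemma coaction non zero div} to $A\otimes\bk[G]$ with $G$ coacting via $\Delta$ on the second factor, as in your first paragraph. Applying it to $\bk[G]$ alone ``with the extra factor carried along'' does not suffice, since non-zero divisors in $\Gamma(A\otimes\bk[G])$ need not be of the form $1\otimes g$.
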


Now consider the ind-algebra $B=Q(a)^{-1}(Q(A)\otimes\Bbbk[G])$ in $\Ds$, and observe that $A\sub B$.  

\begin{lemma}\label{lemma largest ratl subalg}
The following hold:
\begin{enumerate}
    \item $B$ is a subalgebra of $Q(A)$;
    \item $Q(a)(B)\sub B\otimes\Bbbk[G]$, and this induces on $B$ the structure of a $G$-module algebra.
\end{enumerate}\end{lemma}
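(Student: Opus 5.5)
(1) Since $Q(a)$ is an algebra morphism and $Q(A)\otimes\Bbbk[G]$ is a subalgebra of $Q(A\otimes\Bbbk[G])$, the preimage $B$ is a subalgebra of $Q(A)$. The only point to check is that $Q(A)\otimes\Bbbk[G]\to Q(A\otimes\Bbbk[G])$ is a monomorphism, so that it may be regarded as a subalgebra. It is the localisation of $A\otimes\Bbbk[G]$ at the multiplicative set $S\otimes 1$, where $S$ is the set of non-zero divisors of $\Gamma(A)$. Each $s\otimes 1$ is a non-zero divisor, because $\Bbbk[G]$ is flat in $\Ind\Ds$ (as is every object). Therefore $(A\otimes\Bbbk[G])[ (S\otimes 1)^{-1}]\cong Q(A)\otimes\Bbbk[G]$ embeds into $Q(A\otimes\Bbbk[G])$, a further localisation at non-zero divisors, which we know is injective.

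(2) We show $Q(a)(B)\subseteq B\otimes\Bbbk[G]$, that is, $(Q(a)\otimes 1)$ sends $Q(a)(B)$ into $Q(A)\otimes\Bbbk[G]\otimes\Bbbk[G]$. This follows from Lemma~\ref{lemma Q(a) coassoc}: $(Q(a)\otimes1)\circ Q(a)=(1\otimes Q(\Delta))\circ Q(a)$, and on $B$ the map $Q(a)$ lands in $Q(A)\otimes\Bbbk[G]$. On that subobject $1\otimes Q(\Delta)$ restricts to $1\otimes\Delta$, landing in $Q(A)\otimes\Bbbk[G]\otimes\Bbbk[G]$.

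The subtle part is converting this into the statement that the image lies in $B\otimes\Bbbk[G]$. By definition $B\otimes\Bbbk[G]$ is the preimage of $Q(A)\otimes\Bbbk[G]\otimes\Bbbk[G]$ under $Q(a)\otimes 1$ restricted to $Q(A)\otimes\Bbbk[G]$. Tensoring the defining pullback of $B$ with $\Bbbk[G]$ preserves the pullback because $-\otimes\Bbbk[G]$ is exact. We also need $Q(a)\otimes1$ and $1\otimes Q(\Delta)$ to be compatible as maps into a common target; we take this target to be $Q(A\otimes\Bbbk[G]\otimes\Bbbk[G])$, which contains both relevant subalgebras, with all comparison maps injective by the argument from (1).

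Finally, restricting $Q(a)$ to $B$ gives an algebra morphism $a_B\colon B\to B\otimes\Bbbk[G]$. It is coassociative by Lemma~\ref{lemma Q(a) coassoc} and injectivity of the comparison maps. Counitality follows from $(1\otimes\varepsilon)\circ Q(a)=\mathrm{id}$, obtained by localising the corresponding identity for $a$. Hence $B$ is a $G$-module algebra containing $A$ as a $G$-subalgebra.

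The main obstacle is the bookkeeping of the injectivity of the various localisation comparison maps, which makes all the identities meaningful inside a single ambient algebra.
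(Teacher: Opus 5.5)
Your proposal is correct and follows the same route as the paper. Part (1) is the preimage of the subalgebra $Q(A)\otimes\Bbbk[G]$ under the algebra morphism $Q(a)$, and part (2) combines Lemma~\ref{lemma Q(a) coassoc} with identifying $B\otimes\Bbbk[G]$ as the largest subobject of $Q(A)\otimes\Bbbk[G]$ that $Q(a)\otimes 1$ sends into $Q(A)\otimes\Bbbk[G]\otimes\Bbbk[G]$; your extra checks (injectivity of $Q(A)\otimes\Bbbk[G]\to Q(A\otimes\Bbbk[G])$, and $-\otimes\Bbbk[G]$ preserving the defining pullback by exactness) simply fill in details the paper leaves implicit.
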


\begin{proof}
    That $B$ is an algebra is clear.  To obtain $Q(a)(B)\sub B\otimes \Bbbk[G]$, we use Lemma \ref{lemma Q(a) coassoc} to obtain that
    \[
    (Q(a)\otimes 1)Q(a)(B)=(1\otimes\Delta)Q(a)(B)\sub Q(A)\otimes\Bbbk[G]\otimes\Bbbk[G].
    \]
    Thus $Q(a)(B)$ lies in the largest subobject $T$ of $Q(A)\otimes\Bbbk[G]$ such that $(Q(a)\otimes 1)(T)\sub Q(A)\otimes\Bbbk[G]\otimes\Bbbk[G]$.  However, it is clear by definition of $B$ that $T=B\otimes\Bbbk[G]$. 

    To see that $B$ has the structure of a $G$-module algebra, we use Lemma \ref{lemma Q(a) coassoc} for coassociativity, and the counit axiom is also straightforward.
\end{proof}

\begin{example}\label{ex largest ratl subalg}
    Let $H\sub G$ be a subgroup, and consider the canonical $G$-action $a$ on $\bk[G]^H$. Then we have an inclusion of algebras in $\Rep G$:
    \[\bk[G]^H\;\subset\; B:=Q(a)^{-1}(Q(\bk[G]^H)\otimes\Bbbk[G]).\]
\end{example}

\subsection{Exact subgroups, beyond finite type}\label{sec:charobs}

We let $G$ be an affine group scheme in $\Ds$ and $H\sub G$ a subgroup.

	We denote by $G/_0H$ the presheaf on $\text{Aff}_{fpqc}$, the category of affine schemes with the fpqc topology, given by $A\mapsto G(A)/H(A)$, and write ${G/H}$ for its sheafification in this topology. When $G$ is algebraic, $G/H$ is representable by a scheme (more precisely, by \cite{CS}, it is representable in the fppf topology, and thus is so in the fpqc topology as well), so this notation is consistent with previous quotients. 
\begin{prop}\label{prop:GHaff}
    Assume that $G/H$ is represented by the affine scheme $\Spec \bk[G]^H$, then $H\sub G$ is exact.
\end{prop}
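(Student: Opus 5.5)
We will identify $\Rep H$ with the category of $G$-equivariant quasi-coherent sheaves on $G/H$, and then use that $G/H$ is affine to see that taking global sections is exact. The functor $\Ind^G_H$ is then the composite of this identification with taking global sections.

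The first step is to describe $\Rep H$ via descent. The quotient morphism $\pi: G\to G/H$ is an $H$-torsor in the fpqc topology, since $G/H$ is the fpqc sheafification of $G/_0H$. By fpqc descent along $\pi$, $H$-modules in $\Ind\Ds$ correspond to $G$-equivariant quasi-coherent sheaves on $G/H$. Concretely, $V\in\Rep H$ corresponds to the descent of the $G\times H$-equivariant sheaf $\bk[G]\otimes V$ on $G$ along $\pi$. Its global sections are $(\bk[G]\otimes V)^H$, which is exactly $\Ind^G_H V$.

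The second step uses the hypothesis $G/H=\Spec\bk[G]^H=:X$. Since $X$ is affine, quasi-coherent sheaves on $X$ are the same as $\bk[G]^H$-modules in $\Ind\Ds$. Global sections is then the forgetful functor, which is exact. So $V\mapsto (\bk[G]\otimes V)^H$ becomes the composite of two functors, and it remains to check that each is exact.
\begin{itemize}
\item $V\mapsto \bk[G]\otimes V$ is exact.
\item Descent to $X$ is an equivalence onto $G$-equivariant quasi-coherent $\bk[G]^H$-modules, which form an abelian category with exactness detected after forgetting the action.
\end{itemize}
Hence $\Ind^G_H$ is exact. Equivalently, $\bk[G]$ is faithfully flat over $\bk[G]^H$, and the canonical map
\[
\bk[G]\otimes_{\bk[G]^H}(\bk[G]\otimes V)^H\to \bk[G]\otimes V
\]
is an isomorphism. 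Exactness then follows because faithfully flat base change reflects exactness.

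The main obstacle is justifying descent, that is, faithful flatness of $\bk[G]^H\to\bk[G]$ and the isomorphism $G\times H\cong G\times_X G$, for $G$ not of finite type and in the general category $\Ds$, where the results of \cite{CS} cover only algebraic $G$. My first approach is to use the representability hypothesis directly. Since $X$ represents the fpqc sheaf $G/H$, the map $G\to X$ is an fpqc epimorphism of sheaves, so it admits fpqc-local sections. From this I will deduce that $G\to X$ is fpqc-locally isomorphic to the projection $U\times H\to U$, which is faithfully flat because $\bk[H]$ is faithfully flat over $\unit$. Faithful flatness descends fpqc-locally, and $G\times_X G\cong G\times H$ follows from the sheaf quotient description. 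If this local-section argument fails in $\Ds$, the fallback is to write $G=\varprojlim G_i$ as a limit of algebraic quotients, apply the finite-type results of \cite{CS} to $G_i/H_i$, and pass to the limit.
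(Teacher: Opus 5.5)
Your proposal is correct and follows the paper's argument. The paper also takes as input that $\bk[G]^H\to\bk[G]$ is faithfully flat and that $\bk[G]\otimes_{\bk[G]^H}\bk[G]\cong\bk[G]\otimes\bk[H]$. From these it gets $\bk[G]\otimes_{\bk[G]^H}\Ind^G_H V\cong(\bk[G]\otimes_{\bk[G]^H}\bk[G])\square^{\bk[H]}V\cong\bk[G]\otimes V$, using flatness to move the base change past the cotensor product, and concludes because faithfully flat base change reflects exactness.

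The one difference is how the two inputs are obtained. The paper cites \cite[Lemma~5.3]{CS} for them and uses that lemma for arbitrary affine $G$, not only algebraic $G$, so your fallback of reducing to algebraic quotients is unnecessary. Your fpqc-local-section argument instead proves the inputs directly from representability, and it works as you outline. The identification $G\times_X G\cong G\times H$ rests on injectivity of $G/_0H\to G/H$, which holds because $H$ is a closed subgroup.
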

\begin{proof}
    This is a classical observation, going back to \cite[\S 4]{CPS}. Concretely, by \cite[Lemma~5.3]{CS}, $\bk[G]^H\to\bk[G]$ is faithfully flat and by the same lemma we have a canonical isomorphism $\bk[G]\otimes_{\bk[G]^H}\bk[G]\cong \bk[G]\otimes \bk[H]$. Hence it suffices to prove that
\[
\bk[G]\otimes_{\bk[G]^H}\Ind^G_H-\;\simeq\; \bk[G]\otimes-,
\]
since the right hand side is an exact functor.  To prove this claim, observe that
\[
\bk[G]\otimes_{\bk[G]^H}\Ind^G_HV=\Bbbk[G]\otimes_{\Bbbk[G]^H}(\Bbbk[G]\square^{\Bbbk[H]} V)\cong (\Bbbk[G]\otimes_{\Bbbk[G]^H}\Bbbk[G])\square^{\Bbbk[H]} V,
\]
where $\square^{\Bbbk[H]}$ denotes the cotensor product, the first equality is by definition, and the second follows from flatness of $\Bbbk[G]^H\to\Bbbk[G]$.  Applying the canonical isomorphism stated above then proves the claim.
\end{proof}

    Write $G=\lim\limits_{\leftarrow}G_{\alpha}$ for algebraic groups $G_{\alpha}$, and set $H_{\alpha}\sub G_{\alpha}$ so that $H=\lim\limits_{\leftarrow}H_{\alpha}$.  
	\begin{lemma}\label{lem:limsheaves}
		We have an isomorphism of (pre)sheaves ${G/H}\cong\lim\limits_{\leftarrow}{G_{\alpha}/H_{\alpha}}$.  
	\end{lemma}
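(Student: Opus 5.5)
We prove the isomorphism of presheaves ${G/_0H}\cong\lim\limits_{\leftarrow}{G_{\alpha}/_0H_{\alpha}}$ only on a suitable cofinal class of test algebras, and then pass to sheafifications. The comparison morphism is canonical. Each projection $G\to G_\alpha$ maps $H$ into $H_\alpha$, so it induces compatible maps $G(R)/H(R)\to G_\alpha(R)/H_\alpha(R)$ for every commutative algebra $R$. These assemble into a morphism of presheaves $G/_0H\to\varprojlim G_\alpha/_0H_\alpha$, and sheafifying gives
\[
G/H\;\to\;\left(\varprojlim G_\alpha/_0H_\alpha\right)^{\sharp}\;\to\;\varprojlim G_\alpha/H_\alpha .
\]
The first step is to check that the last target is a sheaf. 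A limit of fpqc sheaves, computed objectwise, is again a sheaf, so this is clear.

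It then suffices to show that $G/H\to\varprojlim G_\alpha/H_\alpha$ is both a monomorphism and an epimorphism of fpqc sheaves.

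For injectivity, take $g,g'\in G(R)$ whose images agree in every $G_\alpha/H_\alpha$. After passing to an fpqc cover, which may depend on $\alpha$, $g^{-1}g'$ then lies in $H_\alpha$. The key point is that $H=G\times_{G_\alpha}H_\alpha$ should hold for a cofinal set of $\alpha$. We can arrange this by choosing $H_\alpha$ to be the scheme-theoretic image of $H$ in $G_\alpha$, which is a closed subgroup. Then $H$ is the limit of the $H_\alpha$, and $\bk[H]=\varinjlim\bk[H_\alpha]$ is a quotient of $\bk[G]$ by the union of the defining ideals. We conclude that $g^{-1}g'\in\bigcap_\alpha G\times_{G_\alpha}H_\alpha=H$, using that a point lies in $H$ precisely when it is killed by every element of the union of the ideals. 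The difficulty is that a single cover has to work for all $\alpha$ simultaneously. To handle this, we test membership in $H$ after pulling back to the ordinary quotient $G(R)/H(R)$: the sheaf $G/H$ is the sheafification of $G/_0H$, and injectivity of a sheafified map can be checked on sections of the presheaf.

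For surjectivity, take a compatible family $(x_\alpha)$ with $x_\alpha\in (G_\alpha/H_\alpha)(R)$. Each $x_\alpha$ lifts fpqc-locally to some $g_\alpha\in G_\alpha(R')$. We reduce to a countable or well-ordered cofinal chain and, by transfinite induction, lift compatibly. At each step the fibre of $G_{\beta}\to G_\alpha\times_{G_\alpha/H_\alpha}G_\beta/H_\beta$ is an $H_\alpha$-torsor-type object, namely a quotient torsor under $\ker(H_\beta\to H_\alpha)$. These are affine and faithfully flat, since quotient maps of affine group schemes are faithfully flat by the fppf representability results of \cite{CS}. We take the colimit of the corresponding faithfully flat $R$-algebras, which is still faithfully flat. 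It is therefore an fpqc cover, and over it we obtain a point of $\varprojlim G_\alpha(R'')=G(R'')$ mapping to $(x_\alpha)$.

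I expect this surjectivity step to be the main obstacle. It requires controlling an infinite (transfinite) tower of faithfully flat extensions, and it is the reason the fpqc topology rather than the fppf topology is used. The main risk is limit stages, where the required flatness is preserved because filtered colimits of faithfully flat algebras are faithfully flat.
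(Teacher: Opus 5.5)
Your strategy is the same as the paper's. You construct the canonical comparison map, check injectivity, and prove local surjectivity by a transfinite induction. Its successor steps use that $H_\beta\to H_\alpha$ is an fpqc epimorphism, and its limit steps use that filtered colimits (restricted tensor products) of faithfully flat algebras are faithfully flat. In the surjectivity half you only package things differently. You absorb the local lift of $x_\beta$ into each successor step via the $\ker(H_\beta\to H_\alpha)$-torsor. The paper instead first passes to a single cover $B$, the restricted tensor product of the $B_\alpha$, on which every $x_\alpha$ is an honest coset, and afterwards only corrects representatives by elements of the $H_\alpha$. That half is fine at the paper's level of detail. (Strictly, ``reduce to a well-ordered cofinal chain'' is not automatic for an arbitrary directed index set: e.g.\ the finite subsets of an uncountable set admit no cofinal chain. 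The paper's induction passes over the same point; a full argument would reduce via a continuous chain of directed subsets of smaller cardinality.)

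The injectivity paragraph, however, does not work as written. The ``key point'' that $H=G\times_{G_\alpha}H_\alpha$ for cofinally many $\alpha$ is false in general. It forces $\ker(G\to G_\alpha)\subseteq H$ (take $H=1$ and $G$ not algebraic), and choosing $H_\alpha$ to be the image of $H$ does not arrange it. What you actually use, $H=\bigcap_\alpha G\times_{G_\alpha}H_\alpha$, i.e.\ $H(R)=\varprojlim H_\alpha(R)$, is correct. More importantly, you yourself identify an obstacle: the cover on which $g^{-1}g'$ lands in $H_\alpha$ depends on $\alpha$. The sentence that follows does not resolve it. The missing observation is that no cover is needed. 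Since $H_\alpha\subseteq G_\alpha$ is closed and a faithfully flat $R\to R'$ is a monomorphism, $H_\alpha(R)=G_\alpha(R)\cap H_\alpha(R')$, i.e.\ $G_\alpha/_0H_\alpha$ is separated. Hence if $g,g'\in G(R)$ have the same image in every $(G_\alpha/H_\alpha)(R)$, the image of $g^{-1}g'$ in $G_\alpha(R)$ already lies in $H_\alpha(R)$ for all $\alpha$, so $g^{-1}g'\in H(R)$. (Alternatively, pass to the restricted tensor product of the $\alpha$-dependent covers, exactly as in your surjectivity step.) This is precisely the input the paper takes from [CS, \S 5.3]. Your opening sentence about a presheaf isomorphism on a cofinal class of test algebras is never used and should be dropped.
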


	\begin{proof}
		We have natural map $G/_0H\to\lim\limits_{\leftarrow}{G_{\alpha}/H_{\alpha}}$ from the maps $G/_0H\to G_{\alpha}/_0H_{\alpha}\to{G_{\alpha}/H_{\alpha}}$.  We would like to prove this is a sheafification map.  It is injective by \cite[\S 5.3]{CS}, thus it suffices to prove local surjectivity.
		
		Let $a=(a_{\alpha})\in \lim\limits_{\leftarrow}{G_{\alpha}/H_{\alpha}}(A)$; then for each $\alpha$, choose a faithfully flat map $A\to B_{\alpha}$ such that $a_{\alpha}$ restricts to an honest coset $b_{\alpha}\in G_{\alpha}/H_{\alpha}(B_{\alpha})$.  Let $B$ denote the restricted tensor product of all $B_{\alpha}$.  Then we obtain an element $b=(b_{\alpha})\in\lim\limits_{\leftarrow}G_{\alpha}(B)/H_{\alpha}(B)$ which is the restriction of $a$ under the  faithfully flat map $A\to B$.  
		
		Now we would like to lift $b$ to an element of $G(B)/H(B)$; we may choose elements $g_{\alpha}\in G(B)$ and $h_{\alpha\beta}\in H_{\alpha}(B)$ such that $g_{\alpha}H_{\alpha}=b_{\alpha}$ and $\phi_{\beta\alpha}(g_{\beta})=g_{\alpha}h_{\alpha\beta}$.  Let us show we can find a faithfully flat cover $B\to C$ in which a compatible lift $g'=(g'_{\alpha})\in G(C)$ exists.
		
		We choose a well-order on $\{\alpha\}$ and use transfinite induction on the size of an ordinal $\lambda$ on which we can find a compatible lift. Clearly it can be done for $\lambda=1$. If it is true for $\lambda$ (and we denote the resulting replacement again by $g_\alpha,\alpha\le\lambda$), then we may do it for $\lambda+1$ by finding a cover $C_{\lambda}\to C_{\lambda+1}$ and $h\in H_{\lambda+1}(C_{\lambda+1})$ which restricts to $h_{\lambda,\lambda+1}\in H_{\lambda}(C_{\lambda+1})$, and then set $g_{\lambda+1}':=g_{\lambda+1}h^{-1}$.  
		
		Finally, if we can find a compatible lift for all ordinals less than some limit ordinal $\lambda$, then for all $\alpha<\lambda$ we have $\phi_{\lambda\alpha}(g_{\lambda})=g_{\alpha}h_{\alpha\lambda}$, and by the compatibility we necessarily have $\phi_{\beta\alpha}(h_{\beta\lambda})=h_{\alpha\lambda}$ for $\alpha<\beta$.  Thus the $h_{\alpha\lambda}$ form a compatible system, defining an element $h_{\lambda}\in H_{\lambda}(C_{\lambda})$, where $C_{\lambda}$ is the restricted tensor product of all $C_{\alpha}$ for $\alpha<\lambda$.  Now we set $g_{\lambda}':=g_{\lambda}h_{\lambda}^{-1}$, proving our induction step.
	\end{proof}

Following \cite[Definition~7.5.2]{C1}, we call $H$ {\bf observable} in $G$ if $\Ind^G_H$ is faithful. If $G$ is algebraic, then by \cite[Theorem~7.3.1]{C2} and the main result of \cite{CS}, an equivalent condition is that the scheme $G/H$ is quasi-affine, {\it i.e.} $G/H\to \Spec \bk[G]^H$ is an open immersion, see \cite[\S 5.4]{C2}.

	\begin{lemma}
	\label{thm simp meand ex}
    Assume that $G=\varprojlim G_\alpha$ and $H=\varprojlim H_\alpha$ for observable subgroups $H_\alpha\sub G_\alpha$.
		If $\Bbbk[G]^H$ is a simple algebra in $\Rep G$, then ${G/H}\cong\Spec\Bbbk[G]^H$ and hence $H$ is exact.
	\end{lemma}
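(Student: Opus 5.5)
The plan is to reduce to the algebraic case, then pass to the limit using Lemma~\ref{lem:limsheaves}. Once we know $G/H\cong\Spec\bk[G]^H$, exactness of $H$ follows from Proposition~\ref{prop:GHaff}. So everything rests on showing that the canonical map $G/H\to\Spec\bk[G]^H$ is an isomorphism of fpqc sheaves.

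\textbf{Step 1: the algebraic case.} Assume $G$ is algebraic, $H$ is observable, and $\bk[G]^H$ is simple in $\Rep G$. By \cite[Theorem~7.3.1]{C2} and observability, $i\colon G/H\to X:=\Spec\bk[G]^H$ is a $G$-equivariant open immersion with $i^*\colon\bk[X]\to\bk[G/H]$ an isomorphism. Let $Z\subset X$ be the closed complement of the image, with reduced ideal $I\le\bk[X]$. Since $Z$ is $G$-stable, $I$ is a $G$-stable ideal; by Corollary~\ref{cor G stable sub} it suffices to check stability under $\Dist(G)$ and $G(\bk)$, and for a reduced closed $G$-stable subscheme this should be routine. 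Simplicity of $\bk[G]^H$ forces $I=0$ or $I=\bk[X]$. If $I=\bk[X]$, then $Z=\emptyset$ and $G/H=X$. If $I=0$, then $Z$ is all of $|X|$ (more carefully, $Z\supseteq X_{\mathrm{red}}$), so the open set $G/H$ is empty. This is impossible, since $G/H$ has the point $eH$.

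A cleaner route for the same step is to choose, as in the proof of Lemma~\ref{lemma open iso field frac}, a function $h\in\Gamma(\bk[X])$ with $\emptyset\neq D(h)\subseteq G/H$. Consider the ideal generated by the $G$-translates of $h$, meaning the $G$-submodule generated by $h$ times $\bk[X]$. This ideal is nonzero, hence equal to $\bk[X]$. The corresponding translates $D(gh)$ then cover $X$ and all lie in $G/H$, so $i$ is surjective. Making this precise inside $\Ds$, where points are $\bk$-points together with nilpotent fuzz, is the main obstacle. I would use that the vanishing locus of an ideal is determined on $\Gamma$-level functions via \cite{C2}, together with $G(\bk)$-equivariance and, where nilpotents interfere, reduction to $G_{\mathrm{red}}$.

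\textbf{Step 2: passage to the limit.} In general, $\bk[G]^H=\varinjlim\bk[G_\alpha]^{H_\alpha}$ as algebras. Simplicity of $\bk[G]^H$ does not obviously pass to each $\bk[G_\alpha]^{H_\alpha}$, but I argue as follows. Let $J_\alpha$ be the $G_\alpha$-stable ideal defining the complement of $G_\alpha/H_\alpha$ in $\Spec\bk[G_\alpha]^{H_\alpha}$. Its extension $J_\alpha\bk[G]^H$ is $G$-stable, and it is nonzero because it contains the nonzero function $h_\alpha$ constructed as in Step~1. By simplicity it is all of $\bk[G]^H$, so $1=\sum f_jh_j$ holds already at some stage $\beta\ge\alpha$. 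It follows that the preimage of $G_\alpha/H_\alpha$ in $\Spec\bk[G_\beta]^{H_\beta}$ is everything. Combining this with the openness of $G_\beta/H_\beta$, I expect the inverse system $\{G_\alpha/H_\alpha\}$ to be cofinal with the affine system $\{\Spec\bk[G_\alpha]^{H_\alpha}\}$. Lemma~\ref{lem:limsheaves} then gives
\[
G/H\;\cong\;\varprojlim G_\alpha/H_\alpha\;\cong\;\varprojlim\Spec\bk[G_\alpha]^{H_\alpha}\;=\;\Spec\bk[G]^H.
\]
This completes the proof via Proposition~\ref{prop:GHaff}.
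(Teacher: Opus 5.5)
Your Step~2 is essentially the paper's proof: the nonzero $G_\alpha$-stable ideal cutting out the complement of $G_\alpha/H_\alpha\subseteq\Spec\Bbbk[G_\alpha]^{H_\alpha}$ generates the unit ideal in $\Bbbk[G]^H$ by simplicity, hence already in some $\Bbbk[G_\beta]^{H_\beta}$. The two inverse systems therefore interleave, and Lemma~\ref{lem:limsheaves} together with Proposition~\ref{prop:GHaff} finish the argument. Step~1 is not needed (the algebraic case is the one-index instance of Step~2), apart from its observation that this ideal is nonzero because $G_\alpha/H_\alpha$ is a nonempty open subscheme.
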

\begin{proof}
    If $I\sub\Bbbk[G_{\alpha}]^{H_{\alpha}}$ is a non-zero, $G_{\alpha}$-invariant ideal, then $I\cdot\Bbbk[G]^H$ contains $1$, implying that there exists some $\beta>\alpha$ such that $I\cdot\Bbbk[G_{\beta}]^{H_{\beta}}$ contains $1$.  
	
	In particular, if $I_{\alpha}$ denotes the ideal cutting  out the complement of the open immersion $G_{\alpha}/H_{\alpha}\sub\Spec\Bbbk[G_{\alpha}]^{H_{\alpha}}$, we find that there exists $\beta$ such that the natural map $\Spec\Bbbk[G_{\beta}]^{H_{\beta}}\to\Spec\Bbbk[G_{\alpha}]^{H_{\alpha}}$ factors through $G_{\alpha}/H_{\alpha}$.  In particular, the map $\Spec\Bbbk[G]^H\to\Spec\Bbbk[G_{\alpha}]^{H_{\alpha}}$ factors through $G_{\alpha}/H_{\alpha}$ for any $\alpha$, implying we have a natural map:
	\[
	\Spec\Bbbk[G]^H\to\lim\limits_{\leftarrow}{G_{\alpha}/H_{\alpha}}\cong{G/H}.
	\]
	We also have a natural map in the opposite direction using that $\Spec\Bbbk[G]^H\cong\lim\limits_{\leftarrow}\Spec\Bbbk[G_{\alpha}]^{H_{\alpha}}$ and the natural map $G_{\alpha}/H_{\alpha}\to \Spec\Bbbk[G_{\alpha}]^{H_{\alpha}}$.  These maps are mutually inverse.
    So $H\sub G$ is exact by Proposition~\ref{prop:GHaff}.
\end{proof}

\begin{remark}
    \begin{enumerate}
        \item  A converse of Lemma~\ref{thm simp meand ex} is also true. Concretely, if $H\sub G$ is exact, then the algebra $\bk[G]^H$ is simple (by Lemma~\ref{lemma induced from exact is simple}) and $H$ is an inverse limit of observable algebraic subgroups by the argument at the start of \ref{452} below.
        \item The arguments in \ref{452}, together with \cite[Proposition~7.5.3]{C1}, also show that Lemma~\ref{thm simp meand ex} can be strengthened: If $H\sub G$ is observable and $\bk[G]^H$ is simple, then $H$ is exact. 
    \end{enumerate}
\end{remark}

\subsection{Proof of Theorem \ref{thm simple comm alg}} Let $G$ be an affine group scheme in $\Ds$ and let $A$ now be a simple commutative algebra in $\Rep G$. 

\subsubsection{Reduction to the case where $A^G=\Bbbk$ and $A$ has a $\Bbbk$-point.}\label{reduction}   Let $L$ be a field extension of the field $A^G$ from Lemma~\ref{lem:AG}. The following proof mimics \cite[Lem.~4.4]{Magid}.
    \begin{lemma}\label{lemma base change simple}
    $L\otimes_{A^G}A$ is a simple   $G_L=G\times_{\Bbbk}L$-algebra in $\Ind\Ds_L$.  
    \end{lemma}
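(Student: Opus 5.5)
The plan is to mimic Magid's argument: a nonzero $G_L$-stable ideal of $L\otimes_{A^G}A$ can be "descended" to a nonzero $G$-stable ideal of $A$ by a minimal-length linear algebra argument over the field $K:=A^G$. Write $B:=L\otimes_{K}A$, where $K$ acts on $A$ through $K=\Hom_{\Rep G}(\unit,A)\subset\Gamma(A)$ (Lemma~\ref{lem:AG}). The coaction $a_B=L\otimes_K a_A$ is $L$-linear, so $G_L$-stable ideals of $B$ are exactly the $G$-stable ideals of $B$ in $\Ind\Ds$ that are also $L$-subspaces.

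Let $0\neq J\subset B$ be a $G_L$-stable ideal. Fix a $K$-basis $\{\ell_i\}_{i\in\mathcal I}$ of $L$ with $\ell_0=1$, so $B\cong\bigoplus_i \ell_i\otimes A$ in $\Ind\Ds$ as a $G$-equivariant $A$-module. For a finite subset $S\subset\mathcal I$, set
\[
J_S:=J\cap \textstyle\bigoplus_{i\in S}\ell_i\otimes A .
\]
Choose $S$ of minimal cardinality with $J_S\neq0$. After multiplying by $\ell_{s}^{-1}$ for some $s\in S$ (which preserves $J$ and turns the index set into another basis containing $1$) we may assume $0\in S$. For $s\in S$, the projection $p_s:J_S\to A$ onto the $\ell_s$-component is $A$-linear and $G$-equivariant, and by minimality it is injective for every $s$. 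Its image $I_0:=p_0(J_S)$ is a nonzero $G$-stable ideal of $A$, hence $I_0=A$ by simplicity. Thus there is a unique element $u\in J_S$ with $p_0(u)=1$. Equivalently, we get a morphism $\unit\to J_S$ in $\Rep G$: the image $\unit\subset A=I_0$ is $G$-invariant and $p_0^{-1}$ is equivariant.

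Then $u=\sum_{s\in S}\ell_s\otimes c_s$, where each $c_s=p_s(u)\in\Hom_{\Rep G}(\unit,A)=A^G=K$. Hence $u=\big(\sum_s c_s\ell_s\big)\otimes 1$ with $\sum_s c_s\ell_s\in L$ nonzero because $c_0=1$. This is a unit of $B$ (a nonzero scalar in the field $L$), so $J=B$. It remains to check $B\neq 0$, which holds since $L$ is free over $K$ and $A\neq0$. Commutativity is inherited directly.

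The main obstacle is the careful handling of "elements" in $\Ind\Ds$. The key move is to replace the classical choice of an element with minimal support by the sub-ideal $J_S$. One also needs that $p_0$ lands in $A$ as a $G$-stable $A$-submodule, so that simplicity gives surjectivity, and that the invariant element $1$ lifts to an invariant element of $J_S$. The latter follows because $p_0|_{J_S}$ is an isomorphism $J_S\cong A$ in $\Rep G$. Identifying the $p_s(u)$ as $G$-invariants uses Lemma~\ref{lem:AG}, which forces them into the field $K$.
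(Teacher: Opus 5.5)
Your proof is correct. It uses the same Magid-style minimal-support strategy as the paper, but implements it differently, and the difference is worth noting.

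The paper minimises over compact subobjects $X\subset J$ (and over bases), then takes the ideal $J'\subset A$ of all possible first components. It needs Corollary~\ref{cor G stable sub}, i.e.\ stability under $\Dist(G)$ and $G(\Bbbk)$, to see that $J'$ is $G$-stable. After producing $Z$ with first component $b_1\unit$, it uses $\Dist^+(G)$, the operators $\id-g$, minimality and Corollary~\ref{cor rep G to dist ff} to conclude that $Z$ consists of $G$-invariants.

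You minimise instead over the $G$-stable $A$-submodules $J_S=J\cap\bigoplus_{i\in S}\ell_i\otimes A$, and this gives both steps for free. The paper's $J'$ is just $p_0(J_S)$, which is visibly $G$-stable. Each $p_s|_{J_S}$ is injective, since its kernel is $J_{S\setminus\{s\}}=0$. So $p_0|_{J_S}$ is an isomorphism in $\Rep G$, and the $G$-invariance of $u$, hence $p_s(u)\in K$ by Lemma~\ref{lem:AG}, is automatic.

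What your route buys: it is shorter and purely categorical. It also works verbatim for an arbitrary affine group scheme $G$, whereas the paper's $\Dist(G)$ and $G(\Bbbk)$ tools are only set up for algebraic $G$. The paper's version stays closer to the classical element-wise computation.

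Two small remarks. The decomposition $B\cong\bigoplus_i\ell_i\otimes A$ being $G$-equivariant uses that $a_A$ is $K$-linear, which holds because $a_A(c)=c\otimes 1$ for $c\in K$; this deserves one line. The rescaling that arranges $0\in S$ is unnecessary: for any $s\in S$, taking $u$ with $p_s(u)=1$ already gives a nonzero scalar $\sum_t c_t\ell_t\in L$.
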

    \begin{proof}
        Each basis $\{b_i:i\in I\}$ of $L$ over $A^G$ leads to a decomposition $L\otimes_{A^G}A\cong \bigoplus\limits_{i\in I}b_iA$.  Let $J$ be a non-zero $G_{L}$-stable ideal of $L\otimes_{A^G}A$.  Then for a compact subobject $X\sub J$ in $\Ds$, and a basis $\{b_i\}$ of $L$ over $A^G$, there are finitely many $i\in I$ for which $X\to b_iA$ is non-zero.  Let $X\sub J$ be a non-zero compact subobject and $\{b_i:i\in I\}$ be a basis for which the number of such $i$, call it $n$, is minimal (over all $X$ and all bases).  
        
         If $n=1$, one can quickly reduce to the situation $X\subset A$, which implies that $J=L\otimes_{A^G}A$.  If $n>1$, then up to reindexing we may write $X\sub b_1A\oplus\cdots\oplus b_n A$.  Write $Y_1\sub A$ for the image of the projection $X\to b_1A\cong A$, which is non-zero by assumption.   Now let $J'\sub A$ be the sum of subobjects $Y\sub A$ such that there exists a subobject $Z\sub J$ with $Z\sub b_1A\oplus\cdots\oplus b_n A$, and for which the projection of $Z$ onto $b_1A\cong A$ is $Y$. Clearly $Y_1\sub J'$ so that $J'\neq0$, and $J'$ is an ideal of~$A$.  Further, $J'$ is $G$-stable, for instance by Corollary \ref{cor G stable sub}, because it is stable under both $\Dist(G)$ and $G(\Bbbk)$. Thus $J'=A$, implying that there exists a compact subobject $Z\sub J$ with $Z\sub b_1A\oplus b_2A\oplus\cdots\oplus b_nA$ and such that under the first projection $Z\to b_1A$, the image is exactly $b_1\unit$.  For $U\sub\Dist^+(G)$, we have $U\cdot Z\sub b_2U(A)\oplus\cdots\oplus b_nU(A)$, and similarly for $g\in G(\Bbbk)$ we have $(\id-g)\cdot Z\sub b_2(\id-g)(A)\oplus\cdots\oplus b_n(\id-g)(A)$.  By our minimality assumptions it follows that $\Dist^+(G)\cdot Z=0$ and $G(\Bbbk)$ fixes $Z$, so by Corollary \ref{cor rep G to dist ff}, $Z\sub (L\otimes_{A^G}A)^G\cong L$.  Since $Z\neq0$, it follows that $J=L\otimes_{A^G}A$.
            \end{proof}
    
     Let $\varphi:A\to \Bbbk'$ be a closed point of $\Spec A$ (for $A$ interpreted as an algebra in $\Ind\Ds$). This exists since the (MN) condition implies that the only simple algebras in $\Ind\Ds$ are field extensions of $\bk$.

    \begin{lemma}\label{lemma orbit embedding}
        The orbit map at $\varphi$ induces an embedding of $G$-algebras $A\hookrightarrow\Bbbk'\otimes\bk[G]=\bk'[G_{\bk'}]$, and an embedding of fields $A^G\hookrightarrow \Bbbk'$.  
    \end{lemma}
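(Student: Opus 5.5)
The orbit map at $\varphi$ will be the composite
\[
\psi:\;A\xrightarrow{\;a_A\;}A\otimes\bk[G]\xrightarrow{\;\varphi\otimes 1\;}\bk'\otimes\bk[G]=\bk'[G_{\bk'}],
\]
viewed as a morphism of commutative algebras in $\Ind\Ds$. The plan is to check three things in turn: that $\psi$ is a morphism of $G$-algebras, that it is injective, and that it maps $A^G$ into $\bk'$.

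\emph{$G$-equivariance.} We give $\bk'[G_{\bk'}]$ the $G$-coaction $1\otimes\Delta$, that is, the regular action with $\bk'$ carrying the trivial action. Then equivariance is the identity
\[
(\psi\otimes 1)\circ a_A=(1\otimes\Delta)\circ\psi .
\]
This follows at once from coassociativity, $(a_A\otimes 1)\circ a_A=(1\otimes\Delta)\circ a_A$, after applying $\varphi\otimes 1\otimes 1$. In addition, $\psi$ is an algebra morphism, being a composite of algebra morphisms.

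\emph{Injectivity.} The kernel $K=\ker\psi$ is an ideal of $A$, and by the equivariance just shown it is a $G$-stable subobject. So $K$ is an ideal of $A$ in $\Rep G$. Since $A$ is simple, either $K=0$ or $K=A$. We rule out $K=A$ by composing $\psi$ with the counit $1\otimes\varepsilon:\bk'\otimes\bk[G]\to\bk'$. The counit axiom gives $(1\otimes\varepsilon)\circ a_A=\id_A$, so this composite is $\varphi$. Since $\varphi$ is a unital algebra map to a nonzero field, it is nonzero, hence $\psi\neq 0$ and $K\neq A$. Therefore $K=0$ and $\psi$ is a monomorphism.

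\emph{The fields.} Restricting $\psi$ to $A^G$ gives an injective map of $G$-algebras, so it lands in the $G$-invariants of $\bk'\otimes\bk[G]$ under the regular coaction. These invariants are $\bk'\otimes\bk[G]^G=\bk'$: invariants of the regular representation are the constants, and extending scalars by the vector space $\bk'$ commutes with taking invariants because invariants are an equaliser. More directly, for $f\in A^G$ we have $a_A(f)=f\otimes 1$, so $\psi(f)=\varphi(f)\otimes 1$. Thus the map is just $\varphi|_{A^G}:A^G\to\bk'$. It is injective because $\psi$ is, and also because it is a unital map out of a field (Lemma~\ref{lem:AG}).

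The only point requiring care is the identification $\bk'\otimes\bk[G]\cong\bk'[G_{\bk'}]$ as $G$-algebras in $\Ind\Ds$ with the correct coaction. One also needs that the kernel of an equivariant algebra map is a $G$-submodule; this holds because $\Rep G$ is abelian and its forgetful functor to $\Ind\Ds$ is exact. Neither point is a genuine obstacle. Everything else is formal, and the key input is the simplicity of $A$ in $\Rep G$.
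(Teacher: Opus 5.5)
Your proof is correct and follows the paper's approach: the paper's proof is the single line ``This follows immediately from simplicity.'' Your definition $\psi=(\varphi\otimes 1)\circ a_A$, the coassociativity check for equivariance, the argument that $\ker\psi$ is a $G$-stable ideal that is proper because $(1\otimes\varepsilon)\circ\psi=\varphi$, and the computation $\psi(f)=\varphi(f)\otimes 1$ for $f\in A^G$ are exactly the details that line leaves implicit.
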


    \begin{proof}
        This follows immediately from simplicity.
    \end{proof}

Let $\mathbb{L}'=\overline{\Bbbk'}$.  Clearly $\mathbb{L}=\overline{A^G}\sub \mathbb{L}'$.

\begin{lemma}\label{lem:extscal}
    To prove Theorem \ref{thm simple comm alg}, it suffices to show that there exists an exact subgroup $H_{\mathbb{L}'}\sub G_{\mathbb{L}'}$ such that $A\otimes_{A^G}\mathbb{L}'\cong\mathbb{L}'[G_{\mathbb{L}'}/H_{\mathbb{L}'}]$.
\end{lemma}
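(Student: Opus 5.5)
The plan is a descent argument along the field extension $\mathbb{L}'\supseteq\mathbb{L}$, where $\mathbb{L}'=\overline{\bk'}$ contains $\mathbb{L}=\overline{A^G}$. Suppose we are given an exact subgroup $H'\sub G_{\mathbb{L}'}$ with
\[
A\otimes_{A^G}\mathbb{L}'\;\cong\;\mathbb{L}'[G_{\mathbb{L}'}]^{H'}.
\]
The goal is to produce an exact $H_{\mathbb{L}}\sub G_{\mathbb{L}}$ with $A_{\mathbb{L}}:=A\otimes_{A^G}\mathbb{L}\cong\mathbb{L}[G_{\mathbb{L}}]^{H_{\mathbb{L}}}$. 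First observe that $A_{\mathbb{L}}$ is again a simple $G_{\mathbb{L}}$-algebra by Lemma~\ref{lemma base change simple}. Its field of invariants is $\mathbb{L}$: invariants commute with the flat base change $A^G\to\mathbb{L}$, since $(-)^G$ is a kernel. Hence $A\otimes_{A^G}\mathbb{L}'=A_{\mathbb{L}}\otimes_{\mathbb{L}}\mathbb{L}'$.

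The key step is to show that $A_{\mathbb{L}}$ has an $\mathbb{L}$-point, that is, an algebra morphism $A_{\mathbb{L}}\to\mathbb{L}$ in $\Ind\Ds_{\mathbb{L}}$. Over $\mathbb{L}'$ there is the point $\varphi'$ given by evaluation at the identity coset. I would take a finitely generated $G_{\mathbb{L}}$-stable subalgebra $A_0\sub A_{\mathbb{L}}$ containing enough generators. Then $\Spec A_0$ is a finite-type scheme over $\mathbb{L}$ in $\Ds_{\mathbb{L}}$, and $\varphi'$ shows it is non-empty. By (MN), every closed point of such a scheme has residue field finite over $\mathbb{L}$, hence equal to $\mathbb{L}$ since $\mathbb{L}$ is algebraically closed. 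This is the Nullstellensatz for GR+MN categories from \cite{C1,C2}; the points are classical algebras, as in the remark before Lemma~\ref{lemma orbit embedding}. So each $A_0$ has $\mathbb{L}$-points. To pass to all of $A_{\mathbb{L}}$, which need not be finitely generated, I would use the presentation of $G$ as a limit of algebraic groups together with a compactness/Zorn argument. I expect this to be the main obstacle; if it fails, I would instead reduce $A$ to a directed union of simple finitely generated pieces over the $G_\alpha$.

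Given an $\mathbb{L}$-point $\psi$, Lemma~\ref{lemma orbit embedding} applied with $\bk'=\mathbb{L}$ gives an embedding of $G_{\mathbb{L}}$-algebras $A_{\mathbb{L}}\hookrightarrow\mathbb{L}[G_{\mathbb{L}}]$. Let $H_{\mathbb{L}}$ be the stabiliser of $\psi$, defined as the subgroup whose coordinate ring is the quotient of $\mathbb{L}[G_{\mathbb{L}}]$ by the ideal generated by $\{a\otimes 1-1\otimes a\}$ transported via the orbit map. Then $A_{\mathbb{L}}\sub\mathbb{L}[G_{\mathbb{L}}]^{H_{\mathbb{L}}}$. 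After base change to $\mathbb{L}'$, the point $\psi\otimes\mathbb{L}'$ and $\varphi'$ are both points of $\mathbb{L}'[G_{\mathbb{L}'}/H']$. I would like to show that this scheme is homogeneous with $G(\mathbb{L}')$ acting transitively on its $\mathbb{L}'$-points, which holds since $G/H'$ is a quotient sheaf and $\mathbb{L}'$ is algebraically closed. It follows that $H_{\mathbb{L}}\otimes\mathbb{L}'$ is conjugate to $H'$. Hence $(H_{\mathbb{L}})_{\mathbb{L}'}$ is exact, and the inclusion $A_{\mathbb{L}}\sub\mathbb{L}[G_{\mathbb{L}}]^{H_{\mathbb{L}}}$ becomes an isomorphism after the faithfully flat base change to $\mathbb{L}'$. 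Therefore it is already an isomorphism.

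Finally, exactness of $\Ind^{G_{\mathbb{L}}}_{H_{\mathbb{L}}}$ descends from that of its base change, because $-\otimes_{\mathbb{L}}\mathbb{L}'$ is faithful exact and commutes with induction, which is a cotensor product. This completes the reduction.
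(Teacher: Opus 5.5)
Your argument rests on the claim that $A_{\mL}$ has an $\mL$-point, and this is exactly the step you leave open. It is the real content of the lemma; the paper remarks before Corollary~\ref{cor:kpoint} that the existence of such a point seems difficult to show directly.

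The Nullstellensatz only gives points on finitely generated $G_{\mL}$-stable subalgebras $A_0$, and nothing carries these to the union automatically. For instance, $\bk(t)$ is a directed union of the algebras $\bk[t,f^{-1}]$, each of which has $\bk$-points, yet $\bk(t)$ has none. Even equivariantly, a point of $\Spec A_0$ need not extend to a larger $A_1\supseteq A_0$: take $t=0$ for $\bk[t]\sub\bk[t,t^{-1}]=\bk[\mG_m]$. A Zorn argument would need a class of partial points that can always be extended across arbitrary, non-finitely generated, intermediate stages. Over an uncountable directed system this is a genuine torsor-type problem, not a compactness statement.

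Your fallback does not help either. It presupposes that $A$ is a directed union of simple finitely generated pieces, and the natural pieces $A_\alpha$ are only dense orbit closures, as in the example above, so they are not simple in general.

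The same problem reappears in your claim that $G(\mL')$ acts transitively on the $\mL'$-points of $G_{\mL'}/H'$. For non-algebraic $G$ this amounts to triviality of $H'$-torsors over $\mL'$, which does not follow merely from $G/H'$ being a quotient sheaf.

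The paper obtains the missing point from the neutrality theorem of Appendix~\ref{app:Del}:
\begin{itemize}
\item After reducing to $A^G=\bk$ via Lemma~\ref{lemma base change simple}, the hypothesis shows that $\mod_A$ is pretannakian over $\bk$ with a tensor functor to $\Ds_{\mL'}$.
\item Theorem~\ref{thm neutrality} then gives a tensor functor $F:\mod_A\to\Ds$.
\item By uniqueness of fibre functors \cite[Theorem~7.3.1]{CEO}, the composite of $A\otimes-$ with $F$ is the forgetful functor on $\Rep G$. Hence $F$ sends the multiplication $A\otimes A\to A$, viewed as a morphism of $A$-modules, to an algebra morphism $A\to\unit$, that is, a $\bk$-point.
\item One then reapplies the hypothesis with this $\bk$-point as the chosen closed point, so that $\bk'=\bk$ and $\mL'=\mL$.
\end{itemize}
This last step also makes your descent and conjugacy comparison between data over $\mL$ and over $\mL'$ unnecessary.
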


    \begin{proof}
        Using Lemma \ref{lemma base change simple}, it suffices to look at $A\otimes_{A^G}\mL$, and thus we may assume without loss of generality that $A^G=\Bbbk$.  If we show that $A\otimes_{\Bbbk}\mathbb{L}'\cong\mathbb{L}'[G_{\mathbb{L}'}/H_{\mathbb{L}'}]$ for an exact subgroup $H_{\mathbb{L}'}$, then $\Mod_A$ is the ind-completion of a pretannakian category and admits a symmetric tensor functor to $\Ds_{\mathbb{L}'}$.  By Theorem \ref{thm neutrality}, this implies that we have a tensor functor $F:\mod_A\to\Ds$. Moreover, by \cite[Theorem~7.3.1]{CEO}, the composite 
    \[\Rep G\xrightarrow{A\otimes -}\Mod_A\xrightarrow{F}\Ind\Ds \]
        is isomorphic (as a monoidal functor) to the functor forgetting the $G$-action. Hence, $F$ sends the multiplication morphism $A\otimes A\to A$ in $\Mod_A$ to a morphism $A\to \unit$ in $\Ind\Ds$. One verifies directly that this is an algebra morphism. In other words, we can now assume $\bk'=\bk$, meaning $\mathbb{L}'=\mathbb{L}$, which concludes the proof.   \end{proof}

    {\em Thus from now on}, by replacing $\Ds$ and $A$ with $\Ds_{\mathbb{L}'}$ and $ A\otimes_{A^G}\mathbb{L}'$, {\em we will assume that $A^G=\Bbbk$, and that $A$ has a $\Bbbk$-point $\varphi$.	}
	
	\subsubsection{Connection with exact subgroups}\label{452} The orbit map at $\varphi:A\to\bk$ defines an embedding $A\sub \Bbbk[G]$, by Lemma \ref{lemma orbit embedding}.   
    
    We can write $A=\bigcup\limits_{\alpha\in I} X_\alpha$ for compact subobjects $X_\alpha\in \Rep G$.  We view $I$ as a poset, where $\alpha\le\beta$	if $X_\alpha\sub X_{\beta}$.   Let $A_\alpha$ be the finitely generated subalgebra of $A$ generated by $X_\alpha$, and let $N_\alpha\subset G$ be the kernel of the representation $X_\alpha$, so that $G_\alpha:=G/N_\alpha$ is an algebraic group, and the action of $G$ on $A_\alpha$ factors through $G_\alpha$. 
	  
	Let $H_{\alpha}$ be the stabilizer of $\varphi$ in $G_{\alpha}$.  By \cite[Cor.~7.8]{CS}, the orbit map $G_{\alpha}/H_{\alpha}\to \Spec A_{\alpha}$ determined by the point $\varphi$ is an immersion (in the sense of \cite[Defn.~5.3.5]{C2}), and the image of the underlying topological space must be dense since the pullback map on functions remains injective.  It follows that the orbit map $G_{\alpha}/H_{\alpha}\to\Spec A_{\alpha}$ is an open immersion of a dense open subscheme. In particular $H_\alpha$ is an observable subgroup of $G_\alpha$ by \cite[Theorem~7.3.1]{C1}.

\begin{lemma}\label{lem:Aab}
    The embedding $A_{\alpha}\to\Bbbk[G_{\alpha}]^{H_{\alpha}}$ induces an isomorphism $Q(A_{\alpha})\xto{\sim}Q(\Bbbk[G_{\alpha}]^{H_{\alpha}})$.  For $\alpha<\beta$, we have a commutative diagram of inclusions:
	\[
	\xymatrix{
		A_{\alpha} \ar@{^{(}->}[r]\ar@{^{(}->}[d] & A_{\beta} \ar@{^{(}->}[d] \\ \Bbbk[G_{\alpha}]^{H_{\alpha}} \ar@{^{(}->}[r] & \Bbbk[G_{\beta}]^{H_{\beta}}
	}
	\]
\end{lemma}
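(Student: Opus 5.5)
We first build the embedding $A_\alpha\hookrightarrow\Bbbk[G_\alpha]^{H_\alpha}$. The orbit map at $\varphi$ gives $A\subset\Bbbk[G]$ (Lemma~\ref{lemma orbit embedding}). On $A_\alpha$ the $G$-action factors through $G_\alpha$, so the orbit map restricted to $A_\alpha$ lands in $\Bbbk[G_\alpha]\subset\Bbbk[G]$. Concretely, it is the composite $A_\alpha\xrightarrow{a}A_\alpha\otimes\Bbbk[G_\alpha]\xrightarrow{\varphi\otimes1}\Bbbk[G_\alpha]$. Since $H_\alpha$ is the stabiliser of $\varphi$ in $G_\alpha$, this image consists of $H_\alpha$-invariants, i.e.\ it is pulled back along the orbit morphism $G_\alpha/H_\alpha\to\Spec A_\alpha$. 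So we get the vertical maps $A_\alpha\to\Bbbk[G_\alpha/H_\alpha]=\Bbbk[G_\alpha]^{H_\alpha}$, and they are injective because they are restrictions of the injective map $A\to\Bbbk[G]$.

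For $\alpha<\beta$, the inclusion $A_\alpha\subset A_\beta$ is compatible with the quotient $G_\beta\to G_\alpha$ (as $N_\beta\subset N_\alpha$), and $H_\beta$ maps into $H_\alpha$. Hence $\Bbbk[G_\alpha]\subset\Bbbk[G_\beta]$ restricts to $\Bbbk[G_\alpha]^{H_\alpha}\subset\Bbbk[G_\beta]^{H_\beta}$. All four maps are restrictions of inclusions into $\Bbbk[G]$ given by the same orbit map at $\varphi$, so the square commutes trivially.

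For the isomorphism on algebras of fractions we apply Lemma~\ref{lemma open iso field frac}, with $X=\Spec A_\alpha$ (finite type, since $A_\alpha$ is finitely generated) and $U=G_\alpha/H_\alpha$ embedded via the orbit map. By \S\ref{452} this is an open immersion with dense image. The pullback $i^*:A_\alpha\to\Bbbk[U]$ is the injective map above; here we use that $\Bbbk[U]=\Bbbk[G_\alpha]^{H_\alpha}$, since $U$ is quasi-affine with $\Gamma(U,\mathcal{O})=\Bbbk[G_\alpha]^{H_\alpha}$. The zero-divisor hypothesis of Lemma~\ref{lemma open iso field frac} is exactly Corollary~\ref{cor zero div on G/H}. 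The lemma then gives $Q(A_\alpha)\xrightarrow{\sim}Q(\Bbbk[U])=Q(\Bbbk[G_\alpha]^{H_\alpha})$.

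The main obstacle is the identification of the global functions on the quasi-affine scheme $G_\alpha/H_\alpha$ with $\Bbbk[G_\alpha]^{H_\alpha}$, together with the fact that the orbit map is the open immersion whose pullback is our embedding. For the first point I would cite the description $\Bbbk[G/H]=\Bbbk[G]^H$ recalled before Corollary~\ref{cor zero div on G/H}, combined with the observability of $H_\alpha$ established in \S\ref{452}.
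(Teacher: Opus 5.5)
Your proposal is correct and follows essentially the same route as the paper: the isomorphism $Q(A_\alpha)\xrightarrow{\sim}Q(\Bbbk[G_\alpha]^{H_\alpha})$ comes from Lemma~\ref{lemma open iso field frac}, applied to the dense open orbit immersion $G_\alpha/H_\alpha\hookrightarrow\Spec A_\alpha$, with the zero-divisor hypothesis supplied by Corollary~\ref{cor zero div on G/H}. The square commutes because all four maps are restrictions of the orbit embedding $A\subset\Bbbk[G]$. The difference is only that you spell out details the paper leaves implicit: injectivity of the vertical maps, $\Bbbk[G_\alpha/H_\alpha]=\Bbbk[G_\alpha]^{H_\alpha}$, and the compatibilities $N_\beta\subset N_\alpha$ and $H_\beta\to H_\alpha$.
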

\begin{proof}
    The isomorphism follows by Lemma  \ref{lemma open iso field frac} and Corollary \ref{cor zero div on G/H}. The commutative diagram is immediate.
\end{proof}

	\begin{lemma}\label{lemma below diagram}
		The inclusion $\Bbbk[G_{\alpha}]^{H_{\alpha}}\sub \Bbbk[G_{\beta}]^{H_{\beta}}$ takes non-zero divisors to non-zero divisors.
	\end{lemma}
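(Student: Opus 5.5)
The statement is geometric. By Corollary~\ref{cor zero div on G/H}, an element $f\in\Gamma(\Bbbk[G_{\alpha}]^{H_{\alpha}})$ is a zero divisor if and only if its restriction to some connected component of $G_\alpha/H_\alpha$ is nilpotent. The plan is to show that the inclusion $\Bbbk[G_{\alpha}]^{H_{\alpha}}\sub \Bbbk[G_{\beta}]^{H_{\beta}}$ is pullback along a surjective $G_\beta$-equivariant morphism of homogeneous spaces $\pi:G_\beta/H_\beta\to G_\alpha/H_\alpha$. Components of the source then map into components of the target, and non-nilpotence is preserved.

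First I identify the map. The surjection $G_\beta\to G_\alpha$ (note $N_\beta\sub N_\alpha$, since $X_\alpha\sub X_\beta$) sends $H_\beta$ into $H_\alpha$, because both are stabilisers of the same point $\varphi$ and the map $A_\alpha\to A_\beta$ is compatible with the orbit maps. So we get a morphism $\pi:G_\beta/H_\beta\to G_\alpha/H_\alpha$. Its composite with $G_\beta\to G_\beta/H_\beta$ is the faithfully flat map $G_\beta\to G_\alpha\to G_\alpha/H_\alpha$, using \cite[Lem.~5.1]{CS} and the faithful flatness of quotients of algebraic groups. Hence $\pi$ is faithfully flat, in particular surjective on underlying spaces. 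The inclusion of invariant rings in Lemma~\ref{lem:Aab} is exactly $\pi^*$ restricted to global functions, since the orbit maps are compatible.

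Now let $f$ be a non-zero divisor in $\Bbbk[G_{\alpha}]^{H_{\alpha}}$, and suppose $\pi^*f$ were a zero divisor. By Corollary~\ref{cor zero div on G/H} applied to $G_\beta/H_\beta$, there is a connected component $C$ of $G_\beta/H_\beta$ on which $\pi^*f$ is nilpotent. The image $\pi(C)$ lies in a single connected component $C'$ of $G_\alpha/H_\alpha$.

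The main obstacle is to upgrade ``$f$ vanishes on $\pi(C)$'' to ``$f|_{C'}$ is nilpotent''. For this I use equivariance. All components of $G_\alpha/H_\alpha$ are translates of $G_\alpha^\circ/(G_\alpha^\circ\cap H_\alpha)$, and $G_\beta^\circ$ surjects onto $G_\alpha^\circ$. So $\pi$ restricted to $C$ (a translate of $G^\circ_\beta/(G^\circ_\beta\cap H_\beta)$) surjects onto $C'$, again by faithful flatness of $G_\beta^\circ\to G_\alpha^\circ\to C'$ after translating.

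It remains to check that the restriction $\pi|_C:C\to C'$ is faithfully flat, hence injective on functions. Then nilpotence of $(\pi^*f)|_C$ forces nilpotence of $f|_{C'}$: the ideal generated by a power of $f$ becomes zero after a faithfully flat extension, so it was already zero. This contradicts $f$ being a non-zero divisor by Corollary~\ref{cor zero div on G/H}. The point needing care is that nilpotence is checked on a possibly non-affine component, so I would argue locally on an affine open cover of $C'$ pulled back to $C$.
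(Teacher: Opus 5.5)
Your argument is correct, but it moves nilpotence between the two homogeneous spaces by a different mechanism than the paper.

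The paper first treats connected $G_\beta$ purely algebraically. Both $\Bbbk[G_\alpha]^{H_\alpha}$ and $\Bbbk[G_\beta]^{H_\beta}$ are subalgebras of $\Bbbk[G_\beta]$, and a non-zero divisor $f$ is non-nilpotent. So by Lemma~\ref{lemma conn zero div}, $f$ is a non-zero divisor in $\Bbbk[G_\beta]$, hence in the subalgebra $\Bbbk[G_\beta]^{H_\beta}$. The general case is then reduced to this one by passing to $G_\alpha^\circ$ and $\pi^{-1}(G_\alpha^\circ)$ and using equivariance to move between components.

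You instead argue by contraposition: you apply Corollary~\ref{cor zero div on G/H} on both homogeneous spaces and transport nilpotence from a component $C$ down to $C'$ via faithful flatness of $\pi|_C\colon C\to C'$. This needs two inputs the paper never uses.
\begin{enumerate}
\item Flatness of the map of homogeneous spaces $\pi$. You obtain it by descending flatness along the faithfully flat quotient $G_\beta\to G_\beta/H_\beta$; you should state that lemma explicitly and note that it is available in the framework of \cite{C2,CS}.
\item A local argument over affine opens of the generally non-affine component $C'$.
\end{enumerate}
The paper avoids both by transferring non-nilpotence inside the affine coordinate ring of a connected group, where this is trivial. Your route is more uniformly geometric and skips the two-stage reduction.

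You could also drop flatness of $\pi$ altogether. Write $C$ as the image of $gG_\beta^\circ$ for some $g\in G_\beta(\Bbbk)$. Then injectivity of $\mathcal{O}(C')\to\mathcal{O}(C)$ follows because composing it with pullback to $gG_\beta^\circ$ gives pullback along $gG_\beta^\circ\to\bar g G_\alpha^\circ\to C'$, which is a composite of faithfully flat maps. With that change your argument is essentially the paper's.
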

	
	\begin{proof}
		 Let us first assume that $G_{\beta}$ is connected.  Since both $\Bbbk[G_{\alpha}]^{H_{\alpha}}$ and $\Bbbk[G_{\beta}]^{H_{\beta}}$ are subalgebras of $\Bbbk[G_{\beta}]$, we may apply Lemma \ref{lemma conn zero div}.
		
		For the more general case, notice that both homogeneous spaces $G_{\alpha}/H_{\alpha}$ and $G_{\beta}/H_{\beta}$ are a disjoint union of isomorphic, irreducible open subschemes.  Thus a function is a non-zero divisor if and only if it is so on every component.  Let $f$ be a non-zero divisor on $G_{\alpha}/H_{\alpha}$.  It suffices to choose one connected component $U$ of $G_{\alpha}/H_{\alpha}$ and show the pullback of $f$ is a non-zero divisor on every component of $V$ of $G_{\beta}/H_{\beta}$ which projects to $U$.  However for this we may take $U=G_{\alpha}^\circ/(H_{\alpha}\cap G_{\alpha}^\circ)$, i.e.~we can assume $G_{\alpha}$ is connected and replace $G_{\beta}$ by $\pi^{-1}(G_{\alpha}^\circ)$ under the natural quotient $\pi:G_{\beta}\to G_{\alpha}$.
		
		However, under our new hypotheses we already showed that $G_{\beta}^\circ/(H_{\beta}\cap G_{\beta}^\circ)\to G_{\alpha}/H_{\alpha}$ takes non-zero divisors to non-zero divisors.  By equivariance of the projection $G_{\beta}/H_{\beta}\to G_{\alpha}/H_{\alpha}$, a pullback will be a non-zero divisor on every component if it is on one, and this completes the proof.
	\end{proof}
	
	We have now shown that the two downward arrows and the bottom horizontal arrows in the diagram in Lemma \ref{lem:Aab} take non-zero divisors to non-zero divisors, implying that the inclusion $A_{\alpha}\to A_{\beta}$ takes non-zero divisors to non-zero divisors.  Thus we obtain an inclusion $Q(A_{\alpha})\sub Q(A_{\beta})$.   

    The following lemma is left as an exercise.

    \begin{lemma}\label{lemma exercise}
        Let $B$ be an algebra in $\Rep G$, and $\{B_{\alpha}\}$ a directed system of subalgebras of $B$ with $\bigcup\limits_{\alpha}B_{\alpha}=B$.  If each inclusion $B_{\alpha}\sub B_{\beta}$ takes non-zero divisors to non-zero divisors, then 
        \[
        Q(B)=\bigcup\limits_{\alpha}Q(B_{\alpha}).
        \]
    \end{lemma}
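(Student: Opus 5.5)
We prove both inclusions of subobjects of $Q(B)$. For each $\alpha$, every non-zero divisor $f\in\Gamma(B_\alpha)$ of $B_\alpha$ remains a non-zero divisor in $B$. Indeed, if $0\neq X\sub B$ is compact with $fX=0$, then $X\sub B_\beta$ for some $\beta\ge\alpha$, since $X$ is compact and $B=\bigcup_\beta B_\beta$ is a directed union. But $f$ is a non-zero divisor in $B_\beta$ by hypothesis, so $fX=0$ forces $X=0$, a contradiction. Hence the localisations are compatible: $B_\alpha\sub B\to Q(B)$ factors through $Q(B_\alpha)$. Moreover, $Q(B_\alpha)\to Q(B)$ is a monomorphism, because $Q(B_\alpha)=\varinjlim_f f^{-1}B_\alpha$ and each $f^{-1}B_\alpha\to f^{-1}B\sub Q(B)$ is injective (localisation at a non-zero divisor in an ind-object is exact on the directed system). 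The same argument gives monomorphisms $Q(B_\alpha)\sub Q(B_\beta)$ for $\alpha\le\beta$. So $\bigcup_\alpha Q(B_\alpha)$ makes sense as a subalgebra of $Q(B)$.

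For the reverse inclusion, write $Q(B)=\varinjlim_{f} f^{-1}B$ over non-zero divisors $f\in\Gamma(B)$, so $Q(B)=\bigcup_f f^{-1}B$. We need each $f^{-1}B$ to lie in the union. The key observation is that $\Gamma$ commutes with directed unions of subobjects, because $\unit$ is compact. Hence $\Gamma(B)=\bigcup_\alpha\Gamma(B_\alpha)$, and $f\in\Gamma(B_\alpha)$ for some $\alpha$. Such an $f$ is then a non-zero divisor in $B_\gamma$ for all $\gamma\ge\alpha$, since $B_\gamma\sub B$. It follows that $f^{-1}B=\bigcup_{\gamma\ge\alpha}f^{-1}B_\gamma\sub\bigcup_\gamma Q(B_\gamma)$, using again that localisation commutes with directed unions.

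The only delicate point, and the one I expect to need care, is the interplay between localisation and filtered colimits in $\Ind\cC$. Specifically, one must check that $f^{-1}(-)$ preserves monomorphisms and directed unions for an $f$ that is a non-zero divisor on every term. To handle it, I would present $f^{-1}M$ as the colimit of $M\xto{f}M\xto{f}\cdots$ and use that filtered colimits are exact in $\Ind\cC$. The $G$-structure plays no role here; everything takes place in $\Ind\Ds$.
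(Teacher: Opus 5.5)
Your proof is correct, and since the paper leaves this lemma as an exercise, there is no written argument to compare it against. Your route is the natural completion of that exercise: non-zero divisors of $B_\alpha$ stay non-zero divisors in $B$ by compactness and the hypothesis; $\Gamma(B)=\bigcup_\alpha\Gamma(B_\alpha)$ because $\unit$ is compact; and exactness of filtered colimits in $\Ind\Ds$ shows that $f^{-1}(-)$ preserves monomorphisms and commutes with the directed union.
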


Denote the stabiliser of $\varphi$ in $G$ by $H$, so that $H=\varprojlim H_\alpha$ and $A$ is a subalgebra of $\bk[G]^H$.
	
	\begin{lemma}\label{lemma ratl fnts equal}
		$Q(A)=\bigcup_{\alpha} Q(A_{\alpha})=\bigcup_{\alpha}Q(\Bbbk[G_{\alpha}]^{H_\alpha})=Q(\Bbbk[G]^H)$.
	\end{lemma}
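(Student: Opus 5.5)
The statement is a chain of three equalities, and I will prove each one separately, using the lemmas established just before it.

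\emph{First equality.} Here I apply Lemma~\ref{lemma exercise} to $B=A$ with the directed system $\{A_\alpha\}$. This system exhausts $A$, because every compact $X_\alpha$ lies in $A_\alpha$ and $A=\bigcup_\alpha X_\alpha$. The remaining hypothesis is that each inclusion $A_\alpha\subset A_\beta$ sends non-zero divisors to non-zero divisors. This was shown in the discussion right after Lemma~\ref{lemma below diagram}, by combining the commutative square of Lemma~\ref{lem:Aab}, the isomorphisms $Q(A_\alpha)\cong Q(\Bbbk[G_\alpha]^{H_\alpha})$, and Lemma~\ref{lemma below diagram}. Indeed, a non-zero divisor $f$ of $A_\alpha$ stays a non-zero divisor in $\Bbbk[G_\alpha]^{H_\alpha}$, since it is invertible in $Q$ of that algebra. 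It then passes to $\Bbbk[G_\beta]^{H_\beta}$, and hence to the subalgebra $A_\beta$.

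\emph{Second equality.} This is obtained termwise from the isomorphisms $Q(A_\alpha)\xrightarrow{\sim}Q(\Bbbk[G_\alpha]^{H_\alpha})$ of Lemma~\ref{lem:Aab}. I regard all these objects as subobjects of one ambient algebra, namely $Q(\Bbbk[G]^H)$, or more concretely the union of the $Q(\Bbbk[G_\alpha]^{H_\alpha})$. Compatibility of the inclusions with the transition maps follows from the commutative square in Lemma~\ref{lem:Aab}. Therefore the two unions agree as subobjects.

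\emph{Third equality.} I apply Lemma~\ref{lemma exercise} again, this time to $B=\Bbbk[G]^H$ with the system $\Bbbk[G_\alpha]^{H_\alpha}$. The hypothesis on non-zero divisors is exactly Lemma~\ref{lemma below diagram}. The main point to check is that the system exhausts: I need $\Bbbk[G]^H=\bigcup_\alpha\Bbbk[G_\alpha]^{H_\alpha}$. I would argue this as follows.
\begin{itemize}
\item Since $\Bbbk[G]=\varinjlim\Bbbk[G_\alpha]$, any compact subobject $Y\subset\Bbbk[G]^H$ lies in some $\Bbbk[G_\alpha]$.
\item $Y$ is then invariant under $H$ acting through $H\to H_\alpha$, because $H=\varprojlim H_\alpha$.
\item Right-invariance under $H$ for functions pulled back from $G_\alpha$ should be equivalent to invariance under the image of $H$ in $G_\alpha$.
\item That image is contained in $H_\alpha$. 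I expect it to be large enough, i.e.\ $H\to H_\alpha$ to have dense image, after possibly enlarging $\alpha$.
\end{itemize}
This density is the step I expect to be the main obstacle. If it fails, I would instead replace $\Bbbk[G_\alpha]^{H_\alpha}$ by $\Bbbk[G_\alpha]^{\mathrm{im}(H)}$, or pass to a cofinal subsystem; these algebras exhaust $\Bbbk[G]^H$ for formal reasons. I would then compare their fraction algebras with $Q(\Bbbk[G_\beta]^{H_\beta})$ for larger $\beta$, using that stabilisers of $\varphi$ are compatible.

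Every localization in this proof takes place inside $Q(\Bbbk[G]^H)$. This is legitimate because non-zero divisors are preserved along all the inclusions, so all the unions make sense as subobjects of a common algebra.
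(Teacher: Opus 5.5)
Your decomposition is the same as the paper's. The first and last equalities come from Lemma~\ref{lemma exercise}, applied to $\{A_\alpha\}$ in $A$ and to $\{\Bbbk[G_\alpha]^{H_\alpha}\}$ in $\Bbbk[G]^H$. The middle one comes termwise from Lemma~\ref{lem:Aab}, and your check of the non-zero-divisor hypotheses matches the discussion after Lemma~\ref{lemma below diagram}.

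The one step you leave open is the exhaustion $\Bbbk[G]^H=\bigcup_\alpha\Bbbk[G_\alpha]^{H_\alpha}$. The paper's one-line proof also relies on it without comment, so you are right that it needs an argument. However, the density statement you aim for is the wrong target. It can fail at a given level: take $G=SL_2$ acting on $A=\Bbbk[SL_2]$, with $\varphi$ evaluation at $e$ and $X_\alpha$ a copy of the natural representation. Then $H$ is trivial, while $H_\alpha$ is a one-dimensional unipotent subgroup. In fact you do not need to control the image of $H$ in $G_\alpha$ at all, and you do not need fraction algebras.

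Use compactness instead, starting as in your first bullet. Let $Y\subseteq \Bbbk[G]^H$ be compact, so $Y\subseteq\Bbbk[G_\alpha]$ for some $\alpha$. Its $H$-invariance means the following morphism vanishes: $\delta\colon Y\to \Bbbk[G_\alpha]\otimes\Bbbk[H]$, defined as comultiplication followed by restriction to $H$, minus $\mathrm{id}\otimes 1$.

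Since $H=\varprojlim H_\gamma$, we have $\Bbbk[H]=\varinjlim_\gamma \Bbbk[H_\gamma]$. So $\delta$ is the image of the compatible family of analogous morphisms $\delta_\gamma\colon Y\to\Bbbk[G_\alpha]\otimes\Bbbk[H_\gamma]$ for $\gamma\ge\alpha$. Here $H_\gamma$ acts on $\Bbbk[G_\alpha]\subseteq\Bbbk[G_\gamma]$ through $H_\gamma\to G_\gamma\to G_\alpha$.

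As $Y$ is compact, $\Hom(Y,-)$ commutes with this filtered colimit, so $\delta=0$ forces $\delta_\gamma=0$ for some $\gamma\ge\alpha$. This says exactly that $Y\subseteq \Bbbk[G_\gamma]^{H_\gamma}$. That proves the exhaustion, and with it your argument is complete.
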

	
	\begin{proof}
		The first and last equality follow from Lemma \ref{lemma exercise}, and the middle equality is Lemma~\ref{lem:Aab}.
	\end{proof}

\begin{cor}\label{cor obs}
		We have $A=\Bbbk[G]^H$ for an exact subgroup $H$.
	\end{cor}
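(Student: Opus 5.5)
We must show $A=\Bbbk[G]^H$ and that $H$ is exact. The plan is to sandwich $A$ between itself and $\Bbbk[G]^H$ inside the largest $G$-stable subalgebra of $Q(A)$, use simplicity of $A$ to force equality, and then apply Lemma~\ref{thm simp meand ex}.

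\textbf{Step 1: both algebras lie in one $G$-stable algebra.} By Lemma~\ref{lemma ratl fnts equal} we have $Q(A)=Q(\Bbbk[G]^H)$, and $A\subseteq\Bbbk[G]^H$ via the orbit embedding. Apply the construction of Lemma~\ref{lemma largest ratl subalg} to $A$ to obtain
\[
B:=Q(a_A)^{-1}\bigl(Q(A)\otimes\Bbbk[G]\bigr)\subseteq Q(A).
\]
This $B$ is a $G$-module algebra containing $A$. The $G$-action on $Q(A)=Q(\Bbbk[G]^H)$ coming from $A$ agrees with the one coming from $\Bbbk[G]^H$, since both coactions extend compatibly from the common subalgebra $A$ by Lemma~\ref{lemma coaction non zero div}. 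Hence, by Example~\ref{ex largest ratl subalg}, $\Bbbk[G]^H\subseteq B$, and we get $G$-equivariant inclusions $A\subseteq\Bbbk[G]^H\subseteq B\subseteq Q(A)$.

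\textbf{Step 2: simplicity forces $A=B$.} Take any $b\in B$, or more precisely a compact subobject $X\subseteq B$ in $\Rep G$. Consider the ideal of denominators
\[
D:=\{\,x\in A : xX\subseteq A\,\}.
\]
This is an ideal of $A$, and it is $G$-stable because $A$ and $X$ are $G$-stable in $B$. It is nonzero because $X$ is compact in $Q(A)$ and can therefore be cleared by a single non-zero divisor $f\in\Gamma(A)$, so that $f\in D$. Simplicity of $A$ gives $D=A$, so $1\in D$ and $X\subseteq A$. Thus $B=A$, and hence $A=\Bbbk[G]^H$.

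\textbf{Step 3: exactness of $H$.} By \S\ref{452}, $H=\varprojlim H_\alpha$ with each $H_\alpha$ observable in $G_\alpha$, and $\Bbbk[G]^H=A$ is simple. Lemma~\ref{thm simp meand ex} then gives $G/H\cong\Spec\Bbbk[G]^H$, so $H$ is exact.

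\textbf{Main obstacle.} The delicate point is Step 2: defining $D$ as an internal object, namely as the kernel of $A\to\uHom(X,Q(A)/A)$, and verifying that it is a $G$-subobject. Since $Q(A)$ is only an algebra in $\Ind\Ds$ and not a $G$-module, $G$-stability has to be checked inside $B$, using that $B$ is a $G$-module algebra by Lemma~\ref{lemma largest ratl subalg}. A secondary point is confirming in Step 1 that the two coactions on $Q(A)$ coincide; I would argue this via the uniqueness of the extension of $a_A$ to the localisation.
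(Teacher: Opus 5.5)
Your proposal is correct and follows the paper's proof. Both arguments sandwich $A\subseteq\Bbbk[G]^H\subseteq B$, use $Q(A)=Q(\Bbbk[G]^H)$ to clear denominators of a compact $G$-stable $X\subseteq B$, which yields a nonzero $G$-stable ideal of $A$, conclude $B\subseteq A$ by simplicity, and then invoke Lemma~\ref{thm simp meand ex} for exactness. The only difference is that the paper builds $B$ from the coaction on $\Bbbk[G]^H$ rather than on $A$, so Example~\ref{ex largest ratl subalg} gives $\Bbbk[G]^H\subseteq B$ immediately and your secondary check that the two extended coactions agree is not needed.
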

    \begin{proof}
    First note that with $H$ already defined above, if we can prove that the inclusion $A\subset\bk[G]^H$ is an equality, then exactness of $H\sub G$ follows from Lemma~\ref{thm simp meand ex}.

To prove this equality, write $a:\Bbbk[G]^H\to\Bbbk[G]^H\otimes\Bbbk[G]$ for the coaction. By Example \ref{ex largest ratl subalg}, we have algebra inclusions 
        \[A\,\sub\, \bk[G]^H \,\sub\, B:=Q(a)^{-1}(Q(\Bbbk[G]^H)\otimes\Bbbk[G])\]
        in $\Rep G$.  Let $X\sub B$ be a compact subobject in $\Rep G$, and let $J\sub A$ be the sum of the objects in $\{Y\sub A:YX\sub A\}$.  Then $J$ is an ideal of $A$ in $\Rep G$, and is nonempty because $Q(\Bbbk[G]^H)=Q(A)$ by Lemma \ref{lemma ratl fnts equal} which, by compactness of $X$ implies there is $f\in\Gamma(A)$ with $fX\subset A$.  Thus $J=A$, implying $X\sub A$.

        Hence we have shown that $B\subset A$, which thus implies the desired $\Bbbk[G]^H=A$.  
    \end{proof}

\subsubsection{Conclusion of the proof of Theorem \ref{thm simple comm alg}}

Corollary~\ref{cor obs} shows that every simple $G$-algebra $A$ with a $\bk$-point is of the form $\bk[G]^H$ for an exact subgroup $H<G$, which is sufficient for the proof of Theorem \ref{thm simple comm alg} by the reduction in \S \ref{reduction}.

\appendix

\section{A symmetric tensor category with finite dimensional morphism spaces but objects of infinite length}\label{App}

\subsection{Definitions}

We can generalise the notions of (artinian) tensor categories slightly as follows. A tensor category $\Cs$ over $\bk$ satisfying
\begin{enumerate}
    \item[(4')] $\dim_{\bk}\Hom(X,Y)<\infty$ for all $X,Y\in\cC$ 
\end{enumerate}
will be called {\bf Hom-finite}. As explained in Section~\ref{sec:prel}, it is well-known that artinian tensor categories are $\Hom$-finite.

An essentially small $\bk$-linear symmetric monoidal category $(\cC,\otimes,\unit)$ with $\bk$-linear tensor product is an {\bf ind-tensor category over $\bk$} if it satisfies
\begin{enumerate}
\item[(1')] $\Ind\cC$ is abelian ({\it i.e.} $\cC$ is ind-abelian);
\end{enumerate}
as well as (2) and (3) from \S\ref{prel:tc}. Obviously tensor categories are ind-tensor categories. Ind-abelian categories in general were studied in \cite{Sch}. The prototype of an ind-abelian category is the category of finitely presented modules over a ring.

We thus have increasingly general monoidal categories:
    $$\{\mbox{artinian tensor cat's}\}\subset\{\mbox{$\Hom$-finite tensor cat's}\}\subset \{\mbox{tensor cat's}\}\subset \{\mbox{ind-tensor cat's}\}.$$
    That the second inclusion is strict is well-known, see \cite[\S 2.9]{Del90}. We currently do not know if the third inclusion is strict.
    In \S\ref{App:Ex}, we show that the first inclusion is strict. In other words, while finite length objects imply finite dimensional morphism spaces, the converse is not true.

\subsection{An example}\label{App:Ex}

Throughout, let $\bk$ be a field of characteristic 0. We construct an example of a Hom-finite symmetric tensor category that is not artinian. Our construction requires the axiom of choice, but that should be possible to avoid.

For two non-negative integers $m< n$, consider the parabolic subgroup $P_{n,m}<GL_n$ of the general linear group over $\bk$ with Levi subgroup $GL_{n-m}\times GL_1^{\times m}$. Concretely, if we write $n\times n$-matrices in block form with $m+1$ blocks with the first block of size $n-m$, then $P_{n,m}$ is the subgroup of elements for which all blocks below the diagonal are zero. Hence $P_{n,m+1}<P_{n,m}$ and $P_{n,0}=GL_n$ while $P_{n,n-1}$ is a Borel subgroup.

We refer to \cite{Harman} for details on all of the constructions in this paragraph. Let $\UU$ be a non-principal ultrafilter on $\mZ_{>0}$. We will interpret it as an ultrafilter on $\mZ_{>m}\subset \mZ_{>0}$ as well, for any $m\in\mN$. We have the ultrapower $K:=\bk^{\UU}$, which can canonically be identified for $\UU$ as an ultrafilter on any $\mZ_{>m}$, and which is a field extension of~$\bk$. Furthermore $\prod_{\UU}\Rep_{\bk}P_{n,m}$ is a symmetric tensor category over $K$, where we keep $m\in\mN$ fixed and take the ultraproduct over $n\in\mZ_{> m}$. This tensor category will not be artinian, but we consider the tensor subcategory $\uRep P_{t,m}$ generated by the object
$$V[m]:=(V_{m+1},V_{m+2},V_{m+3},\cdots)\;\in\; \prod_{\UU}\Rep_{\bk}P_{n,m},$$
where $V_l$ is the natural representation of $GL_l$, restricted to the relevant parabolic. Furthermore, the symbol $t$ in the notation is the element 
$$(1,2,3,\cdots)\;\in\; K,$$
where we interpret $K$ as the ultrapower of $k$ over $\UU$ as an ultrafilter on $\mZ_{>0}$, which is thus the categorical dimension of $V[m]$. Now $\uRep P_{t,m}$ is artinian, which follows by the existence of the forgetful tensor functor to the pretannakian category $\uRep GL_{t-m}$ of \cite{Del07, EHS}, where the latter is similarly realised inside the tensor category $\prod_{\UU}\Rep_{\bk} GL_{n-m}$.

Finally, following a standard procedure as reviewed in \cite[\S 6]{CEO-A}, we define $\cC_t$ as the direct limit $\varinjlim \uRep P_{t,m}$ of the chain of tensor functors
$$\uRep GL_t= \uRep P_{t,0}\;\to\; \uRep P_{t,1}\;\to\;\uRep P_{t,2}\;\to\;\cdots.$$
The tensor functor from $\uRep P_{t,m}$ to $\uRep P_{t,m+1}$ is the ultraproduct of the restriction functors from $\Rep P_{n,m}$ to $\Rep P_{n,m+1}$, which are defined for all but one (namely $n=m+1$) factor in the product and hence induce a tensor functor on the ultraproduct.

\begin{prop}
    The symmetric tensor category $\cC_t$ over $K$ has finite-dimensional morphism spaces, but has objects of infinite length.
\end{prop}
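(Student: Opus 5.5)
Two things have to be shown: that morphism spaces in $\cC_t$ are finite dimensional, and that some object of $\cC_t$ has infinite length. The natural candidate for the second is $V[0]$, the image of the generator of $\uRep GL_t$.

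\textbf{Finite-dimensional morphism spaces.} By rigidity, $\Hom(X,Y)\cong\Hom(\unit,X^\ast\otimes Y)$, so it suffices to bound $\Hom(\unit,Z)$ for $Z\in\cC_t$.

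First I would reduce to a fixed stage. Every object of the direct limit comes from some $\uRep P_{t,m}$, and $\Hom_{\cC_t}(\unit,Z)$ is the union of the increasing chain $\Hom_{\uRep P_{t,k}}(\unit,Z_k)$, $k\ge m$, where $Z_k$ denotes the image of $Z$. Each term is finite dimensional because $\uRep P_{t,k}$ is artinian. So it suffices to show this chain stabilises.

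I would further reduce to $Z$ a subquotient of a mixed tensor power $V^{\otimes r}\otimes (V^\ast)^{\otimes s}$ of $V=V[m]$.

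Next I would compute the invariants for $P_{n,k}$ with $n$ large, via Łoś's theorem. Invariants of $P_{n,k}$ in $V_n^{\otimes r}\otimes (V_n^\ast)^{\otimes s}$ are spanned by tensors built from the invariant flag pieces and the invariant lines. Their dimension is a polynomial-type count in $r,s,k$ and is eventually independent of $n$. The key point I expect is that only the first $\min(r,s)+r$ (or similar) flag steps can be detected by a tensor of degree $(r,s)$, so the dimension stabilises once $k>r+s$. Equivalently, the restriction $\Rep P_{n,k}\to\Rep P_{n,k+1}$ is an isomorphism on invariants of such tensors for $k\ge r+s$, uniformly in $n$.

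For subquotients rather than full tensor powers, I would use that $Z$ is cut out from tensor powers by morphisms already in $\uRep GL_t$, plus left exactness of $\Hom(\unit,-)$. This bounds $\dim\Hom(\unit,Z_k)$ by the invariants of a fixed tensor power, which is bounded independently of $k$. Hence the chain stabilises.

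I expect this uniform stabilisation to be the main obstacle. The honest route is to pass to $\mathrm{Lie}$ and use that the unipotent radical of $P_{n,k}$ has invariants on $V^{\otimes r}$ living in the span of $V^{(j)}$, the $j$-th flag piece, with $j\le r$. The torus then forces the weight combinatorics that I expect to give stabilisation.

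\textbf{Infinite length.} In $\uRep P_{t,m}$ the object $V[m]$ has a filtration
\[0\subset L_1\subset L_1\oplus L_2\subset\cdots\subset V[m],\]
where $L_1,\dots,L_m$ are the invariant lines spanned by the last $m$ basis vectors, with quotient $V_{t-m}$. Hence the length of $V=V[0]$ in $\uRep P_{t,m}$ is at least $m+1$.

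Since the transition functors are exact and faithful, a chain of subobjects of length $m$ in the $m$-th stage maps to a strict chain in $\cC_t$. Thus $V$ has length at least $m$ for every $m$, so it has infinite length.
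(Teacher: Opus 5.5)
Your infinite-length argument is fine and is the paper's: $V[0]$ maps to $V[m]$, which has length $m+1$ in $\uRep P_{t,m}$, and the functors into $\cC_t$ are exact and faithful. Only the orientation of the flag is off: with the paper's block upper triangular $P_{n,m}$, the stable flag begins with $\langle e_1,\dots,e_{n-m}\rangle\cong V_{n-m}$, and the one-dimensional pieces are successive quotients.

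The Hom-finiteness part, however, has a genuine gap. Your passage from tensor powers to a general object $Z$ fails in two ways.
\begin{enumerate}
\item Objects of $\uRep P_{t,m}$ are subquotients of mixed tensor powers of $V[m]$ taken \emph{in} $\uRep P_{t,m}$. They are not cut out by morphisms of $\uRep GL_t$; the flag pieces of $V[m]$ already show this.
\item Left exactness of $\Hom(\unit,-)$ bounds $\Hom(\unit,Z)$ only when $Z$ is a subobject of a tensor power $T$, not when it is a quotient. The natural substitute $\dim\Hom(\unit,Z)\le[Z:\unit]\le[T:\unit]$ is not uniform in the stage $k$: already $[V\otimes V^\ast:\unit]$ is at least $k$ in $\uRep P_{t,k}$, since each one-dimensional composition factor $L$ of $V$ contributes $L\otimes L^\ast\cong\unit$.
\end{enumerate}
Moreover, the invariant-theoretic core is only conjectured, and the expected picture is misleading. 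The one-dimensional flag pieces carry non-trivial torus characters. On $GL_n$-modules the $P_{n,k}$-invariants equal the $GL_n$-invariants for \emph{every} $k$, so there is no threshold $k>r+s$.

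What you are missing is that every transition functor is fully faithful, so the chain of Hom spaces is constant from the first stage on.
\begin{itemize}
\item For $n>k+1$ the quotient $P_{n,k}/P_{n,k+1}\cong\mathbb{P}^{n-k-1}$ is projective, hence $\operatorname{Ind}_{P_{n,k+1}}^{P_{n,k}}\unit=\bk[P_{n,k}]^{P_{n,k+1}}=\bk$.
\item Frobenius reciprocity together with the tensor identity then gives $\Hom_{P_{n,k+1}}(X,Y)\cong\Hom_{P_{n,k}}(X,Y\otimes\bk[P_{n,k}]^{P_{n,k+1}})=\Hom_{P_{n,k}}(X,Y)$ for all $P_{n,k}$-modules $X,Y$. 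This is the anti-affine criterion \cite[Theorem~4.14]{CEOP} that the paper invokes.
\item This holds in all factors of the ultraproduct but one, so it passes to $\prod_{\UU}$ and to the full subcategories. Thus $\uRep P_{t,k}\to\uRep P_{t,k+1}$ is fully faithful.
\end{itemize}
Morphism spaces in $\cC_t$ are therefore those of a single stage $\uRep P_{t,k}$. These are finite dimensional because that stage is artinian, and no invariant-theoretic computation is needed.
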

\begin{proof}
    That the morphism spaces are finite dimensional follows from the observation that the tensor functor $\uRep P_{t,m}\to\uRep P_{t,m+1}$ is fully faithful, which follows from the corresponding claim for $\Rep P_{n,m}\to\Rep P_{n,m+1}$. The latter is indeed fully faithful, since the quotient variety $P_{n,m}/P_{n,m+1}$ is anti-affine (even projective), see \cite[Sec.~21.3]{Hum}, so that we can apply \cite[Theorem~4.14]{CEOP}.
    On the other hand, $\uRep P_{t,m}\to \uRep P_{t,m+1}$ sends $V[m]$ to $V[m+1]$. Since the length of $V[m]$ in $\uRep P_{t,m}$ is $m+1$, the image of $V[0]$ in $\cC_t$ has infinite length.
\end{proof}

\section{Neutrality of pretannakian categories over GR and MN categories}\label{app:Del}

In this section, we let $\bk$ be an algebraically closed field and $\Cs$ will be a fixed GR, MN pretannakian category over $\bk$. The following theorem is proved in \cite{Del02} and \cite{Duke} for $\cC=\sVec$ and in \cite{CEO-A} for $\cC=\Ver_p$, by a reduction to the case $\sVec$ via the Frobenius functor. The proof we present here is an immediate adaptation  of Deligne's proof for $\sVec$. In the current paper, the theorem is only used to improve Theorem~\ref{thm simple comm alg} from $\mathbb{L}$ being {\em some} field extension of $\bk$ to it being the algebraic closure of $A^G$. The theorem might be useful in the future for work on \cite[Conjecture~1.4]{BEO}.

\begin{thm}\label{thm neutrality}
    Suppose that $\Ds$ is a pretannakian category over $\bk$ and admits an exact $\bk$-linear symmetric monoidal functor $F:\Ds\to \Mod_R\Cs$, for some ind-algebra $R$ in $\Cs$.  Then there exists a symmetric tensor functor $\Ds\to\Cs$.
\end{thm}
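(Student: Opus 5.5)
The plan is to adapt Deligne's argument from \cite{Del02}: we produce a symmetric tensor functor $\Ds\to\Mod_{R'}\Cs$ for a \emph{simple} commutative ind-algebra $R'$ whose invariants are $\bk$, and then extract a $\bk$-point. We may assume $\Ds$ is finitely generated, say by one object $X$, since a compatible system of functors on finitely generated tensor subcategories can be assembled by a compactness/limit argument on the (nonempty, by the finitely generated case) sets of fibre functors, as in \cite{Del90}. By tannakian formalism over $\Cs$ (\cite[\S 8]{Del90}, \cite[\S 4]{CEO}), the existence of $F$ exhibits $\Ds$ as $\Rep^f(G,\phi)$ for an affine group scheme $G$ in $\Cs$. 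More concretely, fibre functors $\Ds\to\Mod_R\Cs$ are parametrised by a commutative ind-algebra $\mathcal{O}$ in $\Cs$, the coordinate ring of the torsor $\underline{\Isom}^\otimes(\omega_0,F)$. Replacing $R$ by a quotient, we may assume $R=\mathcal{O}$ is the universal such algebra, which is finitely generated in $\Cs$ when $\Ds$ is finitely generated.

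First, take a maximal ideal of $R$ (in the sense of ideals of the ind-algebra in $\Cs$), which exists by Zorn. The resulting quotient $R'$ is a simple commutative ind-algebra in $\Cs$ and still receives a tensor functor $\Ds\to\Mod_{R'}\Cs$ by base change. By MN (as used after Lemma~\ref{lemma base change simple} in Section~\ref{sec:simpleGRMN}), the only simple commutative algebras in $\Ind\Cs$ are field extensions of $\bk$. Thus $R'=K$ is a field extension, and we obtain a symmetric tensor functor $\Ds\to\Cs_K$. Here $\Cs_K$ is a tensor category by Proposition~\ref{prop:FieldExt}, since $\bk$ is algebraically closed and hence perfect.

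It remains to descend from $\Cs_K$ to $\Cs$. Finite generation of $R$ makes $K$ the residue field of a finitely generated algebra, so by the Nullstellensatz in $\Cs$ (\cite{C1}, which follows from GR+MN) we get $K=\bk$ directly. This is precisely Deligne's step: a maximal ideal of a finitely generated commutative algebra has residue field $\bk$ when $\bk$ is algebraically closed. To justify it, I would use that $\Gamma$ of a finitely generated algebra in a GR category is a finitely generated $\bk$-algebra (finite generation of invariants, from GR), and that $R'$ is integral over its even/``$\Gamma$'' part by MN(i) and MN(ii). Then $K$ is a field finitely generated as a $\bk$-algebra, so $K=\bk$ by the classical Nullstellensatz.

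The main obstacle is making this last reduction rigorous. Two points need care. The first is that the parametrising algebra $\mathcal{O}$ of isomorphisms between $\omega_0:=F$ composed with forgetting to $\Ind\Cs$ and candidate functors is indeed a finitely generated commutative algebra in $\Cs$. The second is that the Nullstellensatz holds in GR+MN categories. For the first, I would follow \cite[\S 2]{Del02}: take $\mathcal{O}$ to be the quotient of $S(X\otimes\uHom(X,\unit)\oplus\cdots)$ cutting out the tensor-compatibility relations. Its nonvanishing is guaranteed by $F$ together with $R\neq 0$. For the second, I would try first to cite the results on finite generation of invariants and Hilbert's Nullstellensatz established in \cite{C1,C2} for GR+MN categories.
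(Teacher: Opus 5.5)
\textbf{There is a genuine gap at the step where you replace $R$ by a finitely generated algebra, and that step carries all the content of the theorem.}

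There is no ``universal algebra'' $\mathcal{O}$ parametrising tensor functors $\Ds\to\Mod_R\Cs$. Such functors form a groupoid (a gerbe), not a representable functor. Only the functor of isomorphisms between two \emph{given} tensor functors is representable, as in Lemma~\ref{fintypeDel}.

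Your candidate $\underline{\Isom}^{\otimes}(\omega_0,F)$ is not defined. The forgetful functor $\Mod_R\Cs\to\Ind\Cs$ is not monoidal, so $\omega_0$ is not a tensor functor. Any genuine reference functor $\omega_0:\Ds\to\Cs$ is exactly what has to be constructed.

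The same circularity affects your appeal to \cite[\S 8]{Del90}. Realising $\Ds$ as $\Rep^f(G,\phi)$ with $G$ in $\Cs$ requires a tensor functor into $\Cs$ itself. A functor into $\Mod_R\Cs$ only yields a groupoid acting on $\Spec R$.

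So after your (correct) reduction, via a maximal ideal and MN, to $R=K$ a field, nothing bounds $K$: it could be $\bk(t)$ or much larger. The descent from $\Cs_K$ to $\Cs$ is still entirely open, and no Nullstellensatz can be applied yet.

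\textbf{The missing idea is the groupoid descent of \cite[\S 4]{Del02}, which the paper carries out.}
\begin{enumerate}
\item With $R$ a $\bk$-algebra, write $R=\varinjlim R_\alpha$ over finitely generated $\bk$-subalgebras.
\item The groupoid $I=\Isom_{R\mid R}(F,F)$ acting on $\Spec R$ is represented by a finitely generated, hence finitely presented, faithfully flat $R\otimes R$-algebra.
\item It is therefore the base change of a groupoid $I_\beta$ on $\Spec R_\beta$ (Corollary~\ref{Corlim}), which can be chosen transitive by Corollary~\ref{lem:Salpha}.
\item $F$ lifts to $I$-representations, and base change from a transitive groupoid is an equivalence of representation categories (Lemma~\ref{L44}). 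Hence $F$ descends to a tensor functor $\Ds\to\Mod_{R_\beta}\Cs$.
\item A $\bk$-point of the ordinary finitely generated $\bk$-algebra $R_\beta$ finishes the argument. This uses only the classical Nullstellensatz; no Nullstellensatz in $\Cs$ is needed.
\end{enumerate}

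\textbf{Your reduction to finitely generated $\Ds$ is also insufficient as stated.} Knowing that each finitely generated tensor subcategory admits a fibre functor does not give a compatible system. You need compatible choices of functors and isomorphisms, and inverse limits of nonempty sets over uncountable directed systems can be empty.

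The paper, following \cite[\S 6.4]{Duke}, supplies the missing extension step as follows:
\begin{enumerate}
\item It encodes fibre functors as $\Cs$-algebras (Theorem~\ref{Thm:AlgFun}).
\item It uses uniqueness of fibre functors on finitely generated categories, and lifting of endomorphisms via surjectivity of $G(\unit)\to G'(\unit)$ (Lemma~\ref{lem:reformulate}).
\item It combines these with the gluing Lemma~\ref{LemSlick} in a transfinite induction.
\end{enumerate}
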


Following Deligne's approach, the proof consists of two parts, dealt with in \S \ref{fingen} (generalising \cite[\S 4]{Del02}) and \S\ref{general} (generalising \cite[\S 6.4]{Duke}) below. Throughout the appendix we will refer to monoidal functors as $F$ in Theorem~\ref{thm neutrality} simply as `tensor functors'. We will abbreviate commutative ind-algebras in $\cC$ to `algebras' and commutative algebras in $\VEC$ to `$\bk$-algebras'.

\subsection{Finitely generated case}\label{fingen}

In this subsection, {\em we assume that the pretannakian category $\Ds$ is finitely generated} as a tensor category, meaning that there exists an $X\in\Ds$ such that every object in $\Ds$ is a subquotient of a polynomial in $X,X^\ast$.

For an algebra $R$ and two tensor functors $F,G:\Ds\to \Mod_R\cC$, we define the functor
$$\Isom_R(F,G):\;\CAlg_R\cC\to \Set$$
sending an $R$-algebra $R\to A$ to the set of (iso)morphisms between the tensor functors $\Ds\to \Mod_{A}\cC$ obtained from $F$ and $G$ via extension of scalars. So $\Isom_R(F,G)(R)$ is non-empty if and only if $F$ and $G$ are isomorphic.
\begin{lemma}\label{fintypeDel}
  The functor  $\Isom_R(F,G)$ is represented by a finitely generated and faithfully flat $R$-algebra.
\end{lemma}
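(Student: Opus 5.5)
We follow Deligne's argument in \cite[\S 4]{Del02} (and \cite[\S 1.11]{Del90}), replacing $\VEC$ by $\Ind\Cs$ throughout. Choose a generator $X\in\Ds$, so that every object of $\Ds$ is a subquotient of a polynomial in $X,X^\ast$. The plan has four steps.

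\emph{Step 1: the unrestricted torsor.} The data of a natural transformation $F\to G$ restricted to $X$ and $X^\ast$ are a pair of $R$-module morphisms $F(X)\to G(X)$ and $F(X^\ast)\to G(X^\ast)$. By rigidity of the finitely presented modules $F(X)$, $G(X)$ (images of rigid objects under tensor functors), these morphisms are classified by the internal Hom objects $\uHom_R(F(X),G(X))$ and $\uHom_R(F(X^\ast),G(X^\ast))$. The symmetric algebra over $R$ on the duals of these objects is a finitely generated $R$-algebra. Over it we obtain universal morphisms $u:F(X)\to G(X)$ and $v:F(X^\ast)\to G(X^\ast)$.

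\emph{Step 2: imposing the tensor-transformation conditions.} We impose three families of conditions.
\begin{enumerate}
\item $u$ and $v$ are mutually compatible with the duality pairings, which forces $u$ to be invertible with inverse determined by $v$.
\item Extending $u,v$ multiplicatively to all tensor words $T$ in $X,X^\ast$, the resulting map commutes with $F(f)$ and $G(f)$ for every morphism $f:T\to T'$ in $\Ds$.
\item The symmetric monoidal structures are respected.
\end{enumerate}
Each condition is the vanishing of a morphism from a compact object into the algebra, so together they define an ideal $I$. Since $\Ds$ is generated by $X$ and morphisms between tensor words extend uniquely to subquotients, the argument of \cite[Prop.~1.13]{DM} and \cite[\S 1.12]{Del90} shows that $P:=\mathrm{Sym}/I$ represents $\Isom_R(F,G)$. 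The key point is that a compatible system on the tensor words $T$ descends uniquely to all subquotients; this uses exactness of $F$ and $G$ after base change, which holds because tensor functors out of a tensor category are exact. Finite generation of $P$ is immediate from the construction. Strictly, the relations come from all $\Hom(T,T')$, which is a set of relations rather than a finite one, but we only need finite generation, not finite presentation.

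\emph{Step 3: faithful flatness.} This is the main obstacle. We follow Deligne's route. Consider $\Isom_R(F,G)$ as a $\mathrm{Aut}(F)$-torsor, and realise $P$ as the image of the ``coend'' $\int^{Y\in\Ds} F(Y)^\vee\otimes_R G(Y)$, or more precisely as $F\otimes G$ applied to the regular object $\mathcal{O}(\pi)$ in $\Ind(\Ds\boxtimes\Ds)$. Concretely,
\[
P\;\cong\;(F\boxtimes G)(\mathcal{O}),\qquad \mathcal{O}=\int^{Y}Y^\vee\boxtimes Y .
\]
This object is an ind-algebra in $\Ind(\Ds\boxtimes\Ds)$, and it is faithfully flat over the unit: it has the unit as a direct summand of the underlying object, and it is flat because $\Ds\boxtimes\Ds$ is a tensor category in which every object is flat (using that $\bk$ is perfect). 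Applying the exact tensor functor $F\boxtimes G$, extended to ind-objects over $R\otimes R\to R$, preserves flatness and faithfulness, which yields faithful flatness of $P$ over $R$. Two checks remain. First, that $F\boxtimes G:\Ds\boxtimes\Ds\to\Mod_R\Cs$ exists; this follows from the universal property of the Deligne product, noting that $\Mod_R\Cs$ receives two exact functors, although it is not itself a tensor category. Second, that the representability in Step 2 matches this description. If the Deligne-product route fails because $\Mod_R\Cs$ is not rigid, we would instead prove flatness directly, showing $P$ is a filtered colimit of modules of the form $F(Y)^\vee\otimes_R G(Y)$, which are flat. Faithfulness would then be checked by showing $R\to P$ is split on the unit summand, i.e.\ using the component $Y=\unit$.
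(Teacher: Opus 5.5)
Your overall route is the paper's: representability and finite generation by transporting Deligne's construction, and faithful flatness by realising the representing algebra as $(F\boxtimes G)(\mathcal{O})$. This is a non-zero direct limit of images in $\Mod_R\Cs$ of objects of $\Ds\boxtimes\Ds$.

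\textbf{The gap.} Your faithfulness step rests on the claim that $\unit$ is a direct summand of $\mathcal{O}=\int^Y Y^\vee\boxtimes Y$, and hence that $R\to P$ splits. This is false in general. Take $\Cs=\Vec$, $R=\bk$ of characteristic $0$, and $\Ds=\Rep\mathbb{G}_a$, generated by its $2$-dimensional representation. Then $\Ds\boxtimes\Ds\simeq\Rep(\mathbb{G}_a\times\mathbb{G}_a)$, and $\mathcal{O}$ is $\bk[t]=\bk[\mathbb{G}_a]$ with the action by left and right translations. A retraction of $\unit\hookrightarrow\mathcal{O}$ would be a translation-invariant functional $\phi$ with $\phi(1)=1$. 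But invariance under $t\mapsto t+s$ gives $\phi(t)+s\phi(1)=\phi(t)$, so $\phi(1)=0$. Nothing else in your proposal explains why $R\to P$ should split in $\Mod_R\Cs$, where rigid objects need not be projective.

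\textbf{The fix.} What holds, and what the paper uses, is purity rather than splitting. The map $\unit\hookrightarrow\mathcal{O}$ is a monomorphism whose cokernel is again a filtered colimit of objects of $\Ds\boxtimes\Ds$. Apply the exact, colimit-preserving extension of $F\boxtimes G$. This gives a monomorphism $R\hookrightarrow P$ whose cokernel is a filtered colimit of rigid, hence flat, $R$-modules. So $\Tor_1^R(M,P/R)=0$ and $M\hookrightarrow M\otimes_RP$ for every $M\in\Mod_R\Cs$, which makes $-\otimes_RP$ faithful.

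The same mechanism is what gives flatness. The point is that each compact piece of $\mathcal{O}$ is sent to a rigid object of $\Mod_R\Cs$. It is not that objects of $\Ind(\Ds\boxtimes\Ds)$ are flat and that flatness is ``preserved''.

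Your fallback for flatness is also slightly off. $P$ is a filtered colimit of the images of the $F(Y)^\vee\otimes_RG(Y)$, i.e. of quotients of these, and quotients of flat modules need not be flat. They are flat here only because they are themselves images of objects of $\Ds\boxtimes\Ds$.

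\textbf{A smaller point on Step 2.} If you keep Step 2, you should impose explicitly that $\lambda_T$ maps $F(T_1)$ into $G(T_1)$ for subobjects $T_1\subset T$ of tensor words. Naturality with respect to morphisms between tensor words does not obviously force this. Alternatively, define $P$ as the coend from the outset. This gives representability directly, and finite generation follows because for $Y$ a subquotient of $T$ the image of $F(Y)^\vee\otimes_RG(Y)$ lies in that of $F(T)^\vee\otimes_RG(T)$. It also removes the unverified identification between Step 2 and Step 3.
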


\begin{proof} Except for the faithfully flat part, this is proved in \cite[Proposition~4.1]{Del02} for $\cC=\sVec$ and the proof carries over verbatim. In particular, the case $R=\bk$ (for general $\cC$) simplifies slightly and is written out in detail in the proof of \cite[Proposition~7.3.2]{CEO}. 

As in \cite[\S 3]{Del02}, we can realise the algebra as a non-zero direct limit of objects in the image of $\Ds\boxtimes\Ds\to\Mod_{R}\cC$. Objects in this image are rigid, so faithfully flat, implying that such direct limits are also flat. Moreover, such direct limits will contain a rigid (so faithfully flat) subobject (with flat quotient), making them faithful too.
\end{proof}

For tensor functors $F:\Ds\to\Mod_R\cC$ and $G:\Ds\to\Mod_S\cC$, for algebras $R,S$, we similarly define the functor
$$\Isom_{R\mid S}(F,G):=\Isom_{R\otimes S}(F\otimes S,R\otimes G):\CAlg_{R\otimes S}\cC\to \Set,$$
where $F\otimes S$ stands for the composition
\[\Ds\xrightarrow{F}\Mod_R\cC\xrightarrow{-\otimes S=-\otimes_RR\otimes S} \Mod_{R\otimes S}\cC.\]
We focus on the case $R=S$ and $F=G$, in which case \[I:=\Isom_{R\mid R}(F,F):\;\CAlg_{R\otimes R}\cC\to\Set\] can be interpreted as an affine groupoid acting on $\Spec R$, see \cite[3.10]{Del02}. More concretely, for any algebra $A$, the groupoid is the category with objects $\CAlg(R,A)$, and for any two such objects the set of morphisms between them is $I(A)$, with $A$ interpreted as an $R\otimes R$-algebra via the two objects. If $F$ were the extension of scalars along $\bk\to R$ of some tensor functor $F_0:\Ds\to\cC$, then clearly $I(R)\not=\varnothing$, although that is not how we will prove Theorem~\ref{thm neutrality} here. Rather, we will observe that $F$ lifts to a tensor functor taking values in the category of $I$-representations in $\cC$ and that the latter is equivalent to a category of representations in $\cC$ of a groupoid acting on an affine scheme of finite type, which will then imply Theorem~\ref{thm neutrality} since $\bk=\overline{\bk}$ implies such schemes must have a $\bk$-point. By Lemma~\ref{fintypeDel}, the defining algebra morphism $R\otimes R\to \bk[I]$ is such that $\bk[I]$ is finitely generated over $R\otimes R$, and hence finitely presented by the Hilbert Basis Theorem. We therefore obtain, as in \cite[Corollaire~4.3]{Del02}:
\begin{cor}\label{Corlim}
    If we write $R=\varinjlim_\alpha R_\alpha$, for algebras $R_\alpha$, then there is a groupoid $I_\alpha$ acting on $\Spec R_\alpha$, for some $\alpha$, such that $I$ is obtained from $I_\alpha$ via base change $R_\alpha\to R$.
\end{cor}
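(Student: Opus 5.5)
This is the standard finite-presentation/limit argument, adapted to ind-algebras in $\cC$ as in \cite[Corollaire~4.3]{Del02}. By Lemma~\ref{fintypeDel}, $\bk[I]$ is a finitely generated $R\otimes R$-algebra. Every object of $\cC$ is noetherian, and every subobject of $R\otimes R\otimes S(X)$ is finitely generated for compact $X$; this is the Hilbert Basis Theorem in the form available for $\cC$ of moderate growth (GR+MN). Hence $\bk[I]$ is finitely presented. Concretely,
\[
\bk[I]\;\cong\;(R\otimes R)\otimes S(X)\big/\big((R\otimes R)\cdot Y\big)
\]
for compact $X\in\cC$ and compact $Y\subset (R\otimes R)\otimes S(X)$. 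Since $R=\varinjlim R_\alpha$ and $Y$ is compact, $Y$ factors through $(R_\alpha\otimes R_\alpha)\otimes S(X)$ for some $\alpha$. Define $\bk[I'_\alpha]$ by the same presentation over $R_\alpha\otimes R_\alpha$; then $\bk[I]\cong (R\otimes R)\otimes_{R_\alpha\otimes R_\alpha}\bk[I'_\alpha]$.

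The groupoid structure consists of:
\begin{itemize}
\item the source and target maps, encoded in the $R_\alpha\otimes R_\alpha$-algebra structure;
\item the identity $\bk[I]\to R$ over multiplication $R\otimes R\to R$;
\item the inverse, which covers the swap of $R\otimes R$;
\item the composition $\bk[I]\to \bk[I]\otimes_R\bk[I]$.
\end{itemize}
Each of these is an algebra morphism out of a finitely presented algebra. It is therefore determined by the images of the finitely many generators $X$, and those images are compact subobjects of colimits over $\beta$. So after enlarging $\alpha$, each structure map descends to a morphism defined over $R_\beta$. Each groupoid axiom (associativity, unit, inverse) is an equality of two algebra morphisms out of a finitely presented algebra into a filtered colimit. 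Equality can be checked on the compact generator $X$, and two maps out of a compact object that agree in the colimit already agree at some finite stage. Finitely many axioms need finitely many enlargements.

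The main obstacle is the compactness bookkeeping in $\Ind\cC$. Two facts are needed:
\begin{itemize}
\item $\Hom(X,\varinjlim M_\beta)=\varinjlim\Hom(X,M_\beta)$ for $X\in\cC$, which holds because objects of $\cC$ are compact in $\Ind\cC$;
\item $\otimes$ and $\otimes_{R_\beta}$ commute with filtered colimits, so that targets such as $\bk[I]\otimes_R\bk[I]$ are colimits of their $R_\beta$-versions.
\end{itemize}
Granting these, the groupoid $I_\alpha:=I'_\beta$, for the final index $\beta$ (renamed $\alpha$), acts on $\Spec R_\alpha$. Its base change along $R_\alpha\to R$ recovers $I$ with all of its structure.
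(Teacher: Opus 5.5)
Your route is the paper's: finite generation from Lemma~\ref{fintypeDel}, finite presentation from a Hilbert basis theorem, and then the limit argument of \cite[Corollaire~4.3]{Del02}. Your descent of the presentation, the structure maps and the groupoid identities through the filtered colimit is a correct, more detailed version of that last step.

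The gap is in the one non-formal input, finite presentation. Your claim that every subobject of $R\otimes R\otimes S(X)$ is finitely generated is false for a general ind-algebra $R$. The Hilbert Basis Theorem needs a noetherian base, and $R\otimes R$ is typically not noetherian.
\begin{itemize}
\item For $R=\bk[x_1,x_2,\ldots]\in\VEC$, the ideal generated by all $x_i\otimes 1$ is not finitely generated.
\item Even for $R=K$ a field, $K\otimes_{\bk}K$ is not noetherian as soon as $K$ is not finitely generated over $\bk$. Since a simple quotient of $R$ is a field by MN, the reduction in the proof of Theorem~\ref{thm neutrality} does not avoid this. The kernel of multiplication is the directed union of the proper subideals generated by the $a\otimes 1-1\otimes a$ with $a$ in a finitely generated subfield, so it is not finitely generated.
\end{itemize}
Over such bases finite generation does not imply finite presentation. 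So your argument does not produce the compact relation object $Y$ in your presentation, and the descent to some $R_\alpha$ has nothing to start from. The paper compresses this step into the phrase ``by the Hilbert Basis Theorem'', so this is exactly the point that needed a correct justification.

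One way to supply it is to apply noetherianity to the universal object rather than over $R\otimes R$.
\begin{itemize}
\item As in \cite[\S 3]{Del02}, $\bk[I]\cong\Phi(\mathcal{O})$. Here $\mathcal{O}\in\Ind(\Ds\boxtimes\Ds)$ is the coend algebra of the objects $D^\vee\boxtimes D$, and $\Phi:\Ind(\Ds\boxtimes\Ds)\to\Mod_{R\otimes R}\cC$ is the exact, cocontinuous symmetric monoidal functor determined by $F\otimes R$ and $R\otimes F$.
\item If $Z$ generates $\Ds$, then $\mathcal{O}$ is a quotient of $S(W)$ with $W=Z^\vee\boxtimes Z\oplus Z\boxtimes Z^\vee$.
\item If the kernel is generated by a compact subobject $J_0$, then $\bk[I]$ is the quotient of the symmetric $R\otimes R$-algebra on the rigid module $\Phi(W)$ by the ideal generated by the rigid module $\Phi(J_0)$. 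This is a finite presentation over $R\otimes R$.
\item The ACC on ideals of $S(W)$ needed for this can be tested after the exact faithful tensor functor $L\otimes_{R\otimes R}\Phi(-):\Ds\boxtimes\Ds\to\cC_L$, with $L$ a simple quotient of $R\otimes R$ (a field extension of $\bk$ by MN). Such a functor embeds the lattice of ideals of $S(W)$ into the lattice of ideals of the image algebra.
\end{itemize}
This replaces a Hilbert basis theorem over $R\otimes R$ by one over a field extension of $\bk$. With finite presentation established this way, the rest of your argument goes through unchanged.
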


We say that a groupoid $G$ acting on an affine scheme $\Spec R$ in $\cC$ is {\bf transitive} if $G \to \Spec R \times \Spec R$ is an epimorphism in the category of fpqc-faisceaux.

\begin{lemma}\label{L44}\cite[Lemme~4.4]{Del02} Let $R\to S$ be a morphism in $\CAlg\cC$. If an affine groupoid $G$ acting on $\Spec S$ is obtained from a transitive groupoid $H$ acting on $\Spec R$ via extension of scalars, then extension of scalars induces an equivalence between the categories of representations of $H$ and $G$.
\end{lemma}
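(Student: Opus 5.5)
This is Deligne's Lemme~4.4 transported to $\cC$; I would follow the descent argument used there. A representation of $H$ is an $R$-module $M$ in $\Ind\cC$ together with an isomorphism $\theta: s^*M\to t^*M$ of $\bk[H]$-modules, where $s,t:R\to\bk[H]$ are source and target, satisfying the cocycle condition on $H\times_{\Spec R} H$. Representations of $G$ are defined in the same way over $S$. Extension of scalars sends $(M,\theta)$ to $(S\otimes_R M,\theta_S)$, because by hypothesis $\bk[G]=(S\otimes S)\otimes_{R\otimes R}\bk[H]$.

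The plan is to factor the base change through a single object. Consider $P:=\Spec S\times_{\Spec R,t}H$, whose coordinate ring is $\bk[P]=S\otimes_{R,t}\bk[H]$, regarded as an $R$-scheme via $s$. Since $H$ is transitive, $H\to\Spec R\times\Spec R$ is an fpqc epimorphism. Pulling this back along $\Spec S\times\Spec R\to\Spec R\times\Spec R$ shows that the map $P\to\Spec S\times \Spec R\to\Spec R$, given by the source, is an fpqc covering. Here I need $\Spec S$ nonempty, so that $\Spec S\times \Spec R\to \Spec R$ is itself a cover. Consequently $R\to\bk[P]$ is faithfully flat in $\Mod_R\cC$. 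The groupoid $H$ restricted to $P$ via the source map is isomorphic to the trivial groupoid $P\times_{\Spec S}P$ composed with $G$, by the composition law $(x\in \Spec S,h)$.

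The key steps, in order, are as follows.
\begin{enumerate}
\item Show that pulling $H$-representations back along the cover $\Spec\bk[P]\to\Spec R$ is an equivalence onto descent data. This is faithfully flat descent for modules in $\Mod_R\cC$, which holds in $\Ind\cC$ with the usual Amitsur complex argument, since faithful flatness gives exactness of $M\to M\otimes_R B\rightrightarrows M\otimes_R B\otimes_R B$.
\item Identify descent data over $P$ relative to the $H$-action with representations of $G$ on $\Spec S$. Using the section $\Spec S\to P$, $x\mapsto(x,1_{t(x)})$, I would check that restricting along this section inverts the process of spreading a $G$-module out over $P$ by translation.
\item Verify that the composite equivalence is isomorphic to extension of scalars $(M,\theta)\mapsto (S\otimes_RM,\theta_S)$.
\end{enumerate}

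I expect step (1) to be the main obstacle. Everything else is formal groupoid bookkeeping that holds in any symmetric monoidal setting. The descent step, by contrast, needs faithful flatness of $R\to\bk[P]$ to be extracted from "fpqc epimorphism" inside $\CAlg\cC$. In Deligne's setting this follows from the definition of transitivity, but here I would have to argue through the definition of fpqc sheaves on $\CAlg\cC$. The covering in question is given by a faithfully flat algebra map $R\to B$ together with a factorization through $P$, and one deduces descent along $R\to\bk[P]$ from descent along $R\to B$ by the standard two-step descent argument. I would first try to reduce to this standard fact.
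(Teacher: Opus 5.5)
Your argument is correct in outline, but it is organised differently from the paper's. The paper only sketches Deligne's stack-theoretic proof. There, $H$ and $G$ define fpqc prestacks of groupoids on affine schemes in $\cC$, representations depend only on the associated stacks, and transitivity makes the two associated stacks equivalent.

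You instead unpack this into explicit \v{C}ech descent. Your $P=\Spec S\times_{\Spec R,t}H$ is exactly the fibre product of $\Spec R$ and $\Spec S$ over the quotient stack of $H$. The fact that $P\to\Spec R$ is a covering is the local essential surjectivity of the comparison of stacks, and it is precisely where transitivity enters. Your isomorphism between the pullback of $H$ to $P$ and the pullback of $G$ along the split map $P\to\Spec S$ is the full faithfulness.

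What your version buys is a proof that needs only fpqc descent for modules in $\Ind\cC$ plus descent along a morphism with a section, with no stackification. It also makes visible the tacit hypothesis $S\neq 0$; for $S=0\neq R$ the statement fails. The paper's version is shorter and imports Deligne's argument wholesale.

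One sentence needs correcting. ``Consequently $R\to\bk[P]$ is faithfully flat'' does not follow from transitivity as the paper defines it, namely an epimorphism of fpqc sheaves. Such epimorphisms need not be flat: any morphism with a section is one, e.g.\ $\Spec R\sqcup\Spec R/I\to\Spec R$. So in step (1) you cannot apply the Amitsur-complex argument to $R\to\bk[P]$ directly.

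The repair is the one you give at the end. There is an algebra map $\bk[P]\to B$ with $R\to B$ faithfully flat, so $P\to\Spec R$ generates a covering sieve. Modules in $\Ind\cC$ form an fpqc stack, and this is where the Amitsur argument is used, for $R\to B$. Hence descent along $P\to\Spec R$ is effective. The same reasoning applies to the map from the pulled-back groupoid $P\times_{\Spec R}H\times_{\Spec R}P$ to $H$, which is a base change of the covering $P\times P\to\Spec R\times\Spec R$. You need this to descend the action isomorphism, not just the module.

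In the paper's actual application, $R_\beta\otimes R_\beta\to\bk[I_\beta]$ is shown to be faithfully flat. In that case $R\to\bk[P]$ really is faithfully flat, and your step (1) goes through verbatim.
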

\begin{proof}
    The proof from \cite[\S 3]{Del90} carries over, so we only outline the argument. The groupoids $G$ and $H$ (which are both transitive) define pseudo-functors from the category of affine schemes in $\cC$ to the 2-category of (abstract) groupoids, which by construction are prestacks for the fpqc topology. Then one can observe that the category of $G$-representations is determined by this prestack and subsequently by the associated stack. The crucial observation is then that transitivity implies that the stacks associated to $G$ and $H$ are equivalent.
\end{proof}

\begin{example}
    Consider an algebra $R$ and the groupoid $H=\Spec R\times \Spec R$ acting (transitively) on $\Spec R$ in the obvious way. Then the category of $H$-representations is by definition the category of descent data for $\bk\to R$ (automatically faithfully flat), and hence equivalent to $\Ind\cC$. This agrees with the fact that $H$ is the extension of scalars along $\bk\to R$ of the trivial groupoid.
\end{example}

\begin{proof}[Proof of Theorem~\ref{thm neutrality} for $\Ds$ finitely generated]
    By taking a quotient (and applying extension of scalars), we can assume that $R$ is a $\bk$-algebra. We write this as a limit $\varinjlim_\alpha R_\alpha$ of finitely generated subalgebras. It thus follows from Corollary~\ref{Corlim} that $I$ is obtained via extension of scalars from a groupoid $I_\alpha$ acting on $\Spec R_\alpha$, for some $\alpha$.

    In order to be able to apply Lemma~\ref{L44}, we want $I_\alpha$ to be transitive. Slightly better, we will show that $R_\alpha\otimes R_\alpha\to \bk[I_\alpha]$ is faithfully flat. That $R\otimes R\to \bk[I]$ is faithfully flat follows from Lemma~\ref{fintypeDel}. Hence if we can show that $R_\alpha\otimes R_\alpha\to \bk[I_\alpha]$ is flat, it quickly follows that it is also faithful. Corollary~\ref{lem:Salpha} thus shows that $I_\beta$ (obtained via extension of scalars along $R_\alpha\to R_\beta$) is transitive, for some $\beta>\alpha$.

    Now, by construction of $I$, see \cite[\S 3.10]{Del02}, each $F(X)$, $X\in\Ds$, is an $I$-representation, and the structure is compatible with tensor products and natural (in other words $F$ lifts to the category of $I$-representations in $\cC$). By Lemma~\ref{L44}, $F$ is thus obtained by extension of scalars along $R_\beta\to R$ of a tensor functor $\Ds\to\Mod_{R_\beta}\cC$. But now $R_\beta$ is finitely generated, so taking a simple quotient, automatically isomorphic to $\bk$, yields the desired tensor functor $\Ds\to\cC$ via extension of scalars.
\end{proof}

\begin{remark}
    The arguments in this section actually show a slightly more general claim: Let $\mathscr{E}$ be pretannakian with a tensor functor $\mathscr{E}\to\cC$. Then, if there exists a tensor functor $\Ds\to\Mod_R\mathscr{E}$ (where $\Ds$ is still finitely generated pretannakian) for some $\bk$-algebra $R$, then there exists a tensor functor $\Ds\to\mathscr{E}$.
\end{remark}

\subsection{Tensor functors in terms of algebras}
We adapt slightly some notions from \cite[\S 3.3]{Duke} to deal with non-semisimple pretannakian categories as `base'. Throughout let $\cA$ and $\cB$ be arbitrary pretannakian categories over $\bk$.

\begin{definition}\label{def:Aalg}
    An {\bf $\cA$-algebra in $\cB$} is an ind-algebra $A$ in $\cB\boxtimes\cA$ such that
    \begin{enumerate}
        \item for every $X\in\cB$ there exists $Y\in\cA$ with $A\otimes (X\boxtimes\unit)\simeq A\otimes (\unit\boxtimes Y)$ as $A$-modules in $\Ind(\cB\boxtimes\cA)$, and
        \item $\Hom(\unit\boxtimes V,\unit)\;\xrightarrow{\Hom(\unit\boxtimes V,\eta)}\;\Hom(\unit\boxtimes V, A)$
    is an isomorphism, for every $V\in\cA$.
    \end{enumerate}

\end{definition}

For an $\cA$-algebra $A$ in $\cB$, we can consider the tensor functor
$$\Theta_A:\cA\to\mod_A(\cB\boxtimes\cA),\quad V\mapsto A\otimes(\unit\boxtimes V),$$
and observe that, almost by definition, it is fully faithful and essentially surjective, and hence an equivalence. This thus implies that $\cA$-algebras are absolutely flat and simple. We similarly have a tensor functor
$$\Omega_A:\cB\to\mod_A(\cB\boxtimes\cA),\quad X\mapsto A\otimes(X\boxtimes\unit).$$
\begin{thm}(\cite[Theorem~3.4.3]{Duke})\label{Thm:AlgFun}
The assignment $A\mapsto \Theta^{-1}_A\circ \Omega_A$ is an equivalence from the category of $\cA$-algebras in $\cB$ (as a full subcategory of the category of all ind-algebras in $\cB\boxtimes\cA$) to the category (groupoid) of tensor functors $\cB\to\cA$.
\end{thm}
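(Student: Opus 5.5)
We have to prove the generalisation of Deligne's Theorem 3.4.3 from \cite{Duke}: $A\mapsto \Theta_A^{-1}\circ\Omega_A$ is an equivalence between $\cA$-algebras in $\cB$ and tensor functors $\cB\to\cA$. The plan is to write down an explicit inverse. For a tensor functor $F:\cB\to\cA$, take the right adjoint $T_\ast:\Ind\cA\to\Ind(\cB\boxtimes\cA)$ of the tensor functor
\[T:=(F,\id):\cB\boxtimes\cA\to\cA\]
given by the universal property of the Deligne tensor product. Then set $A_F:=T_\ast(\unit)$. Since $T_\ast$ is lax monoidal, $A_F$ is a commutative ind-algebra.

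\textbf{Step 1: $A_F$ is an $\cA$-algebra.} By \cite[Lemma~6.2.1]{CEO}, or Barr--Beck since $T$ is surjective onto $\cA$, we get $\Mod_{A_F}(\cB\boxtimes\cA)\simeq\Ind\cA$, with $A_F\otimes-$ corresponding to $T$. Condition (1) of Definition~\ref{def:Aalg} follows because $T(X\boxtimes\unit)=F(X)=T(\unit\boxtimes F(X))$, so one takes $Y=F(X)$. Condition (2) follows from adjunction:
\[\Hom(\unit\boxtimes V,A_F)=\Hom_{\cA}(V,\unit)=\Hom(\unit\boxtimes V,\unit),\]
where the second equality uses that $\unit\boxtimes-$ is fully faithful. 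One still has to check that this identification is the map induced by $\eta$. Under the equivalence, $\Theta_{A_F}^{-1}\circ\Omega_{A_F}\cong F$, which shows essential surjectivity of the assignment.

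\textbf{Step 2: full faithfulness.} For an arbitrary $\cA$-algebra $A$, the argument preceding the theorem gives that $\Theta_A$ is an equivalence. Hence $G:=\Theta_A^{-1}\circ\Omega_A$ is a tensor functor. Composing $\mod_A(\cB\boxtimes\cA)\xrightarrow{\Theta_A^{-1}}\cA$ with $A\otimes-$ gives a tensor functor $\cB\boxtimes\cA\to\cA$ that restricts to $G$ and to $\id$ on the two factors. By the universal property it is $(G,\id)$, and therefore $A\cong (G,\id)_\ast(\unit)=A_G$. Morphisms of $\cA$-algebras $A\to A'$ then correspond to monoidal natural transformations, via the universal property and adjunction. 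Since tensor functors form a groupoid, morphisms of $\cA$-algebras are automatically isomorphisms, which one checks as well.

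\textbf{Main obstacle.} The hard point is identifying $\mod_A(\cB\boxtimes\cA)$, for a general $\cA$-algebra $A$, with $\cA$ compatibly with $A\otimes-$. Concretely, one must show that $A\otimes-:\cB\boxtimes\cA\to\mod_A$ lands in the essential image of $\Theta_A$ on all objects, not just on generators of the form $X\boxtimes V$. I would handle this with condition (1) combined with exactness. Every object of $\cB\boxtimes\cA$ is a subquotient of an object $X\boxtimes V$. Since $\Theta_A$ is an equivalence onto an abelian subcategory closed under subquotients, and $A$ is flat because $\cA$-algebras are absolutely flat, the objects of the form $A\otimes(X\boxtimes V)\cong A\otimes(\unit\boxtimes YV)$ control everything.
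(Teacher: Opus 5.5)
Your inverse construction $F\mapsto A_F=T_\ast(\unit)$ with $T=(F,\id)$, and the appeal to \cite[Lemma~6.2.1]{CEO}, are exactly what the paper does. Your Step~2 also spells out what the paper leaves implicit. However, Step~1 has a gap at the one point where the paper's proof uses a non-formal input.

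The identification $\Mod_{A_F}(\cB\boxtimes\cA)\simeq\Ind\cA$, exchanging $A_F\otimes-$ with $T$, needs the right adjoint $T_\ast$ to be \emph{exact}. This holds whether you go through \cite[Lemma~6.2.1]{CEO} or through Barr--Beck, which needs $T_\ast$ to preserve coequalisers and to be conservative. Exactness is precisely the hypothesis the paper verifies, via \cite[Proposition~8.22]{Del90}, before invoking that lemma. Surjectivity of $T$ only gives faithfulness of $T_\ast$: the counit $TT_\ast V\to V$ is split epi because $V\cong T(\unit\boxtimes V)$. It does not give exactness. For instance, over $\mathbb{C}$ the restriction $\Rep SL_2\to\Rep\mathbb{G}_a$ to the unipotent radical of a Borel is even essentially surjective, since every Jordan block is the restriction of a simple $SL_2$-module. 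Yet $\Ind^{SL_2}_{\mathbb{G}_a}$ is not exact, because $SL_2/\mathbb{G}_a\cong\mathbb{A}^2\setminus\{0\}$ is not affine. Correspondingly, $\mathbb{C}[SL_2]^{\mathbb{G}_a}\cong\mathbb{C}[x,y]$ is not simple, and its module category is not $\Rep\mathbb{G}_a$. Conditions (1) and (2), and the isomorphism $\Theta^{-1}_{A_F}\circ\Omega_{A_F}\cong F$, are all read off from this equivalence. So Step~1, and with it essential surjectivity, is unproved as written.

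Besides citing Deligne as the paper does, you can get exactness directly from the special shape of $T$. It has a tensor section, $T\circ(\unit\boxtimes-)\cong\id_{\cA}$. The projection formula $T_\ast(T(Z)\otimes W)\cong Z\otimes T_\ast(W)$ holds for rigid $Z$, so $T_\ast(V)\cong T_\ast T(\unit\boxtimes V)\cong(\unit\boxtimes V)\otimes A_F$ naturally in $V\in\cA$. This is exact in $V$, because tensor products in $\Ind(\cB\boxtimes\cA)$ are exact. Both sides commute with filtered colimits, so the isomorphism extends to $\Ind\cA$. With this inserted, your argument goes through.

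Finally, your ``main obstacle'' is not where the difficulty lies. Once $\Theta_A$ is an equivalence, as stated before the theorem, $A\otimes-$ lands in its essential image automatically. Invoking absolute flatness there would also be circular, since the paper deduces absolute flatness from $\Theta_A$ being an equivalence.
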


\begin{proof}
    We can construct an inverse functor as follows. For a tensor functor $F:\cB\to\cA$ we consider
    $$\omega: \cB\boxtimes\cA\to\cA, \quad X\boxtimes Y\to F(X)\otimes Y,$$
    for which the right adjoint $\omega_\ast:\cA\to\Ind(\cB\boxtimes\cA)$ is exact. This follows for instance from \cite[Proposition~8.22]{Del90}. It thus follows, see \cite[Lemma~6.2.1]{CEO}, that $\cA$ is canonically equivalent to $\mod_B(\cB\boxtimes\cA)$ with $B=\omega_\ast(\unit)$ in a way that exchanges $\omega$ and $B\otimes-$. It follows that $B$ is an $\cA$-algebra and $F\mapsto B$, is the sought inverse.
\end{proof}

\begin{remark}\label{rem:AkG}
    If there exists a tensor functor $\cB\to\cA$, then by \cite[\S 8]{Del90} we have an equivalence $\cB\boxtimes\cA\cong \mathrm{Rep}^fG$ for some affine group scheme $G$ in $\cA$ and under this equivalence the associated $\cA$-algebra is sent to $\bk[G]$.
\end{remark}

\subsection{General case}\label{general}

Recall that $\cC$ is a GR+MN pretannakian category. Let $\Ds$ be pretannakian with a tensor functor $\Ds\to\Mod_R\cC$ for some ind-algebra $R$ in $\cC$.

\begin{lemma}\label{lem:reformulate} Assume that $\Ds$ is finitely generated.
    \begin{enumerate}
        \item $\Ds$ has a $\cC$-algebra, unique up to isomorphism.
        \item For a $\cC$-algebra $A$ in $\Ds$ and a tensor subcategory $\Ds'\subset\Ds$, denote by $A'$ the maximal subalgebra of $A$ that belongs to $\Ind(\Ds'\boxtimes\cC)\subset\Ind(\Ds\boxtimes\cC)$. Any algebra endomorphism of $A'$ lifts to one of $A$.
    \end{enumerate}
\end{lemma}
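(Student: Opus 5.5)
Part (1) is a combination of the finitely generated case of Theorem~\ref{thm neutrality} (proved in \S\ref{fingen}) with Theorem~\ref{Thm:AlgFun}. Since $\Ds$ is finitely generated and admits a tensor functor to $\Mod_R\cC$, the finitely generated case gives a symmetric tensor functor $F:\Ds\to\cC$. Theorem~\ref{Thm:AlgFun} then turns $F$ into a $\cC$-algebra $A$ in $\Ds$, which settles existence.

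For uniqueness, let $A_1,A_2$ be two $\cC$-algebras in $\Ds$, with associated functors $F_1,F_2:\Ds\to\cC$. Lemma~\ref{fintypeDel} with $R=\bk$ says that $\Isom(F_1,F_2)$ is represented by a finitely generated, faithfully flat $\bk$-algebra $P$ in $\cC$. In particular $P\neq0$, so $P$ has a simple quotient. Because $P$ is finitely generated and $\cC$ satisfies MN, and $\bk$ is algebraically closed, any simple quotient of $P$ is $\bk$ itself. This gives a $\bk$-point of $\Isom(F_1,F_2)$, that is, an isomorphism $F_1\cong F_2$. Since the correspondence in Theorem~\ref{Thm:AlgFun} is an equivalence of categories, it follows that $A_1\cong A_2$. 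An alternative route is to apply the same argument to the two algebras viewed as $\cC$-algebras over $\Ds\boxtimes\Ds$.

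For part (2), set $F=\Theta_A^{-1}\circ\Omega_A:\Ds\to\cC$ and let $F'$ be its restriction to $\Ds'$. First I would check that $A'$ is a $\cC$-algebra in $\Ds'$ corresponding to $F'$. Concretely, $A=\omega_\ast(\unit)$ for $\omega:\Ds\boxtimes\cC\to\cC$, and the maximal subobject of $A$ lying in $\Ind(\Ds'\boxtimes\cC)$ is $\omega'_\ast(\unit)$, where $\omega'$ is the restriction of $\omega$; this follows from adjunction. Under Theorem~\ref{Thm:AlgFun}, an algebra endomorphism of $A'$ then corresponds to an automorphism $\phi'$ of $F'$; endomorphisms of $\cC$-algebras are automatically isomorphisms, since morphisms of tensor functors out of a rigid category are invertible.

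To lift, I would need an automorphism $\phi$ of $F$ restricting to $\phi'$. I would use Lemma~\ref{fintypeDel} with $R=\bk$ once more, this time to represent the group schemes $G=\underline{\mathrm{Aut}}(F)$ and $G'=\underline{\mathrm{Aut}}(F')$. By Remark~\ref{rem:AkG} and the tannakian formalism of \cite[\S 8]{Del90}, the restriction map $G\to G'$ is a quotient map, because $\Ds'\subset\Ds$ is a full tensor subcategory closed under subquotients. Its fibre over the $\bk$-point $\phi'$ is then a $G$-torsor, hence nonempty: it is represented by a nonzero finitely generated algebra, which is faithfully flat over $\bk$ by the torsor property. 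A simple quotient of that algebra is $\bk$, again using MN and $\bk=\overline{\bk}$, and this yields the required lift $\phi$.

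The main obstacle is exactly this surjectivity of $G\to G'$ in the fpqc sense for group schemes in $\cC$, together with showing that the fibre is represented by a finitely generated algebra. I would address it by identifying the fibre with $\Isom_{\bk}(F,F\circ\text{twist})$, namely the functor of isomorphisms $F\to F$ whose restriction to $\Ds'$ equals $\phi'$, and then adapting the proof of Lemma~\ref{fintypeDel}.
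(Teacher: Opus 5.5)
Your part (1) is fine and matches the paper. Existence comes from \S\ref{fingen} together with Theorem~\ref{Thm:AlgFun}. Uniqueness comes from a $\bk$-point of the finitely generated, nonzero algebra representing $\Isom(F_1,F_2)$, which is the argument behind \cite[Proposition~7.3.2]{CEO} that the paper cites. In part (2), identifying $A'$ with $\omega'_\ast(\unit)$ by adjunction is a valid substitute for the paper's Hom computation. Your reduction to surjectivity of $G(\unit)\to G'(\unit)$, with $G=\underline{\mathrm{Aut}}^{\otimes}(F)$ and $G'$ the quotient corresponding to $\Ds'$, is exactly the paper's reduction.

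The gap is that surjectivity itself. Tannakian formalism (\cite[\S 8]{Del90}, \cite[Theorem~4.3.1]{CEO}) only gives that $\bk[G']$ is a Hopf subalgebra of $\bk[G]$. It does not give that $\bk[G]$ is faithfully flat over $\bk[G']$, which is what your ``quotient map in the fpqc sense'' requires.
\begin{itemize}
\item Already for $\cC=\Vec$, faithful flatness of a commutative Hopf algebra over a Hopf subalgebra is a genuine theorem. For group schemes in a GR+MN category it is a nontrivial output of the commutative algebra in \cite{C1}. The paper imports precisely the needed statement, surjectivity of $G(\unit)\to G'(\unit)$, from \cite[Porism~7.2.6]{C1}.
\item Without it your argument is circular. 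The algebra $\bk[G]\otimes_{\bk[G']}\bk$ representing the fibre over $\phi'$ is nonzero if and only if $\phi'$ lies in the image of $G(\unit)$; the reverse direction is your MN/Nullstellensatz argument. So nonemptiness of the fibre is equivalent to what you want to prove, and appealing to ``the torsor property'' presupposes it. Moreover, that fibre would be a torsor under $\ker(G\to G')$, not under $G$.
\item Your proposed repair does not work as stated. In Lemma~\ref{fintypeDel}, faithful flatness holds because the representing algebra is the image of a nonzero ind-object under a tensor functor out of $\Ds\boxtimes\Ds$. The functor of automorphisms of $F$ restricting to a prescribed $\phi'$ is not an $\Isom$-functor between two tensor functors on a pretannakian category, and there is no ``twist'' of $\Ds$ that makes it one.
\end{itemize}
You need to cite (or prove) the surjectivity, or faithful flatness over Hopf subalgebras, from \cite{C1}. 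Granted faithful flatness, your MN and Nullstellensatz step does then finish the proof.
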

\begin{proof}
    Due to Theorem~\ref{Thm:AlgFun}, part (1) is a reformulation of the existence (proved in \S \ref{fingen}) and uniqueness, see \cite[Proposition~7.3.2]{CEO}, of tensor functors $\Ds\to\cC$.

    For part (2) we first observe that $A'$ is a $\cC$-algebra in $\Ds'$. Indeed, condition~\ref{def:Aalg}(2) is trivially inherited. That the first property is inherited we can observe via
    \[\Hom_A(A\otimes (X\boxtimes\unit),A\otimes (\unit\boxtimes Y))\;\cong\; \Hom( X\boxtimes Y^\vee,A)\;\cong\;\Hom( X\boxtimes Y^\vee,A'),\]
    for $X\in\Ds',Y\in\cC$, and the same observation for morphism in the other direction.
 Via Theorem~\ref{Thm:AlgFun} this then becomes a claim about lifts of endomorphisms of tensor functors $\Ds'\to\cC$ to $\Ds\to\cC$ which, by \cite[Th\'eor\`eme~8.17]{Del90} and \cite[Theorem~4.3.1]{CEO} becomes the claim that $G(\unit)\to G'(\unit)$ is surjective, for some affine group scheme $G$ in $\cC$ and quotient group $G'$. The latter is true by \cite[Porism~7.2.6]{C1}.
\end{proof}

\begin{lemma}\label{LemSlick}
Consider tensor subcategories $\Ds_1$ and $\Ds_2$ of $\Ds$ such that $\langle\Ds_1,\Ds_2\rangle=\Ds$, and $\Ds_2$ is finitely generated. 
If $A_1$ and $A_2$ are $\cC$-algebras of $\Ds_1$ and $\Ds_2$, respectively, then there exists an algebra morphism $A_{12}\to A_2$, where $A_{12}$ is the maximal subalgebra of $A_1$ in $\Ind((\Ds_1\cap\Ds_2)\boxtimes\cC)$. For any such morphism, the associated  algebra $A:=A_1\otimes_{A_{12}}A_2$ is a $\cC$-algebra for $\Ds$.
\end{lemma}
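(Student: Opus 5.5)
The statement has two parts, existence of an algebra morphism $A_{12}\to A_2$ and the $\cC$-algebra property of $A=A_1\otimes_{A_{12}}A_2$, and I would treat them separately.

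\emph{Existence of the morphism.} Let $\Ds_{12}:=\Ds_1\cap\Ds_2$, and let $A_2'$ be the maximal subalgebra of $A_2$ lying in $\Ind(\Ds_{12}\boxtimes\cC)$. The argument in the proof of Lemma~\ref{lem:reformulate}(2) uses no finite generation. It therefore shows that $A_{12}$ and $A_2'$ are both $\cC$-algebras in $\Ds_{12}$. By Theorem~\ref{Thm:AlgFun}, they correspond to the two tensor functors $\Ds_{12}\to\cC$ obtained by restricting the functors attached to $A_1$ and to $A_2$. An isomorphism of these restricted functors gives an isomorphism $A_{12}\cong A_2'$, and composing with $A_2'\subset A_2$ gives the required morphism.

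So the task reduces to showing that any two tensor functors $\Ds_{12}\to\cC$ are isomorphic. I expect this to be the main obstacle, since $\Ds_{12}$ need not be finitely generated. For finitely generated subcategories $\Ds'\subset\Ds_{12}$, uniqueness holds by \cite[Proposition~7.3.2]{CEO}. I would write $\Ds_{12}$ as the filtered union of such $\Ds'$ and build a compatible family of isomorphisms by transfinite induction along a well-ordered cofinal chain. At a successor step, an isomorphism on $\Ds'$ must extend to a larger $\Ds''$. Two isomorphisms on $\Ds''$ differ by an element of $G''(\unit)$, where $G''$ is the automorphism group scheme in $\cC$; so it suffices that $G''(\unit)\to G'(\unit)$ is surjective, which is \cite[Porism~7.2.6]{C1}, exactly as in Lemma~\ref{lem:reformulate}(2). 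At a limit step, compatible isomorphisms simply glue. If the chain argument fails because the indexing is not totally ordered, the fallback is to represent $\Isom$ by a nonzero algebra, namely the colimit of the faithfully flat algebras of Lemma~\ref{fintypeDel}. One then finds a $\bk$-point using that every simple algebra in $\Ind\cC$ is a field extension (MN), together with the same surjectivity.

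\emph{$A$ is a $\cC$-algebra.} For condition~\ref{def:Aalg}(2), I would compute $\Hom(\unit\boxtimes V,A)$ via Remark~\ref{rem:AkG}. After base change, the $A_i$ look like coordinate algebras $\bk[G_i]$ and $A_{12}$ like $\bk[G_{12}]$, where $G_{12}$ is a common quotient of the $G_i$. Then $A$ corresponds to the coordinate algebra of the fibre product, whose $(\unit\boxtimes V)$-isotypic invariant part is just $\Hom(\unit\boxtimes V,\unit)$. Flatness of $A_2$ over $A_{12}$ (both are absolutely flat and simple) keeps this computation exact.

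Condition~\ref{def:Aalg}(1) then follows by a closure argument. The functor $\Theta_A: V\mapsto A\otimes(\unit\boxtimes V)$ is fully faithful, because
\[\Hom_A\bigl(A\otimes(\unit\boxtimes V),A\otimes(\unit\boxtimes W)\bigr)\cong\Hom\bigl(\unit\boxtimes V\otimes W^\ast,A\bigr)=\Hom(V,W).\]
It is also exact, so its essential image is closed under kernels and cokernels in $\mod_A$. By base change from $A_i$ to $A$, objects $X\in\Ds_i$ satisfy $A\otimes(X\boxtimes\unit)\cong A\otimes(\unit\boxtimes Y)$ for some $Y\in\cC$. The class of such $X$ is closed under tensor products and duals. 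Since $\Ds=\langle\Ds_1,\Ds_2\rangle$, every object of $\Ds$ is the kernel of a morphism between direct sums of tensor products of objects of $\Ds_1$ and $\Ds_2$. Applying the functor $A\otimes(-\boxtimes\unit)$, and using that it is exact because rigid objects are flat, completes the proof.
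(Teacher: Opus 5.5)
The main gap is in the second part. Your argument that $A=A_1\otimes_{A_{12}}A_2$ is a $\cC$-algebra is circular.

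Remark~\ref{rem:AkG} gives separate models $\Rep G_1$, $\Rep G_2$, $\Rep G_{12}$ for $\Ind(\Ds_1\boxtimes\cC)$, $\Ind(\Ds_2\boxtimes\cC)$, $\Ind(\Ds_{12}\boxtimes\cC)$. A model of $\Ind(\Ds\boxtimes\cC)$ in which $A$ becomes $\bk[G_1\times_{G_{12}}G_2]$ exists only once there is a tensor functor $\Ds\to\cC$. Producing one is exactly what this lemma is for, in the transfinite induction proving Theorem~\ref{thm neutrality}.

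``After base change'' does not say along what. Suppose you mean a faithfully flat $R'$ over which the given functor $\Ds\to\Mod_R\cC$ becomes isomorphic to $F_1\otimes R'$ on $\Ds_1$ and to $F_2\otimes R'$ on $\Ds_2$. You would still have to make these two trivialisations agree on $\Ds_{12}$ with the chosen $A_{12}\to A_2$, and then descend the conclusion from $R'$ back to $\bk$. Neither step is done. Similarly, absolute flatness of $A_{12}$ is known in $\Ind(\Ds_{12}\boxtimes\cC)$, not in $\Ind(\Ds\boxtimes\cC)$, so the flatness you invoke there is unjustified.

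For condition~\ref{def:Aalg}(1), full faithfulness of $\Theta_A$ is just condition~(2) again. Your claim that every object of $\Ds$ is a kernel of a morphism between direct sums of objects $X_1\otimes X_2$, with $X_i\in\Ds_i$, is not formal. The category $\langle\Ds_1,\Ds_2\rangle$ is only a closure under subquotients, and essential images of exact fully faithful tensor functors need not be closed under subobjects; restriction from $GL_n$ to a Borel subgroup is an example. In the group picture your claim is observability of $G_1\times_{G_{12}}G_2$ in $G_1\times G_2$, which again presupposes the model.

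The paper does not argue this from scratch. It transports \cite[Lemma~6.4.3]{Duke}, using the tannakian description of $\Ds_2$ and its subcategories (\cite[Theorem~4.3.1, Corollary~4.3.4]{CEO}), Lemma~\ref{lem:reformulate} (including lifting endomorphisms of $A_2'$ to $A_2$), and \cite[Corollary~7.2.7]{C1}. None of these lifting inputs appear in your second part, and some such input is needed to make the identifications over $\Ds_{12}$ coherent.

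In the first part, the obstacle you anticipate does not occur, and your workaround would not remove it if it did. $\Ds_{12}$ is a tensor subcategory of the finitely generated category $\Ds_2$. Via $F_2$ it corresponds to a quotient of the algebraic group $G_2$ in $\cC$, and that quotient is again algebraic: a strictly increasing chain of finitely generated tensor subcategories of $\Ds_{12}$ would give a strictly increasing chain of Hopf ideals in the noetherian algebra $\bk[G_2]$. Hence $\Ds_{12}$ is finitely generated, and Lemma~\ref{lem:reformulate}(1) gives $A_{12}\cong A_2'$ at once. This is the use of the finite generation of $\Ds_2$ that your proposal misses.

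For a genuinely non-finitely generated category your argument would fail, for three reasons:
\begin{itemize}
\item its finitely generated subcategories need not contain a cofinal well-ordered chain;
\item along a chain of arbitrary subcategories the limit stages need not be finitely generated, so \cite[Proposition~7.3.2]{CEO} and \cite[Porism~7.2.6]{C1} give nothing beyond them, and your successor step assumes an isomorphism already exists on the larger category;
\item the fallback only produces a point of the $\mathrm{Isom}$-algebra over some field extension $K\supseteq\bk$, that is, an isomorphism after extending scalars to $K$, not over $\bk$.
\end{itemize}
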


\begin{proof}
    The proof of the special case of $\cC=\sVec$ in \cite[Lemma~6.4.3]{Duke} carries over. The usage of \cite[Corollary~6.3.3]{Duke} can be replaced with that of \cite[Corollary~4.3.4]{CEO}. The usage of \cite[Theorem~6.3.1]{Duke} can be replaced with its generalisation \cite[Theorem~4.3.1]{CEO}. \cite[Lemma~6.4.2]{Duke} is to be replaced with Lemma~\ref{lem:reformulate} and \cite[Proposition~6.3.4]{Duke} by \cite[Corollary~7.2.7]{C1}.
\end{proof}

\begin{proof}[Proof of Theorem~\ref{thm neutrality} for general $\Ds$]
    We can now copy the proof of \cite[Theorem~6.4.1]{Duke}. In essence, we can construct $\cC$-algebras for all finitely generated subcategories of $\Ds$, by Lemma~\ref{lem:reformulate}, and by a transfinite induction argument using Lemma~\ref{LemSlick} we can take an appropriate limit over all of them to become a $\cC$-algebra for $\Ds$.
\end{proof}

\section{Extensions of rigid objects}\label{App:C}

In this appendix we give a self-contained proof of one version of the well-known principle that, under suitable hypotheses, an extension of two rigid objects is rigid, see also \cite[\S 4]{EP} and references therein.

In this appendix, we let $(\cB,\otimes ,\unit)$ be an abelian
monoidal category with right exact tensor product for which the functor
$\Hom(X\otimes-,Z)$
is representable, for all $X,Z\in\cB$, by an object $\uHom(X,Z)\in\cB$. We can interpret $\uHom(-,-)$ as a bifunctor $\cB^{\mathrm{op}}\times\cB\to\cB$.

We assume that every object in $\cB$ is a quotient of a left flat one (an object $X$ is left flat if $-\otimes X$ is an exact functor). Finally, we assume that we have an indexing set $S$, and for each $\alpha\in S$ an inverse system in $\cB$ with codirected poset $I_\alpha$ and objects $\{P^\alpha_i\mid i\in I_\alpha\}$, such that the associated functors
\[F_\alpha:=\varinjlim\Hom(P_i^\alpha,-):\;\cB\to\Ab\]
are exact and jointly faithful. This is naturally an assumption about the pro-completion of~$\cB$, but it will be more convenient to formulate things as above.

\begin{lemma}\label{lem:1}
    For an exact sequence $0\to X\to Y\to Z\to 0$ in $\cB$, with $Z$ right flat, and arbitrary $A\in\cB$, the induced sequence
    \[0\to X\otimes A\to Y\otimes A\to Z\otimes A\to 0\]
    is exact.
\end{lemma}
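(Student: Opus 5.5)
Since $-\otimes A$ is right exact, it suffices to show that $X\otimes A\to Y\otimes A$ is a monomorphism. The plan is the standard Tor-style argument, using that every object is a quotient of a left flat object to resolve $A$. Here ``right flat'' for $Z$ means $Z\otimes-$ is exact, and ``left flat'' for $F$ means $-\otimes F$ is exact.

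Choose an epimorphism $F\twoheadrightarrow A$ with $F$ left flat, and let $K$ be its kernel, so $0\to K\to F\to A\to 0$ is exact. Tensoring the two short exact sequences against each other gives a $3\times 3$ diagram with rows $X\otimes K\to Y\otimes K\to Z\otimes K\to 0$, then $X\otimes F\to Y\otimes F\to Z\otimes F\to 0$, then $X\otimes A\to Y\otimes A\to Z\otimes A\to 0$. Its columns are $U\otimes K\to U\otimes F\to U\otimes A\to 0$ for $U=X,Y,Z$.

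The inputs are as follows. All rows and columns are right exact, since $\otimes$ is right exact in each variable. The middle row is exact on the left as well, because $-\otimes F$ is exact. The right column $0\to Z\otimes K\to Z\otimes F\to Z\otimes A\to 0$ is short exact, because $Z\otimes-$ is exact.

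The snake-lemma chase then runs as follows. Let $x\in X\otimes A$ map to $0$ in $Y\otimes A$; I work with generalized elements or apply the snake lemma to the map of the bottom two rows viewed as complexes. First lift $x$ to $x'\in X\otimes F$ (after passing to an epimorphic cover). Its image $y'\in Y\otimes F$ maps to $0$ in $Y\otimes A$, so $y'$ comes from some $k\in Y\otimes K$. The image of $k$ in $Z\otimes K$ maps to the image of $y'$ in $Z\otimes F$, which is zero because $y'$ comes from $X\otimes F$. Injectivity of $Z\otimes K\to Z\otimes F$ then forces the image of $k$ in $Z\otimes K$ to be zero. By exactness of the top row at $Y\otimes K$, $k$ therefore comes from some $k'\in X\otimes K$. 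Then $x'$ and the image of $k'$ in $X\otimes F$ have the same image in $Y\otimes F$, and injectivity of $X\otimes F\to Y\otimes F$ makes them equal. Hence $x$, being the image of $x'$, is the image of an element of $X\otimes K$, so $x=0$.

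Formally, I would phrase this chase either via the snake lemma in the abelian category $\cB$ or via Freyd–Mitchell embedding on a small abelian subcategory containing the diagram; the chase only uses exactness, so this is legitimate. The main point needing care is using exactness of $Z\otimes-$ at the left end, which is exactly where right flatness of $Z$ enters.
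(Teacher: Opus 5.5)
Your proof is correct and follows the same route as the paper. The paper likewise resolves $A$ by left flat objects and concludes by a diagram chase, noting that conceptually this amounts to $\Tor_1(Z,A)=0$; your version is slightly more economical, since a single cover $0\to K\to F\to A\to 0$ with $F$ left flat suffices. That is the snake lemma applied to the map from the $K$-row to the $F$-row, using that $Z\otimes K\to Z\otimes F$ is a monomorphism.
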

\begin{proof}
    The claim is tautological if $A$ is left flat. In general, we can take a resolution $P_2\to P_1\to P_0\to A\to 0$ with  left flat $P_i$ and the claim follows from diagram chasing. More conceptually, one can argue that we can construct an appropriate Tor-theory, and use $\Tor_1(Z,A)=0$.
\end{proof}


We call an object $X\in\cB$ left (resp. right) rigid if it has a left dual $X^\ast$ (resp. right dual~${}^\ast X$). In particular, if $X$ is left rigid, then it is right flat. 

\begin{lemma}\label{lem:2}
    For an exact sequence $0\to X\to Y\to Z\to 0$ in $\cB$ with $Z$ right rigid, and arbitrary $A\in\cB$, the induced sequence
    \[0\to \uHom(Z,A)\to \uHom(Y,A)\to\uHom(X,A)\to 0\]
    is exact.
\end{lemma}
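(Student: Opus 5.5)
We need exactness of $0\to \uHom(Z,A)\to \uHom(Y,A)\to\uHom(X,A)\to 0$. Left exactness is automatic: $\uHom(-,A)$ turns the right exact sequence $X\to Y\to Z\to 0$ into a left exact one, since $\Hom(-\otimes T,A)$ is left exact for every $T$ (the tensor product is right exact) and $\uHom(-,A)$ represents it. The content is surjectivity of $\uHom(Y,A)\to\uHom(X,A)$. We will test this with the exact, jointly faithful functors $F_\alpha$: a morphism is an epimorphism if and only if each $F_\alpha$ of it is surjective. Joint faithfulness and exactness make $\bigoplus_\alpha F_\alpha$ exact and faithful, so it reflects nonvanishing of cokernels.

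Fix $\alpha$. We have
\[F_\alpha(\uHom(W,A))=\varinjlim_i\Hom(P_i^\alpha,\uHom(W,A))=\varinjlim_i \Hom(W\otimes P_i^\alpha,A).\]
By Lemma~\ref{lem:1}, using that $Z$ is right rigid and hence right flat (this is the sidedness that Lemma~\ref{lem:1} requires), each sequence $0\to X\otimes P_i^\alpha\to Y\otimes P^\alpha_i\to Z\otimes P_i^\alpha\to 0$ is exact. The question therefore becomes whether, in the colimit over $i$, every morphism $X\otimes P_i\to A$ extends along $X\otimes P_i\hookrightarrow Y\otimes P_i$ after passing to some $P_j\to P_i$. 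The obstruction to extending lies in $\Ext^1(Z\otimes P_i,A)$, whose image we want to vanish in $\varinjlim_j$.

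To handle the obstruction we use rigidity of $Z$ to move it across:
\[\Hom(Z\otimes P,B)\cong\Hom(P,Z'\otimes B),\]
where $Z'$ is the appropriate dual of $Z$, so that $\Hom(Z\otimes -,-)$ is computed via the exact functor $Z'\otimes-$. The extension class of $0\to X\otimes P_i\to Y\otimes P_i\to Z\otimes P_i\to0$ pushed out along $f\colon X\otimes P_i\to A$ gives $0\to A\to E\to Z\otimes P_i\to 0$. Under the adjunction, this corresponds to an extension of $P_i$ by $Z'\otimes A$, namely the pullback of $0\to Z'\otimes A\to Z'\otimes E\to Z'\otimes Z\otimes P_i\to0$ along the coevaluation $P_i\to Z'\otimes Z\otimes P_i$; exactness here uses that $Z'\otimes-$ is exact. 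Exactness of $F_\alpha$ means precisely that $\varinjlim_j\Ext^1(P_j,-)$ vanishes on the relevant classes. Concretely, the epimorphism onto $P_i$ becomes split after precomposing with some $P_j\to P_i$, since $F_\alpha$ preserves epimorphisms and the identity class in $\varinjlim\Hom(P_j,P_i)$ lifts. Transporting this splitting back through the adjunction yields a morphism $Z\otimes P_j\to E$ splitting the pulled-back extension, and hence an extension of $f|_{X\otimes P_j}$ to $Y\otimes P_j\to A$. This gives surjectivity of $F_\alpha$ applied to our map, and we conclude by joint faithfulness.

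The main obstacle is the bookkeeping in the last step: keeping left and right duals straight, and checking that the adjunction isomorphism is compatible with pullback and pushout of extensions, so that splitting over $P_j$ genuinely corresponds to the extension of $f$. We would verify this on the level of Yoneda extensions by writing everything as explicit diagrams.
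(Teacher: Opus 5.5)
\paragraph{Overall.} Your strategy is the paper's, and apart from one step your argument is correct. You test surjectivity with the $F_\alpha$ and rewrite $F_\alpha(\uHom(W,A))$ as $\varinjlim_i\Hom(W\otimes P_i^\alpha,A)$. You then kill the obstruction in $\varinjlim_i\Ext^1(Z\otimes P_i^\alpha,A)$ using $\Hom(Z\otimes P,-)\cong\Hom(P,{}^\ast Z\otimes -)$ together with exactness of ${}^\ast Z\otimes-$ and $F_\alpha$. Your Yoneda-extension bookkeeping (split $E'\to P_i$ after passing to some $P_j$, then transport the section to $Z\otimes P_j\to E$ by naturality of the adjunction) is a correct unpacking of the paper's one-line ``equivalently, $\varinjlim\Hom(Z\otimes P_i^\alpha,-)=F_\alpha({}^\ast Z\otimes-)$ is exact''.

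\paragraph{The faulty step.} The step that everything else rests on is justified incorrectly: the exactness of $0\to X\otimes P_i^\alpha\to Y\otimes P_i^\alpha\to Z\otimes P_i^\alpha\to 0$. Lemma~\ref{lem:1} needs $Z\otimes-$ to be exact ($Z$ right flat), and right rigidity does not give this.

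With the paper's conventions, a right dual means $Z\otimes-\dashv{}^\ast Z\otimes-$ (so that $\uHom(Z,-)\cong{}^\ast Z\otimes-$). The same evaluation and coevaluation also give $-\otimes{}^\ast Z\dashv-\otimes Z$. So right rigidity makes $-\otimes Z$ exact, i.e.\ $Z$ is \emph{left} flat. It also makes ${}^\ast Z\otimes-$ exact, which you correctly use later. But $Z\otimes-$ is only right exact. The paper states the correct sidedness: left rigid objects are right flat.

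The implication you invoke genuinely fails. Take bimodules over $R=\bk[x]$ and let $M=R$ with the right action factoring through $x\mapsto 0$. Then $M$ is free as a left module, hence right rigid, but $M\otimes_R-$ sends the injection $R\xrightarrow{x}R$ to the zero endomorphism of $M$. Without the monomorphism $X\otimes P_i^\alpha\hookrightarrow Y\otimes P_i^\alpha$, the pushout $0\to A\to E\to Z\otimes P_i^\alpha\to 0$ is not an extension, and your obstruction argument never gets started.

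\paragraph{The fix.} The missing ingredient is left flatness of the objects $P_i^\alpha$, which is exactly what the paper invokes at this point (``using that $\varinjlim$ preserves exact sequences (and left flatness of $P_i^\alpha$)''). This hypothesis is implicit in the standing assumptions of the appendix; compare Remark~\ref{rem:C}, which asks for inverse systems of right flat objects in the mirrored situation. It gives exactness of $-\otimes P_i^\alpha$ directly, with no appeal to Lemma~\ref{lem:1}. With that substitution your proof is complete and coincides with the paper's.
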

\begin{proof}

    It suffices to show that
    \[0\to F_\alpha(\uHom(Z,A))\to F_\alpha(\uHom(Y,A))\to F_\alpha(\uHom(X,A))\to 0 \]
    is exact for every $\alpha$, or equivalently, that the direct limit of the sequences
    \[0\to \Hom(Z\otimes P_i^\alpha,A)\to \Hom(Y\otimes P_i^\alpha,A)\to\Hom(X\otimes P_i^\alpha,A)\to 0\]
    is exact.  Using that $\varinjlim$ preserves exact sequences (and left flatness of $P_i^\alpha$), it is thus sufficient to show that 
    $\varinjlim \Ext^1(Z\otimes P_i^\alpha, A)=0$ or equivalently that $\varinjlim \Hom(Z\otimes P_i^\alpha,-)$ is exact. The latter is clear, since the functor is $F_\alpha({}^\ast Z\otimes -)$, the composite of the exact functors ${}^\ast Z\otimes-$ and $F_\alpha$.
\end{proof}

\begin{prop}\label{prop:C}
  If for an exact sequence $0\to X\to Y\to Z\to 0$ in $\cB$ both $X$ and $Z$ are right rigid, then so is $Y$.
\end{prop}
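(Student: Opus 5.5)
We use the standard criterion for rigidity via internal Hom, adapted to the present one-sided setting. For an object $W$ with $\uHom(W,-)$ available, set $W^\vee:=\uHom(W,\unit)$. There is a canonical natural transformation
\[\uHom(W,\unit)\otimes A\;\to\;\uHom(W,A),\]
adjoint to the evident map $W\otimes\uHom(W,\unit)\otimes A\to A$ built from evaluation. The plan is to show two things: first, that $W$ is right rigid exactly when this transformation is an isomorphism for all $A$; second, that this property passes from $X$ and $Z$ to $Y$ by a five-lemma argument. We fix the side conventions so that right rigidity of $W$ means $\Hom(W\otimes -,-)\cong\Hom(-,{}^\ast W\otimes -)$, with ${}^\ast W=\uHom(W,\unit)$. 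This is the analogue of \cite[Proposition~2.3]{Del90}, used already in the proof of Theorem~\ref{prop:consolidate2}.

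\textbf{Step 1: the criterion.} If $W$ is right rigid, the transformation is an isomorphism, since ${}^\ast W\otimes-$ represents $\Hom(W\otimes-,-)$. Conversely, suppose it is an isomorphism for $A=W$. Then the identity of $W$ produces a coevaluation $\unit\to\uHom(W,W)\cong W^\vee\otimes W$. Together with the evaluation $W\otimes W^\vee\to\unit$, this should satisfy the zigzag identities by naturality. This is a formal check and is only a matter of bookkeeping of sides.

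\textbf{Step 2: two rows of short exact sequences.} Fix $A$ and consider the commutative diagram whose top row is
\[Z^\vee\otimes A\to Y^\vee\otimes A\to X^\vee\otimes A\]
and whose bottom row is
\[\uHom(Z,A)\to\uHom(Y,A)\to\uHom(X,A),\]
with the vertical maps being the canonical ones.

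The bottom row is short exact by Lemma~\ref{lem:2}, since $Z$ is right rigid. For the top row, Lemma~\ref{lem:2} with $A=\unit$ first gives a short exact sequence $0\to Z^\vee\to Y^\vee\to X^\vee\to 0$. Since $X$ is right rigid, $X^\vee={}^\ast X$. Tensoring on the right by $A$ is left exact on this sequence by Lemma~\ref{lem:1}, provided ${}^\ast X$ is right flat. Here we must arrange that a right dual is flat on the side Lemma~\ref{lem:1} requires. If the sides do not match, we instead apply the mirror version of Lemma~\ref{lem:1}; its proof is symmetric, using that the dual of a right rigid object is left rigid and hence flat on the correct side. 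So both rows are short exact.

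\textbf{Step 3: conclude.} The outer vertical maps are isomorphisms by Step 1 applied to $X$ and $Z$. The five lemma then gives that the middle map is an isomorphism for every $A$, so $Y$ is right rigid by Step 1.

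\textbf{Main obstacle.} The main difficulty is keeping left/right conventions consistent. This matters in two places: in matching the flatness side needed for Lemma~\ref{lem:1}, and in checking that the criterion in Step 1 genuinely produces a right dual rather than a left one. I would settle this first by writing out the case $W=\unit$ and then fixing all conventions from it.
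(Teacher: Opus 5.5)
Your proposal is correct and follows the paper's proof essentially verbatim. The paper likewise uses Deligne's criterion (which it cites rather than reproving as in your Step~1), the same two-row diagram with the bottom row exact by Lemma~\ref{lem:2}, the top row exact by Lemma~\ref{lem:2} at $A=\unit$ combined with Lemma~\ref{lem:1}, and the five lemma. Your worry about sides resolves without any mirror lemma: ${}^\ast X$ is left rigid (with left dual $X$), hence right flat, which is exactly the hypothesis Lemma~\ref{lem:1} places on the third term.
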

\begin{proof}
    We use the criterion from \cite[Proposition~2.3]{Del90}, that the canonical natural transformation
    \[\xi_{B,A}:\uHom(B,\unit)\otimes A\to\uHom(B,A)\]
    is an isomorphism for all $A\in\cB$ if and only if $B$ is right rigid (with right dual $\uHom(B,\unit)$).

    We can consider the commutative diagram
    \[\xymatrix{
    0\ar[r]&\uHom(Z,\unit)\otimes A\ar[r]\ar[d]&\uHom(Y,\unit)\otimes A\ar[r]\ar[d]& \uHom(X,\unit)\otimes A\ar[r]\ar[d]&0 \\ 
    0\ar[r]&\uHom(Z,A)\ar[r]&\uHom(Y,A)\ar[r] &\uHom(X,A)\ar[r]&0 .}\]
By assumption, the left and right downward arrow are isomorphisms for all $A\in\cB$. By Lemma~\ref{lem:2}, the lower row is exact. Since $\Hom(X,\unit)\cong {}^\ast X$ is left rigid (it has left dual $X$) and thus right flat, Lemmas~\ref{lem:2} and~\ref{lem:1} show that the top row is also exact. It now follows that the middle downward arrow is also an isomorphism, so that $Y$ is right rigid indeed.
\end{proof}

\begin{remark}\label{rem:C}
    It follows from the same type of argument that extensions of left rigid objects are left rigid, if we instead assume that $\Hom(-\otimes Y,Z)$ is always representable, that every object is a quotient of a right flat one and if we have inverse systems of right flat objects with associated functors exact and jointly faithful.
\end{remark}

\begin{remark}
    We point out that for an object $X$ to admit a right dual, it is not sufficient for $X\otimes -$ to be left adjoint to a functor $X'\otimes -$ for some $X'\in\cC$, see for example \cite{HZ}.
\end{remark}

\subsection*{Acknowledgements}
The authors thank James Milne for pointing out the need to include a proof of Proposition~\ref{prop:FieldExt}.  They would also like to thank Pavel Etingof for helpful discussions. The research of the first author was supported by ARC grant FT220100125.  The research of the second author was supported by ARC grant DP210100251.

	\textsc{\footnotesize K.C.: School of Mathematics and Statistics, University of Sydney, NSW 2006, Australia} 
	
	\textit{\footnotesize Email address:} \texttt{\footnotesize kevin.coulembier@sydney.edu.au}
	
	\textsc{\footnotesize A.S.: School of Mathematics and Statistics, University of Sydney, NSW 2006, Australia} 
	
	\textsc{\footnotesize School of Mathematics and Statistics, UNSW Sydney, NSW 2052,
		Australia}
	
	\textit{\footnotesize Email address:} \texttt{\footnotesize xandersherm@gmail.com}


\begin{thebibliography}
	{DMNO}
	\bibitem[AM]{AM} M.~F. Atiyah and I.~G. Macdonald, {\it Introduction to commutative algebra}, Addison-Wesley Publishing Co., Reading, Mass.-London-Don Mills, Ont., 1969; MR0242802
	
	\bibitem[BEO]{BEO} D.~Benson, P.~Etingof, V.~Ostrik:
New incompressible symmetric tensor categories in positive characteristic.
Duke Math. J. 172 (2023), no. 1, 105--200.

\bibitem[BP]{BP} D.~Benson, J.~Pevtsova: Group schemes and their Lie algebras over a symmetric tensor category. arXiv:2507.02031.




\bibitem[CPS]{CPS}  E.~Cline, B.~Parshall, L.~Scott: Induced modules and affine quotients. Math. Ann. 230 (1977), no. 1, 1--14. 

\bibitem[C1]{Duke} Coulembier, Kevin. Tannakian categories in positive characteristic, Duke Math. J. {\bf 169} (2020), no.~16, 3167--3219; MR4167087.

    \bibitem[C2]{C1} Coulembier, Kevin. "Commutative algebra in tensor categories." Transform. Groups 31 (2026), no. 2, 1225--1272.

    \bibitem[C3]{C2} Coulembier, Kevin. "Algebraic geometry in tensor categories." arXiv preprint arXiv:2311.02264 (2023). To appear in Michigan Mathematical Journal.

    \bibitem[C4]{GR} Coulembier, Kevin. "On geometrically reductive tensor categories." arXiv preprint arXiv:2605.19167 (2026).

    \bibitem[CEO1]{CEO-A} K.~Coulembier, P.~Etingof, V.~Ostrik:
On Frobenius exact symmetric tensor categories.
With Appendix A by Alexander Kleshchev.
Ann. of Math. (2) 197 (2023), no. 3, 1235--1279.
	
	\bibitem[CEO2]{CEO} K.~Coulembier, P.~Etingof, V.~Ostrik:
Incompressible tensor categories.
Adv. Math. 457 (2024), Paper No. 109935, 65 pp.

	\bibitem[CEOP]{CEOP}K.~Coulembier, P.~Etingof, V.~Ostrik, B.~Pauwels:
Monoidal abelian envelopes with a quotient property.
J. Reine Angew. Math. 794 (2023), 179--214.
	\bibitem[CSZ]{CSZ} K.~Coulembier, M.~Stroiński, T.~Zorman. Simple algebras and exact module categories. arXiv:2501.06629.


    \bibitem[CS]{CS} Coulembier, Kevin, and Alexander Sherman. "Homogeneous spaces in tensor categories." arXiv preprint arXiv:2505.04848 (2025).

\bibitem[D1]{Del90} P.~Deligne: Cat\'egories tannakiennes. The Grothendieck Festschrift, Vol. II, 111--195, Progr. Math., 87, Birkh\"auser Boston, Boston, MA, 1990. 



	\bibitem[D2]{Del02} P.~Deligne: Cat\'egories tensorielles. Mosc. Math. J. 2 (2002), no. 2, 227--248.

    	\bibitem[D3]{Del07} P.~Deligne:
La cat\'egorie des repr\'esentations du groupe sym\'etrique $S_t$, lorsque t n'est pas un entier naturel. Algebraic groups and homogeneous spaces, 209--273,
Tata Inst. Fund. Res. Stud. Math., 19, Tata Inst. Fund. Res., Mumbai, 2007.

\bibitem[D4]{Del14}
P.~Deligne: Semi-simplicit\'e de produits tensoriels en caract\'eristique p. Invent. Math. 197 (2014), no. 3, 587--611.

    \bibitem[DM]{DM} P.~Deligne, J.S.~Milne: Tannakian Categories. In
Hodge cycles, motives, and Shimura varieties. 
Lecture Notes in Mathematics, 900. Springer-Verlag, Berlin-New York, 1982, pp. 101-228.

    \bibitem[EGNO]{EGNO} P.~Etingof, S.~Gelaki, D.~Nikshych, V.~Ostrik.
Tensor categories.
Math. Surveys Monogr., 205
American Mathematical Society, Providence, RI, 2015. 

\bibitem[EHS]{EHS}
I.~Entova-Aizenbud, V.~Hinich, V.~Serganova:
Deligne categories and the limit of categories~$Rep(GL(m|n))$.
 Int. Math. Res. Not. IMRN 2020, no. 15, 4602--4666.


\bibitem[EO]{EO} P.~Etingof, V.~Ostrik: On the Frobenius functor for symmetric tensor categories in positive characteristic. J. Reine Angew. Math. 773 (2021), 165--198.

\bibitem[EP]{EP} P.~Etingof, D.~Penneys:
Rigidity of non-negligible objects of moderate growth in braided categories.
Forum Math. Pi 14 (2026), Paper No. e7.


\bibitem[F]{Fo} T.J.~Ford:
Separable algebras.
Grad. Stud. Math., 183
American Mathematical Society, Providence, RI, 2017.


\bibitem[G]{EGA} Éléments de géométrie algébrique, par A. Grothendieck avec la collaboration de J. Dieudonn
é; EGA IV: Étude locale des schémas et des morphismes de schémas. La troisième
partie est Inst. Hautes Études Sci. Publ. Math., 28 (1966).


\bibitem[HZ]{HZ} S.~Halbig, T.~Zorman:
Duality in monoidal categories.
Math. Z. 313 (2026), no. 1, Paper No. 4, 35 pp.

\bibitem[Ha]{Harman} N.~Harman: Deligne categories as limits in rank and characteristic. arXiv:1601.03426.

\bibitem[HS]{HS} N.~Harman, A.~Snowden: Discrete pre-Tannakian categories. arXiv:2304.05375.

\bibitem[HSS]{HSS} N.~Harman, A.~Snowden, N.~Snyder: The Delannoy category.
Duke Math. J. 173 (2024), no. 16, 3219--3291.

\bibitem[H]{H} R. Hartshorne, {\it Algebraic geometry}, Graduate Texts in Mathematics, No. 52, Springer, New York-Heidelberg, 1977; MR0463157

\bibitem[Hu]{Hum} J.~E. Humphreys, {\it Linear algebraic groups}, Graduate Texts in Mathematics, No. 21, Springer, New York-Heidelberg, 1975; MR0396773.

\bibitem[J]{J} J.C.~Jantzen:
Representations of algebraic groups. 
Second edition. Mathematical Surveys and Monographs, 107. American Mathematical Society, Providence, RI, 2003.



\bibitem[La]{Lam} T.Y.~Lam:
A first course in noncommutative rings.
Grad. Texts in Math., 131
Springer-Verlag, New York, 1991.


\bibitem[Le]{Le} S.D.~Lentner: A conditional algebraic proof of the logarithmic Kazhdan-Lusztig correspondence. arXiv:2501.10735.

\bibitem[Ma]{Magid} A.R.~Magid,  Equivariant completions of rings with reductive group action.
J. Pure Appl. Algebra 49 (1987), no. 1-2, 173–185.

\bibitem[Mo]{M} S.~Montgomery, Hopf algebras and their actions on rings.
CBMS Regional Conf. Ser. in Math., 82, Providence, RI, 1993.

\bibitem[N]{N} Nastasescu, Constantin. "Théorème de Hopkins pour les catégories de Grothendieck." Ring Theory Antwerp 1980: Proceedings, University of Antwerp UIA Antwerp, Belgium, May 6–9, 1980. Berlin, Heidelberg: Springer Berlin Heidelberg, 2006. 88-93.

\bibitem[S]{Sch} D.~Sch\"appi:
Ind-abelian categories and quasi-coherent sheaves.
Math. Proc. Cambridge Philos. Soc. 157 (2014), no. 3, 391--423.

\bibitem[SY]{SY} K.~Shimizu, H.~Yadav: Commutative exact algebras and modular tensor categories. arXiv:2408.06314.

\bibitem[Stacks]{stacks-project} The Stacks Project Authors, \textit{Stacks Project}, \url{https://stacks.math.columbia.edu} (2018).

    
	\end{thebibliography}
\end{document}